\documentclass[12pt]{article}
\usepackage{amsmath}
\usepackage{graphicx,psfrag,epsf}
\usepackage{enumerate}
\usepackage{natbib}
\usepackage{url}

\usepackage[margin = 2.3cm]{geometry}

\usepackage{amssymb, amsthm, ascmac, authblk, bm, booktabs, caption, comment, enumerate, latexsym, lscape, longtable, mathrsfs, mathtools, multirow, natbib, newtxtext, psfrag, setspace, subcaption, thmtools}
\usepackage[stable]{footmisc}

\usepackage{enumitem}

\usepackage{mathtools}
\usepackage{mathrsfs}
\usepackage{color}
\usepackage{pgfplots}
\pgfplotsset{compat=1.16}
\usepackage{here}
\usepackage{multirow}
\usepackage{caption}
\usepackage[subrefformat=parens]{subcaption}
\captionsetup{compatibility=false}
\usepackage{authblk}
\usepackage[colorlinks=true, citecolor=teal, linkcolor=teal, urlcolor=teal, backref=page]{hyperref}
\usepackage{threeparttable}
\usepackage{comment}
\usepackage{stackengine}
\usepackage{tikz}
\usetikzlibrary{arrows.meta}
\usetikzlibrary{bending}

\usepackage{lineno}

\usepackage[capitalize]{cleveref}

\newtheorem{theorem}{Theorem}%[section]
\newtheorem{lemma}{Lemma}%[section]
\newtheorem{proposition}{Proposition}%[section]
\newtheorem{corollary}{Corollary}%[section]
\newtheorem{assumption}{Assumption}%[section]

\theoremstyle{definition}
%[section]
\newtheorem*{rmk*}{Remark}
\newtheorem{rmk}{Remark}%[section]

%\numberwithin{equation}{section}

\DeclareMathOperator{\Var}{Var}
\DeclareMathOperator{\Cov}{Cov}
\DeclareMathOperator{\E}{\mathbb{E}}

\DeclareMathOperator{\trace}{tr}

\def\be#1{\begin{equation*}#1\end{equation*}}
\def\ben#1{\begin{equation}#1\end{equation}}

\def\besn#1{\begin{equation}\begin{split}#1\end{split}\end{equation}}

\def\bm#1{\begin{multline*}#1\end{multline*}}
\def\bmn#1{\begin{multline}#1\end{multline}}
\def\ba#1{\begin{align*}#1\end{align*}}
\def\ban#1{\begin{align}#1\end{align}}

\newcommand{\ol}[1]{\overline{#1}}
\newcommand{\ul}[1]{\underline{#1}}

\newcommand{\wt}[1]{\widetilde{#1}}

\newcommand{\mcl}[1]{\mathcal{#1}}
\newcommand{\norm}[1]{\left\|#1\right\|}
\newcommand{\bra}[1]{\left(#1\right)}
\newcommand{\cbra}[1]{\left\{#1\right\}}
\newcommand{\sbra}[1]{\left[#1\right]}
\newcommand{\abs}[1]{\left|#1\right|}

\newcommand{\fourier}[1]{\mathfrak{F}#1}

\newcommand{\eps}{\varepsilon}
\newcommand{\pr}{\mathbb{P}} % underlying probability measure
\newcommand{\symg}{\mathbb S} % symmetric group
\newcommand{\pdf}{\mathcal{P}} % set of all probability density functions

\allowdisplaybreaks

\begin{document}
\baselineskip=18pt

\renewcommand\thmcontinues[1]{Continued}
\onehalfspacing

\title{\bf Gaussian Approximation for High-Dimensional $U$-statistics with Size-Dependent Kernels}
\author[1]{Shunsuke Imai \thanks{The first author's research was supported by JSPS KAKENHI Grant Number 24KJ1472.}}
\author[2]{Yuta Koike \thanks{The second author's research was supported by JST CREST Grant Number JPMJCR2115 and JSPS KAKENHI Grant Numbers JP22H00834, JP22H00889, JP22H01139, JP24K14848.}}

\affil[1]{Graduate School of Economics, Kyoto University}
\affil[2]{Graduate School of Mathematical Sciences, University of Tokyo}

\maketitle

\begin{abstract}
    %We establish Gaussian approximation results for such $U$-statistics in the high-dimensional setting where the dimensions of $U$-statistics grow as the sample size increases.
    %The U-statistics of our interest have fixed order, the kernels possibly depend on the sample size $n$ and the dominant components of the Hoeffding decomposition are absent or unknown. 
    %This setting includes cases with known degrees of degeneracy as special forms.
    %In addition, we considered small bandwidth asymptotics for estimating average marginal densities of high-dimensional data and an adaptive goodness-of-fit test against smooth alternatives, and present contributions to the literature on the applications. 
    
    %Motivated by small bandwidth asymptotics for kernel-based semiparametric estimators in econometrics, this paper establishes Gaussian approximation results for high-dimensional $U$-statistics based on symmetric kernels of a fixed order, that possibly depend on the sample size. 
    %We particularly allow for a situation where the dominant component of the Hoeffding decomposition of the $U$-statistics is absent or unknown. 
    %Motivated by small bandwidth asymptotics for kernel-based semiparametric estimators in econometrics, this paper establishes Gaussian approximation results for high-dimensional fixed-order $U$-statistics whose kernels are possibly dependent on the sample size with allowing for a situation where the dominant component of the Hoeffding decomposition of the $U$-statistics is absent or unknown. 
    Motivated by small bandwidth asymptotics for kernel-based semiparametric estimators in econometrics, this paper establishes Gaussian approximation results for high-dimensional fixed-order $U$-statistics whose kernels depend on the sample size. 
    Our results allow for a situation where the dominant component of the Hoeffding decomposition is absent or unknown, including cases with known degrees of degeneracy as special forms.
    %The dimension of the $U$-statistics is possibly larger than the sample size. 
    The obtained error bounds for Gaussian approximations are sharp enough to almost recover the weakest bandwidth condition of small bandwidth asymptotics in the fixed-dimensional setting when applied to a canonical semiparametric estimation problem. 
    We also present an application to an adaptive goodness-of-fit testing and the simultaneous inference on high-dimensional density weighted averaged derivatives, along with discussions about several potential applications. 
    \vspace{2mm}

\noindent \textit{Keywords}: 
Adaptive test; 
%contractions; 
high-dimensional central limit theorem; 
maximal inequalities;
small bandwidth asymptotics; 
Stein's method; 
$U$-statistics. 
\end{abstract}

\clearpage

\section{Introduction} \label{sec:introduction}
    $U$-statistics are a general class of estimators and test statistics, and its application covers wide-ranging statistical problems. 
    Among various types of $U$-statistics, we consider a situation where the order is fixed, the kernel possibly depends on the sample size $n$, and the dominant component of the Hoeffding decomposition is absent or unknown. 
    This setting includes cases with known degrees of degeneracy as special forms.
    In this paper, we establish Gaussian approximation results for such $U$-statistics in the high-dimensional setting where the dimensions of $U$-statistics grow as the sample size increases.

    $U$-statistics of our interest arise in many important statistical applications. 
    Examples of degenerate $U$-statistics with $n$-dependent kernels include test statistics for the specification of parametric regression models \citep[e.g.][]{hardle1993comparing,zheng1996consistent} and semiparametric regression model \citep[e.g.][]{fan1996consistent} and for the conditional mean independence \citep[e.g.][]{fan1996consistent}. 
    %{\color{red} ここで扱っている例は, small bandwidth asymptoticsを考えるまでもなく, 初めからdegenerate-Uの形になっている or 近似可能.}
    A notable example of $U$-statistics whose dominant Hoeffding components are absent or unknown appears in \textit{small bandwidth asymptotics} for two-step linear kernel-based semiparametric estimators \citep{cattaneo2014small}.
    This class of estimators can be applied to a variety of specific statistical problems beyond semiparametric inference, as introduced later (Sections \ref{sec:app_ade}, \ref{sec:DWAD} and \ref{sec:discussion}).  
    Among estimators in this class, one noteworthy example is a kernel-based estimator for density-weighted average derivatives (DWADs). 
    This estimator can be applied to statistical inference on a wide range of parameters, including finite-dimensional parameters in single-index models, as well as various marginal parameters motivated by economic theory. 
    Further details and more specific applications are discussed in Section \ref{sec:DWAD} and \ref{sec:discussion}.

     In the framework of small bandwidth asymptotics, the influential functions of estimators do not always have asymptotically linear forms. In terms of second-order $U$-statistics, the Haj\'ek projections of the estimators are not always dominant over the quadratic terms and the distributional approximations are performed based on both linear and quadratic terms. By contrast, the classical semiparametric inference procedures require some restrictions on the tuning parameters and data generating processes so that the influential functions have asymptotically linear forms and the quadratic terms are ignored. 
     %As a notable feature, the asymptotic variance in asymptotically linear framework is invariant to how to construct the first stage nonparametric estimator and this fact implies that the approximation by the linear influential function ignores the effect of the preliminary estimation on the final estimator. 
    %In this respect, the small bandwidth asymptotics is the framework to capture the effect of the nonparametric estimation more precisely. 
    Recently, \cite{cattaneo2024higher} have established Edgeworth expansions for the DWAD estimator (standardization and studentization are conducted considering both linear and quadratic terms) and theoretically shown that capturing both linear and quadratic terms gives a higher-order improvement on the accuracy of normal approximation even when the linear term is dominant, as well as the conditions on the tuning parameters and data-generating processes are weaken. 
    Although in the small bandwidth asymptotics of kernel-based non-linear semiparametric two-step estimators, it is known that the linear and bias terms dominate the quadratic term \citep{cattaneo2013generalized,cattaneo2018kernel}, capturing the quadratic term should improve the normal approximation error \cite[cf. the second paragraph of page 3]{cattaneo2024higher}. 

    In modern applications, the number of target parameters of statistical inference can be large, and one might wish to construct simultaneous confidence bands or conduct multiple testing with family-wise error rate or false discovery rate control. 
    Examples of such situations include cases where there are many outcomes, groups, or time points (or combinations thereof), and parameters are estimated separately for each outcome, group, and time point \citep[Section 1.1]{belloni2018high}; where economic theory implies a large number of testable conditions \citep[e.g.,][]{chernozhukov2019inference}; or where one seeks to perform uniform inference over tuning parameters for purposes of adaptive inference and sensitivity analysis/robustness checks \citep{horowitz2001adaptive,armstrong2018simple}. 
    See Sections \ref{sec:gof}, \ref{sec:DWAD} and \ref{sec:discussion} for more specific examples.

    Our Gaussian approximation results for high-dimensional $U$-statistics are broadly applicable to address the above situations and a range of related potential problems.
    To illustrate our developed Gaussian approximation results, we provide a toy example of small bandwidth asymptotics for estimating the average marginal densities of high-dimensional data (Section \ref{sec:app_ade}) and an application to an adaptive goodness-of-fit test against smooth alternatives (Section \ref{sec:gof}) and to the simultaneous inference on many DWADs with empirical data-analysis (Section \ref{sec:DWAD}). 
    The toy example not only provides an illustrative use case of our Gaussian approximation results, but also confirms that the bound on the approximation error is sharp enough to recover the weakest condition of small bandwidth asymptotics in the fixed-dimensional setting \citep{cattaneo2014small}. 
    Beyond the illustration, we make a notable contribution to a goodness-of-fit test of a prespecified distribution, which was recently investigated by \cite{li2024optimality}; see \cref{rmk:gof} for details.   
    See Section \ref{sec:discussion} for other specific examples of potential applications.

\paragraph{Related Literature and Technical Contributions:}
    In this paragraph, we explain theoretically related references and our contributions from a technical perspective.

    %As a pioneering contribution, in \cite{CCK13}, \citeauthor*{CCK13} (CCK for short) established a distributional approximation result between the maximum of a sum of high-dimensional independent random vectors and its Gaussian analog.
    As a pioneering contribution, in \cite{CCK13}, \citeauthor*{CCK13} (CCK for short) established a Gaussian approximation result for the maximum of a sum of high-dimensional independent random vectors. 
    Since then, numerous extensions have been proposed in various directions, some of which address $U$-statistics and their generalizations \citep{chen2018gaussian,ChKa19,ChKa20,song2019approximating,song2023stratified,cheng2022gaussian,chiang2023inference,koike2023high}. 
    Nonetheless, existing CCK-type results for $U$-statistics are almost essentially concerned with the non-degenerate case. 
    The exceptions are \cite{ChKa19} and \cite{koike2023high}.
    While the former authors actually consider degenerate $U$-statistics, the focus is on randomized incomplete $U$-statistics which are approximated by linear terms.
    The latter considers essentially degenerate $U$-statistics whose kernels depend on $n$, but focuses on the case of homogeneous sums.
    To the best of our knowledge, high-dimensional Gaussian approximation in our setting has not been established so far.

    %On the other hand, in fixed-dimensional settings, the asymptotic normality of not necessarily Hoeffding non-degenerate $U$-statistics has been established and well-known \citep{de1987central,de1990central}.
    On the other hand, in the fixed-dimensional setting, the asymptotic normality of not necessarily Hoeffding non-degenerate $U$-statistics has been established by many authors and various sufficient conditions are known. 
    %Furthermore, recently, \cite{DoPe19} have derived a central limit theorem (CLT) for general symmetric $U$-statistics via the exchangeable pairs approach in Stein's method in conjunction with Hoeffding-decomposition. 
    Among others, \cite{DoPe19} have recently derived an error bound for the normal approximation to a general symmetric $U$-statistic in terms of the so-called contraction kernels using the exchangeable pairs approach in Stein's method; see Theorem 5.2 ibidem and also \cite[Section 3.2]{dobler2023normal}. 
    %Among others, the recent work of \cite{DoPe19} has derived analytic sufficient conditions expressed in terms of the so-called \emph{contraction kernels} (see \cref{sec:u-stat-notation} for definition).
    Although a multivariate variant of their bounds potentially works in situations with growing dimensions, it is far from trivial how fast the dimension can grow with respect to the sample size.  
    
    In order to establish Gaussian approximation results for general symmetric $U$-statistics in high-dimensions, we build on the development of these two strands of literature.
    Specifically, we employ an analogous argument to the proof of Lemma A.1 of \cite{CCKK22} to develop a high-dimensional central limit theorem (CLT) via generalized exchangeable pairs (Theorem \ref{thm:gexch}) and make extensive use of some notions introduced by \cite{DoPe17,DoPe19}, especially contraction kernels and product formulae \citep[cf. Sections 2.5 and 2.6][]{DoPe19}.

    Whereas building upon these previous works, we make our own contributions toward establishing Gaussian approximation results.
    From a technical standpoint, the main contribution of this paper lies in the development of quite sharp maximal inequalities. %ratherよりもsufficientlyとかの方が良い？ 
    In particular, we extend Lemmas 8 and 9 of \cite{CCK15} in two directions: To $U$-statistics (Theorems \ref{thm:max-is} and \ref{lem:max-is}) and to martingales and non-negative adapted sequences (Lemmas \ref{max-rosenthal} and \ref{lem:nonneg-ada}).
    %These results are then used to establish an inequality (Lemma \ref{influence-mom}), which enables the bounds for the Gaussian approximation (Theorem \ref{thm:order-2} and Corollary \ref{coro:order-2}) to be sharp enough to recover the weakest conditions known under small bandwidth asymptotics.
    These results enable us to make our Gaussian approximation results (Theorem \ref{thm:order-2} and Corollary \ref{coro:order-2}) sharp enough to recover the weakest conditions known under small bandwidth asymptotics.
    Moreover, Theorem \ref{thm:max-is} by itself improves upon an existing maximal inequality (Corollary 5.5 in \citealp{ChKa20}) when applied to the present setting.
    This refinement may also hold in other settings and be of potential independent interest.
    See Remark \ref{rem:comparison_maximal} for details about this point.

    Our first main theorem (Theorem \ref{thm:clt-ustat}) covers a general setting, as it allows for $U$-statistics of arbitrary order $r$ and does not assume prior knowledge of the dominant component in the Hoeffding decomposition.
    However, such generality makes the bound on the Gaussian approximation error considerably complex.       
    To enhance applicability, we provide several additional results alongside a general result (Theorem \ref{thm:clt-ustat}). 
    Specifically, 
    (i) Theorem \ref{thm:order-2} presents a result for $r=2$ with a simple bound that is sufficiently sharp for practical purposes; and (ii) Corollary \ref{coro:order-2} serves a bound expressed in terms of moments of kernels rather than those of Hoeffding projections. This result holds under the same assumption as Theorem \ref{thm:order-2}.

\paragraph{Organization:}
The rest of the paper is organized as follows.
Sections \ref{sec:u-stat-notation} and \ref{subsec:main_results} introduce the formal setup and state the main theoretical results, respectively, and Section \ref{sec:app_ade} illustrates how to apply our results.
In Section \ref{sec:gof},  we apply our main results to a goodness-of-fit test.  
Section \ref{sec:discussion} discusses several concrete examples of potential applications.
\cref{subsec:hdgexch,subsec:maximal_inequality} present the two key building blocks of the proofs of main results, and the proofs of main results are in \cref{subsec:proof_clt_ustat,subsec:proof_coro_clt_ustat,subsec:proof_thm_order-2,subsec:proof_coro_order_2}.
\cref{sec:proof_gof,sec:proof-aux} give the proofs of results for the goodness-of-fit test and auxiliary results, respectively.

\paragraph{General notation and convention:}

%\begin{itemize}

For a positive integer $m$, we write $[m]:=\{1,\dots,m\}$. We also set $[0]:=\emptyset$ by convention. 
Given a vector $x\in\mathbb R^p$, its $j$-th component is denoted by $x_j$. 
Also, we set $|x|:=\sqrt{\sum_{j=1}^px_j^2}$ and $\|x\|_\infty:=\max_{j\in[p]}|x_j|$. 
For two vectors $x,y\in\mathbb R^p$, $x\cdot y$ denotes their inner product, i.e.~$x\cdot y=x^\top y$. 
Given a $p\times q$ matrix $A$, its $(j,k)$-th entry is denoted by $A_{jk}$. Also, we set $\|A\|_\infty:=\max_{j\in[p],k\in[q]}|A_{jk}|$. 
For two $p\times p$ matrices $A$ and $B$, $\langle A,B\rangle$ denotes their Frobenius inner product, i.e.~$\langle A,B\rangle=\trace(A^\top B)$. 
$\mcl R_p$ denotes the set of all rectangles in $\mathbb R^p$. 
For a normed space $\mathfrak X$, its norm is denoted by $\|\cdot\|_{\mathfrak X}$. 
We interpret $\max\emptyset$ as 0 unless otherwise stated. 
For two random variables $\xi$ and $\eta$, we write $\xi\lesssim\eta$ or $\eta\gtrsim\xi$ if there exists a \emph{universal} constant $C>0$ such that $\xi\leq C\eta$. 
Given parameters $\theta_1,\dots,\theta_m$, we use $C_{\theta_1,\dots,\theta_m}$ to denote positive constants, which depend only on $\theta_1,\dots,\theta_m$ and may be different in different expressions.

%\end{itemize}

\section{Main results}\label{sec:main}

\subsection{$U$-statistics related notation}\label{sec:u-stat-notation}

Given a probability space $(\Omega,\mcl A,\pr)$, let $X_1,\dots,X_n$ be i.i.d.~random variables taking values in a measurable space $(S,\mcl S)$. 
Write $P$ for the common distribution of $X_i$. 
Given an integer $r\geq1$, we say that a function $\psi:S^r\to\mathbb R$ is \emph{symmetric} if $\psi(x_1,\dots,x_r)=\psi(x_{\sigma(1)},\dots,x_{\sigma(r)})$ for all $x_1,\dots,x_r\in S$ and $\sigma\in\symg_r$, where $\symg_r$ is the symmetric group of degree $r$. 
For an $\mcl S^{\otimes r}$-measurable symmetric function $\psi:S^r\to\mathbb R$, we set
\[
J_r(\psi)=J_{r,X}(\psi):=\sum_{1\leq i_1<\cdots<i_r\leq n}\psi(X_{i_1},\dots,X_{i_r})
=\frac{1}{r!}\sum_{(i_1,\dots,i_r)\in I_{n,r}}\psi(X_{i_1},\dots,X_{i_r}),
\]
where $I_{n,r}:=\{(i_1,\dots,i_r):1\leq i_1,\dots,i_r\leq n,~i_s\neq i_t\text{ for all }s\neq t\}$. 
Following \cite{DoPe19}, we call $J_r(\psi)$ the \emph{$U$-statistic of order $r$, based on $X=(X_i)_{i=1}^n$ and generated by the kernel $\psi$}. 
By convention, we set $J_0(\psi):=\psi$ when $r=0$ ($\psi$ is a constant in this case). 
Note that in statistics, the ``averaged'' version $U_r(\psi):=\binom{n}{r}^{-1}J_r(\psi)$ is usually referred to as a $U$-statistic because it is an \emph{unbiased} estimator for the parameter $\theta:=\E[\psi(X_1,\dots,X_r)]$. 
Since we frequently invoke technical tools developed in \cite{DoPe19}, we choose to work with the unaveraged version as in \cite{DoPe19}. 
Except for this, our notation is basically consistent with \cite{ChKa19,ChKa20}. 

%From now on, we always (at least) assume that the kernel satisfies $\psi\in L^1(P^r)$. 
%We say that a symmetric kernel $\psi\in L^1(P^r)$ is \emph{degenerate} if $\E[\psi(X_1,\dots,X_r)\mid X_1,\dots,X_{r-1}]=0$. 
%It is an unbiased estimator for
%\[
%\theta=\theta(\mu):=\E[\psi(Z_1,\dots,Z_p)]
%\]
%The Hoeffding decomposition of $J_p(\psi)$ is given by
%\[
%J_p(\psi)=\E[J_p(\psi)]+\sum_{s=1}^p\binom{n-s}{p-s}J_s(\psi_s),
%\]
%where
%\[
%\psi_s(x_1,\dots,x_s)=\sum_{k=0}^s(-1)^{s-k}\sum_{1\leq i_1<\cdots<i_k\leq s}g_k(x_{i_1},\dots,x_{i_k})
%\]
%and
%\[
%g_k(y_1,\dots,y_k)=\E[\psi(y_1,\dots,y_k,Z_1,\dots,Z_{p-k})]
%\]
For a symmetric kernel $\psi\in L^1(P^r)$ and $0\leq k\leq r$, we define a function $P^{r-k}\psi:S^k\to\mathbb R$ as
\[
P^{r-k}\psi(x_1,\dots,x_k)=\E[\psi(x_1,\dots,x_k,X_{k+1},\dots,X_{r})],\qquad x_1,\dots,x_k\in S.
\]
We say that $\psi$ is \emph{degenerate} if $P\psi=0$ $P^{r-1}$-a.s. 
We write $\pi_s\psi$ for the \emph{Hoeffding projection} of $\psi$ of order $s$, i.e.
\ben{\label{eq:hoef-proj}
\pi_s\psi(x_1,\dots,x_s)
%&=\int_{S^k}P^{r-k}f(y_1,\dots,y_k)(\delta_{x_1}-P)(dy_1)\cdots(\delta_{x_k}-P)(dy_k)\\
=\sum_{k=0}^s(-1)^{s-k}\sum_{1\leq i_1<\cdots<i_k\leq s}P^{r-k}\psi(x_{i_1},\dots,x_{i_k}).
}
Note that \cite{DoPe19} use the notation $g_k$ and $\psi_s$ instead of $P^{r-k}\psi$ and $\pi_s\psi$, respectively. 
The \emph{Hoeffding decomposition} of $J_r(\psi)$ is given by
\ben{\label{eq:hoef-decomp}
J_r(\psi)=\sum_{s=0}^r\binom{n-s}{r-s}J_s(\pi_s\psi)
=\E[J_r(\psi)]+\sum_{s=1}^r\binom{n-s}{r-s}J_s(\pi_s\psi).
}
When $\psi\in L^2(P^r)$, the variance of $J_r(\psi)$ is decomposed as (cf.~Eqs.(2.8) and (2.10) in \cite{DoPe19})
\ban{
\Var[J_r(\psi)]
&=\sum_{s=1}^r\binom{n-s}{r-s}^2\Var[J_s(\pi_s\psi)]
=\sum_{s=1}^r\binom{n-s}{r-s}^2\binom{n}{s}\|\pi_s\psi\|_{L^2(P^s)}^2\label{u-anova}\\
&=\binom{n}{r}\sum_{s=1}^r\binom{r}{s}\binom{n-r}{r-s}\Var[P^{r-s}\psi(X_1,\dots,X_s)].\label{u-anova-2}
}

For two symmetric kernels $\psi\in L^2(P^{r}),\varphi\in L^2(P^{r'})$ and two integers $0\leq l\leq s\leq r\wedge r'$, we define the \emph{contraction kernel} $\psi\star_s^l\varphi:S^{r+r'-s-l}\to\mathbb R$ as
\bm{
(\psi\star^l_s\varphi)(y_1,\dots,y_{s-l},u_1,\dots,u_{r-s},v_1,\dots,v_{r'-s})\\
=\E\sbra{\psi(X_1,\dots,X_l,y_1,\dots,y_{s-l},u_1,\dots,u_{r-s})\varphi(X_1,\dots,X_l,y_1,\dots,y_{s-l},v_1,\dots,v_{r'-s})},
}
for every $(y_1,\dots,y_{s-l},u_1,\dots,u_{r-s},v_1,\dots,v_{r'-s})$ belonging to the set $A_0\subset S^{r+r'-s-l}$ such that the random variable in the expectation on the right-hand side is integrable, and we set it equal to zero otherwise. 
By Lemma 2.4(i) in \cite{DoPe19}, $\psi\star^l_s\varphi$ is well-defined in the sense that $P^{r+r'-s-l}(A_0)=0$. 
We refer to \cite[Section 2.5]{DoPe19} for more information on contraction kernels. 

For a function $f:S^r\to\mathbb R$ that is not necessarily symmetric, we set
\[
M(f):=\max_{(i_1,\dots,i_r)\in I_{n,r}}|f(X_{i_1},\dots,X_{i_r})|.
\]
We also define the \emph{symmetrization} of $f$ as the function $\wt f:S^r\to\mathbb R$ defined by
\[
\wt f(x_1,\dots,x_r):=\frac{1}{r!}\sum_{\sigma\in\symg_r}f(x_{\sigma(1)},\dots,x_{\sigma(r)}),\qquad
x_1,\dots,x_r\in S.
\]
$\wt f$ is evidently symmetric. 
Also, by Minkowski's inequality and Fubini's theorem
\ben{\label{sym-lp}
\|\wt f\|_{L^q(P^r)}\leq\|f\|_{L^q(P^r)}
}
for all $q\in[1,\infty]$. 
Moreover, by the triangle inequality,
\ben{\label{sym-M}
M(\wt f)\leq M(f).
}

\subsection{Main results} \label{subsec:main_results}

Let $p$ be a positive integer. We assume $p\geq3$ so that $\log p>1$. 
Also, let $r$ be a positive integer such that $r\leq n/4$. 
For every $j\in[p]$, let $\psi_{j}\in L^4(P^{r})$ be a symmetric kernel of order $r$ such that
$\sigma_j:=\sqrt{\Var[J_r(\psi_j)]}>0$. 
Define
\[
%\varphi_{j}:=\frac{\psi_{j}}{\sigma_j},\qquad
%W:=\bra{\frac{U_{r}(\psi_{1})-\E[U_{r}(\psi_{1})]}{\sigma_1},\dots,\frac{U_{r}(\psi_{p})-\E[U_{r}(\psi_{p})]}{\sigma_p}}^\top.
W:=\bra{J_r(\psi_1)-\E[J_r(\psi_1)],\dots,J_r(\psi_p)-\E[J_r(\psi_p)]}^\top.
\]
Our first main result is an explicit error bound on the normal approximation of $P(W\in A)$ uniformly over $A\in\mcl R_p$ for the general order $r$. 
To state the result concisely, we introduce some notation. 
For $a,b\in[r]$, we set
\ba{
\Delta_{1}(a,b)&:=\sum_{s=1}^{a\wedge b}\sum_{l=0}^{s\wedge(a+b-s-1)}\max_{0\leq u\leq a+b-l-s}\Delta_1(a,b;s,l,u),\\
\Delta_2(a)&:=\max_{0\leq s\leq a-1}\Delta_2(a;s),
}
where
\ba{
\Delta_1(a,b;s,l,u)
&:=n^{2r+\frac{l-s-a-b-u}{2}}(\log p)^{\frac{a+b-l-s+u}{2}}
\sqrt{\E\sbra{\max_{j,k\in[p]}\frac{M\bra{P^{a+b-l-s-u}(|\wt{\pi_a\psi_j\star^{l}_s\pi_b\psi_{k}}|^2)}}{\sigma_j^{2}\sigma_k^{2}}}}
}
and
\ba{
\Delta_2(a;s)
:=n^{4r-2a-2s-1}(\log p)^{2(a+s-1)}\E\sbra{\max_{j\in[p]}\frac{M\bra{P^{a-s-1}\bra{|\pi_a\psi_j|^2}}^2}{\sigma_j^4}}.
}
\begin{theorem}\label{thm:clt-ustat}
There exists a constant $C_r$ depending only on $r$ such that
\ben{\label{eq:clt-ustat}
\sup_{A\in\mathcal{R}_p}\left|\pr(W\in A)-\pr(Z\in A)\right|
\leq C_{r}\bra{\sqrt{\max_{a,b\in[r]}\Delta_{1}(a,b)\log^2 p}+\bra{\max_{a\in[ r]}\Delta_2(a)\log^5 p}^{1/4}},
}
where $Z\sim N(0,\Cov[W])$. 
\end{theorem}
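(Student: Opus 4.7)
The strategy is to combine the two building blocks flagged in the introduction: the high-dimensional central limit theorem for generalized exchangeable pairs (\cref{subsec:hdgexch}, Theorem~\ref{thm:gexch}) with the $U$-statistic maximal inequalities of \cref{subsec:maximal_inequality}. The combinatorial glue is the Hoeffding decomposition \eqref{eq:hoef-decomp} together with the product formulae for contraction kernels $\psi\star^l_s\varphi$ in the sense of \cite{DoPe19}. Concretely, I would construct the exchangeable pair $(W,W')$ by drawing an index $I$ uniformly from $[n]$, replacing $X_I$ with an independent copy $X_I'$, and re-evaluating each $U$-statistic; the standard calculation then yields the regression property $\E[W - W'\mid X] = (2/n)\, W$ up to centering, so $(W,W')$ is admissible for Theorem~\ref{thm:gexch} with linear coefficient $\Lambda = (1/n)\Cov[W]$.

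With this pair, Theorem~\ref{thm:gexch} reduces the Gaussian approximation error to bounding (i) the $\|\cdot\|_\infty$-norm of the centered conditional matrix $(n/2)\,\E[(W-W')(W-W')^\top\mid X] - \Cov[W]$ against all test rectangles, and (ii) a fourth-moment tail of the coordinates of $W-W'$. For (i), I would expand coordinate-wise,
\[
W_j - W_j' = \sum_{a=1}^{r}\binom{n-a}{r-a}\bigl(J_a(\pi_a\psi_j) - J_a^{(I)}(\pi_a\psi_j)\bigr),
\]
where $J_a^{(I)}$ is the leave-one-out version omitting index $I$. Plugging this into $(W_j-W_j')(W_k-W_k')$ and applying the product formula of \cite[Section~2.6]{DoPe19} expresses each cross term as a finite sum, indexed by $(a,b,s,l)$, of $U$-statistics generated by the symmetrized contraction kernels $\wt{\pi_a\psi_j\star^l_s\pi_b\psi_k}$. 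After conditioning on $X$, taking a further Hoeffding expansion and cancelling the order-$0$ component against $\Cov[W]$ leaves only degenerate $U$-statistics whose $L^\infty$-norm over $(j,k)\in[p]^2$ is exactly what the maximal inequalities (Theorems~\ref{thm:max-is} and \ref{lem:max-is}) are designed to control. Each such application contributes one factor of the form $n^{-\alpha}(\log p)^\beta\sqrt{\E[\max_{j,k} M(P^{a+b-l-s-u}(|\wt{\pi_a\psi_j\star^l_s\pi_b\psi_k}|^2))/(\sigma_j^2\sigma_k^2)]}$, which after normalization matches $\Delta_1(a,b;s,l,u)$; taking the worst case in $u$ and summing over the finitely many $(s,l)$ produces the stated $\Delta_1(a,b)$. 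The fourth-moment term (ii) is handled by an exactly parallel expansion of $\E[(W_j-W_j')^4\mid X]$ controlled by Lemmas~\ref{max-rosenthal} and \ref{lem:nonneg-ada}, and this is what yields the $\Delta_2(a;s)$ contributions with the correct exponents $n^{4r-2a-2s-1}(\log p)^{2(a+s-1)}$.

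The main obstacle I anticipate is the bookkeeping, rather than any single deep step: one has to keep track of the binomial coefficients $\binom{n-a}{r-a}$ from \eqref{eq:hoef-decomp}, the $n^{-1}$ factor from uniform resampling, the extra $\log p$ picked up from each maximal inequality (which determines the exponents of $\log p$ in $\Delta_1$ and $\Delta_2$), and the variance scaling by $\sigma_j\sigma_k$. In particular, matching the inner maximum over the ``degeneracy level'' $u$ in $\Delta_1$ and over $s$ in $\Delta_2$ requires iterating the maximal inequality at each level of the residual Hoeffding decomposition of the cross-product kernel, and checking that in the worst case the bound collapses to a single sharp term. Once this matching is carried out, the two bounds $\sqrt{\max_{a,b}\Delta_1(a,b)\log^2 p}$ and $(\max_a \Delta_2(a)\log^5 p)^{1/4}$ emerge directly from the two summands in the generalized exchangeable-pairs bound of Theorem~\ref{thm:gexch}, yielding \eqref{eq:clt-ustat} with a constant $C_r$ depending only on $r$ through the number of terms produced by the product formula.
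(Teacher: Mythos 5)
There is a genuine gap at the very first step: your claimed regression identity $\E[W-W'\mid X]=(2/n)\,W$ is false in the paper's setting. The standard one-coordinate-resampling pair satisfies the identity \emph{levelwise}: with $D_{j,s}:=J_{s,X'}(\pi_s\psi_{j})-J_{s,X}(\pi_s\psi_{j})$ one has $\E[D_{j,s}\mid X]=-(s/n)J_{s,X}(\pi_s\psi_j)$, so the shrinkage factor is $s/n$ and differs across Hoeffding levels. Since the theorem explicitly allows the dominant Hoeffding component to be absent or unknown (mixed levels present), $\E[W'-W\mid X]$ is \emph{not} proportional to $W$, and there is no matrix $\Lambda$ (certainly not $(1/n)\Cov[W]$) making $(W,W')$ satisfy an approximate linear regression property with a small remainder. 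This is precisely why the paper introduces the generalized construction: one must use the weighted antisymmetric function $G_j=n\sum_{s=1}^r s^{-1}\bigl(J_{s,X'}(\psi_{j,s})-J_{s,X}(\psi_{j,s})\bigr)$ with $\psi_{j,s}=\binom{n-s}{r-s}\pi_s\psi_j$, which gives the exact identity $\E[G\mid X]=-W$ and makes \cref{thm:gexch} (in fact \cref{coro:exch}) applicable with $R=0$. The same omission breaks your covariance step: the object to compare with $\Cov[W]$ is $\tfrac12\E[GD^\top\mid X]$, not $\tfrac{n}{2}\E[DD^\top\mid X]$. In the paper's computation the product formula gives $n\E[D_{j,a}D_{k,b}\mid X]=\sum_t t\,J_{a+b-t}(\chi^{(j,k)}_{a+b-t})$, and it is exactly the $a^{-1}$ weight in $G$ that turns the order-zero ($t=2a$) terms into $2\,\E[J_a(\psi_{j,a})J_a(\psi_{k,a})]$, which cancel against $2\Sigma_{jk}$; with your unweighted $(n/2)\E[DD^\top\mid X]$ the zero-order terms come with weight $a$ and leave a residual of size $(a-1)\E[J_a(\psi_{j,a})J_a(\psi_{k,a})]$, an order-one error that no maximal inequality can absorb.

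A secondary inaccuracy: for this general-$r$ theorem the fourth-moment term $\E[\|G\|_\infty\|D\|_\infty^3]$ is not handled by \cref{max-rosenthal} and \cref{lem:nonneg-ada}; the paper reduces it to $n\E[\max_j D_{j,a}^4]$, observes that $\sum_{\mathbf i\in I_{n-1,a-1}}\psi_{j,a}(X_{\mathbf i},X_n)$ is, conditionally on $X_n$, a degenerate $U$-statistic of order $a-1$, and applies \cref{thm:max-is} conditionally to obtain $\Delta_2(a)$. The martingale/adapted-sequence lemmas enter only in the refined $r=2$ result (\cref{thm:order-2}) via \cref{influence-mom}. The rest of your outline (product formula for $J_a(\psi_{j,a})J_b(\psi_{k,b})$, cancellation of the order-zero component, and \cref{thm:max-is} applied to the degenerate remainders to produce $\Delta_1(a,b;s,l,u)$) is in line with the paper, but it only works after the weighted $G$ is in place.
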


Although the right-hand side of \eqref{eq:clt-ustat} consists of explicit analytical quantities of the kernels $\psi_j$, it contains many terms and their evaluations are often cumbersome. 
As we will see below, at least for the case $r=2$, we can drastically reduce the number of terms to be evaluated because most of the components of the first term are dominated by the second term.  
Set
\ba{
\Delta_1^{(0)}&:=n^2\max_{j\in[p]}\frac{\|\pi_2\psi_j\star_1^1\pi_2\psi_j\|_{L^2(P^2)}}{\sigma_j^2},&
\Delta_1^{(1)}&:=n^{\frac{5}{2}}\max_{j,k\in[p]}\frac{\|\pi_1\psi_j\star^{1}_1\pi_2\psi_{k}\|_{L^2(P)}}{\sigma_j\sigma_k},
}
and
\ba{
\Delta_{2,*}^{(1)}(1)
&:=n^5\max_{j\in[p]}\frac{\|\pi_1\psi_{j}\|_{L^4(P)}^4}{\sigma_j^4},&
\Delta_{2,*}^{(2)}(1)&:=n^{4}\E\sbra{\max_{j\in[p]}\frac{M(\pi_1\psi_j)^4}{\sigma_j^4}}\log p,
}
and
\ba{
\Delta_{2,*}^{(1)}(2)&:=n^2\max_{j\in[p]}\frac{\|\pi_2\psi_{j}\|_{L^4(P^2)}^4}{\sigma_j^4}\log^{3}p,&
\Delta_{2,*}^{(2)}(2)&:=n^3\max_{j\in[p]}\frac{\|P(|\pi_2\psi_{j}|^2)\|_{L^{2}(P)}^{2}}{\sigma_j^4}\log^{2}p,
\\
\Delta_{2,*}^{(3)}(2)&:=n\E\sbra{\max_{j\in[p]}\frac{M\bra{P\bra{|\pi_2\psi_j|^4}}}{\sigma_j^4}}\log^4 p,&
\Delta_{2,*}^{(4)}(2)&:=\E\sbra{\max_{j\in[p]}\frac{M\bra{\pi_2\psi_j}^4}{\sigma_j^4}}\log^5p,\\
\Delta_{2,*}^{(5)}(2)&:=n^{2}\E\sbra{\max_{j\in[p]}\frac{M\bra{P(|\pi_2\psi_j|^2)}^2}{\sigma_j^4}}\log^{3} p.
}
The next corollary states the bound in terms of $\Delta_1^{(\ell)}$ $(\ell=0,1)$, $\Delta_2(a)$ $(a=1,2)$ and $\Delta_{2,*}^{(\ell)}(2)$ $(\ell=1,5)$ for the case $r=2$. 
\begin{corollary}\label{coro:clt-ustat}
If $r=2$, there exists a universal constant $C$ such that
\ben{\label{eq:clt-ustat-2}
\sup_{A\in\mathcal{R}_p}\left|\pr(W\in A)-\pr(Z\in A)\right|
\leq C\bra{\sqrt{\Delta_1'}+\cbra{\bra{\Delta_{2}(1)+\Delta_{2}(2)+\Delta_{2,*}^{(1)}(2)}\log^5 p}^{1/4}},
}
where 
\begin{equation*}%\label{def:delta'1}
    \Delta_1':=
\Delta_1^{(0)}\log^3 p
+\Delta_1^{(1)}\log^{5/2} p
+n^{3/2}\max_{j\in[p]}\frac{\|\pi_1\psi_j\|_{L^2(P)}}{\sigma_j}\bra{\Delta_{2,*}^{(5)}(2)\log^{9}p}^{1/4}.
\end{equation*}
\end{corollary}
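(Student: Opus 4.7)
The plan is to specialize Theorem~\ref{thm:clt-ustat} to $r=2$ and classify each summand $\Delta_1(a,b;s,l,u)$ for $(a,b)\in\{1,2\}^2$ into either a contributor to $\sqrt{\Delta_1'}$ or a contributor to the fourth-root term. After expanding the sums over $s,l$ and the max over $u$ there are roughly twenty such cases. The summand $\max_{a\in[2]}\Delta_2(a)$ from \eqref{eq:clt-ustat} is trivially absorbed by $\{(\Delta_2(1)+\Delta_2(2)+\Delta_{2,*}^{(1)}(2))\log^5 p\}^{1/4}$ on the right-hand side of \eqref{eq:clt-ustat-2}.

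The three dominant cases account for the four explicit pieces of $\Delta_1'$. First, for $(a,b,s,l,u)=(2,2,1,1,0)$, I apply the Hilbert--Schmidt--type Cauchy--Schwarz inequality
\[
\|\pi_2\psi_j\star_1^1\pi_2\psi_k\|_{L^2(P^2)}^2\leq \|\pi_2\psi_j\star_1^1\pi_2\psi_j\|_{L^2(P^2)}\,\|\pi_2\psi_k\star_1^1\pi_2\psi_k\|_{L^2(P^2)}
\]
(obtained by viewing $\pi_2\psi_j$ as a Hilbert--Schmidt operator on $L^2(P)$) to reduce the $(j,k)$-max to a diagonal one, producing $\Delta_1^{(0)}\log^3 p$. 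Second, $(1,2,1,1,0)$ and its transpose yield $\Delta_1^{(1)}\log^{5/2} p$ directly, since $\pi_1\psi_j\star_1^1\pi_2\psi_k$ is already a single-variable function so that \eqref{sym-lp} is trivial. Third, $(1,2,1,1,1)$ and its transpose will require the Cauchy--Schwarz factorization
\[
\bigl|(\pi_1\psi_j\star_1^1\pi_2\psi_k)(v)\bigr|^2\leq \|\pi_1\psi_j\|_{L^2(P)}^2\,(P|\pi_2\psi_k|^2)(v),
\]
followed by a further Cauchy--Schwarz $E[\max_k M(P|\pi_2\psi_k|^2)/\sigma_k^2]\leq \sqrt{E[\max_k M(P|\pi_2\psi_k|^2)^2/\sigma_k^4]}$ on the expectation; this reproduces the third piece of $\Delta_1'$.

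All remaining $\Delta_1(a,b;s,l,u)$ terms are sub-dominant: after multiplication by $\log^2 p$ they carry extra negative powers of $n$ or $\log p$ relative to the dominant cases, and the $\sqrt{\cdot}$ is bounded by $(\Delta_2(a')\log^5 p)^{1/4}$ (for a suitable $a'\in\{1,2\}$) using the Jensen lower bound $E[\max_j M(f_j)^2/\sigma_j^4]\geq \max_j \|f_j\|_{L^2}^2/\sigma_j^4$. The case $(a,b,s,l,u)=(2,2,2,0,0)$ is the sole exception: the kernel equals $\pi_2\psi_j(y_1,y_2)\pi_2\psi_k(y_1,y_2)$, a pure product, whose $L^2(P^2)$-norm squared is bounded by $\|\pi_2\psi_j\|_{L^4(P^2)}^2\|\pi_2\psi_k\|_{L^4(P^2)}^2$ via Cauchy--Schwarz. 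After the diagonal reduction $\sqrt{ab}\leq(a+b)/2$ on the $(j,k)$-max and taking $\sqrt{\cdot\,\log^2 p}$, this yields exactly $(\Delta_{2,*}^{(1)}(2)\log^5 p)^{1/4}$ up to constants, explaining the inclusion of $\Delta_{2,*}^{(1)}(2)$ in the statement.

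The main obstacle is the tedious bookkeeping across cases and ensuring the correct powers of $n$ and $\log p$ line up, especially for $(a,b)=(2,2)$ with $(s,l)\in\{(1,0),(1,1)\}$ and $u\geq 1$. These borderline cases require combining the Hilbert--Schmidt identity, the pointwise factorization $M(fg)\leq M(f)M(g)$, and the refined contraction bound $P^1 g_{jk}^2(u)\leq \|\pi_2\psi_k\star_1^1\pi_2\psi_k\|_{L^2(P^2)}(P|\pi_2\psi_j|^2)(u)$ (which follows from the operator inequality $\|T_k\|_{\mathrm{op}}^2\leq \|T_k^*T_k\|_{HS}$), together with an AM--GM step $\sqrt{AB}\leq(A+B)/2$ to split a cross-kernel product into pieces assignable to the dominant terms. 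Once every case is shown to fall into one of these categories, the conclusion follows from $\sqrt{A+B}\leq\sqrt{A}+\sqrt{B}$ applied to the partition of the first summand in \eqref{eq:clt-ustat}.
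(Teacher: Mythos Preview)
Your approach is essentially the paper's: the case-by-case analysis you outline is exactly the content of Lemma~\ref{lem:delta}, which bounds $\Delta_1(a,b)\log^2 p$ for each $(a,b)\in\{1,2\}^2$ in terms of $\Delta_1^{(0)},\Delta_1^{(1)}$ and the auxiliary quantities $\Delta_{2,*}^{(\ell)}(a')$; the proof of the corollary then reduces each $\Delta_{2,*}^{(\ell)}(a')$ to $\Delta_2(a')$ (or keeps $\Delta_{2,*}^{(1)}(2)$). Your identification of the dominant cases $(2,2,1,1,0)$, $(1,2,1,1,0)$, $(1,2,1,1,1)$, $(2,2,2,0,0)$ and of the borderline case $(2,2,1,1,1)$ is correct, and your tools (the Hilbert--Schmidt Cauchy--Schwarz, the operator-norm contraction bound, AM--GM) match Lemma~2.4 in \cite{DoPe19} and Lemma~\ref{max-contraction}.

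There is one genuine gap. Your claim that every sub-dominant term satisfies $\sqrt{\Delta_1(a,b;s,l,u)\log^2p}\lesssim(\Delta_2(a')\log^5 p)^{1/4}$ is not true without first reducing to the regime $\log p\leq n$. Take $(a,b,s,l,u)=(1,1,1,0,1)$: after your AM--GM reduction,
\[
\sqrt{\Delta_1(1,1;1,0,1)\log^2 p}\;\lesssim\; n(\log p)^{3/2}\Bigl(\E\sbra{\max_{j\in[p]} M(\pi_1\psi_j)^4/\sigma_j^4}\Bigr)^{1/4},
\]
whereas $(\Delta_2(1)\log^5 p)^{1/4}=n^{5/4}(\log p)^{5/4}\bigl(\E[\max_j M(\pi_1\psi_j)^4/\sigma_j^4]\bigr)^{1/4}$; the comparison requires $(\log p)^{1/4}\lesssim n^{1/4}$. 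Several of the $u=\max$ cases in $\Delta_1(2,2)$ behave the same way (this is precisely why the paper's proof has to show $\Delta_{2,*}^{(2)}(1),\Delta_{2,*}^{(4)}(2),\Delta_{2,*}^{(5)}(2)\leq\Delta_2(\cdot)$ \emph{under} $\log p\leq n$). The paper disposes of the complementary regime by a preliminary observation: from \eqref{u-anova} one has $\{\Delta_2(1)+\Delta_{2,*}^{(1)}(2)\}\log^5 p\geq(\log^5 p)/(2n^2)$, so when $\log p>n$ the right-hand side of \eqref{eq:clt-ustat-2} already exceeds a universal constant and the bound is vacuous. You need to insert this step (or an equivalent triviality argument) before the case analysis; once you do, the rest of your outline goes through.
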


\if0
\begin{rmk}\label{rmk:clt-ustat}
Since \eqref{u-anova} implies $n^{3/2}\max_{j\in[p]}\|\pi_1\psi_j\|_{L^2(P)}/\sigma_j\lesssim1$, the right-hand side of \eqref{eq:clt-ustat-2} converges to 0 once we verify the following conditions:
\ba{
\Delta_1^{(0)}\log^3p+\Delta_1^{(1)}\log^{5/2}p\to0,\\
\Delta_{2}(1)\log^5p+\Delta_{2}(2)\log^5p+\Delta_{2,*}^{(1)}(2)\log^5p\to0,\\
\Delta_{2,*}^{(5)}(2)\log^{9}p\to0.
}
Moreover, since Lemma 2.4(vi) in \cite{DoPe19} gives
\[
\Delta_1^{(1)}\leq n^{3/2}\max_{j\in[p]}\frac{\|\pi_1\psi_j\|_{L^2(P)}}{\sigma_j}\sqrt{\Delta_1^{(0)}},
\]
The first condition can be replaced by the condition $\Delta_1^{(0)}\log^5p\to0$.
\end{rmk}
\fi

In applications to small bandwidth asymptotics, we found that the bound of \cref{thm:clt-ustat}, and hence \cref{coro:clt-ustat}, is not sharp enough to recover the weakest possible condition on the lower bound of bandwidths (see \cref{rmk:ade}). 
This is caused by the second term on the right-hand side of \eqref{eq:clt-ustat} whose derivation relies on a somewhat crude argument similar in nature to a simple Gaussian approximation result of \cite{CCK13} (see Comment 2.5 ibidem). 
For the case $r=2$, we can refine this point, leading to the following result.  
%\if0
\begin{theorem}\label{thm:order-2}
Suppose that $r=2$ and $\max_{j\in[p]}\|\psi_j\|_{L^q(P^2)}<\infty$ for some $q\in[4,\infty]$. 
Then there exists a universal constant $C$ such that
\ben{\label{eq:order-2}
\sup_{A\in\mathcal{R}_p}\left|\pr(W\in A)-\pr(Z\in A)\right|
%&\leq C\bra{\sqrt{\max_{a,b\in[2]}\Delta_1(a,b)\log^2 p}+\cbra{\bra{\Delta_{2,q}(1)+\Delta_{2,q}(2)}\log^5 (np)}^{1/4}},
%+\Delta_{3,q}(\log p)^{1/2-1/q}},
\leq C\bra{\sqrt{\Delta_1'}+\cbra{\bra{\Delta_{2,q}(1)+\Delta_{2,q}(2)}\log^5 p}^{1/4}},
}
where 
$\Delta_{2,q}(1):=\Delta_{2,*}^{(1)}(1)+\Delta_{2,q}^{(2)}(1)$ and $\Delta_{2,q}(2):=\sum_{\ell=1}^3\Delta_{2,*}^{(\ell)}(2)+\sum_{\ell=4}^5\Delta_{2,q}^{(\ell)}(2)$ 
with
\ba{
\Delta_{2,q}^{(2)}(1)&:=n^{4+4/q}\norm{\max_{j\in[p]}\frac{|\pi_1\psi_{j}|}{\sigma_j}}_{L^{q}(P)}^4\log p,\\
%\Delta_{2,*}^{(2)}(2)&:=n^3\max_{j\in[p]}\frac{\|P(|\pi_2\psi_{j}|^2)\|_{L^{2}(P)}^{2}}{\sigma_j^4}\log^{2}p,&
%\Delta_{2,*}^{(3)}(2)&:=n\E\sbra{\max_{j\in[p]}\frac{M(P(|\pi_2\psi_{j}|^4))}{\sigma_j^4}}\log^4 p,\\
\Delta_{2,q}^{(4)}(2)&:=n^{8/q}\norm{\max_{j\in[p]}\frac{|\pi_2\psi_{j}|}{\sigma_j}}_{L^q(P^2)}^4\log^5(np),\\
\Delta_{2,q}^{(5)}(2)&:=n^{2+4/q}\norm{\max_{j\in[p]}\frac{P(|\pi_2\psi_{j}|^2)}{\sigma_j^2}}_{L^{q/2}(P)}^2\log^{3} (np).
}
Here, we interpret $1/q$ as 0 when $q=\infty$. 
\end{theorem}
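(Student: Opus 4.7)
The plan is to sharpen the proof of Corollary~\ref{coro:clt-ustat} by replacing, in the second summand of \eqref{eq:clt-ustat-2}, every crude $\E[\max_{j}M(\cdot)]$ control with a tight $L^q$-based maximal inequality, while leaving the Stein/exchangeable-pair term $\sqrt{\Delta_1'}$ untouched. The Hoeffding decomposition
\begin{equation*}
    J_2(\psi_j)-\E[J_2(\psi_j)] = (n-1) J_1(\pi_1\psi_j) + J_2(\pi_2\psi_j),
\end{equation*}
together with the generalized exchangeable-pair machinery of Theorem~\ref{thm:gexch} that underlies Theorem~\ref{thm:clt-ustat}, reduces the problem to controlling fourth-moment quantities of the form $\E[\max_{j}M(\pi_1\psi_j)^4/\sigma_j^4]$, $\E[\max_{j}M(\pi_2\psi_j)^4/\sigma_j^4]$, and $\E[\max_{j}M(P(|\pi_2\psi_j|^2))^2/\sigma_j^4]$. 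In the proof of Theorem~\ref{thm:clt-ustat} these are bounded in a single crude step, producing the coarse quantities $\Delta_2(1)$, $\Delta_2(2)$, and $\Delta_{2,*}^{(1)}(2)$; this is precisely where sharpness is lost.

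The refinement would invoke the $U$-statistic maximal inequalities (Theorems~\ref{thm:max-is} and \ref{lem:max-is}) and the adapted-sequence maximal inequality (Lemma~\ref{lem:nonneg-ada}). Under the hypothesis $\max_{j}\|\psi_j\|_{L^q(P^2)}<\infty$, each of these produces a decomposition of the shape ``an $L^2$/$L^4$ moment carrying only $\log p$ factors, plus an $L^q$-norm of the pointwise coordinate-maximum carrying a $\log(np)$ factor''. Applied to $\E[\max_{j}M(\pi_1\psi_j)^4/\sigma_j^4]$ with exponent $q$, this yields the split $\Delta_{2,*}^{(1)}(1)+\Delta_{2,q}^{(2)}(1)$; applied to $\E[\max_{j}M(\pi_2\psi_j)^4/\sigma_j^4]$ one obtains the $L^4$-based pieces $\Delta_{2,*}^{(1)}(2)$, $\Delta_{2,*}^{(2)}(2)$, $\Delta_{2,*}^{(3)}(2)$ plus the $L^q$-remainder $\Delta_{2,q}^{(4)}(2)$; and applied to $\E[\max_{j}M(P(|\pi_2\psi_j|^2))^2/\sigma_j^4]$ via the adapted-sequence argument one obtains $\Delta_{2,q}^{(5)}(2)$. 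The presence of $\log(np)$ rather than $\log p$ inside $\Delta_{2,q}^{(4)}(2)$ and $\Delta_{2,q}^{(5)}(2)$ reflects the extra $\log n$ factor entering through the truncation levels that power these inequalities.

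The main obstacle will be two-fold. First, one must correctly apply the $U$-statistic maximal inequality to the \emph{degenerate} kernel $\pi_2\psi_j$: a naive $L^q$ bound using $\|\pi_2\psi_j\|_{L^q(P^2)}$ alone is too weak by a power of $n$, so the decomposition must exploit the conditional second moment $P(|\pi_2\psi_j|^2)$ via Lemma~\ref{lem:nonneg-ada}, trading a factor $n^2$ for the correct exponent $n^{2+4/q}$ appearing in $\Delta_{2,q}^{(5)}(2)$. Second, one must track that the logarithmic factors and powers of $n$ produced by the new splittings remain compatible with the outer $\log^5 p$ prefactor in \eqref{eq:order-2}; this part is mostly bookkeeping, relying on Minkowski's and Jensen's inequalities together with the contraction-kernel identities of \cite[Section~2.5]{DoPe19} to relate $\|P^{k}(|\pi_a\psi_j|^2)\|_{L^m}$ across different moment orders $m$. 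Once these refinements are inserted, the outer $1/4$-power structure of the second summand is preserved and the surviving pieces collect into $\{(\Delta_{2,q}(1)+\Delta_{2,q}(2))\log^5 p\}^{1/4}$, completing the proof of \eqref{eq:order-2}.
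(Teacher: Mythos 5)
Your plan — to obtain \eqref{eq:order-2} from the bound of \cref{coro:clt-ustat} by re-estimating the quantities $\E[\max_j M(\cdot)]$ with sharper $L^q$-based maximal inequalities, leaving the rest of the architecture of \cref{thm:clt-ustat} intact — contains a genuine gap: the quantities you propose to refine are already too large. Your claimed split, e.g.\ $n^5\E\sbra{\max_j M(\pi_1\psi_j)^4/\sigma_j^4}\lesssim \Delta_{2,*}^{(1)}(1)+\Delta_{2,q}^{(2)}(1)$, is false in general. Indeed $\E[\max_j M(\pi_1\psi_j)^4]=\E[\max_{i,j}|\pi_1\psi_j(X_i)|^4]$ contains the maximum over the \emph{sample} index $i$; if all $\pi_1\psi_j$ equal a single $g$ with a polynomial tail of index $q$, this is of order $n^{4/q}$ (up to logs), which cannot be dominated by the population moment $\|g\|_{L^4(P)}^4$ plus a remainder carrying the smaller coefficient $n^{-1+4/q}\log p$. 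The same factor of order $n$ (up to logarithms) separates $\Delta_2(2)$ and $\Delta_{2,*}^{(1)}(2)$ from $\Delta_{2,q}(2)$: for instance $\Delta_2(2;0)=n^3\log^2p\,\E[\max_jM(P(|\pi_2\psi_j|^2))^2/\sigma_j^4]$ is at best $n^{3+4/q}\|\max_jP(|\pi_2\psi_j|^2)/\sigma_j^2\|_{L^{q/2}(P)}^2\log^2p$, versus $\Delta_{2,q}^{(5)}(2)=n^{2+4/q}(\cdots)\log^3(np)$. The paper itself signals that \eqref{eq:order-2} cannot be a corollary of \eqref{eq:clt-ustat-2} with better bookkeeping: by \cref{rmk:ade}, \cref{coro:clt-ustat} forces $n^3h_n^2\to\infty$ in the average-density example, while \cref{thm:order-2} only needs $n^2h_n\to\infty$ up to logs.

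The loss you need to remove occurs before the $\Delta_2$ terms are formed, namely in passing from \cref{thm:gexch} to \cref{coro:exch}, where the fourth-order term becomes the unconditional moment $\E[\|G\|_\infty\|D\|_\infty^3]\le C\max_a n\E[\max_jD_{j,a}^4]$ and, after \eqref{d4-order-r} and \cref{thm:max-is}, the sample maximum enters with full polynomial weight. The paper's proof of \cref{thm:order-2} therefore does not route through \cref{coro:exch} or \cref{coro:clt-ustat} at all: it applies \cref{thm:gexch} directly, keeping the truncation $1_{\{\|D\|_\infty\le\beta^{-1}\}}$ and the conditional structure. Conditioning on $X$ averages over the resampled index $\alpha$, so the relevant objects $\Gamma_s=n\max_j\E[|D_{j,s}|^4\mid X]$ involve sums $\sum_i$ rather than maxima over $i$; $\E[\Gamma_1]$ is then bounded by Lemma 9 of \cite{CCK15} and $\E[\Gamma_2]$ by \cref{influence-mom}, which rests on the new martingale and adapted-sequence inequalities (\cref{max-rosenthal,lem:nonneg-ada}) and the Banach-space symmetrization \cref{nemirovski-Lp} — not on re-applying \cref{thm:max-is} to the $\Delta_2$'s. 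The hypothesis $\max_j\|\psi_j\|_{L^q(P^2)}<\infty$ and the factors $n^{4/q}$, $n^{8/q}$, $\log(np)$ enter through the separate control of the truncation error $\pr(\|D\|_\infty>\beta^{-1})$, via Markov's inequality at exponent $q$ and a conditional application of \cref{max-rosenthal}, after which $\eps$ is optimized and \cref{lem:delta} converts $\max_{a,b}\Delta_1(a,b)\log^2p$ into $\Delta_1'$ plus terms absorbed into the fourth root. Without this structural change — working with the conditional $\Gamma^\eps$ and handling the truncation probability separately — the bound \eqref{eq:order-2} is not reachable.
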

%\fi

\begin{rmk}[Sufficient conditions for convergence of the bound]\label{rmk:order-2}
%A similar argument to \cref{rmk:clt-ustat} implies that the right-hand side of \eqref{eq:order-2} converges to 0 once we verify the following conditions:
(i) Since $n^{3/2}\max_{j\in[p]}\|\pi_1\psi_j\|_{L^2(P)}/\sigma_j\lesssim1$ by \eqref{u-anova} and $\Delta_{2,*}^{(5)}(2)\leq\Delta_{2,q}^{(5)}(2)$ by \eqref{eq:max-out}, the right-hand side of \eqref{eq:order-2} converges to 0 once we verify the following conditions:
\ba{
%\bra{\Delta_1^{(0)}\log^3p+\Delta_1^{(1)}\log^{5/2}p}\wedge\bra{\Delta_1^{(0)}\log^5p}\to0,\\
\Delta_1^{(0)}\log^3p+\Delta_1^{(1)}\log^{5/2}p\to0,\\
\Delta_{2,q}(1)\log^5p+\sum_{\ell=1}^3\Delta_{2,*}^{(\ell)}(2)\log^5p+\Delta_{2,q}^{(4)}(2)\log^5p\to0,\\
\Delta_{2,q}^{(5)}(2)\log^{9}p\to0.
}
Moreover, since Lemma 2.4(vi) in \cite{DoPe19} gives
\[
\Delta_1^{(1)}\leq n^{3/2}\max_{j\in[p]}\frac{\|\pi_1\psi_j\|_{L^2(P)}}{\sigma_j}\sqrt{\Delta_1^{(0)}},
\]
the first condition can be replaced by the condition $\Delta_1^{(0)}\log^5p\to0$. 

\noindent(ii) If $\psi_j$ are all degenerate, then $\Delta_1'=\Delta_1^{(0)}\log^3p$ and $\Delta_{2,q}(1)=0$, so it suffices to verify
\[
\Delta_1^{(0)}\log^3p\to0\quad\text{and}\quad\Delta_{2,q}(2)\log^5p\to0.
\]
\end{rmk}

\begin{rmk}[Sub-Weibull case]
Since the constant $C$ in \cref{thm:order-2} does not depend on $q$, we can derive an adequate bound for the case of sub-Weibull kernels from \eqref{eq:order-2} with $q=\log n$ (cf.~Lemma A.6 in \cite{koike2023high}). 
The same remark applies to the next corollary. 
We omit the details. 
\end{rmk}

In applications, it is often convenient to directly work with kernels rather than their Hoeffding projections. 
The following corollary is useful for this purpose. 
\begin{corollary}\label{coro:order-2}
Under the assumptions of \cref{thm:order-2}, there exists a universal constant $C$ such that
\ben{\label{eq:coro:order-2}
\sup_{A\in\mathcal{R}_p}\left|\pr(W\in A)-\pr(Z\in A)\right|
%&\leq C\bra{\sqrt{\max_{a,b\in[2]}\Delta_1(a,b)\log^2 p}+\cbra{\bra{\Delta_{2,q}(1)+\Delta_{2,q}(2)}\log^5 (np)}^{1/4}},
%+\Delta_{3,q}(\log p)^{1/2-1/q}},
\leq C\bra{\sqrt{\tilde\Delta_1'}+\cbra{\bra{\tilde\Delta_{2,q}(1)+\tilde\Delta_{2,q}(2)}\log^5 p}^{1/4}},
}
where
\ba{
\tilde\Delta_1'&:=
\tilde\Delta_1^{(0)}\log^3 p
+\tilde\Delta_1^{(1)}\log^{5/2}p
+n^{3/2}\max_{j\in[p]}\frac{\sqrt{\Var[P\psi_j(X_1)]}}{\sigma_j}\bra{\tilde\Delta_{2,q}^{(5)}(2)\log^{9}p}^{1/4},\\
\tilde\Delta_{2,q}(1)&:=\tilde\Delta_{2,*}^{(1)}(1)+\tilde\Delta_{2,q}^{(2)}(1),\qquad
\tilde\Delta_{2,q}(2):=\sum_{\ell=1}^3\tilde\Delta_{2,*}^{(\ell)}(2)+\sum_{\ell=4}^5\tilde\Delta_{2,q}^{(\ell)}(2),
}
with
\ba{
\tilde\Delta_1^{(0)}&:=n^2\max_{j\in[p]}\frac{\|\psi_j\star_1^1\psi_j\|_{L^2(P^2)}}{\sigma_j^2},&
\tilde\Delta_1^{(1)}
&:=n^{3/2}\max_{j\in[p]}\frac{\sqrt{\Var[P\psi_j(X_1)]}}{\sigma_j}\sqrt{\tilde\Delta_1^{(0)}},
%&:=\min\cbra{
%n^{3/2}\max_{j\in[p]}\frac{\sqrt{\Var[P\psi_j(X_1)]}}{\sigma_j}\sqrt{\tilde\Delta_1^{(0)}},
%n^{5/2}\max_{j,k\in[p]}\bra{\frac{\|P\psi_j\star_1^1\psi_k\|_{L^2(P)}}{\sigma_j\sigma_k}+\frac{|P^2\psi_j|\|P\psi_k\|_{L^2(P)}}{\sigma_j\sigma_k}}
%},
}
and
\ba{
%\tilde\Delta_1^{(0)}&:=n^2\max_{j\in[p]}\frac{\|\psi_j\star_1^1\psi_j\|_{L^2(P^2)}}{\sigma_j^2},&
\tilde\Delta_{2,*}^{(1)}(1)
&:=n^5\max_{j\in[p]}\frac{\|P\psi_{j}\|_{L^4(P)}^4}{\sigma_j^4},&
\tilde\Delta_{2,q}^{(2)}(1)&:=n^{4+4/q}\norm{\max_{j\in[p]}\frac{|P\psi_{j}|}{\sigma_j}}_{L^{q}(P)}^4\log p,
}
and
\ba{
\tilde\Delta_{2,*}^{(1)}(2)&:=n^2\max_{j\in[p]}\frac{\|\psi_{j}\|_{L^4(P^2)}^4}{\sigma_j^4}\log^{3}p,&
\tilde\Delta_{2,*}^{(2)}(2)&:=n^3\max_{j\in[p]}\frac{\|P(\psi_{j}^2)\|_{L^{2}(P)}^{2}}{\sigma_j^4}\log^{2}p,\\
\tilde\Delta_{2,*}^{(3)}(2)&:=n\E\sbra{\max_{j\in[p]}\frac{M(P(\psi_{j}^4))}{\sigma_j^4}}\log^4 p,&
\tilde\Delta_{2,q}^{(4)}(2)&:=n^{8/q}\norm{\max_{j\in[p]}\frac{|\psi_{j}|}{\sigma_j}}_{L^q(P^2)}^4\log^5(np),\\
\tilde\Delta_{2,q}^{(5)}(2)&:=n^{2+4/q}\norm{\max_{j\in[p]}\frac{P(\psi_{j}^2)}{\sigma_j^2}}_{L^{q/2}(P)}^2\log^{3} (np).
}
\end{corollary}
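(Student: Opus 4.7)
The plan is to deduce Corollary \ref{coro:order-2} directly from Theorem \ref{thm:order-2} by replacing each quantity on the right-hand side of \eqref{eq:order-2} that is expressed in terms of the Hoeffding projections $\pi_1\psi_j,\pi_2\psi_j$ by the corresponding kernel-based quantity. The three identities to exploit are
\[
\pi_1\psi_j(x)=P\psi_j(x)-P^2\psi_j,\quad
\pi_2\psi_j(x_1,x_2)=\psi_j(x_1,x_2)-P\psi_j(x_1)-P\psi_j(x_2)+P^2\psi_j,
\]
together with $\|\pi_1\psi_j\|_{L^2(P)}^2=\Var[P\psi_j(X_1)]$ and Jensen's inequality $\|P\psi_j\|_{L^q(P)}\leq\|\psi_j\|_{L^q(P^2)}$. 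Note that $W$ is invariant under the centering $\psi_j\mapsto\psi_j-P^2\psi_j$ while all quantities appearing in $\tilde\Delta_1'$, $\tilde\Delta_{2,q}(1)$ and $\tilde\Delta_{2,q}(2)$ are either translation-invariant or change only by negligible lower-order contributions, so I may and do assume $P^2\psi_j=0$ for each $j\in[p]$ throughout.

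In the centered case, the pointwise bounds $|\pi_1\psi_j(x)|=|P\psi_j(x)|$ and $|\pi_2\psi_j(x_1,x_2)|\leq|\psi_j(x_1,x_2)|+|P\psi_j(x_1)|+|P\psi_j(x_2)|$, combined with Minkowski's inequality in the relevant $L^q$ or $M(\cdot)$ seminorms and Jensen, immediately convert $\Delta_{2,*}^{(1)}(1),\Delta_{2,q}^{(2)}(1)$ and $\Delta_{2,*}^{(1)}(2)$ through $\Delta_{2,q}^{(5)}(2)$ into their tilded counterparts up to absolute constants. The only non-trivial quantities are the contraction terms. For $\Delta_1^{(0)}$, I expand $\pi_2\psi_j\star_1^1\pi_2\psi_j$ by bilinearity of the contraction $\star_1^1$; the leading piece is $\psi_j\star_1^1\psi_j$, contributing $\tilde\Delta_1^{(0)}$, while every cross term is a product of $P\psi_j$-factors whose $L^2(P^2)$-norm is controlled, via Cauchy--Schwarz, by $\|P\psi_j\|_{L^2(P)}^2=\Var[P\psi_j(X_1)]$. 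By the variance decomposition \eqref{u-anova}, $\Var[P\psi_j(X_1)]\leq\sigma_j^2/\{n(n-1)^2\}$, so each cross contribution to $n^2\|\pi_2\psi_j\star_1^1\pi_2\psi_j\|_{L^2(P^2)}/\sigma_j^2$ is of order $n^{-1}$ and is harmlessly absorbed into $\tilde\Delta_1^{(0)}$. For $\Delta_1^{(1)}$, I invoke the bound of Remark \ref{rmk:order-2}(i), itself a consequence of Lemma 2.4(vi) of \cite{DoPe19}, namely $\Delta_1^{(1)}\leq n^{3/2}\max_{j\in[p]}\|\pi_1\psi_j\|_{L^2(P)}/\sigma_j\cdot\sqrt{\Delta_1^{(0)}}$, and combine it with $\|\pi_1\psi_j\|_{L^2(P)}=\sqrt{\Var[P\psi_j(X_1)]}$ and the contraction bound just obtained to recover exactly $\tilde\Delta_1^{(1)}$. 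The residual term in $\Delta_1'$ converts directly using the same variance identity together with $\Delta_{2,*}^{(5)}(2)\leq\tilde\Delta_{2,q}^{(5)}(2)$, which follows from the standard maximal inequality $\E\max_{i\in[n]}|f(X_i)|^2\lesssim n^{4/q}\|f\|_{L^{q/2}(P)}^2$ (as alluded to in Remark \ref{rmk:order-2}(i)). Substituting every replacement into \eqref{eq:order-2} yields \eqref{eq:coro:order-2}.

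The main obstacle I anticipate is the careful bookkeeping in the bilinear expansion of $\pi_2\psi_j\star_1^1\pi_2\psi_j$: one must verify that every one of the cross terms, after Cauchy--Schwarz or Jensen, is genuinely of order $\sigma_j^2/n^{3}$ in $L^2(P^2)$ and thus of strictly smaller order than $\|\psi_j\star_1^1\psi_j\|_{L^2(P^2)}$ after normalization, rather than generating new $\tilde\Delta$-type quantities that should have been stated in the corollary. The variance-decomposition bound $\Var[P\psi_j(X_1)]\lesssim\sigma_j^2/n^{3}$ supplied by \eqref{u-anova} is the crucial ingredient ensuring that this bookkeeping actually closes.
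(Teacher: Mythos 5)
Your overall route---deducing the corollary from \cref{thm:order-2} by converting each projection-based quantity into its kernel-based counterpart---is the same as the paper's, and your treatment of the $\Delta_2$-type terms (Jensen/Minkowski plus \eqref{eq:max-out} for $\Delta_{2,*}^{(5)}(2)$) is essentially what the paper does. The genuine gap is in your treatment of $\Delta_1^{(0)}$. Expanding $\pi_2\psi_j\star_1^1\pi_2\psi_j$ bilinearly (even after centering so that $P^2\psi_j=0$), the cross terms are \emph{not} all products of $P\psi_j$-factors: you also get $-(\psi_j\star_1^1 P\psi_j)(x)-(\psi_j\star_1^1 P\psi_j)(y)$, i.e.\ $\E[\psi_j(X,\cdot)P\psi_j(X)]$, whose $L^2(P)$-norm is only controlled by $\|\psi_j\|_{L^2(P^2)}\|P\psi_j\|_{L^2(P)}$ via Cauchy--Schwarz, not by $\|P\psi_j\|_{L^2(P)}^2=\Var[P\psi_j(X_1)]$. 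Since $\|\psi_j\|_{L^2(P^2)}$ is typically much larger than $\|P\psi_j\|_{L^2(P)}$ (by a factor $h_n^{-1/2}$ in the average-density illustration), the variance decomposition only yields $n^2\|\psi_j\|_{L^2(P^2)}\|P\psi_j\|_{L^2(P)}/\sigma_j^2\lesssim n^{-1/2}$, and, more importantly, this quantity is in general \emph{not} of smaller order than $\tilde\Delta_1^{(0)}$ (one can make $\|\psi_j\star_1^1\psi_j\|_{L^2(P^2)}\ll\|\psi_j\|_{L^2(P^2)}\|P\psi_j\|_{L^2(P)}$ by taking $\pi_2\psi_j$ with many small eigenvalues), so it cannot be ``harmlessly absorbed into $\tilde\Delta_1^{(0)}$'' as your plan asserts; the bookkeeping you flag as the main obstacle does not close in the form you describe.

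What the paper does instead: \cref{drop-pi} gives exactly $\|\pi_2\psi_j\star_1^1\pi_2\psi_j\|_{L^2(P^2)}\lesssim\|\psi_j\star_1^1\psi_j\|_{L^2(P^2)}+\|\psi_j\|_{L^2(P^2)}\|P\psi_j\|_{L^2(P)}$, and the second term is absorbed \emph{not} into $\tilde\Delta_1^{(0)}$ but into the fourth-root part of the bound, via Lyapunov ($\|\psi_j\|_{L^2(P^2)}^2=\|P(\psi_j^2)\|_{L^1(P)}\leq\|P(\psi_j^2)\|_{L^2(P)}$ and $\|P\psi_j\|_{L^2(P)}\leq\|P\psi_j\|_{L^4(P)}$) together with AM--GM, which yields $n^2\|\psi_j\|_{L^2(P^2)}\|P\psi_j\|_{L^2(P)}\log^3p\leq\sqrt{\{\tilde\Delta_{2,*}^{(2)}(2)+\tilde\Delta_{2,*}^{(1)}(1)\}\log^5p}$. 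Your argument needs this (or an equivalent) extra step. A smaller point: for $\Delta_1^{(1)}$ you use the bound $\Delta_1^{(1)}\leq n^{3/2}\max_{j}\|\pi_1\psi_j\|_{L^2(P)}\sigma_j^{-1}\sqrt{\Delta_1^{(0)}}$, which reimports the same problematic $\Delta_1^{(0)}$; the paper avoids this by observing $\pi_1\psi_j\star_1^1\pi_2\psi_k=\pi_1\psi_j\star_1^1\psi_k-P(\pi_1\psi_j\star_1^1\psi_k)$, hence $\|\pi_1\psi_j\star_1^1\pi_2\psi_k\|_{L^2(P)}\leq\|\pi_1\psi_j\star_1^1\psi_k\|_{L^2(P)}$, and then applying Lemma 2.4(vi) of \cite{DoPe19} to the uncentered kernel, which gives $\Delta_1^{(1)}\leq\tilde\Delta_1^{(1)}$ directly with no cross terms at all.
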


\begin{rmk}[Sufficient conditions for convergence of the bound]\label{rmk:coro:order-2}
Since  $n^{3/2}\max_{j\in[p]}\sqrt{\Var[P\psi_j(X_1)]}/\sigma_j\lesssim1$ by \eqref{u-anova-2}, the right-hand side of \eqref{eq:coro:order-2} converges to 0 once we verify the following conditions:
\ba{
\tilde\Delta_1^{(0)}\log^5p\to0,\\
\tilde\Delta_{2,q}(1)\log^5p+\sum_{\ell=1}^3\tilde\Delta_{2,*}^{(\ell)}(2)\log^5p+\tilde\Delta_{2,q}^{(4)}(2)\log^5p\to0,\\
\tilde\Delta_{2,q}^{(5)}(2)\log^{9}p\to0.
}
%where we take the first argument of the minimum in $\tilde\Delta_1^{(1)}$ to bound it. 
%Taking the second argument instead, we can replace the first condition by
%\ba{
%\tilde\Delta_1^{(0)}\log^3p+n^{5/2}\max_{j,k\in[p]}\bra{\frac{\|P\psi_j\star_1^1\psi_k\|_{L^2(P)}}{\sigma_j\sigma_k}+\frac{|P^2\psi_j|\|P\psi_k\|_{L^2(P)}}{\sigma_j\sigma_k}}\log^{5/2}p\to0.
%}
\end{rmk}

\subsection{An illustration: Estimation of the average marginal densities} \label{sec:app_ade}

In this subsection, as an illustration of our developed Gaussian approximation results, we consider estimation of the average marginal densities of high-dimensional data. 
Notably, it turns out that our condition does not require the estimator to be asymptotically linear, and the lower bound condition on the bandwidth coincides with, up to a logarithmic factor, the weakest condition to ensure that the variance of the estimator converges to $0$ as $n\to\infty$.
This indicates that our high-dimensional Gaussian approximation holds under nearly the same conditions as small bandwidth asymptotics in the fixed-dimensional setting (cf. Theorem 1 in \citealp{cattaneo2014small}).

Let $X_1,\dots,X_n$ be i.i.d.~random vectors in $\mathbb R^p$ with common law $P$. 
We consider a high-dimensional setting such that $p=p_n\to\infty$ as $n\to\infty$. Note that this means that quantities related to $P$ possibly depend on $n$, although we omit this dependence for notational simplicity. 

For $i\in[n]$ and $j\in[p]$, we denote by $X_{ij}$ the $j$-th component of $X_i$. Assume that the law of $X_{ij}$ has an unknown density $f_j\in L^2(\mathbb R)$. Note that we do not assume that $P$ itself has density. 
We are interested in estimating the vector of the average marginal densities $\theta=(\theta_1,\dots,\theta_p)^\top$ with $\theta_j:=\E[f_j(X_{1j})]$. 
According to \cite{cattaneo2022average} (cf.~the first paragraph of page 1142), estimation of the average density is often viewed as a prototype of two-step semiparametric estimation in econometrics, so this would serve as illustrating how our theory works in such applications. 
We also remark that $\theta_j$ is equal to the integrated square of density $\int_{\mathbb R}f_j(t)^2dt$ and its estimation has been extensively studied in mathematical statistics; see e.g.~\cite{gine2008simple} and references therein. 

Following \cite{gine2008simple} and \cite{cattaneo2022average}, we consider the kernel-based leave-one-out cross-validation estimator for $\theta_j$:
\[
\hat\theta_{n,j}:=\frac{1}{\binom{n}{2}h_n}\sum_{1\leq i<k\leq n}K\bra{\frac{X_{ij}-X_{kj}}{h_n}},
\]
where $K:\mathbb R\to\mathbb R$ is a (fixed) kernel function and $h_n>0$ is a bandwidth parameter converging to $0$.
Following \cite{gine2008simple}, we impose the following conditions on the kernel:
\begin{assumption}[Kernel]
$K$ is bounded and symmetric. In addition,
\[
\int_{\mathbb R}K(u)du=1\quad\text{and}\quad\int_{\mathbb R}|uK(u)|du<\infty.
\]
\end{assumption}
Note that this condition particularly implies that for any $\gamma\in[0,1]$,
\[
\int_{\mathbb R}|u|^\gamma|K(u)|du\leq\|K\|_{L^\infty(\mathbb R)}+\int_{\mathbb R}|uK(u)|du<\infty.
\]
For the marginal densities $f_j$, we assume that they are bounded and contained in a Sobolev space as in \cite{gine2008simple}. 
Formally, for $f\in L^2(\mathbb R^d)$ and $\alpha>0$, we define
\[
\|f\|_{H^\alpha}:=\sqrt{\int_{\mathbb R^d}|\fourier{f}(\lambda)|^2(1+|\lambda|^2)^\alpha d\lambda},
\]
where $\fourier{f}$ denotes the Fourier transform of $f$; when $f\in L^1(\mathbb R^d)$, it is defined as
\[
\fourier{f}(\lambda)=\frac{1}{(2\pi)^{d/2}}\int_{\mathbb R^d}f(x)e^{-\sqrt{-1}\lambda\cdot x}dx,\quad \lambda\in\mathbb R^d,
\]
and we continuously extend it to $L^2(\mathbb R^d)$. 
See e.g.~\cite{Rudin1991} for details of these concepts. 
The only properties of the Sobolev space we need in this section are \cref{lem:sobolev} and Eq.\eqref{eq:bias} from \cite{gine2008simple} below. 

We impose the following conditions on the marginal densities. 
\begin{assumption}[Marginal densities]\label{ass:mar-dens}

    (i) There exist constants $R>0$ and $0<\alpha\leq1/2$ such that $\|f_j\|_{L^\infty(\mathbb R)}+\|f_j\|_{H^\alpha}\leq R$ for all $j\in[p]$.
    
    \noindent(ii) There exists a constant $b>0$ such that $\E[f_j(X_{1j})]\geq b$ and $\Var[f_j(X_{1j})]\geq b^2$ for all $j\in[p]$. 
\end{assumption}
Under \cref{ass:mar-dens}(i), Theorem 1 in \cite{gine2008simple} gives the following estimate of the bias:
\ben{\label{eq:bias}
\|\E[\hat\theta_n]-\theta\|_\infty=O(h_n^{2\alpha}).
}
\cref{ass:mar-dens}(ii) ensures that both the first- and second-order Hoeffding projections of $\psi_j$ defined below have non-zero asymptotic variances. 
Although it is presumably possible to modify the following arguments to remove this condition, we work with it for simplicity.  

We apply \cref{coro:order-2} to $\hat\theta_n:=(\hat\theta_{n,1},\dots,\hat\theta_{n,p})^\top$ with $q=\infty$.  
The corresponding kernels are
\[
\psi_j(x,y)=\frac{1}{h_n\binom{n}{2}}K\bra{\frac{x_{j}-y_{j}}{h_n}},\quad x,y\in\mathbb R^p.
\]
We begin by evaluating the order of the variances $\sigma_{j}^2:=\Var[\hat\theta_{j}]$, $j\in[p]$. 
Recall that by \eqref{u-anova-2},
\[
\sigma_{j}^2
=n(n-1)(n-2)\Var[P\psi_j(X_1)]
+\frac{n(n-1)}{2}\Var[\psi_j(X_1,X_2)].
\]
Let us evaluate the first term. Observe that for any integer $m\geq1$,
\ben{\label{ade-int1}
P(\psi_j^m)(x)
%=\frac{1}{h_n^m\binom{n}{2}^m}\int_{\mathbb R}K\bra{\frac{x_{j}-y_{j}}{h_n}}^mf_j(y_j)dy_j
=\frac{1}{h_n^{m-1}\binom{n}{2}^m}\int_{\mathbb R}K(u)^mf_j(x_j+uh_n)du.
}
In particular, we have $\max_{j\in[p]}\|\binom{n}{2}P\psi_j(X_1)-f_j(X_{1j})\|_{L^2({P})}\to0$ as $n\to\infty$. 
In fact,
\ba{
\norm{\binom{n}{2}P\psi_j(X_1)-f_j(X_{1j})}_{L^2({ P})}^2
&=\int_{\mathbb R}\abs{\int_{\mathbb R}K(u)\{f_j(x_j+uh_n)-f_j(x_j)\}du}^2f_j(x_j)dx_j\\
&\leq R\|K\|_{L^1(\mathbb R)}\int_{\mathbb R}\int_{\mathbb R}|K(u)|\{f_j(x_j+uh_n)-f_j(x_j)\}^2dudx_j\\
&\leq 2^{2(1-\alpha)}R^2\|K\|_{L^1(\mathbb R)}\int_{\mathbb R}|K(u)||uh_n|^{2\alpha}du,
}
where the second line follows by Jensen's inequality and the third by \cref{lem:sobolev}. 
As a result, since $h_n \to 0$,
\[
\max_{j\in[p]}\abs{\Var\sbra{\binom{n}{2}P\psi_j(X_1)}-\Var[f_j(X_{1j})]}\to0
\]
as $n\to\infty$.
Meanwhile, \eqref{ade-int1} also yields
\ba{
P^2(\psi_j^2)
&=\frac{1}{h_n\binom{n}{2}^2}\int_{\mathbb R^2}K(u)^2f_j(y_j+uh_n)f_j(y_j)dudy_j.
}
Hence, 
\ban{
\abs{h_n\binom{n}{2}^2P^2(\psi_j^2)-\|K\|_{L^2(\mathbb R)}^2\E[f_j(X_{1j})]}
&\leq\int_{\mathbb R}\int_{\mathbb R}K(u)^2|f_j(y_j+uh_n)-f_j(y_j)|f_j(y_j)dudy_j\notag\\
&\leq\int_{\mathbb R}K(u)^2\sqrt{\int_{\mathbb R}|f_j(y_j+uh_n)-f_j(y_j)|^2dy_j}\|f_j\|_{L^2(\mathbb R)}du\notag\\
&\leq 2^{1-\alpha}\|K\|_{L^\infty(\mathbb R)}\|f_j\|_{L^2(\mathbb R)}\|f_j\|_{H^\alpha}\int_{\mathbb R}|K(u)||uh_n|^{\alpha}du,\label{ade-var-approx}
}
where the second line follows by the Schwarz inequality and the third by \cref{lem:sobolev}. 
Since $\max_{j\in[p]}\binom{n}{2}|P^2\psi_j|=O(1)$ by \eqref{ade-int1}, we conclude
\[
\max_{j\in[p]}\abs{h_n\binom{n}{2}^2\Var[\psi_j(X_1,X_2)]-\|K\|_{L^2(\mathbb R)}^2\E[f_j(X_{1j})]}\to0
\]
as $n\to\infty$. 
Therefore
$
\max_{j\in[p]}\sigma_{j}^{-2}=O(\{n^{-1} + n^{-2}h_n\}^{-1})= O\left( \min\{n, n^2h_n\} \right).
$
Next, we verify the conditions in \cref{rmk:coro:order-2}. 
By \eqref{ade-int1}, $|P(\psi_j^m)(x)|\lesssim R\|K\|_{L^\infty(\mathbb R)}^{m-1}h_n^{-m+1}n^{-2m}$ for any $x\in\mathbb R^p$ and integer $m\geq1$. 
Therefore, 
\ban{
\tilde\Delta_{2,*}^{(1)}(1)\log^5(np)
&=n^5\max_{j\in[p]}\frac{\|P\psi_{j}\|_{L^4(P)}^4}{\sigma_j^4}\log^5(np)
%=O(n^5\cdot n^2\cdot n^{-8}\cdot\log^5(np))
=O(n^{-1}\log^5(np)),\label{ade-cond1}\\
\tilde\Delta_{2,q}^{(2)}(1)\log^5(np)&
\leq n^{4}\norm{\max_{j\in[p]}\frac{|P\psi_{j}|}{\sigma_j}}_{L^{\infty}(P)}^4\log^6(np)
%=O(n^4\cdot n^2\cdot n^{-8}\cdot\log^6(np))
=O(n^{-2}\log^6(np)),\label{ade-cond2}\\
\tilde\Delta_{2,*}^{(1)}(2)\log^5(np)
&\leq n^2\max_{j\in[p]}\frac{\|\psi_{j}\|_{L^4(P^2)}^4}{\sigma_j^4}\log^{8}(np)
%=O(n^2\cdot n^4h_n^2\cdot n^{-8}h_n^{-3}\cdot\log^8(np))
=O(n^{-2}h_n^{-1}\log^8(np)),\label{ade-cond3}\\
\tilde\Delta_{2,*}^{(2)}(2)\log^5(np)
&\leq n^3\max_{j\in[p]}\frac{\|P(\psi_{j}^2)\|_{L^{2}(P)}^{2}}{\sigma_j^4}\log^{7}(np)
%=O(n^3\cdot n^4h_n^2\cdot n^{-8}h_n^{-2}\cdot\log^7(np))
=O(n^{-1}\log^7(np)),\label{ade-cond4}\\
\tilde\Delta_{2,*}^{(3)}(2)\log^5(np)
&\leq n\norm{\max_{j\in[p]}\frac{P(\psi_{j}^4)}{\sigma_j^4}}_{L^{\infty}(P)}\log^9(np)
%=O(n\cdot n^4h_n^2\cdot n^{-8}h_n^{-3}\cdot\log^8(np)) %logの肩の8は9のタイプミス？
=O(n^{-3}h_n^{-1}\log^9(np)),\label{ade-cond5}\\
\tilde\Delta_{2,q}^{(5)}(2)\log^9(np)
&=n^{2}\norm{\max_{j\in[p]}\frac{P(\psi_{j}^2)}{\sigma_j^2}}_{L^{\infty}(P)}^2\log^{12} (np)
%=O(n^2\cdot n^4h_n^2\cdot n^{-8}h_n^{-2}\cdot\log^{12}(np))
=O(n^{-2}\log^{12}(np)).\label{ade-cond6}
}
Also, since $|\psi_j(x,y)|\lesssim \|K\|_{L^\infty(\mathbb R)}h_n^{-1}n^{-2}$ for all $x,y\in\mathbb R^p$,
\ban{
\tilde\Delta_{2,q}^{(4)}(2)\log^5(np)
&= \norm{\max_{j\in[p]}\frac{|\psi_{j}|}{\sigma_j}}_{L^\infty(P^2)}^4\log^{10}(np)
%=O(n^4h_n^2\cdot n^{-8}h_n^{-4}\cdot\log^{10}(np))
=O(n^{-4}h_n^{-2}\log^{10}(np)).\label{ade-cond7}
}

In addition, since
\ba{
\psi_j\star_1^1\psi_j(X_{1},X_{2})
=\frac{1}{h_n\binom{n}{2}^{2}}\int_{\mathbb R}K(u)K\bra{\frac{X_{1j}-X_{2j}}{h_n}+u}f(X_{1j}+uh_n)du,
}
we have
\ba{
\|\psi_j\star_1^1\psi_j\|_{L^2(P^2)}^2
&\leq \frac{R^2}{h_n^{2}\binom{n}{2}^{4}}\int_{\mathbb R^3}K(u)K\bra{\frac{x_1-x_{2}}{h_n}+u}^2f_j(x_1)f_j(x_2)dudx_1dx_2\\
&\leq \frac{R^3}{h_n\binom{n}{2}^{4}}\int_{\mathbb R^3}K(u)K(v)^2f_j(x_1)dudx_1dv 
\leq\frac{R^3\|K\|_{L^\infty(\mathbb R)}}{h_n\binom{n}{2}^{4}}.
}
Hence 
\ben{\label{ade-cond8}
\tilde\Delta_1^{(0)}\log^5p=n^2\max_{j\in[p]}\frac{\|\psi_j\star_1^1\psi_j\|_{L^2(P^2)}}{\sigma_j^2}\log^5p
%=O(n^2\cdot n^2h_n\cdot n^{-4}h_n^{-1/2}\cdot\log^5p)
=O(\sqrt h_n\log^5p).
}
Consequently, provided that
\ben{\label{ass:ade}
\log^7p=o(n),\qquad
\log^8(np)=o(n^2h_n),\qquad
h_n\log^{10}p=o(1),
}
we have
\[
\sup_{A\in\mathcal R_p}\abs{\pr(\hat\theta_n-\E[\hat\theta_n]\in A)-\pr(Z\in A)}\to0,
\]
where $Z\sim N(0,\Cov[\hat\theta_n])$. 
In view of \eqref{eq:bias}, if we additionally assume $(\min\{n,n^2h_n\})^{1/2} h_n^{2\alpha}\sqrt{\log p}\to0$, then Lemma 1 in \cite{chernozhukov2023high} gives
\[
\sup_{A\in\mathcal R_p}\abs{\pr(\hat\theta_n-\theta\in A)-\pr(Z\in A)}\to0.
\]
We emphasize that our condition does not require $nh_n\to\infty$ as $n\to\infty$ and hence $\hat\theta_n$ may not be asymptotically linear. 
Besides, the lower bound condition on the bandwidth is $n^2h_n/\log^8(np)\to\infty$, which coincides with, up to a logarithmic factor, the weakest condition to ensure $\Var[\hat\theta_{n,j}]\to0$ as $n\to\infty$ for each $j$. 

\begin{rmk}[Relation to \cite{cattaneo2014small}]
We can relate conditions \eqref{ade-cond1}--\eqref{ade-cond8} to those in the proof of \cite[Theorem 1]{cattaneo2014small} as follows. 
First, \eqref{ade-cond1} corresponds to Eq.(A.5) in \cite{cattaneo2014small}.
Next, \eqref{ade-cond3}, \eqref{ade-cond4} and \eqref{ade-cond8} are counterparts of Eqs.(A.7), (A.8) and (A.9) in \cite{cattaneo2014small}, respectively. 
Third, \eqref{ade-cond2} and \eqref{ade-cond6} can be seen as maximal versions of \eqref{ade-cond1} and \eqref{ade-cond4}, respectively. 
Finally, we can interpret \eqref{ade-cond5} and \eqref{ade-cond7} as maximal versions of \eqref{ade-cond3}.  
\end{rmk}

\begin{rmk}[Application of \cref{coro:clt-ustat}]\label{rmk:ade}
If we apply \cref{coro:clt-ustat} instead of \cref{coro:order-2}, we need to replace the second condition in \eqref{ass:ade} by $\log^9p=o(n^3h_n^2)$, which requires $n^3h_n^2\to\infty$ as $n\to\infty$. 
\end{rmk}

\section{Application 1: Adaptive goodness-of-fit tests}\label{sec:gof}

Let $X_1,\dots,X_n$ be i.i.d.~random vectors in $\mathbb R^d$ with common distribution $P$. 
Unlike \cref{sec:app_ade}, we assume that $P$ does not depend on $n$, and so does $d$. 
Assume that $P$ has density $f$. 
We aim to test whether $f$ is equal to a prespecified density function $f_0$ or not, based on the data $X_1,\dots,X_n$. 
Namely, we consider the following hypothesis testing problem:
\[
H_0:f=f_0\qquad\text{vs}\qquad H_1:f\neq f_0.
\]
Let $K:\mathbb R^d\to\mathbb R$ be a bounded positive definite function; recall that $K$ is said to be \emph{positive definite} if $(K(u_i-u_j))_{1\leq i,j\leq N}$ is a positive definite symmetric matrix for all $N\geq1$ and $u_1,\dots,u_N\in\mathbb R^d$. Note that $K$ is particularly symmetric. 
For every positive number $h>0$, write
\ba{
\varphi_h(x,y)=\frac{1}{h^d}K\bra{\frac{x-y}{h}},\quad x,y\in\mathbb R^d.
}
Then we define
\[
\hat\varphi_h(x,y)=\varphi_h(x,y)-P_0\varphi_h(x)-P_0\varphi_h(y)+P_0^2\varphi_h,
\]
where $P_0$ is the probability distribution on $\mathbb R^d$ with density $f_0$. 
A straightforward computation shows
\ben{\label{eq:mmd}
\E[\hat\varphi_h(X_1,X_2)]=\int_{\mathbb R^d\times\mathbb R^d}\varphi_h(x,y)\{f(x)-f_0(x)\}\{f(y)-f_0(y)\}dxdy,
}
which is equal to the squared \emph{maximum mean discrepancy} (MMD) between $P$ and $P_0$, based on the kernel $\varphi_h$ (see Eq.(10) in \cite{sriperumbudur2010hilbert}). 
In particular, $\E[\hat\varphi_h(X_1,X_2)]=0$ if and only if $f=f_0$ a.e., provided that $\varphi_h$ is a characteristic kernel in the sense of \cite[Definition 6]{sriperumbudur2010hilbert}. 
This suggests rejecting the null hypothesis when an estimator for $\E[\hat\varphi_h(X_1,X_2)]$ takes a too large value. Since the (averaged) $U$-statistic $U_2(\hat\varphi_h)=\binom{n}{2}^{-1}J_2(\hat\varphi_h)$ is an unbiased estimator for $\E[\hat\varphi_h(X_1,X_2)]$, it is natural to use a properly normalized version of $U_2(\hat\varphi_h)$ as a test statistic. This turns out to be $J_2(\hat\psi_h)$ with $\hat\psi_{h}:=\sqrt{h^d\binom{n}{2}^{-1}}\hat\varphi_h$ (cf.~\cref{mmd-var}). 
Recently, \cite{li2024optimality} have shown that this test is minimax optimal against smooth alternatives if $K$ is the Gaussian density and $h$ is chosen appropriately. 
%To be precise, we define the Sobolev ball of densities as
%\[
%H^\alpha_R:=\cbra{f\in L^1(\mathbb R^d):f\geq0,\int_{\mathbb R^d}f(x)dx=1,\|f\|_{H^\alpha}\leq R}
%\]
%for $\alpha>0$ and $R>0$. 
%We assume $\|f_0\|_{H^\alpha}<\infty$. 
To be precise, denote by $\pdf_d$ the set of probability density functions on $\mathbb R^d$. 
Fix a constant $R>0$. 
Given a constant $\alpha>0$ and a sequence $\rho_n$ of positive numbers tending to 0 as $n\to\infty$, we associate the sequence of alternatives as
\[
%H_1(\rho_n;\alpha):=\{f\in H^\alpha_R:\|f-f_0\|_{L^2(\mathbb R^d)}\geq\rho_n\}.
H_1(\rho_n;\alpha):=\cbra{f\in \pdf_d:\|f-f_0\|_{H^\alpha}\leq R,\|f-f_0\|_{L^2(\mathbb R^d)}\geq\rho_n}.
\]
In Theorem 2 of \cite{li2024optimality}, they have shown that if $\|f_0\|_{H^\alpha}<\infty$ and we choose $h=h_n$ so that $h_n\asymp n^{-2/(4\alpha+d)}$, the aforementioned test is consistent for the alternative $f\in H_1(\rho_n;\alpha)$ as long as $\rho_n/\rho_n^*(\alpha)\to\infty$, where $\rho_n^*(\alpha):=n^{-2\alpha/(4\alpha+d)}$. 
Moreover, if $\liminf_{n\to\infty}\rho_n/\rho_n^*(\alpha)<\infty$ and $\|f_0\|_{H^\alpha}<R$, there is no consistent test against $f\in H_1(\rho_n;\alpha)$ for some significance level by \cite[Theorem 3]{li2024optimality}; see also \cite{arias2018remember} for related results in the case of H\"older classes. 
An apparent problem of this test is that we should choose the bandwidth $h$ depending on $\alpha$ whose exact value is rarely known in practice. 
Therefore, one would wish to construct an \emph{adaptive} test in the sense that it does not require knowledge of $\alpha$ while keeping the power of the test as possible. 
We refer to \cite{ingster2000adaptive} and \cite[Section 8.1]{gine2016mathematical} for formal discussions of adaptive tests. 
To achieve this goal, \cite{li2024optimality} have considered the maximum of $J_2(\hat\psi_h)$ over a range of $h$ and showed that this test is adaptive to $\alpha\geq d/4$ up to a logarithmic factor; see Theorem 9 ibidem and also \cref{rmk:gof} below for a discussion. 
In this section, we use our theory to refine their result. 
Specifically, we consider the test statistic $
T_n:=\max_{h\in\mcl H_n}J_2(\hat\psi_h),
$
where
\[
\mcl H_n:=\cbra{\bar h_n/2^k:k=0,1,\dots,\lfloor\log_2(n^{2/d}/(\bar h_n\log^{5/d}n))\rfloor},
\]
and the sequence $\bar h_n$ is chosen so that $n^\delta\bar h_n\to\infty$ and $\bar h_n^\delta\log n\to0$ as $n\to\infty$ for any $\delta>0$. 
We can take $\bar h_n=e^{-\sqrt{\log n}}$ for example. 
We have defined $\mcl H_n$ so that the smallest bandwidth $\ul h_n:=\min\mcl H_n$ satisfies the following condition. 
\ben{\label{ass:bandwidth}
\log^5n=O(n^2\ul h_n^d)\quad\text{as }n\to\infty.
}
Note that unlike \cite{li2024optimality}, we take the maximum over a finite set of bandwidths. Apart from mathematical tractability, this is computationally attractive and is employed by several authors; see \cite{chetverikov2021adaptive} and references therein.  
For the kernel function $K$, we impose the following standard regularity assumption:
\begin{assumption}[Kernel]\label{ass:kernel-gof}
$K$ is bounded and positive definite. 
Also, $K\in L^1(\mathbb R^d)$ and $\int_{\mathbb R^d}K(u)du=1$.   
\end{assumption}
Note that we do not need to assume that the induced kernels $\varphi_h$ are characteristic in the sense of \cite[Definition 6]{sriperumbudur2010hilbert} because we consider the situation $h\to0$.

To compute quantiles of $T_n$, \cite{li2024optimality} suggest simulating $T_n$ under the null hypothesis. 
Although this is theoretically feasible, it is often computationally difficult or demanding to generate random variables from the general distribution $P_0$, even when $f_0$ is analytically tractable. 
For this reason, we suggest approximating the null distribution of $T_n$ using our theory. 
Let $Z^0=(Z^0_h)_{h\in\mcl H_n}$ be a centered Gaussian random vector with covariance matrix $((hh')^{d/2}P_0^2(\hat\varphi_{h}\hat\varphi_{h'}))_{h,h'\in\mcl H_n}$ and set $T_n^G:=\max_{h\in\mcl H_n}Z^0_h$. 
Also, denote by $\pr_f$ the probability measure on $(\Omega,\mcl A)$ under which the common distribution of $X_i$ has density $f$. 
\begin{proposition}\label{gof-ga-null}
Assume $\|f_0\|_{H^\gamma}<\infty$ for some $\gamma>0$. 
Under \cref{ass:kernel-gof}, we have
\[
\sup_{t\in\mathbb R}\abs{\pr_{f_0}(T_n\leq t)-\pr(T_n^G\leq t)}\to0\quad\text{as }n\to\infty.
\]
\end{proposition}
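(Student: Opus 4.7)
The plan is to apply \cref{coro:order-2} (with $q=\infty$, or $q=\log n$ via the sub-Weibull remark if a crude pointwise bound on $\hat\varphi_h$ proves too lossy) to the $p_n:=|\mcl H_n|$-dimensional vector
\[
W:=\bra{J_2(\hat\psi_h)}_{h\in\mcl H_n},
\]
noting that under $\pr_{f_0}$ the kernel $\hat\varphi_h$ is Hoeffding-degenerate (since $P_0\hat\varphi_h\equiv 0$ by construction), so the same is true of $\hat\psi_h$. In particular \eqref{eq:mmd} gives $\E_{f_0}[J_2(\hat\psi_h)]=0$, and $\Cov_{f_0}[W]=\Cov[Z^0]$ by a direct computation using \eqref{u-anova}. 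Hence once we establish the Gaussian approximation on the full class of rectangles $\mcl R_{p_n}$, specializing to the half-spaces $\{w:\max_h w_h\le t\}\in\mcl R_{p_n}$ yields the Kolmogorov bound for $T_n$ vs.\ $T_n^G$ stated in the proposition. Since $|\mcl H_n|\asymp\log n$, we have $\log p_n\asymp\log\log n$, which gives us a substantial cushion in the logarithmic factors.

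The crux is to verify the two conditions singled out in \cref{rmk:order-2}(ii) (the degenerate case): $\tilde\Delta_1^{(0)}\log^3 p_n\to0$ and $\tilde\Delta_{2,q}(2)\log^5 p_n\to0$. First I would compute the normalization: using the substitution $u=(x-y)/h$ together with $K\in L^1\cap L^\infty$ and the Sobolev hypothesis $\|f_0\|_{H^\gamma}<\infty$, one shows $h^d P_0^2(\hat\varphi_h^2)\to\|K\|_{L^2(\mathbb R^d)}^2\|f_0\|_{L^2(\mathbb R^d)}^2$ uniformly in $h\in\mcl H_n$, so that $\sigma_h^2:=\Var_{f_0}[J_2(\hat\psi_h)]=h^d P_0^2(\hat\varphi_h^2)$ is bounded above and below by positive constants for all $n$ large. (A separation of the integral and a standard Sobolev modulus-of-continuity argument as in the estimate leading to \eqref{ade-var-approx} of \cref{sec:app_ade} handles this.) Once $\sigma_h\asymp1$, each $\tilde\Delta$ quantity becomes a pure $L^p$-norm computation for $\hat\varphi_h$ (up to the scaling factor $\sqrt{h^d/\binom{n}{2}}$).

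Next I would estimate the relevant norms by the standard rescaling $u=(x-y)/h$: for any integer $m\ge1$ and appropriate $q$,
\[
\|\varphi_h\|_{L^q(P_0^2)}^q\lesssim h^{-(q-1)d},\qquad
\|P_0(\varphi_h^m)\|_{L^q(P_0)}^q\lesssim h^{-(m-1)qd},\qquad
\|\varphi_h\|_{L^\infty(P_0^2)}\lesssim h^{-d},
\]
with analogous bounds for $\hat\varphi_h$ after absorbing the lower-order terms $P_0\varphi_h$ and $P_0^2\varphi_h$, which stay bounded by standard kernel-smoothing estimates using $\|f_0\|_{H^\gamma}<\infty$. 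For the contraction kernel I would use
\[
|\hat\varphi_h\star_1^1\hat\varphi_h(x,y)|\leq\int |\hat\varphi_h(x,z)||\hat\varphi_h(y,z)|dP_0(z),
\]
plug in the definitions, and apply Young's convolution inequality (or a direct change of variables) to obtain $\|\hat\varphi_h\star_1^1\hat\varphi_h\|_{L^2(P_0^2)}^2\lesssim h^{-d}$. Translating through $\hat\psi_h=\sqrt{h^d/\binom{n}{2}}\hat\varphi_h$, each of $\tilde\Delta_1^{(0)},\tilde\Delta_{2,*}^{(1)}(2),\ldots,\tilde\Delta_{2,q}^{(5)}(2)$ will be $O\big((n^2h^d)^{-\kappa}\big)$ for some $\kappa>0$ (possibly times additional negative powers of $n$), uniformly in $h\in\mcl H_n$. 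The worst case is at the smallest bandwidth $\ul h_n=\min\mcl H_n$, and by \eqref{ass:bandwidth}, $\log^5 n=O(n^2\ul h_n^d)$, so (since $\log p_n\asymp\log\log n$) the logarithmic factors in \cref{rmk:coro:order-2} are absorbed and each quantity indeed goes to zero. This parallels exactly the computation carried out in \eqref{ade-cond1}--\eqref{ade-cond8} of \cref{sec:app_ade}.

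The main obstacle I expect is controlling the $M$-type terms $\tilde\Delta_{2,*}^{(3)}(2)$ and $\tilde\Delta_{2,q}^{(4)}(2)$, which involve maxima of $P_0(\hat\varphi_h^4)$ and of $|\hat\varphi_h|$ over indices. Because $\varphi_h$ is pointwise bounded only by $h^{-d}\|K\|_\infty$, a naive $L^\infty$ bound would give $\tilde\Delta_{2,q}^{(4)}(2)\lesssim n^{-4}h^{-2d}\log^{10}(np_n)$ at $h=\ul h_n$, which is controlled by \eqref{ass:bandwidth} only up to the logarithmic factor $\log^{10}(np_n)$. Here I would use the sub-Weibull device of the remark after \cref{thm:order-2}: take $q=\log n$ instead of $q=\infty$. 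The factor $n^{8/q}\asymp 1$ is then negligible, while the $L^{\log n}$-norm of $\max_h|\hat\psi_h|/\sigma_h$ is controlled by polynomial-in-$\log n$ factors via moment bounds on $\hat\varphi_h$ (which are finite by the $L^\infty$ bound, but we extract only polylog rather than $h^{-d}$ by combining with $\|\hat\varphi_h\|_{L^{\log n}(P_0^2)}^{\log n}\lesssim h^{-(\log n-1)d}$ through the $1/q$-root). This, together with $\log p_n\asymp\log\log n$ and \eqref{ass:bandwidth}, closes the bound and finishes the proof.
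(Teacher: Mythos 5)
Your overall strategy is the same as the paper's: apply the $r=2$ Gaussian approximation (the paper uses \cref{thm:order-2} applied to the degenerate kernels $\pi_2^{f_0}\psi_h=\hat\psi_h$, which under the null is the same reduction you invoke), check the variance normalization via a Sobolev modulus-of-continuity argument, bound the remaining $\Delta$-quantities by kernel-norm computations, and exploit $|\mcl H_n|\asymp\log n$ together with \eqref{ass:bandwidth}. However, there is a genuine gap at the step you dispatch with ``Young's convolution inequality (or a direct change of variables)'': the claimed bound $\|\hat\varphi_h\star_1^1\hat\varphi_h\|_{L^2(P_0^2)}^2\lesssim h^{-d}$ is not available under the stated hypotheses. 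The proposition only assumes $\|f_0\|_{H^\gamma}<\infty$ for some (possibly tiny) $\gamma>0$; $f_0$ is \emph{not} assumed bounded, and the $h^{-d}$ rate for the contraction kernel is exactly what a pointwise bound $\|f_0\|_{L^\infty}<\infty$ would buy you. Without boundedness, the paper has to work harder: it uses the Sobolev embedding $H^\gamma\hookrightarrow L^m$ with $m=2/(1-2\gamma/d)$ (after reducing to $\gamma<d/4$) and two applications of Young's convolution inequality, arriving at $h^d\|\varphi_h\star_1^1\varphi_h\|_{L^2(P_0^2)}\lesssim h^{2\gamma}$ (see the computation culminating in \eqref{gof:star11}); this weaker rate still suffices because $\bar h_n^{\delta}\log n\to0$ for every $\delta>0$. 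Note also that this term is worst at the \emph{largest} bandwidth $\bar h_n$, not at $\ul h_n$, so it is not of the form $O((n^2h^d)^{-\kappa})$ and is not controlled by \eqref{ass:bandwidth}; your blanket statement to that effect is incorrect for $\tilde\Delta_1^{(0)}$. Avoiding the boundedness assumption is precisely one of the refinements the paper advertises (\cref{rmk:gof}), so this is the nontrivial core of the proof rather than a routine change of variables.

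Your ``main obstacle'' concerning $\tilde\Delta_{2,q}^{(4)}(2)$ is, by contrast, illusory, and your proposed fix does not work as described. With $p_n=|\mcl H_n|\asymp\log n$ the extra factor is $\log^5 p_n\asymp(\log\log n)^5$, not $\log^5 n$, so the plain $q=\infty$ bound gives $n^{-4}\ul h_n^{-2d}\log^5(np_n)(\log\log n)^5\lesssim\log^{-5}n\,(\log\log n)^5\to0$ directly from \eqref{ass:bandwidth}; this is exactly how the paper closes this term (its $\delta_4$). Moreover, the $q=\log n$ device cannot deliver ``polylog instead of $h^{-d}$'': since $\|\varphi_h\|_{L^q(P_0^2)}\lesssim h^{-d(1-1/q)}$ and $h^{d/\log n}$ is bounded below by a constant for $h\gtrsim n^{-2/d}$, the $L^{\log n}$-norm is still of order $h^{-d}$, so switching to $q=\log n$ gains only constants here. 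Fortunately nothing is lost, because the $q=\infty$ computation already suffices once the logarithms are accounted for correctly.
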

Since the covariance matrix of $Z^0$ is known, we can in principle compute quantiles of $T_n^G$ by simulation, and they can be used to construct (approximate) critical regions of the test. 
However, $Z^0$ is a high-dimensional random vector, so its simulation could be computationally demanding. 
Instead, we suggest a bootstrap procedure that does not require any simulation of multivariate random variables. 
Let $(\zeta_i)_{i=1}^n$ be i.i.d.~standard normal variables independent of the data $(X_i)_{i=1}^n$. 
We define a bootstrap version of $J_2(\hat\psi_h)$ as
\[
J_2^*(\psi_h):=\sum_{1\leq i<j\leq n}\zeta_i\zeta_j\psi_h(X_i,X_j),\quad\text{where }\psi_h:=\sqrt{h^d\binom{n}{2}^{-1}}\varphi_h.
\]
Here, we use $\psi_h$ instead of $\hat\psi_h$ for construction to make the mathematical analysis (slightly) simpler. 
Then we define the bootstrap test statistic as $T_n^*:=\max_{h\in\mcl H_n}J_2^*(\psi_h)$. 
Given a significance level $0<\tau<1$, let $\hat c_\tau$ be the $(1-\tau)$-th quantile of $T_n^*$ conditional on the data. That is,
\[
\hat c_\tau:=\inf\cbra{t\in\mathbb R:\pr^*(T_n^*\leq t)\geq1-\tau},
\]
where $\pr^*$ denotes the conditional probability given the data. 
\begin{theorem}[Size control]\label{gof-size}
Under the assumptions of \cref{gof-ga-null},  
$
\pr_{f_0}(T_n> \hat c_\tau)\to\tau
$ as $n\to\infty$. 
\end{theorem}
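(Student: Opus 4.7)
The plan is to prove this via the standard three-step bootstrap-size-control template, where the main content lies in upgrading the paper's unconditional Gaussian approximation to a conditional one.

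First, \cref{gof-ga-null} already gives $\sup_t \abs{\pr_{f_0}(T_n \leq t) - \pr(T_n^G \leq t)} \to 0$. Letting $c_\tau^G$ denote the $(1-\tau)$-quantile of $T_n^G$, this immediately yields $\pr_{f_0}(T_n > c_\tau^G) \to \tau$.

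Second, the central step is to establish the conditional Gaussian approximation
\[
\sup_{t \in \mathbb R}\abs{\pr^*(T_n^* \leq t) - \pr(T_n^G \leq t)} \overset{\pr_{f_0}}{\to} 0.
\]
The guiding observation is that $(J_2^*(\psi_h))_{h \in \mcl H_n}$ is itself a vector of second-order $U$-statistics based on the iid enlarged variables $Y_i := (X_i, \zeta_i) \in \mathbb R^d \times \mathbb R$, with symmetric kernels $\xi_h((x,z),(y,w)) := zw\,\psi_h(x,y)$; since $\E[\zeta_1] = 0$, each $\xi_h$ is completely Hoeffding-degenerate. Hence \cref{coro:order-2} applies directly to $T_n^*$. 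Because the Gaussian multipliers have finite moments of every order (so one may take $q = \infty$), the relevant $L^q$-moments of $\xi_h$ reduce, after integrating out $\zeta_i$, to the $L^q(P_0^2)$-moments of $\psi_h$ already controlled in the proof of \cref{gof-ga-null}, so the bandwidth condition \eqref{ass:bandwidth} again suffices. This yields unconditional approximation of $T_n^*$ by the maximum of a centered Gaussian vector with covariance $\tilde\Sigma_{hh'} := (hh')^{d/2} P_0^2(\varphi_h \varphi_{h'})$. Since $\hat\varphi_h - \varphi_h$ consists only of $O(1)$ terms while $\varphi_h$ is of order $h^{-d}$, one has $\|\tilde\Sigma - \Cov(Z^0)\|_\infty = O(\ul h_n^d) \to 0$, and a Gaussian-comparison inequality (CCK/Nazarov type) absorbs this mismatch. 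To promote the unconditional statement to a conditional one, I would rerun the exchangeable-pair construction of \cref{subsec:hdgexch} conditionally on $(X_i)_{i=1}^n$: the resulting bound becomes a measurable functional of data-dependent quantities such as $\Sigma^*_{hh'} := (hh')^{d/2} U_2(\varphi_h \varphi_{h'})$ and empirical moments of $\psi_h$, all of which concentrate around their $\pr_{f_0}$-means by the $U$-statistic law of large numbers.

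Third, Kolmogorov closeness of the conditional law of $T_n^*$ to that of $T_n^G$, combined with CCK anti-concentration for the Gaussian maximum $T_n^G$, yields $\hat c_\tau \overset{\pr_{f_0}}{\to} c_\tau^G$; another application of anti-concentration together with the first step then produces $\pr_{f_0}(T_n > \hat c_\tau) \to \tau$.

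The main obstacle is the conditional-versus-unconditional distinction in Step 2: the paper's Gaussian approximation theorems are stated unconditionally, whereas bootstrap size control demands closeness of conditional laws given the data. What makes the transfer tractable is precisely the Gaussian-multiplier structure, which renders $J_2^*(\psi_h) = \tfrac{1}{2}\,\zeta^\top A_h \zeta$ a homogeneous second-order Gaussian chaos conditionally on the data: this allows the exchangeable-pair construction to be carried out in the multipliers $\zeta_i$ alone, while the data-dependent covariance $\Sigma^*$ is controlled by standard concentration for degenerate $U$-statistics.
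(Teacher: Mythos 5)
Your overall architecture does match the paper's: \cref{gof-size} is obtained there by proving a \emph{conditional} Gaussian approximation for $T_n^*$ (\cref{gof-boot}) and combining it with \cref{gof-ga-null}, the variance bounds of \cref{mmd-var}, and a generic quantile/anti-concentration argument (\cite[Proposition 3.2]{koike2019mixed}); your Steps 1 and 3 are acceptable substitutes for that outer layer. The gap is in your central Step 2. The detour through \cref{coro:order-2} applied to the enlarged variables $Y_i=(X_i,\zeta_i)$ is flawed on two counts: taking $q=\infty$ is not legitimate, since \cref{coro:order-2} with $q=\infty$ requires $\max_j\|\psi_j\|_{L^\infty(P^2)}<\infty$ and the kernels $zw\,\psi_h(x,y)$ are unbounded in the Gaussian multipliers (you would need finite $q$, e.g.\ $q=\log n$); and, as you yourself concede, this route only controls the \emph{unconditional} law of $T_n^*$, which says nothing about $\hat c_\tau$. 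The entire burden therefore falls on your closing sketch, ``rerun the exchangeable-pair construction conditionally in the multipliers.'' That is legitimate in principle --- \cref{thm:gexch} does not require U-statistic structure, and indeed conditionally on the data $J_2^*(\psi_h)$ is a \emph{weighted} U-statistic (Gaussian quadratic form) in $\zeta$, which is exactly why the paper's U-statistic CLTs cannot be invoked conditionally and why the paper instead proves a dedicated high-dimensional CLT for Gaussian quadratic forms (\cref{gqf}, with error governed by the covariance discrepancy and fourth cumulants $48\,\trace(M^4)$) and applies it given the data. But as written your plan defers rather than supplies the substance: one must still bound the conditional Stein terms for the chaos and then show that the resulting data-dependent quantities --- the conditional covariance $(hh')^{d/2}\binom{n}{2}^{-1}J_2(\varphi_h\varphi_{h'})$ and trace-of-fourth-power terms of the form $\sum_{i,j}\bigl(\sum_{k\ne i,j}\psi_h(X_i,X_k)\psi_h(X_j,X_k)\bigr)^2$ --- vanish in expectation over the data under \eqref{ass:bandwidth}. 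Those estimates (the terms $I$ and $II$) are precisely the content of the paper's proof of \cref{gof-boot}, so the hard quantitative work is asserted, not done.

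Two smaller corrections. First, your covariance-mismatch rate is wrong: by \eqref{mmd-var-eq1} the gap between $(hh')^{d/2}P_0^2(\varphi_h\varphi_{h'})$ and $\Cov(Z^0)_{hh'}$ is of order $(h\wedge h')^{d/2}$, so over $\mcl H_n$ it is governed by the \emph{largest} bandwidth, $O(\bar h_n^{d/2})$, not $O(\ul h_n^d)$; it still vanishes since $\bar h_n\to0$, so the conclusion survives, but the heuristic ``the correction terms are $O(1)$ while $\varphi_h$ is of order $h^{-d}$'' does not deliver the rate you stated. Second, if you do pursue the conditional exchangeable-pair route, you will need maximal bounds over $h\in\mcl H_n$ for the chaos quantities (hypercontractivity makes this manageable since $|\mcl H_n|=O(\log n)$), and the anti-concentration input for $T_n^G$ in your Step 3 should be justified by the variance lower bound \eqref{mmd-var-eq2}, exactly as the paper does.
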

\cref{gof-size} suggests rejecting the null hypothesis if $T_n>\hat c_\tau$. 
The following result shows that this test is adaptive in the sense described in \cite{ingster2000adaptive}. 
\begin{theorem}[Adaptation]\label{gof-ada}
For every $\alpha>0$, define
\[
\rho_n^{ad}(\alpha):=\bra{\frac{\sqrt{\log\log n}}{n}}^{2\alpha/(4\alpha+d)}.
\]
Let $0<\alpha_0<\alpha_1$ and suppose that we have a family of sequences $\rho_n(\alpha)$ $(\alpha_0<\alpha<\alpha_1)$ such that $\inf_{\alpha_0<\alpha<\alpha_1}\rho_{n}(\alpha)/\rho_n^{ad}(\alpha)\to\infty$ as $n\to\infty$. Under the assumptions of \cref{gof-ga-null}, we have
\ba{
\sup_{\alpha_0<\alpha<\alpha_1}\sup_{f\in H_1(\rho_n(\alpha);\alpha)}\pr_f(T_n> \hat c_\tau)\to1\quad\text{as }n\to\infty.
}
\end{theorem}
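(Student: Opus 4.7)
The plan is to follow the standard template for adaptive goodness-of-fit testing (cf.~\cite{ingster2000adaptive} and \cite[Section 8.1]{gine2016mathematical}) and reduce the claim to two ingredients: (I) a uniform upper bound $\hat c_\tau\leq C_\tau\sqrt{\log\log n}$ with $\pr_f$-probability tending to one, over all $(\alpha,f)$ in the hypothesis region; and (II) for each such $(\alpha,f)$, the existence of a bandwidth $h^\ast(\alpha)\in\mcl H_n$ at which $J_2(\hat\psi_{h^\ast(\alpha)})/\sqrt{\log\log n}\to\infty$ in $\pr_f$-probability at a rate uniform in $(\alpha,f)$. Together these give $T_n\geq J_2(\hat\psi_{h^\ast(\alpha)})>\hat c_\tau$ with probability tending to one, uniformly. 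The optimal bandwidth is dictated by balancing the stochastic fluctuation $\sqrt{\log\log n}/(n h^{d/2})$ of the test statistic against the kernel bias $h^{2\alpha}R^2$, which yields $h^\ast(\alpha)\asymp(\sqrt{\log\log n}/n)^{2/(4\alpha+d)}$. Since $\ul h_n\asymp\log^{5/d}n/n^{2/d}$ and $\bar h_n\to 0$ slower than any polynomial in $n^{-1}$, the closest dyadic value in $\mcl H_n$ satisfies this equivalence uniformly in $\alpha\in[\alpha_0,\alpha_1]$ for all large $n$.

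For step (II), Parseval's identity rewrites the squared MMD (cf.~\eqref{eq:mmd}) as
\[
\E_f[\hat\varphi_h(X_1,X_2)]=(2\pi)^{d/2}\int_{\mathbb R^d}\fourier{K}(h\lambda)\,|\fourier{(f-f_0)}(\lambda)|^2\,d\lambda.
\]
Bochner's theorem gives $\fourier{K}\geq 0$, and the elementary estimate $1-\fourier{K}(h\lambda)\lesssim(h|\lambda|)^{2\alpha}\wedge 1$ combined with the Sobolev bound $\|f-f_0\|_{H^\alpha}\leq R$ produces $\E_f[\hat\varphi_h(X_1,X_2)]\gtrsim\|f-f_0\|_{L^2}^2-Ch^{2\alpha}R^2$. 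At $h=h^\ast(\alpha)$ one has $(h^\ast)^{2\alpha}\asymp\rho_n^{ad}(\alpha)^2$, and the uniform hypothesis $\rho_n(\alpha)/\rho_n^{ad}(\alpha)\to\infty$ makes the bias negligible, so
\[
\E_f[J_2(\hat\psi_{h^\ast})]\gtrsim n(h^\ast)^{d/2}\rho_n(\alpha)^2=\sqrt{\log\log n}\cdot\bra{\rho_n(\alpha)/\rho_n^{ad}(\alpha)}^2,
\]
which is of larger order than $\sqrt{\log\log n}$ by assumption. A variance computation analogous to that in \cref{sec:app_ade} -- now accounting for the possibly non-zero first Hoeffding projection $P_f\hat\varphi_{h^\ast}$ under $\pr_f$, whose $L^2(P_f)$-norm is controlled via Parseval and Young's inequality in terms of $\|f-f_0\|_{L^2}\leq R$ -- yields $\Var_f[J_2(\hat\psi_{h^\ast})]=o((\E_f[J_2(\hat\psi_{h^\ast})])^2)$ uniformly, and Chebyshev's inequality then gives $J_2(\hat\psi_{h^\ast})\geq\tfrac12\E_f[J_2(\hat\psi_{h^\ast})]$ with $\pr_f$-probability tending to one, uniformly in $(\alpha,f)$.

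The hard part is step (I): since $\hat c_\tau$ depends on data drawn from $\pr_f$, \cref{gof-size} cannot be invoked directly. Conditionally on $X$, $T_n^\ast=\max_{h\in\mcl H_n}J_2^\ast(\psi_h)$ is a supremum of centered second-order Gaussian chaoses with associated matrices $A_h=\tfrac12(\psi_h(X_i,X_j))_{i\neq j}$. The Hanson--Wright inequality delivers sub-Gaussian tails in the regime $t\lesssim\|A_h\|_F^2/\|A_h\|_{op}$, and the Frobenius norm satisfies $\|A_h\|_F^2=\sum_{i<j}\psi_h(X_i,X_j)^2=O_{\pr_f}(1)$ uniformly by a direct second-moment calculation. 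The delicate point is showing $\|A_h\|_{op}=o_{\pr_f}(1)$ uniformly in $h\in\mcl H_n$: I would split into the regime $nh^d\to 0$ (where $A_h$ is diagonally sparse and Gershgorin yields $\|A_h\|_{op}\lesssim(\binom{n}{2}h^d)^{-1/2}\|K\|_{L^\infty}\to 0$) and $nh^d\to\infty$ (where standard concentration for the random kernel matrix gives spectral norm $\asymp nh^d$, hence $\|A_h\|_{op}\asymp h^{d/2}\to 0$), handling the transition and the maximum over $|\mcl H_n|\lesssim\log n$ bandwidths by a union bound. On the resulting high-probability event, the sub-Gaussian regime of Hanson--Wright extends up to $t\asymp\sqrt{\log\log n}$, and a final union bound delivers $\hat c_\tau\leq C_\tau\sqrt{\log\log n}$ with the required uniformity in $(\alpha,f)$.
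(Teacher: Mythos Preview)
Your Step (II) --- lower-bounding the signal via Parseval and concentrating via Chebyshev --- is essentially correct and matches the paper (its \cref{ly-lem15} is exactly your Fourier argument, and the paper's explicit Hoeffding decomposition $J_2(\hat\psi_h)=J_2(\pi_2^f\psi_h)+S_n+\E_f[J_2(\hat\psi_h)]$ under $\pr_f$ amounts to the same variance control). The gap is in Step (I). Your Gershgorin claim $\|A_h\|_{op}\lesssim(\binom{n}{2}h^d)^{-1/2}\|K\|_{L^\infty}$ is a bound on a \emph{single entry}, not on the maximal row sum that Gershgorin actually controls; to pass from one to the other you would need $K$ compactly supported (not assumed under \cref{ass:kernel-gof}) \emph{and} a separate bound on the maximal degree of the random geometric graph. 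In the regime $nh^d\to\infty$, row sums of the $K$-matrix concentrate around $nh^d f(X_i)$, so any Gershgorin-type bound requires $\|f\|_{L^\infty}<\infty$; but the alternatives $f\in H_1(\rho_n(\alpha);\alpha)$ are \emph{not} assumed bounded (only $f-f_0\in H^\alpha$ and $f_0\in H^\gamma$, with $\alpha\wedge\gamma$ possibly below $d/2$). So the Hanson--Wright route, as written, does not deliver $\|A_h\|_{op}=o(1)$ uniformly.

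The paper sidesteps operator norms entirely by leveraging its own Gaussian approximation. Since every alternative $f$ lies in a fixed Sobolev ball $H^{\alpha_0\wedge\gamma}_{R+\|f_0\|_{H^\gamma},\,\|f_0\|_{L^2}^2/2}$, \cref{gof-boot} applies \emph{uniformly under $\pr_f$}: the conditional law of $T_n^\ast$ is close to that of the Gaussian $\max_{h\in\mcl H_n}Z_h$, uniformly in $f$. Combined with the identity $\{\hat c_\tau>a\}=\{\pr^\ast(T_n^\ast>a)>\tau\}$ and Markov's inequality, one gets $\pr_f(\hat c_\tau>a_n)\leq\tau^{-1}\pr_f(\max_h Z_h>a_n)+o(1)$, and the Gaussian maximal inequality (with $|\mcl H_n|=O(\log n)$) immediately yields $\hat c_\tau=O_{\pr_f}(\sqrt{\log\log n})$ with the required uniformity. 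This is the missing idea: the paper's bootstrap Gaussian approximation is stable under the alternative, so no direct quadratic-form tail analysis is needed.
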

Note that in \cref{gof-ada}, $f_0$ does not necessarily belong to the same Sobolev space as $f$.%; it suffices to have $\|f_0\|_{H^\alpha}<\infty$ for \emph{some} $\alpha>0$. 

\begin{rmk}[Comparison to \cite{li2024optimality}]\label{rmk:gof}
\cref{gof-ada} refines the result of \cite[Theorem 9]{li2024optimality} in three directions: (i) It does not require $\alpha\geq d/4$. (ii) The kernel function $K$ is not necessarily Gaussian. (iii) Distinguishable separation rates $\rho_n(\alpha)$ are smaller; \cite{li2024optimality} need to replace $\sqrt{\log\log n}$ in $\rho_n^{ad}(\alpha)$ by $\log\log n$. Note that the remaining $\sqrt{\log\log n}$ factor is not an artifact but is essential; see Theorem 1 in \cite{ingster2000adaptive}. 
We also mention that in the context of two-sample testing, \cite{schrab2023mmd} have addressed items (i) and (ii) but they additionally assume that the underlying densities are bounded; see \cite[page 54, footnote 10]{schrab2023mmd}. 
\end{rmk}

\section{Application 2: Inference on many density weighted average derivatives} \label{sec:DWAD}

\subsection{Jackknife multiplier bootstrap for many density weighted average derivatives}  \label{subsec:DWAD-theory}
Let $Z_{i} \coloneqq (X_{i}, Y_{i})~(i=1,\dots,n)$ be i.i.d.~observations of $Z=(X,Y)$, where $X$ is a random vector in $\mathbb{R}^d$ with density function $f$ and $Y$ is a random vector in $\mathbb{R}^p$.  
Our parameters of interest are the Density-Weighted Averaged Derivatives (DWADs) defined as follows.
For $j = 1,\dots, p$ and $k=1,\dots d$, 
\[
    \theta_{jk} \coloneqq \E\left[ f(X) \frac{\partial}{\partial X_k} g_j(X)\right], \quad \text{with} \quad g_j(x) \coloneqq \E[Y_{ij} \mid X = x].
\]
This parameter captures how much an infinitesimal change of $X_k$ affects $Y$ conditional on $X$ on average.
Although our Gaussian approximation can accommodate the situation where $d$ goes to infinity as long as Assumption \ref{as:DWAD-bandwdith} stated latter is satisfied with slight modifications of the assumptions and proof, we focus on the case under fixed $d$.
Meanwhile, we allow $p$ to diverge to infinity as $n\to\infty$.
%The estimator \citep{powell1989semiparametric} is given by
\citet{powell1989semiparametric} proposed the following estimator for $\theta_{jk}$:
\[
    \hat{\theta}_{jk,h_n} \coloneqq \frac{1}{n}\sum_{i=1}^n\sum_{l\neq i}^n\psi_{jk,h_n}(Z_i,Z_l), \quad \text{with} \quad \psi_{jk,h_n}(Z_i,Z_l) \coloneqq \frac{-1}{(n-1)h_n^{d+1}} (Y_{ij}-Y_{lj})\partial_kK_{il,h_n} 
\]
where $K_{il,h_n} \coloneqq K((X_{ij} - X_{lj})/h_n)$ and $\partial_kK \coloneqq \frac{\partial}{\partial u_k} K(u_1, \dots, u_d)$ is a partial derivative of a kernel function $K$ satisfying \cref{as:DWAD-Kernel}.

To conduct simultaneous inference, we employ the Jackknife Multiplier Bootstrap (JMB) in the terminology of \cite{chen2018gaussian}. 
We could also conduct bootstrap inference under some appropriate conditions with the randomly reweighted bootstrap (adopted for a goodness-of-fit test in Section \ref{sec:gof}) or the $m$-out-of-$n$ bootstrap (considered by \citealp{cattaneo2014bootstrapping} under the fixed dimensional setting) but we do not pursue the possibility. 

Let $\{z_i\}_{i=1}^n$ be a sequence of random variables such that $z_i \overset{\text{i.i.d}}{\sim} N(0,1)$ and independent of $\{Z_i\}_{i=1}^n$. 
Then, the test statistic of JMB is defined as follows:
\begin{align*}
   & \hat{J}_{jk,h_n'}^{\sharp} \coloneqq\frac{1}{\hat{\sigma}_{jk,h_n'}}\left( \frac{1}{n}\sum_{i=1}^n z_i \left[ \sum_{l\neq i}^n \psi_{jk,h_n'}(Z_i,Z_l) - \hat{\theta}_{jk,h_n'} \right]\right) , \\
   & \quad \text{with} \quad \psi_{jk,h_n'}(Z_i,Z_l) \coloneqq \frac{-1}{(n-1)h_n'^{d+1}} (Y_{ij}-Y_{lj})\partial_k K_{il,h_n'} ,
\end{align*}  
where the bandwidth $h_n'$ satisfies $h_n' = 2^{1/(d+2)} h_n$ and $\hat{\sigma}_{jk,h_n'}$ is a standard error of $\hat{\theta}_{jk,h_n}$.
As Theorems 2(a), (b) and (c) in \cite{cattaneo2014small}, \cite{cattaneo2014small} proposed three types of consistent estimators of ${\sigma_{jk,h_n}^2} \coloneqq \Var[\hat{\theta}_{jk,h_n}]$. 
Among them, we adopt their Theorem 2(c) because Theorem 2(a) requires one more different bandwidth and Theorem 2(b) possibly takes negative values. 
The variance estimator of Theorem 2(c) in \cite{cattaneo2014small} is given by:
\[
    \hat{\sigma}^2_{jk,h_n'} \coloneqq \frac{1}{n}\sum_{i=1}^n \left[ \sum_{l\neq i}^n \psi_{jk,h_n'}(Z_i,Z_l) - \hat{\theta}_{jk,h_n'} \right]^2.
\]
We define the bootstrap statistic as $\hat{T}_n^\sharp \coloneqq \max_{(j,k,h_n) \in[p]\times[d]\times\mathcal{H}_n} |\hat{J}_{jk,h_n'}^{\sharp}|$, where $\mathcal{H}_n$ is a finite set of bandwidths.
Given a significance level $0<\tau<1$, the critical value is defined as the $(1-\tau)$-th quantile of $\hat{T}_n^\sharp$ conditional on data. 
That is, $\hat{c}^\sharp_\tau \coloneqq \inf\{t\in\mathbb{R} : \mathbb{P}^*(\hat{T}_n^\sharp \le t) \ge 1-\tau\}$.
We consider the validity of this critical value under the following assumptions. 

\begin{assumption}[Data Generating Process] \label{as:DWAD-DGP}
%差分を取られる関数, f(x+uh), g(x+uh), v(x+uh)とその微分とかinteractionはソボレフ. %積分の外に出てくる関数, g(x) f(x) v(x)とその微分とかinteractionはL^inftyノルムを抑える.
%差分を取るのは, \partial_j f, \partial_j \{gf\}, f, vf, gf
%積分の外に出てくるのは, f, vf, gf
There exist constants $C>0$ and $0<\alpha\leq1/2$ independent of $n$ satisfying the following conditions:
\begin{enumerate}[label=(\roman*)]
    %\item[(i)] $f$ and $g_j$ are differentiable for all $j\in[p]$,  $\|\partial_k f\|_{L^\infty(\mathbb{R})}$ is bounded uniformly in $k\in[d]$ and there exists universal constant $C$ such that $\|\partial_k \{g_jf\}\|_{L^\infty(\mathbb{R})} < C$ for all $j\in[p]$ and $k\in[d]$.
    %\item[(ii)]  $\|f\|_{H^\alpha}$ and $\|\partial_k f\|_{H^\alpha}$ are  bounded uniformly in $k\in[d]$ and $0< \alpha \le 1/2$. Also, there exists universal constant $C$ such that $\|\partial_k \{g_jf\}\|_{H^\alpha} < C$,  $\|v_jf\|_{H^\alpha}<C$ and $ \|g_jf\|_{H^\alpha}<C $ uniformly in  $j\in[p]$, $k\in[d]$ and $0< \alpha < 1/2$, with $v_j(x) \coloneqq \E[Y_j^2 \mid X=x]$.
    %\item[(iii)] $\lim_{|x| \to \pm\infty} |g_j(x)f(x)^2| \to 0$ for all $j\in[p]$. 
    %\item[(iv)] There exists universal constant $C$ such that $\E[Y_j^4 ] < C$ uniformly in $j\in[p]$. 
    %\item[(v)] There exists a constant $C$ such that $\Var[\partial_k \{g_jf\}(X) - Y_j\partial_k f(X)]>C$  and $\E[\Var(Y_j\mid X) f(X)]>C$ uniformly in $j\in[p]$ and $k\in[d]$. 
    
    \item $f$ and $g_j$ are differentiable for all $j\in[p]$,  $\max_{k\in[d]}\|\partial_k f\|_{L^\infty(\mathbb{R}^d)}\leq C$ and $\max_{j\in[p],k\in[d]}\|\partial_k \{g_jf\}\|_{L^\infty(\mathbb{R}^d)} \leq C$.
    
    \item  $\|f\|_{H^\alpha}\leq C$, $\max_{j\in[p]} \|g_j\|_{H^\alpha}\leq C $, $\max_{j\in[p]} \|v_j\|_{H^\alpha}\leq C $, $\max_{k\in[d]}\|\partial_k f\|_{H^\alpha}\leq C$, $\max_{j\in[p]} \|g_jf\|_{H^\alpha}\leq C $ and
     $\max_{j\in[p],k\in[d]}\|\partial_k \{g_jf\}\|_{H^\alpha} \leq C$, where $v_j(x) \coloneqq \E[Y_j^2 \mid X=x]$.
    
    \item $\lim_{|x| \to \infty} |g_j(x)f(x)^2| \to 0$ for all $j\in[p]$. 
    
    \item $\max_{j\in[p]} |Y_j| \leq C$ almost surely.
    
    \item $\Var[\partial_k \{g_jf\}(X) - Y_j\partial_k f(X)]\geq1/C$  and $\E[\Var(Y_j\mid X) f(X)]\geq1/C$ for all $j\in[p]$ and $k\in[d]$.
    
\end{enumerate}
\end{assumption}

\begin{assumption}[Kernel] \label{as:DWAD-Kernel} 
\begin{itemize}\mbox{}
    \item[(i)] $K:\mathbb{R}^d \to \mathbb{R}$ is even, differentiable and $\int_{\mathbb{R}^d} K(u)du =1$.
    \item[(ii)]  $\int_{\mathbb{R}^d} |u||K(u)|du < \infty$, $ \|\partial_k K\|_{L^2(\mathbb{R}^d)} < \infty$ and $\|\partial_k K\|_{L^\infty(\mathbb{R}^d)}<\infty$, $\int_{\mathbb{R}^d} |u|^\alpha|\partial_k K(u)| du< \infty$ for all $k\in[d]$ and $0\le \alpha\le 1/2$. 
\end{itemize}
\end{assumption}

\begin{assumption}[Bandwidth and Dimensionality] \label{as:DWAD-bandwdith}
\begin{itemize}\mbox{}
Let $\overline{h}_n$ and $\underline{h}_n$ be the maximum and minimum of the bandwidth space $\mathcal{H}_n$ and $|\mathcal{A}_n| \coloneqq p \times d\times |\mathcal{H}_n|$. 
    \item[(i)] $\min\{n,n^2\bar{h}_n^{d+2}\}^{1/2} \overline{h}_n^{2\alpha} \sqrt{\log |\mathcal{A}_n|} \to 0$, $ \overline{h}_n^{d/2} \log^5 |\mathcal{A}_n| \to 0$, and $\bar{h}_n^{\alpha}\log^2|\mathcal{A}_n|\to 0$.
    \item[(ii)] $n^{-1}\log^7 (n|\mathcal{A}_n|) \to 0$, $n^{-2}\underline{h}_n^{-d} \log^8(n|\mathcal{A}_n|) \to 0$.
    \item[(iii)] The bandwidth $h_n'$ (used for the JMB and studentization) satisfies $h_n' = 2^{1/(d+2)} h_n$. 
\end{itemize}
\end{assumption}

Assumptions \ref{as:DWAD-DGP} and \ref{as:DWAD-Kernel} are standard conditions in the literature of the DWAD. 
The notable difference from the assumptions in \cite{cattaneo2014small} is that they consider the smoothness of functions associated with DGP in terms of the degrees of differentiability while we does in terms of Sobolev order $\alpha$.
As noted in the last paragraph of \cite[p.52]{gine2008simple}, the restriction $\alpha\le 1/2$ is imposed only for technical simplicity and can be dropped. 
In the empirical analysis in the next section, we allow $1/2<\alpha\le 1$ when constructing a bandwidth space.
Assumption \ref{as:DWAD-bandwdith}(i) and (ii) are almost the same as those in Section \ref{sec:app_ade}. 
Assumption \ref{as:DWAD-bandwdith}(iii) makes JMB valid and variance estimation consistent.

%仮定の説明を追記
%\begin{itemize}
    %\item Assumption \ref{as:DWAD-DGP}と\ref{as:DWAD-Kernel}は\cite{cattaneo2014small}でも仮定されている標準的な仮定. 滑らかさの仮定が複数回微分可能の意味ではなくて, ソボレフの意味になっている点だけが特筆すべき違い.
    %\item Assumption \ref{as:DWAD-DGP}(i)の前半と(iii)は, DWADの定義と推定可能な形に変形するための部分積分のための仮定.
    %\item Assumption \ref{as:DWAD-DGP}(i)の後半と(ii)は, 条件付き期待値とか期待値をバウンドするための仮定. 
    %\item Assumption \ref{as:DWAD-DGP}(v)は, Hoeffding分解の1次項と2次項の漸近分散が0でないことを仮定. 
    %\item $\alpha\le 1/2$ という仮定は, \cite[p.52]{gine2008simple}の最後の段落にあるように, 単なる単純化のための仮定でdropできる. 次節の実証分析では実際に$1/2\le \alpha\le 1$を許した上でバンド幅の候補を作っている. 
    %\item 仮定3はSection 2.3とほとんど同じで, 追加でいうことは特になし. 
%\end{itemize}

%%fはnに依存しないので, universal Cで抑える必要はない. dをfixしてしまえば, kの方もnに依存しないので, universal Cで抑える必要はない.
%%g_j, v_jはnに依存するので, 関連する量はuniversal Cで抑える必要がある.

\begin{theorem} \label{thm:DWAD_size}
    Under the assumptions presented above, it holds that $\mathbb{P}(\hat{T}_n>\hat{c}^\sharp_\tau) \to \tau$ as $n\to\infty$, where $\hat{T}_n$ is defined as $\hat{T}_n \coloneqq \max_{(j,k,h_n)\in[p]\times[d]\times\mathcal{H}_n} \hat{\sigma}_{jk,h_n'}^{-1} |\hat{\theta}_{jk,h_n'} - \theta_{jk}|$. 
\end{theorem}

The proof of this section is in Appendix \ref{sec:proof_DWAD}.

\subsection{Empirical Application : Price Elasticities Matrices of food in India 2022}
%導入. Deaton and Ngに倣って, 価格弾力性の推定をDWADでする. 
In this section, we provide an empirical application of the Gaussian approximation result presented in \cref{subsec:DWAD-theory}. 
Specifically, we conduct statistical inference on high-dimensional price elasticities using the nonparametric approach proposed by \cite{deaton1998parametric}. 
Price elasticity is of interest for a wide range of purposes, including tax and subsidy reforms in public and development economics \citep{deaton1998parametric}, pricing, advertising and product display strategy in marketing science \citep{jiang2025online}, and many other applications.

%データについて
We use the dataset of 2022-2023 Indian Household Consumption Expenditure Survey (HCES) by National Sample Survey (NSS). 
The cleaned dataset is available at Advait Moharir's Github\footnote{\url{https://github.com/advaitmoharir/hces_2022/tree/main}}.
We focus on ten goods throughout the analysis.
To control and capture heterogeneity across rural/urban areas and income levels, we stratify households in the dataset into eight mutually exclusive subsamples obtained by crossing place of residence (rural or urban) with income quantile, where income is proxied by Monthly Per-Capita Consumption Expenditure (MPCE).
We focus on this rural/urban split for simplicity, but a comprehensive empirical investigation would ideally stratify at finer geographic levels (such as state, village and so on) to accommodate local price and consumption variation.
The sample sizes of each groups are 38559 or 38560 for each quartile in rural area and are 25825 or 25826 for each quartile in urban area.
For our analysis we require, for each household $i$ and each good $j$, both the quantity consumed (denoted $Q_{ij}$) and a corresponding price measure (denoted $P_{ij}$).
As to the quantity, missing quantities are replaced by zero and then log-transformed as $\log(Q_{ij}+10^{-3})$ where the small constant $10^{-3}$ prevents taking the logarithm of zero.
Concerning the price, following \citet{deaton1998parametric}, we use the log of the household-specific unit value (the average price actually paid by the household $i$ for good $j$). 
In line with \citet[Section 4.3]{deaton1998parametric}, missing unit values are imputed by the median within each region-income group. While \cite{deaton1998parametric} employ village means, we use medians to mitigate the influence of outliers.

%推定について
Following (14) and (15) of \cite{deaton1998parametric}, we estimate the price elasticities using DWAD. 
Although \cite{deaton1998parametric} estimate the average partial derivative of the household expenditure on good $j$ (i.e. $P_jQ_j$) with respect to the log-price of good $k$ (i.e. $\log P_k$), the resulting estimates themselves are not those of the price elasticities and the transformation of the estimates into those of the price elasticities leaves an even asymptotically non-negligible bias. 
Though the transformation still has a practical value as an approximation, we directly estimate the price elasticities instead. 
In particular, the average partial derivative of the log-household purchase of good $j$ (i.e. $\log Q_j$) with respect to the log-price of good $k$ (i.e. $\log P_k$):
\[
    \theta_{jk} \coloneqq \E\left[ f(\log P_1, \dots, \log P_{10}) \frac{\partial m_j(P_1, \dots, P_{10})}{\partial \log P_k}\right] 
\]
with $m_j(P_1, \dots, P_{10}) = \E[\log Q_j \mid P_1, \dots, P_{10}]$ and the expectation is taken with respect to the joint probability density of $(\log P_{1},\dots,\log P_{10})$. 

%推論について
%具体的な設定(バンド幅の設定, 有意水準, Step-Dwonをやっていないことなど)
The critical value (at the significance level of $0.05$) is constructed via the JMB presented in the previous subsection with $499$ bootstrap replicates. 
For computational simplicity, we adopt  \cite{westfall1993resampling}'s method, which is, according to \cite{meinshausen2011asymptotic}, an asymptotically optimal single-step procedure when the number of hypothesis tends to infinity, and do not seek a potentially improved power with \cite{romano2005exact}'s step-down methods.
Note that subset pivotality holds for our procedure and therefore the family-wise error rate (FWER) is strongly controlled because the joint distribution of the test statistics corresponding to any subset of null hypotheses does not depend on (i) whether the complementary hypotheses are true or false and on (ii) parameters associated with those complementary hypotheses. %各成分ごとに, 各成分の平均値で中心化 + 各成分の分散でスケーリングをしているので, これは成立しているはず.
When a more stringent level of power is desired, implementing a step-down procedure is a preferred approach (see \citealp[Section 5]{CCK13}).
As in Section 3, we take an agnostic stance on the smoothness level $\alpha$ of the data generating process (DGP) and conduct inference in an adaptive manner and construct the space of bandwidth candidates as $\mathcal{H}_n \coloneqq \{ h_\alpha = C_h \cdot n^{-2/(10+ 2\alpha - 1/10)} : \alpha= \{0,0.1,0.2,0.3,0.4,0.5, 0.6,0.7,0.8,0.9,1.0\}  \}$ with  $C_h \coloneqq (\max_{1\le i \le n} \log P_{ij} - \min_{1\le i \le n} \log P_{ij})_{1\le j \le 10}$. 
We specify the contraction rate of the bandwidth as $n^{-2/(10+ 2\alpha - 1/10)}$ in accordance with the MSE-optimal bandwidth by \cite{cattaneo2010robust}, which is proportional to $n^{-2/(10+ 2\alpha)}$.
The factor $-1/10$ is an adjustment term for undersmoothing. 
%JMBのbootstrapバイアスが0だから, undersmoothingしないといけないことを書いた方が良いかもしれない.
%この-1/10という値はアドホックですが, Literatureでは30%も小さくするような思いきったundersmoothingをしているものもある(Chernozukov, Lee, Rosen, 2013, Econometrica など)ので, 大きな問題ではないのかなと思っています.
In our smoothness agnostic setting, the optimal constant derived via Taylor expansion by  \cite{cattaneo2010robust} is infeasible, so we set $C_h$ to the difference between the maximum and minimum of $\log P_{ij}$ following \cite[footnote 4]{chetverikov2021adaptive}. 
To avoid oversmoothing due to potential influential outliers, we calculate $C_h$ after trimming 0.5$\%$ from each tail of $\log P_{ij}$. 

%次元が高いことに関する言及 (このパラグラフなくても良いかも)
Note that the target parameter for statistical inference is high-dimensional.
Specifically, the price elasticities are defined for every pair of goods, and since we focus on ten goods, this alone yields $10\times10=100$ parameters.
In addition, as mentioned above, we stratify the sample into eight sub-groups, so the dimension rises to $100\times8=800$.
Moreover, we consider eleven candidates of bandwidths and build the critical value to be the most conservative across them.
Taking all these factors together, we ultimately carry out inference on the parameters of $800\times 11 = 8,800$ dimensions.

%分析結果の図の説明
Figure \ref{fig:empirical_result} reports the estimated price‐elasticity matrices for four household groups (rural–Q1, rural–Q4, urban–Q1, and urban–Q4) defined by location (rural and urban) and income level (lowest‐quartile Q1 and highest-quartile Q4). 
Heatmaps for the remaining groups and for alternative smoothness values $\alpha$ (84 additional heatmaps in total) are omitted here due to space constraints.
The insignificant estimates are black-outed.
%分析結果の解釈について
%主食に限った話 : コメと小麦の間には代替財としての関係がある. 田舎都会の両方で, 所得水準が高いほど非弾力的になる(当たり前な気はする). 都会の所得水準Q1では, 他の属性の家計とは違って, コメの方が小麦よりも弾力的.
A significant substitute relationship is found between rice and wheat in all four groups.
Also, the own-price elasticities for these staples become progressively smaller in absolute value as we move from low-income (Q1) to high-income (Q4) households, indicating that demand is less price‐elastic at higher income levels. 
Interestingly, for urban-Q1 households, rice demand is more price-elastic than wheat demand, whereas the reverse is observed in the other three groups. 
%ミルクとチーズについて : チーズの値段が上がると所得水準がQ1の家計に限って田舎都会の両方で, milkの消費が有意に増えるが, 所得水準Q4の家計では, そのような効果は統計的に有意ではない.
A rise in the price of cheese significantly increases milk consumption among Q1 households in both rural and urban areas, suggesting that milk serves as a low-cost substitute for cheese when budgets are tight. 
For Q4 households, such a relationship is statistically insignificant, consistent with weaker budget constraints. %チーズに関しては言及しない方がいいかもしれない. チーズはコメとか小麦とかと比べて種類ごとに品質が大きく変わるにもかかわらず, チーズ内の品質を考慮していないせいで, (特に高所得層では)高いチーズほど需要がある可能性があり, それがチーズの自己価格弾力性が正に出ていたり, ミルクが安い代替財のように見えている可能性がある. (それ込みでの弾力性だと思えば何の問題もない気がしてきた)
%あまり踏み込まない言い訳
A variety of additional substitute and complement relationships and richer economic interpretations could be explored but we do not seek the possibility as the present paper focuses on methodological contributions rather than policy implications.

\begin{figure}[H]    % h=here, t=top, b=bottom, p=page of floats
  \centering
  %--- 1st row ----------------------------------------------------%
  \begin{subfigure}{0.48\linewidth}
    \includegraphics[width=\linewidth, trim=0 0 0 0.73cm,clip]{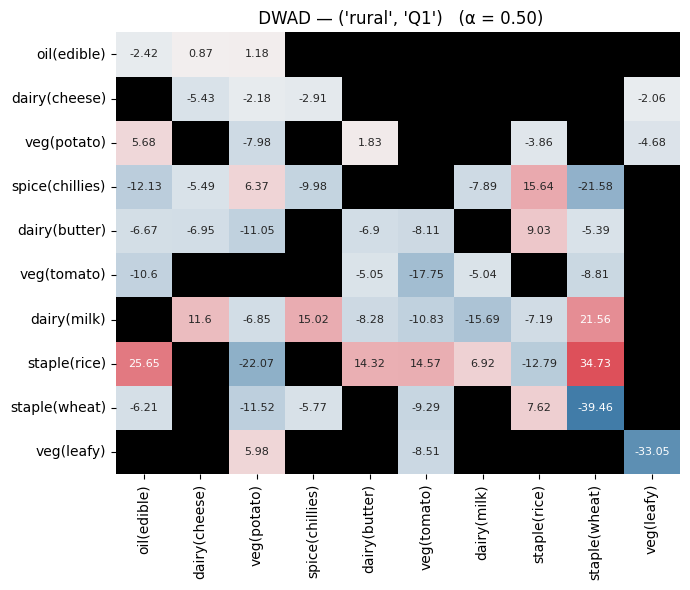}
    \caption{rural-Q1}
  \end{subfigure}\hfill      % \hfill で左右に隙間を入れる
  \begin{subfigure}{0.48\linewidth}
    \includegraphics[width=\linewidth, trim=0 0 0 0.73cm,clip]{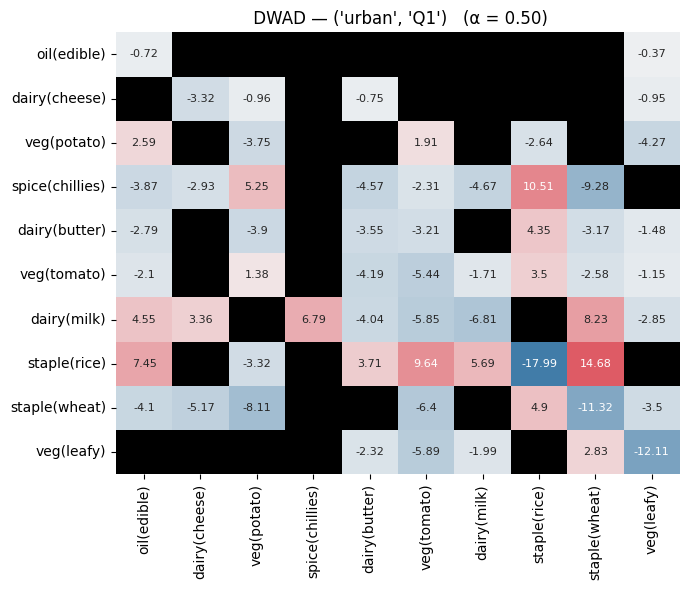}
    \caption{urban-Q1}
  \end{subfigure}

  \vspace{1em}               % 行間の調整（任意）

  %--- 2nd row ----------------------------------------------------%
  \begin{subfigure}{0.48\linewidth}
    \includegraphics[width=\linewidth, trim=0 0 0 0.73cm,clip]{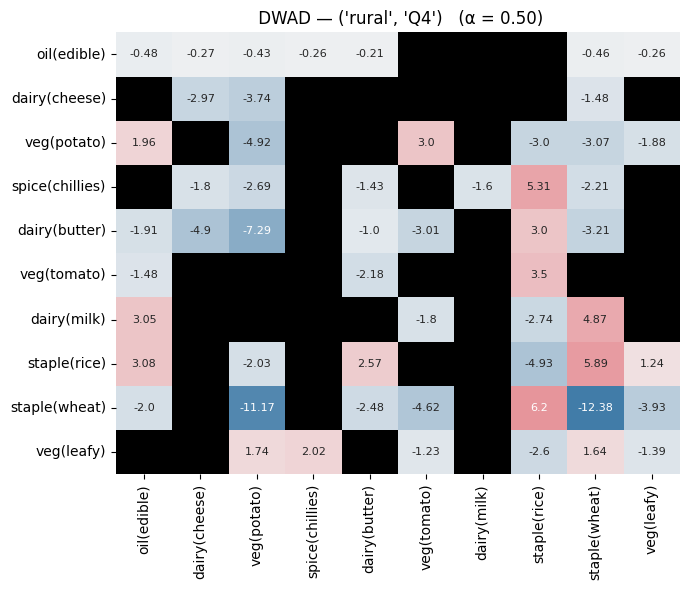}
    \caption{rural-Q4}
  \end{subfigure}\hfill
  \begin{subfigure}{0.48\linewidth}
    \includegraphics[width=\linewidth, trim=0 0 0 0.73cm,clip]{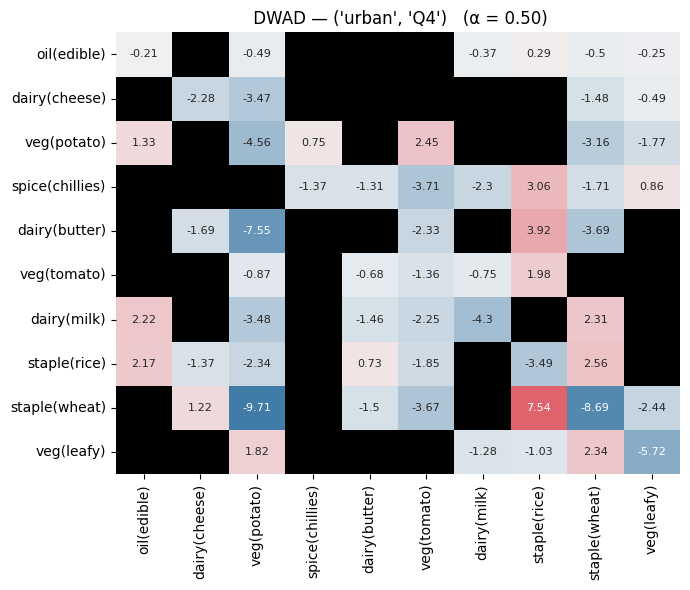}
    \caption{urban-Q4}
  \end{subfigure}

  \caption{Heatmaps of DWAD estimates for price-elasticities}
  \label{fig:empirical_result}
\end{figure}

\section{Conclusion and Discussion} \label{sec:discussion}
        In this study, we developed Gaussian approximation results for general symmetric $U$-statistics in the high-dimensional setting. 
        As an illustration, we considered small bandwidth asymptotics for estimating average marginal densities of high-dimensional data and an adaptive goodness-of-fit test against smooth alternatives, along with contributions to the literature on the applications. 
        Beyond the examples presented in the previous sections, our results have a wide range of potential applications, only a small portion of which are listed below.

        In Section \ref{sec:introduction}, we mentioned specification tests for parametric regression \citep{hardle1993comparing,zheng1996consistent}. 
        %Let $\varepsilon_1, \dots, \varepsilon_n$ be the differences between a dependent variable and a parametric regression fit, and let $X_1,\dots,X_n$ be $d$-dimensional covariates. 
        It is known that the test statistics of \cite{hardle1993comparing} and \cite{zheng1996consistent} with some specific weighting functions are approximated by certain degenerate second-order $U$-statistics;
        %\begin{align*}
        %    \frac{nh^{d/2}}{n^2} \sum_{i=1}^n\sum_{j\neq i}^n \varepsilon_i\varepsilon_j \bar{K}\left( \frac{X_i-X_j}{h_n} \right), \quad \frac{nh^{d/2}}{n(n-1)} \sum_{i=1}^n\sum_{j\neq i}^n \varepsilon_i\varepsilon_j K\left( \frac{X_i-X_j}{h_n} \right)
        %\end{align*}
        %respectively, where $K:\mathbb{R}^d \to \mathbb{R}$ is an appropriate symmetric kernel function and $\bar{K}(v) \coloneqq \int K(u)K(v-u)du$. %n^{-2}は{n(n-1)}^{-1}で近似可能.
        Also, beyond the second order $U$-statistics, \cite{fan1996consistent} considered testing the conditional mean independence and functional forms of partially linear models and single-index models, their proposed test statistics are approximated by certain degenerate fourth-order U-statistics.
        Both the examples provided here and the one treated in Section \ref{sec:gof} are illustrative in nature, and test statistics based on general symmetric $U$-statistics appear more broadly in specification tests.
        Moreover, in such cases, adaptive tests can be constructed in a straightforward manner by applying our Gaussian approximation results, as demonstrated in Section~\ref{sec:gof}.
        %\cite{horowitz2001adaptive}と同じなのではと突っ込まれると思うので, 提案するガウス近似を応用するだけで, 似たようなadaptive testをたくさん作れることを売りとして書く.

        %To conduct an adaptive test, the test statistics are often maximized over a prespecified discrete space of bandwidths and the number of elements in the space increases at a rate of $O(\log n)$ \citep[e.g. Assumption 6 in][]{horowitz2001adaptive}.
        %Beyond that, conducting adaptive tests across multiple groups leads to the test statistics with the form of higher-dimensional $U$-statistics, such as testing the parametric specification of Engel curves for each good, household group and time period in some large markets.
       %\cite{blundell2003nonparametric}のrepeated cross-sections of household-level datain the British Family Expenditure Survey(BFES) (1974 to 1993)は, 時点数, グループ数も多い. 論文では, 20時点で, 22個の商品グループに対してエンゲル曲線の推定を行なっている. 家計は大人が2人いるグループに絞っている(子供がいる, いないは区別していない).
       %Richard Blundellさんは, エンゲル曲線の論文をたくさん書いている.

       In \cref{sec:introduction}, we also mentioned DWAD and consider its application to the nonparametric inference on high-dimensional price elasticities in Section \ref{sec:DWAD}.
       %Section 4で導入したので, ここは省略.
       %It is defined by $\E[f(X) g'(X)]$, where $f$ is a density function of the law of $X$ and $g'$ is a gradient or partial derivative of some function $g$. 
       %To estimate this quantity, \cite{powell1989semiparametric} proposed the following transformation with integration by parts %滑らかさや裾の重さに対する仮定の下で, 
       %\begin{align*}
       %    \E[f(X) g'(X)] = \int g'(x)f(x)^2 dx= -2 \int g(x)f'(x)f(x)dx = -2\E\left[ g(X)f'(X)\right],
       %\end{align*} 
       %and its kernel-based estimator is given by the following second-order $U$-statistics:
       %\begin{align*}
        %    J_2(\psi_n), \quad \text{with} \quad \psi_n = \frac{-2}{n(n-1)h_n^{d+1}} K'\left( \frac{X_i - X_j}{h_n}\right)(Y_i - Y_j).
       %\end{align*}
        Another use of DWADs is to estimate finite-dimensional parameters in the single-index model \citep{powell1989semiparametric}. 
        Letting $Z_i=(X_i,Y_i)$ $(i=1,\dots,n)$ be i.i.d.~observations of $Z=(X,Y)$, where $X$ is a random vector in $\mathbb R^d$ and $Y$ is a random variable,
        the semiparametric single index model is given by 
        \[
            Y_i = g(X_i^\top\theta) + \varepsilon_i, \quad \E[\varepsilon_i \mid X_i] = 0,
        \]
        where $g : \mathbb{R} \to \mathbb{R}$ is an unknown link function and $\theta\in\mathbb{R}^d$ is the parameter of interest. 
        %Also letting $\tilde{g}(X_i) \coloneqq g(X_i^\top\theta)$, since $\tilde{g}'(X_i) \coloneqq \nabla_{X_i} g(X_i^\top\theta)  = g'(X_i^\top \theta)\theta$, it can be seen that $\E[f(X_i)\tilde{g}'(X_i)]$ is proportional to $\theta$:
        Also, since $\tilde{g}'(X_i) \coloneqq \nabla_{X_i} g(X_i^\top\theta)  = g'(X_i^\top \theta)\theta$, it can be seen that $\E[f(X_i)\tilde{g}'(X_i)]$ is proportional to $\theta$:
        \[
            \E\left[ f(X_i) g'(X_i^\top \theta)\right] \theta = \E[f(X_i)\tilde{g}'(X_i)].
        \]
        Since the parameter of the single index model is identified only up to scale, an estimator for $\E[f(X_i)\tilde{g}'(X_i)]$ is also one of the estimators for $\theta$. 
        Although the single index model includes various limited dependent variable models, a useful special case is the semiparametric Type-I Tobit model and our developed Gaussian approximation results make it possible to examine cases with high-dimensional censored outcomes, such as top-coded incomes grouped by occupations in large labor markets and high-dimensional corner solutions in markets with numerous goods, without relying on the normality and homoscedasticity of the error term assumed in the standard method \citep{tobin1958corner,amemiya1973regression}. 
        %Type-I Tobit Modelは元々は, 耐久財の消費が端点解になってしまう状況を分析するために開発されたので, それを知っている人にはこれは自然な文章な気がするものの, Tobit Modelに初めて出会う人は言いたいことがわからない文章になっているかもしれない.
    
        One can also use DWADs to conduct statistical inference on the parameters of nonseparable models \citep[e.g.][]{altonji2012estimating} or to test whether marginal parameters satisfy the properties or conditions implied by economic theory \citep[e.g.][]{stoker1989tests,hardle1991empirical,deaton1998parametric,coppejans2005kernel,campbell2011competition,dong2022estimation}. 
        Let $X = (X_1^\top, X_2^\top)^\top$ and $Y=m(X_1, \varepsilon)$, where $m(\cdot)$ is an unknown function, and $\varepsilon$ is an unobservable random variable.
        Suppose that we are interested in estimating the marginal effect of $X_1$
        \[
            \theta \coloneqq \mathbb{E}\left[w(X_1,X_2) \frac{\partial}{\partial X_1}m(X_1,\varepsilon) \right],
        \]
        where $w(\cdot)$ is a known weight function. 
        Then, under the assumption that $X_1$ is independent of $\varepsilon$ conditional on $X_2$ together with some regularity conditions, paramter of interest in nonseparable model is given by
        \[
            \theta =  \E\left[w(X_1,X_2) \frac{\partial}{\partial X_1}g( X_1,X_2) \right],
        \]
        where $g(X) \coloneqq \mathbb{E}[Y\mid X]$. 
        $\theta$ captures the weighted average marginal effect of $X_1$. 
        This parameter includes other marginal parameters of potential interest in economics that are not always expressed as the parameter of nonseparable models.
        Although any strictly positive weight function can be used in testing the hypothesis ``$\E\left[ w(X_1, X_2) \partial m(X_1,\varepsilon)/\partial X_1\right] = \text{(some constant)} $'', choosing a density weight enables the complete removal of the random denominator problem.
        Among many marginal parameters of potential interest, as illustrative example where our Gaussian approximation results prove useful, we consider the inference on the high-dimensional price elasticities building on the idea of \cite{deaton1998parametric}.  
        %Although they considered estimation problems, it can be used to classify goods, in terms of price effect, into gross substitutes or complements by testing if the partial derivatives of the demand functions with respect to cross-price are non-negative or not. 
        %If the number of goods is $p$, the total number of such partial derivatives amounts to $p^2$.
        %A similar classification such as ordinary and Giffen goods, as well as superior and inferior goods, and necessity and luxury goods, follows the same approach. 
        Another potential application is to test whether some marginal parameters satisfy  conditions implied by economic theory.
        As a specific example, \cite{coppejans2005kernel} tested, using repeated cross-sectional data, whether a labor market is competitive by examining the hypothesis that the average wage equals the marginal wage with respect to working hours on average. 
        Notably, they conducted 36 separate tests for each of the 12 groups of occupations and 3 time points, and while they did not, similar tests based on various grouping criteria such as gender or income level, as well as those covering additional time points and occupations, could also be of interest, and that kind of multiple testing settings will be related to our developed Gaussian approximation results.

Finally, this study, in the current version, does not fully cover Gaussian approximations for weighted $U$-statistics. 
As a result, it cannot accommodate frameworks such as weak-many instrumental variables asymptotics \citep{chao2012asymptotic} or many covariates asymptotics \citep{cattaneo2018alternative,cattaneo2018inference}, due to the involvement of the inverse of a product of projection matrices in dominant terms in such settings.
We plan to address such situations in future research.

\clearpage

\appendix

\vspace{1cm}
\begin{center}
{\LARGE
{\bf Appendix}
}
\end{center}

\section{Proofs for Section \ref{sec:main}} \label{sec:proof_main}

Throughout the discussions, we will frequently use the following elementary inequality, sometimes without reference. For random variables $\xi_1,\dots,\xi_N$ and $1\leq m\leq q$,
\ben{\label{eq:max-out}
\norm{\max_{i\in[N]}|\xi_i|}_{L^m(\pr)}
\leq N^{1/q}\max_{i\in[N]}\norm{\xi_i}_{L^q(\pr)}.
}

We first introduce two main ingredients of the proof: High-dimensional CLTs via generalized exchangeable pairs and maximal inequalities. 
These results are proved later (see \cref{sec:proof-aux}). 
After that, we prove the main results presented in \cref{sec:main}. 

\subsection{High-dimensional CLTs via generalized exchangeable pairs} \label{subsec:hdgexch}

%The proof relies on the following high-dimensional CLT via generalized exchangeable pairs. 
To effectively utilize the techniques developed in \cite{DoPe17,DoPe19}, it is convenient to have a high-dimensional CLT based on Stein's method of exchangeable pairs. 
While such a result has already been established in the literature (see Theorem 1.2 in \cite{fang2021high} for the non-degenerate covariance matrix case and Proposition 2 in \cite{cheng2022gaussian} for the possibly degenerate covariance matrix case), these results require the exchangeable pairs to satisfy the so-called approximate linear regression property (see Eq.(1.6) in \cite{fang2021high} and Eq.(10) in \cite{cheng2022gaussian}), which is not the case for the standard construction of exchangeable pairs for general symmetric $U$-statistics. 
For this reason, we develop the following new version, which can be seen as a variant of \cite[Theorem 7.1]{fang2023p} that concerns a bound in the $p$-Wasserstein distance. 
See also \cite{zhang2022berry} and \cite{dobler2023normal} for related results in the univariate setting. 
%In addition, since the standard construction of exchangeable pairs for general symmetric $U$-statistics does not have the so-called approximate linear regression property properly, we would like to work without this condition. 
\begin{theorem}\label{thm:gexch}
Let $(Y,Y')$ be an exchangeable pair of random variables taking values in a measurable space $(E,\mcl E)$. Let $\mathsf{W}:E\to\mathbb R^p$ be an $\mcl E$-measurable function, and set $W:=\mathsf{W}(Y),W':=\mathsf{W}(Y')$ and $D:=W'-W$. 
Suppose that there exists an antisymmetric $\mcl E^{\otimes2}$-measurable function $\mathsf{G}:E^2\to\mathbb R^p$ in the sense that $\mathsf{G}(Y,Y')=-\mathsf{G}(Y',Y)$ and such that $G:=\mathsf{G}(Y,Y')$ satisfies
\ben{\label{eq:nlr}
\E[G\mid Y]=- (W+R)
} 
for some random vector $R$ in $\mathbb R^p$. 
Furthermore, let $\Sigma$ be a $p\times p$ positive semidefinite symmetric matrix such that $\ul\sigma:=\min_{j\in[p]}\sqrt{\Sigma_{jj}}>0$. 
Then, there exists a universal constant $C>0$ such that for any $\eps>0$, 
\besn{\label{eq:gech}
&\sup_{A\in\mathcal R_p}\left|\pr(W\in A)-\pr(Z\in A)\right|\\
&\leq \frac{C}{\ul\sigma}\bra{
\E\sbra{\|R^\eps\|_\infty}\sqrt{\log p}
+\eps^{-1}\E\sbra{\|V^\eps\|_\infty}(\log p)^{3/2}
+\eps^{-3}\E\sbra{\Gamma^\eps}(\log p)^{7/2}
+\eps\sqrt{\log p}
},
}
where $Z\sim N(0,\Sigma)$, $\beta=\eps^{-1}\log p$, 
\ba{
R^\eps&:=R+\E[G1_{\{\|D\|_\infty>\beta^{-1}\}}\mid Y],&
V^\eps&:=\frac{1}{2}\E[GD^\top1_{\{\|D\|_\infty\leq\beta^{-1}\}}\mid Y]-\Sigma,
}
and
\[
\Gamma^\eps:=\max_{j,k,l,m\in[p]}\E[|G_jD_kD_lD_m|1_{\{\|D\|_\infty\leq\beta^{-1}\}}\mid Y].
\]
\end{theorem}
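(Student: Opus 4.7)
The plan is to adapt the softmax-smoothed Stein's method framework of \cite{CCKK22} (see Lemma A.1 therein) from the i.i.d.\ sum setting to the present generalized exchangeable pair setting, substituting the antisymmetry of $\mathsf{G}$ combined with \eqref{eq:nlr} for the classical linear regression identity $\E[W'-W\mid Y]=-\lambda W$ used there. By the standard smoothing step, for any $A\in\mcl R_p$ I would approximate $1_A$ by a smooth $g_A$ built via the softmax $F_\beta(w) = \beta^{-1}\log\sum_{j=1}^p e^{\beta w_j}$ with $\beta = \eps^{-1}\log p$ plus a one-dimensional mollifier, absorbing the resulting approximation error into the additive $\eps\sqrt{\log p}/\ul\sigma$ term via Nazarov's anti-concentration inequality for $Z\sim N(0,\Sigma)$ (which uses $\ul\sigma>0$). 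Letting $f$ solve the Stein PDE $\langle \Sigma,\nabla^2 f\rangle - w\cdot\nabla f = g_A - \E[g_A(Z)]$, the Ornstein--Uhlenbeck representation of $f$ together with the softmax derivative bounds $\sum_{|\alpha|=k}\|\partial^\alpha F_\beta\|_\infty \lesssim \beta^{k-1}$ yields the aggregate bounds $\sum_{|\alpha|=k}\|\partial^\alpha f\|_\infty \lesssim \beta^{k-1}\sqrt{\log p}/\ul\sigma$ for $k=1,\dots,4$.

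The core Stein-method step uses the fact that exchangeability of $(Y,Y')$ and antisymmetry of $\mathsf{G}$ yield, for any bounded measurable scalar $h$ and any $j\in[p]$, the swap identity $\E[G_j h(W)] = -\tfrac{1}{2}\E[G_j(h(W')-h(W))]$. Applied to $h=\partial_j f$, summed over $j$, and combined with \eqref{eq:nlr} after truncating $\E[G\cdot\nabla f(W)]$ at $\{\|D\|_\infty\leq\beta^{-1}\}$ --- with the complementary piece of $\E[G\mid Y]$ being absorbed into $R^\eps$ --- one derives
\[
\E[\mcl L f(W)] = -\E\langle V^\eps,\nabla^2 f(W)\rangle + \E[R^\eps\cdot\nabla f(W)] - \tfrac{1}{4}\E[G\cdot\nabla^3 f(W)[D,D]\,1_{\{\|D\|_\infty\leq\beta^{-1}\}}] - \tfrac{1}{2}\E[G\cdot R_3\,1_{\{\|D\|_\infty\leq\beta^{-1}\}}],
\]
where $R_3$ denotes the third-order Taylor remainder of $\nabla f$ at $W$ (cubic in $D$). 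The first two terms are controlled directly by the aggregate derivative bounds, yielding $\eps^{-1}(\log p)^{3/2}\E[\|V^\eps\|_\infty]/\ul\sigma$ and $\sqrt{\log p}\,\E[\|R^\eps\|_\infty]/\ul\sigma$, respectively, and the cubic-in-$D$ remainder $R_3$ fits the form of $\Gamma^\eps$ and contributes $\eps^{-3}(\log p)^{7/2}\E[\Gamma^\eps]/\ul\sigma$ via $\sum_{|\alpha|=4}\|\partial^\alpha f\|_\infty\lesssim\beta^3\sqrt{\log p}/\ul\sigma$.

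The main obstacle is the middle, only-quadratic-in-$D$ term $\E[G\cdot\nabla^3 f(W)[D,D]\,1_{\{\|D\|_\infty\leq\beta^{-1}\}}]$, which does not directly fit the form of $\Gamma^\eps$ (a naive bound would involve $\max_{j,k,l}\E[|G_j D_k D_l|\,1_{\{\|D\|_\infty\leq\beta^{-1}\}}\mid Y]$ rather than $\Gamma^\eps$). The resolution is to note that this expression is again odd under the swap $(Y,Y')\leftrightarrow(Y',Y)$, so applying the swap identity a second time and performing one further Taylor expansion of $\nabla^3 f(W')-\nabla^3 f(W)$ gains one extra factor of $D$ at the cost of one more derivative of $f$, recasting it as a cubic-in-$D$, quartic-derivative expression that is again controlled by $\eps^{-3}(\log p)^{7/2}\E[\Gamma^\eps]/\ul\sigma$ using the same aggregate bound on $\sum_{|\alpha|=4}\|\partial^\alpha f\|_\infty$. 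Assembling all four contributions together with the smoothing error yields \eqref{eq:gech}.
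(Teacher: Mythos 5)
Your overall architecture matches the paper's proof: softmax smoothing with the $\eps\sqrt{\log p}/\ul\sigma$ error absorbed via Nazarov's inequality, the Stein solution given by Gaussian interpolation, the swap identity $\E[G_jh(W)]=-\tfrac12\E[G_j\{h(W')-h(W)\}]$ (the paper's $\E[G\cdot\{\nabla f(W)+\nabla f(W')\}1_{\{\|D\|_\infty\leq\beta^{-1}\}}]=0$), absorption of the truncated piece of $\E[G\mid Y]$ into $R^\eps$, and a second symmetrization to upgrade the quadratic-in-$D$ term into a cubic-in-$D$, fourth-derivative term (the Fang--Koike symmetry trick used in the paper's Step 5). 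Your treatment of the $R^\eps$ and $V^\eps$ terms is sound, because these are $\sigma(Y)$-measurable and the uniform aggregate bounds on $\sum_j\|\partial_jf\|_\infty$ and $\sum_{j,k}\|\partial_{jk}f\|_\infty$ (which do incorporate the Nazarov gain) can simply be pulled out.

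The genuine gap is in the final bounding of the two fourth-order terms. Bounding them by $\bra{\sup_w\sum_{j,k,l,m}|\partial_{jklm}f(w)|}\,\E\sbra{\max_{j,k,l,m}|G_jD_kD_lD_m|1_{\{\|D\|_\infty\leq\beta^{-1}\}}}$, as your argument does, produces the expectation of a maximum, not $\E[\Gamma^\eps]$. Since $\Gamma^\eps$ takes the maximum over indices \emph{after} conditioning on $Y$, conditional Jensen gives $\E[\Gamma^\eps]\leq\E[\max_{j,k,l,m}|G_jD_kD_lD_m|1_{\{\|D\|_\infty\leq\beta^{-1}\}}]$, so the quantity you control is the larger one and \eqref{eq:gech} does not follow; this distinction is precisely what makes the theorem usable in the $U$-statistic application, where the maximum of conditional expectations is far smaller than the expected maximum over $p^4$ products. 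To recover $\E[\Gamma^\eps]$ you must make the derivative factor independent of $Y'$ \emph{before} conditioning: the paper majorizes $|\partial_{jklm}m^y(w)|\leq U^y_{jklm}(w)$ by functions satisfying $U^y_{jklm}(w+w')\lesssim U^y_{jklm}(w)$ whenever $\|w'\|_\infty\leq\beta^{-1}$ and $\sum_{j,k,l,m}U^y_{jklm}(w)\lesssim\eps^{-4}\log^3p$ (Eqs.~(C.5)--(C.8) of \cite{CCKK22}); on the truncation event (this is why the cutoff is tied to $\beta^{-1}$) this replaces the fourth derivative at the $D$-shifted point by a quantity depending only on $(Y,Z)$, so that conditioning on $Y$ yields $\E[|G_jD_kD_lD_m|1_{\{\|D\|_\infty\leq\beta^{-1}\}}\mid Y]\leq\Gamma^\eps$ before the sum over indices, and a final localization-plus-Nazarov step supplies the factor $\eps\sqrt{\log p}/\ul\sigma$. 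The same repair is needed both for your recast middle term and for your Taylor remainder $R_3$, whose fourth derivative is evaluated at an intermediate point depending on $D$.
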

%
%A proof of this theorem is given in \cref{sec:gexch}.
\begin{rmk}[Comparison to \cite{cheng2022gaussian}]
When $G=\Lambda^{-1}D$ with $\Lambda$ a $p\times p$ invertible matrix, we can derive the following bound from Proposition 2 in \cite{cheng2022gaussian} and Nazarov's inequality \cite[Lemma A.1]{CCK17}: 
\ba{
&\sup_{A\in\mathcal R_p}\left|\pr(W\in A)-\pr(Z\in A)\right|\\
&\leq C_{\ul\sigma}\Biggl(
\eps^{-1}\E\sbra{\|R\|_\infty}
+\eps^{-1}(\log p)^{3/2}\E\sbra{\norm{\frac{1}{2}\E[(\Lambda^{-1}D)D^\top\mid W]-\Sigma}_\infty}\\
&\qquad\quad+\eps^{-3}(\log p)^{7/2}\E\sbra{\max_{j,k,l,m\in[p]}\E[|(\Lambda^{-1}D)_jD_kD_lD_m|\mid W]}\\
&\qquad\quad+\eps^{-4}(\log p)^{3}\E\sbra{\max_{j,k,l,m\in[p]}|(\Lambda^{-1}D)_jD_kD_lD_m|1_{\|D\|_\infty>\beta^{-1}}}
+\eps\sqrt{\log p}
\Biggr).
}
A simple computation shows that \eqref{eq:gech} implies a similar bound but replaces $\eps^{-1}$ in the first term and $\eps^{-4}(\log p)^3$ in the fourth term by $\sqrt{\log p}$ and $\eps^{-3}(\log p)^{7/2}$, respectively. 
Since the above bound is trivial if $\eps\sqrt{\log p}\geq1$ due to the last term, our bound is always better.  
\end{rmk}

Although \cref{thm:gexch} is per se new, its proof is essentially a minor modification of the proof of \cite[Lemma A.1]{CCKK22} that concerns sums of independent random vectors. 
The real new problem here is how to bound the quantities that appear on the right-hand side of \eqref{eq:gech}. 
In our application, we regard $X=(X_i)_{i=1}^n$ as a random element taking values in $(S^n,\mcl S^{\otimes n})$ and construct an exchangeable pair $(X,X')$ in a standard way. 
Then we apply \cref{thm:gexch} to $(Y,Y')=(X,X')$ with $G_j=\sum_{s=0}^rs^{-1}\binom{n-s}{r-s}\{J_{s,X'}(\pi_s\psi_j)-J_{s,X}(\pi_s\psi_j)\}$; see Step 1 of the proof of \cref{thm:clt-ustat} for details. 
We remark that \cite{dobler2023normal} has employed essentially the same construction to obtain 1-Wasserstein bounds in the univariate case. 
To bound the main term of $\E[\|V^\eps\|_\infty]$, which is $\E[\|V\|_\infty]$ with $V$ defined by \eqref{def-V}, we will utilize the fact that we can explicitly write down the Hoeffding decomposition of $\E[GD^\top\mid X]$ thanks to Proposition 2.6 in \cite{DoPe19} and Lemma 3.3 in \cite{DoPe17} (see Eq.\eqref{v-hoef}). Then, we can invoke sharp maximal inequalities for $U$-statistics developed in the next subsection to bound $\E[\|V\|_\infty]$ (see \cref{thm:max-is}). The detailed computation is found in Step 2 of the proof of \cref{thm:clt-ustat}. 
Meanwhile, the treatment of the remaining terms is more involved. 
For the case $r=2$, we develop a sufficiently sharp maximal inequality tailored to the present situation; see \cref{influence-mom}. 
For the general case, it seems hard to directly bound the remaining terms, especially $\E[\|\Gamma^\eps\|_\infty]$. 
For this reason, we instead use the following simplified bound. 
\begin{corollary}\label{coro:exch}
Under the assumptions of \cref{thm:gexch}, there exists a universal constant $C'>0$ such that 
\bm{
\sup_{A\in\mathcal{R}_p}\left|\pr(W\in A)-\pr(Z\in A)\right|\\
\leq \frac{C'}{\ul\sigma}\Biggl(
\E\sbra{\|R\|_\infty}\sqrt{\log p}
+\sqrt{\E\sbra{\|V\|_\infty}}\log p
+\bra{\E[\|G\|_\infty\|D\|_\infty^3]}^{1/4}(\log p)^{5/4}
\Biggr),
}
where
\ben{\label{def-V}
V:=\frac{1}{2}\E[GD^\top\mid Y]-\Sigma.
}
\end{corollary}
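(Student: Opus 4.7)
The plan is to start from the bound \eqref{eq:gech} in \cref{thm:gexch} and, after replacing the three truncated quantities $R^\eps$, $V^\eps$, $\Gamma^\eps$ by their untruncated counterparts plus a common remainder driven by the cubic moment $M_3:=\E[\|G\|_\infty\|D\|_\infty^3]$, optimize the free parameter $\eps$ to obtain the claimed bound.

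\emph{First step (truncation removal).} For an arbitrary $\eps>0$, recall $\beta=\eps^{-1}\log p$. The key device is the trivial pointwise bound $1_{\{\|D\|_\infty>\beta^{-1}\}}\leq \beta^k\|D\|_\infty^k$ valid for every $k\geq 0$. Taking $k=3$ in the definition of $R^\eps$ and $k=2$ in the definition of $V^\eps$ (together with $\|GD^\top\|_\infty\leq\|G\|_\infty\|D\|_\infty$), and trivially bounding $\Gamma^\eps$ by dropping the indicator, one obtains
\begin{align*}
\E[\|R^\eps\|_\infty]&\leq \E[\|R\|_\infty]+\beta^3 M_3,\\
\E[\|V^\eps\|_\infty]&\leq \E[\|V\|_\infty]+\tfrac{1}{2}\beta^2 M_3,\\
\E[\Gamma^\eps]&\leq M_3.
\end{align*}
Substituting into \eqref{eq:gech} and noticing that both extra $M_3$ contributions collapse into the $\eps^{-3}(\log p)^{7/2}$ coefficient (since $\sqrt{\log p}\cdot\beta^3=\eps^{-3}(\log p)^{7/2}$ and $\eps^{-1}(\log p)^{3/2}\cdot\beta^2=\eps^{-3}(\log p)^{7/2}$) yields the intermediate bound
\[
\frac{C}{\ul\sigma}\Bigl(\E[\|R\|_\infty]\sqrt{\log p}+\eps^{-1}\E[\|V\|_\infty](\log p)^{3/2}+\eps^{-3}M_3(\log p)^{7/2}+\eps\sqrt{\log p}\Bigr),
\]
valid for every $\eps>0$ up to a universal constant.

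\emph{Second step (optimization).} I will then set $\eps_1:=(\E[\|V\|_\infty]\log p)^{1/2}$ and $\eps_2:=(M_3(\log p)^3)^{1/4}$, which balance the fourth term against the second and third terms, respectively, and take $\eps:=\eps_1\vee\eps_2$. A direct computation gives $\eps^{-1}\E[\|V\|_\infty](\log p)^{3/2}\leq \sqrt{\E[\|V\|_\infty]}\log p$, $\eps^{-3}M_3(\log p)^{7/2}\lesssim M_3^{1/4}(\log p)^{5/4}$, and $\eps\sqrt{\log p}\leq \sqrt{\E[\|V\|_\infty]}\log p+M_3^{1/4}(\log p)^{5/4}$. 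Adding these up produces the stated bound. The degenerate case $\E[\|V\|_\infty]=M_3=0$ is handled by letting $\eps\downarrow 0$ (or by noting that the original bound is trivial when it exceeds one).

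I do not anticipate any real obstacle: the argument is essentially bookkeeping. The single observation worth highlighting is that the tail corrections to all three of $R^\eps$, $V^\eps$, $\Gamma^\eps$ are naturally controlled by the \emph{same} cubic moment $M_3$, which is precisely what allows a single choice of $\eps$ to simultaneously balance all truncation errors and deliver the simplified bound.
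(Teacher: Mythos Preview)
Your proposal is correct and follows essentially the same route as the paper: the same truncation-removal estimates for $R^\eps$, $V^\eps$, $\Gamma^\eps$ via $1_{\{\|D\|_\infty>\beta^{-1}\}}\leq\beta^k\|D\|_\infty^k$, the same observation that all remainder terms collapse to the $\eps^{-3}(\log p)^{7/2}M_3$ form, and the same balancing choice of $\eps$ (the paper takes $\eps_1+\eps_2$ rather than $\eps_1\vee\eps_2$, which is of course equivalent up to a constant). The only cosmetic difference is the treatment of the degenerate case: the paper notes that $G=0$ or $D=0$ forces $V=-\Sigma$ and hence $\sqrt{\E[\|V\|_\infty]}\geq\ul\sigma$, making the claimed bound trivially $\geq C'$; your $\eps\downarrow0$ argument also works.
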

In our application, the first term of the above bound vanishes. 
Since $\E[\|G\|_\infty\|D\|_\infty^3]$ is essentially a maximal moment of degenerate $U$-statistics, we can again invoke \cref{thm:max-is} to bound it.

\subsection{Maximal inequalities} \label{subsec:maximal_inequality}

In order to obtain sharp bounds for quantities appearing in \cref{thm:gexch} and \cref{coro:exch}, we need to extend Lemmas 8 and 9 in \cite{CCK15} in two directions. These extensions would be of independent interest. 

The first direction is extensions to $U$-statistics. 
%To bound the quantities appearing in the bound of \cref{coro:exch}, we need a sufficiently sharp maximal inequality for $U$-statistics. 
%We will use the following version, which can be seen as an extension of \cite[Corollary 2]{IbSh02} to a maximal inequality.  
\begin{theorem}\label{thm:max-is}
Let $q\geq1$ and $\psi_j\in L^{q\vee2}(P^r)$ $(j\in[p])$ be degenerate, symmetric kernels of order $r\geq1$. 
Then there exists a constant $C_r$ depending only on $r$ such that
\ben{\label{eq:max-is-deg}
\norm{\max_{j\in[p]}|J_r(\psi_j)|}_{L^q(\pr)}
\leq C_r\max_{0\leq s\leq r}n^{\frac{r-s}{2}}(q+\log p)^{\frac{r+s}{2}}\norm{\max_{j\in[p]}M\bra{P^{r-s}(\psi_j^2)}}_{L^{1\vee\frac{q}{2}}(\pr)}^{1/2}.
}
\end{theorem}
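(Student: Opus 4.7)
My plan is to combine a scalar moment bound for a single degenerate symmetric $U$-statistic with the max-out trick \eqref{eq:max-out}. As a first step, I would establish, for each single degenerate symmetric kernel $\psi$ of order $r$ and every $m\geq 2$, the scalar moment inequality
\begin{equation*}
\|J_r(\psi)\|_{L^m(\pr)} \leq C_r \max_{0 \leq s \leq r} n^{(r-s)/2} m^{(r+s)/2} \|M(P^{r-s}(\psi^2))\|_{L^{1 \vee m/2}(\pr)}^{1/2}.
\end{equation*}
This is the order-$r$ analog of Rosenthal's inequality. The natural route is to decouple (de la Pe\~na--Gin\'e) and iterate a conditional Rosenthal bound along the $r$ coordinates. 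At each peeling step, the independent sum conditional on the remaining coordinates splits into a ``variance'' contribution bringing in a factor $\sqrt{m}$ together with a conditional $P(\cdot)$, or a ``maximum'' contribution bringing in a factor $m$ together with a conditional $M(\cdot)$. Tallying $s$ maxima and $r-s$ integrations over the $r$ peelings produces exactly the $n^{(r-s)/2}m^{(r+s)/2}$ scaling with envelope $M(P^{r-s}(\psi^2))$. Alternatively, the peeling yields an outer-index martingale structure to which \cref{max-rosenthal} can be applied at each level.

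Once the scalar inequality is in place, I would apply \eqref{eq:max-out} with exponent $q' := q + \log p$ to obtain
\begin{equation*}
\left\|\max_{j \in [p]}|J_r(\psi_j)|\right\|_{L^q(\pr)} \leq e\cdot\max_{j \in [p]} \|J_r(\psi_j)\|_{L^{q'}(\pr)},
\end{equation*}
then invoke the scalar bound with $m=q'$ and push $\max_j$ inside the Lebesgue norm via $\max_j\|\xi_j\|_{L^\alpha}\leq \|\max_j\xi_j\|_{L^\alpha}$ (valid since $M(P^{r-s}(\psi_j^2))\geq 0$). A naive implementation would yield the claim but with the envelope sitting in $L^{1\vee q'/2}$ rather than $L^{1\vee q/2}$. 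The sharper Lebesgue exponent is recovered via an Adamczak-style decomposition for maxima of degenerate $U$-statistics that directly produces an envelope in $L^{1\vee q/2}$ within the $q$-weighted correction term; the residual expectation term is then bounded separately by the route above with $q=0$ (so $q'=\log p$), which does not re-introduce any $L^q$ envelope.

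The principal technical obstacle is establishing the Adamczak-type decomposition for maxima of order-$r$ degenerate $U$-statistics in exactly the right shape, so that the envelope lies in $L^{1\vee q/2}$ rather than $L^{1\vee(q+\log p)/2}$. The iterated conditional Rosenthal step is essentially combinatorial once decoupling is in place, but careful bookkeeping of the $M$ and $P^{r-s}$ operators through each level of conditioning is required; I expect the martingale maximal inequalities \cref{max-rosenthal,lem:nonneg-ada} to provide the cleanest machinery for carrying out this bookkeeping.
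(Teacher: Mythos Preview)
Your route is genuinely different from the paper's, and the part you flag as ``the principal technical obstacle'' is in fact avoidable entirely.

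The paper does not start from a scalar bound and then max-out. Instead, it applies the max-out trick \emph{before} any peeling, inside a symmetrization step that keeps $\max_j$ in place throughout. Concretely, the paper's \cref{u-nemirovski} randomizes $J_r(\psi_j)$ by Rademacher signs, applies \eqref{eq:max-out} \emph{conditionally on $X$}, and then uses hypercontractivity of Rademacher chaoses to return to second moments conditionally on $X$. This yields, for every $q\ge1$,
\[
\norm{\max_{j\in[p]}|J_r(\psi_j)|}_{L^q(\pr)}
\le C_r(q+\log p)^{r/2}\norm{\max_{j\in[p]}J_r(\psi_j^2)}_{L^{1\vee q/2}(\pr)}^{1/2},
\]
so the envelope already sits in $L^{1\vee q/2}$, with $\max_j$ inside and $\psi_j^2$ already squared---no Adamczak decomposition or exponent repair is needed. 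The remaining quantity $\|\max_j J_r(\psi_j^2)\|_{L^{1\vee q/2}}$ is a maximum of \emph{non-negative} $U$-statistics, and the paper handles it by \cref{lem:max-is}, which is the companion result proved by induction on $r$ via the Hoeffding decomposition.

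By contrast, your plan (scalar Rosenthal-type bound, then max-out at exponent $q'=q+\log p$) does produce the wrong Lebesgue exponent on the envelope, as you note. Your proposed fix---an ``Adamczak-style decomposition'' that places the envelope in $L^{1\vee q/2}$---is left unspecified, and it is not clear how the residual term would be controlled without re-introducing a $(q+\log p)$-dependent Lebesgue norm. Even if it can be made to work, it is substantially heavier than the paper's two-line reduction (\cref{u-nemirovski} followed by \cref{lem:max-is}). The key idea you are missing is that conditioning on $X$ lets hypercontractivity absorb the full $(q+\log p)^{r/2}$ in one shot while the $\max_j$ remains pointwise, which immediately delivers the squared kernel inside the max at the correct moment order.
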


\begin{theorem}\label{lem:max-is}
Let $q\geq1$ and $\psi_j\in L^{q}(P^r)$ $(j\in[p])$ be non-negative, symmetric kernels of order $r\geq0$. 
Then there exists a constant $c_r\geq1$ depending only on $r$ such that
\ben{\label{eq:max-is}
\norm{\max_{j\in[p]}J_r(\psi_j)}_{L^q(\pr)}
\leq c_r\max_{0\leq s\leq r}n^{r-s}(q+\log p)^s\norm{\max_{j\in[p]}M(P^{r-s}\psi_j)}_{L^q(\pr)}.
}
\end{theorem}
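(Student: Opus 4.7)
The proof proceeds by induction on the order $r$. The base case $r = 0$ is immediate since $J_0(\psi_j) = \psi_j$ equals $M(P^0\psi_j)$. For $r = 1$, the statistic $J_1(\psi_j) = \sum_{i=1}^n \psi_j(X_i)$ is a sum of i.i.d.\ non-negative random variables, and the desired bound follows from an $L^q$-extension of Lemma 9 in \cite{CCK15} applied to $\psi_j(X_i)$: the $s=0$ contribution is $n \max_j P\psi_j = n \norm{\max_j M(P\psi_j)}_{L^q(\pr)}$ (since $P\psi_j$ is a constant so $M(P\psi_j)=P\psi_j$), while the $s=1$ contribution is $(q+\log p)\norm{\max_j M(\psi_j)}_{L^q(\pr)}$.

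For $r \geq 2$, I would combine the Hoeffding decomposition
\[
J_r(\psi_j) = \binom{n}{r}P^r\psi_j + \sum_{s=1}^r \binom{n-s}{r-s}\,J_s(\pi_s\psi_j)
\]
with the degenerate maximal inequality \cref{thm:max-is} applied to each $J_s(\pi_s\psi_j)$ for $s \geq 1$. The $s=0$ term directly contributes $\binom{n}{r}\norm{\max_j M(P^r\psi_j)}_{L^q(\pr)} \leq n^r \norm{\max_j M(P^r\psi_j)}_{L^q(\pr)}$, matching the $s=0$ envelope term. For each $s \geq 1$, \cref{thm:max-is} applied to the degenerate kernels $\pi_s\psi_j$ yields, after multiplying by $\binom{n-s}{r-s} \lesssim n^{r-s}$, a bound of the form
\[
n^{r-(s+l)/2}(q+\log p)^{(s+l)/2}\norm{\max_j M(P^{s-l}((\pi_s\psi_j)^2))}_{L^{1\vee q/2}(\pr)}^{1/2}\quad (0 \leq l \leq s).
\]
To translate the second-moment quantity inside back into the first-moment envelope, I would exploit non-negativity of $\psi_j$: by the representation \eqref{eq:hoef-proj} and Cauchy--Schwarz,
\[
(\pi_s\psi_j(x_1,\dots,x_s))^2 \leq C_s \sum_{k=0}^s\sum_{\substack{T\subseteq[s]\\|T|=k}}(P^{r-k}\psi_j(x_T))^2,
\]
and by Jensen's inequality $(P^{r-k}\psi_j)^2 \leq P^{r-k}(\psi_j^2)$. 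A pointwise bound $\psi_j^2 \leq \psi_j \cdot M(\psi_j)$ (valid at any realised tuple, since $\psi_j(X_{i_1},\dots,X_{i_r}) \leq M(\psi_j)$) then factors $P^{r-m}(\psi_j^2)$ into $P^{r-m}\psi_j \cdot M(\psi_j)$. Taking the square root and using Cauchy--Schwarz in $L^q$ produces a product $\norm{\max_j M(P^{r-m}\psi_j)}_{L^q(\pr)}^{1/2}\norm{\max_j M(\psi_j)}_{L^q(\pr)}^{1/2}$ for an appropriate $m \leq s$, which by AM--GM is absorbed into the envelope terms at indices $r - m$ and $r$.

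The main obstacle will be the precise bookkeeping of exponents and the verification that every term produced in the above chain falls within the envelope $\max_{0 \leq s \leq r}n^{r-s}(q+\log p)^s\norm{\max_j M(P^{r-s}\psi_j)}_{L^q(\pr)}$. Concretely, one must check that the geometric mean $n^{r-(s_1+s_2)/2}(q+\log p)^{(s_1+s_2)/2}$ arising from pairing two envelope indices $s_1, s_2$ is bounded by the envelope evaluated at a single index, which reduces to the log-concavity of $s \mapsto (r-s)\log n + s\log(q+\log p)$ combined with the trivial bound $\sqrt{ab} \leq \max(a,b)$. As a backup strategy, should the Hoeffding-based route become intractable, I would attempt a direct induction via the symmetrization identity $r J_r(\psi_j) = \sum_{i=1}^n J_{r-1, X^{(i)}}(\psi_j(X_i, \cdot))$ combined with an exchangeable analogue of Lemma 9 of \cite{CCK15} applied to the outer sum over $i$; the genuine technical hurdle there is the lack of independence between the $n$ summands, which requires a conditional Bernstein-type argument that avoids losing the $n/(q+\log p)$ factor one would pay with the naive bound $\max_j \sum_i \leq \sum_i \max_j$.
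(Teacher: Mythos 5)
There are two genuine problems with your route. First, a circularity: you invoke \cref{thm:max-is} to control each Hoeffding term $J_s(\pi_s\psi_j)$, but in the paper \cref{thm:max-is} is itself deduced from \cref{lem:max-is} (via \cref{u-nemirovski} applied to reduce $\max_j|J_r(\psi_j)|$ to the non-negative $U$-statistic $J_r(\psi_j^2)$). For the Hoeffding terms with $s<r$ you could appeal to the induction hypothesis, but the top term $s=r$ requires \cref{thm:max-is} at order $r$, i.e.\ exactly the statement being proved; without an independent proof of \cref{thm:max-is}, or an explicit self-bounding device, the argument does not close. The paper sidesteps this by never passing through \cref{thm:max-is}: it applies only the symmetrization/hypercontractivity bound (\cref{u-nemirovski}), which keeps the squared kernel inside a $U$-statistic $J_s\bigl((\pi_s\psi_j)^2\bigr)$ evaluated at data tuples, and the self-referential term (the one containing $J_r(\psi_j)$ itself after Cauchy--Schwarz) is absorbed by an AM--GM step of the form $K(q+\log p)^{r/2}\|\max_j M(\psi_j)\|_{L^q(\pr)}^{1/2}\sqrt I\leq \tfrac{K^2}{2}(q+\log p)^{r}\|\max_j M(\psi_j)\|_{L^q(\pr)}+\tfrac I2$, a device absent from your proposal.

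Second, and more fundamentally, your translation of the second-moment quantities back into first-moment envelopes is invalid. The inequality $\psi_j^2\leq\psi_j\cdot M(\psi_j)$ holds only when $\psi_j$ is evaluated at an observed tuple $(X_{i_1},\dots,X_{i_r})$, since $M(\psi_j)$ is the maximum over such tuples. Inside $P^{r-m}(\psi_j^2)$ (or inside $M\bigl(P^{s-l}((\pi_s\psi_j)^2)\bigr)$ produced by \cref{thm:max-is}) the last coordinates are integrated over fresh copies from $P$, so the argument of $\psi_j$ is a mixed point that is not a data tuple and $\psi_j\leq M(\psi_j)$ need not hold there; the claimed factorization $P^{r-m}(\psi_j^2)\leq P^{r-m}\psi_j\cdot M(\psi_j)$ therefore has no justification, and in general second moments of an unbounded kernel cannot be dominated by its first-moment envelopes. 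The paper's proof is arranged precisely so that this issue never arises: after \eqref{eq:hoef-proj} and the $c_r$-inequality, the squares $(P^{r-k}\psi_j)^2$ appear only evaluated at data tuples inside $J_s\bigl((\pi_s\psi_j)^2\bigr)$, where they are linearized as $(P^{r-k}\psi_j)^2(X_{\mathbf i})\leq M(P^{r-k}\psi_j)\,P^{r-k}\psi_j(X_{\mathbf i})$, giving $J_s\bigl((\pi_s\psi_j)^2\bigr)\leq C_r\sum_k n^{s-k}M(P^{r-k}\psi_j)J_k(P^{r-k}\psi_j)$; Cauchy--Schwarz, the induction hypothesis for $k<r$, the absorption step for $k=r$, and the reduction to the regime $q+\log p\leq n$ then complete the proof. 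Your exponent bookkeeping remarks are fine in spirit, but the core translation step must be replaced by this data-tuple linearization rather than a pointwise bound under the integral.
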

%The next result is an extension of \cite[Corollary 1]{IbSh02} to a maximal inequality. 
%We prove this theorem in \cref{sec:max-is}.
We can also regard these inequalities as extensions of Corollaries 2 and 1 in \cite{IbSh02} to maximal inequalities; see \cref{rmk:is}. 
%The proof of these theorems is deferred to \cref{sec:max-is}.
\begin{rmk}[Comparison to \cite{ChKa20}] \label{rem:comparison_maximal}
\cite{ChKa20} have developed local maximal inequalities for $U$-processes indexed by general function classes satisfying certain uniform covering number conditions. 
Their results are particularly applicable to the finite function class $\mcl F:=\{\psi_1,\dots,\psi_p\}$. 
Specifically, since $\mcl F$ is VC type with characteristics $(p,1)$ for envelope $\max_{j\in[p]}|\psi_j|$ in the sense of \cite[Definition 2.1]{ChKa20}, under the assumptions of \cref{thm:max-is}, Corollary 5.5 in \cite{ChKa20} gives the following bound:
\ben{\label{ck-bound}
\E\sbra{\sup_{j\in[p]}|J_r(\psi_j)|}
\leq C_r\bra{n^{\frac{r}{2}}\sup_{j\in[p]}\|\psi_j\|_{L^2(P^r)}\log^{r/2}(np)+n^{\frac{r-1}{2}}\|M_r\|_{L^2(\pr)}\log^r(np)},
} 
where $M_r:=\max_{1\leq i\leq\lfloor n/r\rfloor}\max_{j\in[p]}|\psi_j(X_{(i-1)r+1},\dots,X_{ir})|$. 
Using \cite[Proposition 3]{kontorovich2023decoupling} and Jensen's inequality, one can show
\[
\max_{1\leq s\leq r}n^{\frac{r-s}{2}}(\log p)^{\frac{r+s}{2}}\norm{\max_{j\in[p]}M\bra{P^{r-s}(\psi_j^2)}}_{L^{1}(\pr)}^{1/2}\leq C_rn^{\frac{r-1}{2}}\|M_r\|_{L^2(\pr)}\log^r(np),
\]
so \eqref{eq:max-is-deg} with $q=1$ refines \eqref{ck-bound}. 
In applications, $\|M_r\|_{L^2(\pr)}$ is often comparable to $\|\max_{j\in[p]}M(\psi_j)\|_{L^2(\pr)}$, and the order of their coefficients improves from $O(n^{(r-1)/2}\log^r(np))$ in \eqref{ck-bound} to $O(\log^rp)$ in \eqref{eq:max-is-deg}. 
\end{rmk}

\begin{rmk}[(Sub-)optimality of the bounds]\label{rmk:is}
Since 
\[
\norm{\max_{j\in[p]}M(P^{r-s}\psi_j)}_{L^q(\pr)}\leq n^{s/q}\norm{\max_{j\in[p]}|P^{r-s}\psi_j|}_{L^q(\pr)}
\]
by \eqref{eq:max-out}, the bounds of Theorems \ref{thm:max-is} and \ref{lem:max-is} have the same dependence on $n$ and $\psi_j$ as those of Corollaries 2 and 1 in \cite{IbSh02}, respectively. 
Since the latter results are two-sided, our bound has a correct dependence on $n$ and $\psi_j$ in this sense.  
On the other hand, the dependence on $p$ and $q$ would be sub-optimal. 
For example, in the bound of \cref{thm:max-is}, the coefficient of the standard deviation component $n^{r/2}\max_{j\in[p]}\|\psi_j\|_{L^2(P^r)}$ is $(q+\log p)^{r/2}$, which should be $\sqrt{q+\log p}$ in view of the central limit theorem. 
In fact, when $r=2$ and $\psi_j$ are bounded, we can presumably derive a refined maximal inequality from \cite[Corollary 3.4]{gine2000exponential}. See also \cite{adamczak2006moment} and \cite{ChKu25} for extensions of this result to the cases of $r>2$ and sub-Weibull kernels, respectively. 
%We remark that the bounds for $r>2$ are necessary to bound $\E[\|V\|_\infty]$ even if we are interested only in high-dimensional CLTs for $U$-statistics of order 2. 
\end{rmk}

%For the proof of \eqref{influence-mom-2}, we need extensions of Lemmas 8 and 9 in \cite{CCK15} to martingales and non-negative adapted sequences, respectively. These extensions would be of independent interest. 
The second direction is extensions to martingales and non-negative adapted sequences. 
\begin{lemma}\label{max-rosenthal}
Let $(\xi_i)_{i=1}^N$ be a martingale difference sequence in $\mathbb R^p$ with respect to a filtration $\mathbf G=(\mcl G_i)_{i=0}^N$. 
There exists a universal constant $C$ such that
\ba{
&\norm{\max_{j\in[p]}\max_{n\in[N]}\abs{\sum_{i=1}^n\xi_{ij}}}_{L^m(\pr)}\\
&\leq C \bra{\norm{\max_{j\in[p]}\sqrt{\sum_{i=1}^N\E[\xi_{ij}^2\mid\mcl G_{i-1}]}}_{L^m(\pr)}\sqrt{m+\log p}
+\norm{\max_{i\in[N]}\|\xi_i\|_\infty}_{L^m(\pr)}(m+\log p)}
}
for any $m\geq1$. 
\end{lemma}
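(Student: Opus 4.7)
The plan is to combine a scalar Burkholder--Rosenthal-type martingale moment inequality, applied coordinate by coordinate, with the standard $L^q$-union bound over the $p$ coordinates at the exponent $q = m + \log p$. The scalar ingredient I would invoke is: for every $j \in [p]$ and every $q \geq 2$,
\begin{equation*}
\norm{\max_{n \in [N]}\abs{\sum_{i=1}^n \xi_{ij}}}_{L^q(\pr)}
\leq C \sqrt{q}\,\norm{\sqrt{\sum_{i=1}^N \E[\xi_{ij}^2 \mid \mcl G_{i-1}]}}_{L^q(\pr)}
+ C q\, \norm{\max_{i \in [N]} |\xi_{ij}|}_{L^q(\pr)}.
\end{equation*}
This is classical and can be obtained by Doob's $L^q$-maximal inequality followed by Burkholder--Davis--Gundy and a Freedman-type moment estimate that passes from the square function $\sqrt{\sum_i \xi_{ij}^2}$ to its predictable counterpart (the martingale $[S_j]_N-\langle S_j\rangle_N$ having increments bounded by $\max_i \xi_{ij}^2$).

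Given the scalar bound, I would apply the elementary inequality $\norm{\max_j Y_j}_{L^q}^q \leq \sum_j \norm{Y_j}_{L^q}^q \leq p \max_j \norm{Y_j}_{L^q}^q$, giving $\norm{\max_j Y_j}_{L^q} \leq p^{1/q}\max_j \norm{Y_j}_{L^q}$. Choosing $q = m + \log p$ makes $p^{1/q}\leq e$, and combining with Lyapunov's inequality $\norm{\cdot}_{L^m} \leq \norm{\cdot}_{L^q}$ and the trivial $\max_j \norm{Y_j}_{L^q} \leq \norm{\max_j |Y_j|}_{L^q}$ yields
\begin{equation*}
\norm{\max_{j,n} \abs{\sum_{i=1}^n \xi_{ij}}}_{L^m(\pr)}
\leq eC\sqrt{m+\log p}\,\norm{\max_j \sqrt{\sum_{i=1}^N \E[\xi_{ij}^2 \mid \mcl G_{i-1}]}}_{L^{m+\log p}(\pr)}
+ eC(m+\log p)\,\norm{\max_i \|\xi_i\|_\infty}_{L^{m+\log p}(\pr)}.
\end{equation*}
This already gives the desired inequality up to the replacement of $L^m$ by $L^{m+\log p}$ on the right-hand side.

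To pass from $L^{m+\log p}$ norms into $L^m$ norms without inflating the coefficients, I would use a truncation--stratification argument. Pick a threshold $c_0 > 0$ (eventually of order $\norm{\max_i \|\xi_i\|_\infty}_{L^m(\pr)}$) and split $\xi_i = \xi_i' + \xi_i''$ with $\xi_i' := \xi_i 1_{\{\|\xi_i\|_\infty \leq c_0\}} - \E[\xi_i 1_{\{\|\xi_i\|_\infty \leq c_0\}} \mid \mcl G_{i-1}]$, which remains a martingale difference whose $\ell^\infty$-increments are bounded by $2c_0$ and whose predictable quadratic variation is dominated by that of $\xi_i$. A vector Freedman--Bernstein tail bound (scalar Freedman coordinate-wise plus a union bound over $j$) then gives a subgaussian--subexponential tail for $\max_{j,n}\abs{\sum_{i \leq n}\xi_{ij}'}$ with deterministic noise parameter $c_0$. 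A dyadic slicing on the random $\max_j \sqrt{\sum_i \E[(\xi_{ij}')^2 \mid \mcl G_{i-1}]}$, followed by $L^m$-moment integration of the tail and summation of the resulting geometric series, yields the claimed $L^m$ form for the bounded part. The contribution of $\sum_i \xi_i''$ is controlled directly using $|\sum_i \xi_i''| \leq \text{const}\cdot N \max_i\|\xi_i\|_\infty 1_{\{\max_i\|\xi_i\|_\infty > c_0\}}$, bounded in $L^m$ via Doob's inequality and the choice of $c_0$.

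The main obstacle is this last truncation--stratification step: the bookkeeping needed to ensure that, after the dyadic slicing and $L^m$-moment integration, the right-hand side features $L^m$ norms of $\max_j \sqrt{\sum_i \E[\xi_{ij}^2 \mid \mcl G_{i-1}]}$ and $\max_i \|\xi_i\|_\infty$ rather than the cruder $L^{m+\log p}$ norms, while preserving the coefficients $\sqrt{m+\log p}$ and $m+\log p$. The key observation making this work is that under the slicing, the level of the random predictable variance can be effectively replaced by its $L^m$-quantile, which is in turn controlled by its $L^m$ norm. The rest of the argument is a routine assembly of standard martingale tools.
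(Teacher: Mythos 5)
Your first two steps (coordinatewise Rosenthal with explicit constants $C\sqrt q$, $Cq$, then the union bound at $q=m+\log p$) match the paper's starting point, but both of the devices you propose for the genuinely hard part fail as sketched. First, the truncation at a \emph{deterministic} level $c_0\asymp\norm{\max_i\|\xi_i\|_\infty}_{L^m(\pr)}$ cannot control the unbounded part: your bound $\abs{\sum_i\xi_i''}\le \mathrm{const}\cdot N\max_i\|\xi_i\|_\infty 1_{\{\max_i\|\xi_i\|_\infty>c_0\}}$ carries a factor $N$, and with only $\norm{\max_i\|\xi_i\|_\infty}_{L^m(\pr)}$ available no choice of $c_0$ removes it (take $\max_i\|\xi_i\|_\infty$ equal to $K$ with probability $\eps$ and $0$ otherwise: for any $c_0\lesssim K$ the $L^m$ norm of the remainder is of order $N K\eps^{1/m}=N\norm{\max_i\|\xi_i\|_\infty}_{L^m}$, while enlarging $c_0$ to compensate inflates the Bernstein term $qc_0$ of the bounded part). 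The paper avoids this by truncating at the \emph{adapted running maximum} (Davis decomposition, $\xi_i 1_{\{\xi_i^*\le 2\xi_{i-1}^*\}}$), so the remainder telescopes to $2\xi_N^*$ and its compensator is handled by the dual Doob inequality, with no factor $N$.

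Second, and more fundamentally, the "dyadic slicing plus Freedman plus $L^m$-integration" step does not deliver the downgrade from $L^{m+\log p}$ to $L^m$ norms on the right-hand side. After the union bound over $j$, the Freedman tail for $\max_{j,n}\abs{S_{n,j}}$ at deviation scaled to a variance level $\lambda$ saturates at $\sim p\,e^{-cq}$ and does not decay as $\lambda$ grows, so the layer-cake integral over the slices of the random $V:=\max_j\sqrt{\sum_i\E[\xi_{ij}^2\mid\mcl G_{i-1}]}$ diverges; if instead you stop the martingale at variance level $4^k$ and use H\"older between the $L^q$ bound on the stopped maximum and $\pr(V\approx 2^k)$, the factor $\pr(A_k)^{-m/q}$ is only $O(e^m)$ when $\pr(A_k)\ge e^{-q}$, and on thinner slices the argument yields $\sqrt{\log(1/\pr(A_k))}$ in place of $\sqrt{m+\log p}$, which is exactly the loss that separates $\norm{V}_{L^m}$ from $\norm{V}_{L^{m+\log p}}$ and is not summable to the claimed form when $V$ has heavy tails. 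Replacing the random variance level by "its $L^m$-quantile" does not fix this: the event $\{V>\text{quantile}\}$ has small probability but no useful conditional moment bound follows from the unconditional $L^q$ estimate. What is needed is the information that the $L^{m+\log p}$ domination holds at \emph{every stopping time}, converted into an $L^m$ bound by a good-$\lambda$/extrapolation argument; this is precisely the role of \cref{extrapolation} (Pisier's Lemma 5.23) in the paper's proof, and your sketch contains no substitute for it.
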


\begin{lemma}\label{lem:nonneg-ada}
Let $(\eta_i)_{i=1}^N$ be a sequence of random vectors in $\mathbb R^p$ adapted to a filtration $(\mcl G_i)_{i=1}^N$. Suppose that $\eta_{ij}\geq0$ and $\eta_{ij}\in L^1(\pr)$ for all $i\in[N]$ and $j\in[p]$. 
Then there exists a universal constant $C$ such that
\[
\E\sbra{\max_{j\in[p]}\sum_{i=1}^N\eta_{ij}}
\leq C\bra{\E\sbra{\max_{j\in[p]}\sum_{i=1}^N\E[\eta_{ij}\mid\mcl G_{i-1}]}
+\E\sbra{\max_{i\in[N]}\max_{j\in[p]}\eta_{ij}}\log p},
\]
where we set $\mcl G_0:=\{\emptyset,\Omega\}$.
\end{lemma}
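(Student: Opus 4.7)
The plan is to employ the predictable-compensator decomposition of the coordinate sums, bound the resulting martingale part by a Freedman--Bernstein inequality after truncation, and close the self-bounded inequality via Young's inequality.

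Set $S_j:=\sum_{i=1}^N\eta_{ij}$, $A_j:=\sum_{i=1}^N\E[\eta_{ij}\mid\mcl G_{i-1}]$, $\xi_{ij}:=\eta_{ij}-\E[\eta_{ij}\mid\mcl G_{i-1}]$, $L_j:=\sum_i\xi_{ij}$, and $M:=\max_{i\in[N],j\in[p]}\eta_{ij}$. Non-negativity of $\eta_{ij}$ gives $S_j\leq A_j+L_j^+$ coordinatewise, hence
\[
\E\sbra{\max_{j\in[p]}S_j}\leq\E\sbra{\max_{j\in[p]}A_j}+\E\sbra{\max_{j\in[p]}L_j^+}.
\]
The first term is exactly the predictable contribution appearing in the claim, so it suffices to dominate $\E[\max_{j}L_j^+]$ by a constant multiple of $\E[\max_{j}A_j]+\E[M]\log p$.

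For a fixed level $K>0$, introduce the truncated variables $\tilde\eta_{ij}^{(K)}:=\eta_{ij}\wedge K$ and the corresponding martingale $\tilde L_j^{(K)}:=\sum_i(\tilde\eta_{ij}^{(K)}-\E[\tilde\eta_{ij}^{(K)}\mid\mcl G_{i-1}])$. Its increments are bounded by $K$ almost surely, its conditional quadratic variation is bounded by $KA_j$ (since $(\eta\wedge K)^2\leq K(\eta\wedge K)$), and $\tilde L_j^{(K)}=L_j$ on the event $\{M\leq K\}$. Freedman's one-sided inequality, augmented by a stopping-time argument to accommodate the random ceiling $\max_{j'\in[p]}A_{j'}\leq\mu$, together with a union bound over $j\in[p]$, yields
\[
\pr\bra{\max_{j\in[p]}L_j^+>t,\;M\leq K,\;\max_{j\in[p]}A_j\leq\mu}\leq p\exp\bra{-\frac{t^2}{2K\mu+2Kt/3}}.
\]

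The main obstacle is to remove the conditioning on $\{M\leq K\}$ and $\{\max_{j}A_j\leq\mu\}$; this is handled by dyadic peeling over the levels $K\in\{2^k\E[M]:k\geq 0\}$ and $\mu\in\{2^l\E[\max_jA_j]:l\geq 0\}$. Summing the resulting tail bounds and integrating in $t$ produces the estimate
\[
\E\sbra{\max_{j\in[p]}L_j^+}\lesssim \sqrt{\E[M]\,\E\sbra{\max_{j\in[p]}A_j}\,\log p}+\E[M]\log p.
\]
Young's inequality $\sqrt{ab}\leq\tfrac12(a+b)$ applied with $a=\E[\max_{j}A_j]$ and $b=\E[M]\log p$ bounds the first term by $\tfrac12\E[\max_{j}A_j]+\tfrac12\E[M]\log p$, which, combined with the initial decomposition, delivers the claimed inequality after absorbing the $\E[\max_jA_j]$ piece into the left-hand side and adjusting the constant.
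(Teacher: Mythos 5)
Your reduction to the martingale part, the truncated Freedman bound with a predictable stopping time for the event $\{\max_{j}A_j\le\mu\}$, the union bound over $j$, and the final Young/AM--GM step are all fine. The gap is the peeling step. Stratifying over the dyadic levels $K_k=2^k\E[M]$ and $\mu_l=2^l\E[\max_jA_j]$, the only available control on the blocks $E_{k,l}:=\{M\in(K_{k-1},K_k],\ \max_jA_j\in(\mu_{l-1},\mu_l]\}$ is Markov's inequality, $\pr(E_{k,l})\le 2^{-\max(k,l)+1}$. Integrating your tail bound on each block gives a contribution of order
\[
\pr(E_{k,l})\Bigl(\sqrt{K_k\mu_l\,\log\bigl(p/\pr(E_{k,l})\bigr)}+K_k\log\bigl(p/\pr(E_{k,l})\bigr)\Bigr),
\]
and on the diagonal $k=l=m$ this is at least of order $\sqrt{\E[M]\,\E[\max_jA_j]\,(\log p+m)}$, which does not sum over $m$; the linear-in-$K_k$ part summed over blocks is likewise only controlled by an $L\log L$ quantity $\E[M\log^+(M/\E M)]\log p$ rather than $\E[M]\log p$. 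No choice of deterministic peeling levels fixes this: the growth of the Bernstein bound in the truncation level exactly cancels the Markov decay of the block probabilities. So the intermediate estimate $\E[\max_jL_j^+]\lesssim\sqrt{\E[M]\E[\max_jA_j]\log p}+\E[M]\log p$ does not follow from the argument as written (whether or not the estimate itself is true), and the universal constant is lost.

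The known way around exactly this obstruction, and the one the paper takes, is to avoid deterministic truncation altogether: a Davis-type decomposition $\eta_i'=\eta_i1_{\{\eta_i^*\le 2\eta_{i-1}^*\}}$, $\eta_i''=\eta_i-\eta_i'$ (with $\eta_i^*$ the running maximum) makes the discarded part telescoping, $\sum_i\|\eta_i''\|_\infty\le 2\eta_N^*$, hence controlled by $\E[\eta_N^*]$ alone, while the retained, recentered part is a martingale whose increments are bounded by the \emph{predictable} quantity $4\eta_{i-1}^*$. One then applies the maximal Rosenthal inequality (\cref{max-rosenthal}), which is itself proved not by tail stratification but by an extrapolation/good-$\lambda$ principle (\cref{extrapolation}), and finishes with the same AM--GM step you use. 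If you want to salvage your route, you would need to replace the Markov-based peeling by such a good-$\lambda$ or extrapolation argument (or restart the martingale at the random times where $M$ or the compensator doubles), i.e., precisely the ingredient your sketch currently treats as routine.
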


We will use these inequalities to obtain the following estimates. They play a crucial role in the proof of \cref{thm:order-2}. 
\begin{lemma}\label{influence-mom}
Let $\psi_j\in L^4(P^2)$ $(j\in[p])$ be degenerate, symmetric kernels of order $2$. 
There exists a universal constant $C$ such that
\bmn{\label{influence-mom-1}
\E\sbra{\max_{i\in[n]}\max_{j\in[p]}\int_S\abs{\sum_{i'\in[n]:i'< i}\psi_j(X_{i'},x)}^4P(dx)}\\
\leq C\Biggl(n^2\max_{j\in[p]}\|P(\psi_j^2)\|_{L^{2}(P)}^{2}\log^{2}p
+n\max_{j\in[p]}\|\psi_j\|_{L^4(P^2)}^4\log^{3}p
%+n^{4/q}\norm{\max_{j\in[p]}P(\psi_{j}^4)}_{L^{q/4}(P)}(\log p)^{4-4/q}
+\E\sbra{\max_{j\in[p]}M\bra{P(\psi_{j}^4)}}\log^4 p
\Biggr)
}
and
\bmn{\label{influence-mom-2}
\E\sbra{\max_{j\in[p]}\sum_{i=1}^n\abs{\sum_{i'\in[n]:i'\neq i}\psi_j(X_{i'},X_i)}^4}\\
\leq C\Biggl(n^3\max_{j\in[p]}\|P(\psi_j^2)\|_{L^{2}(P)}^{2}\log^{2}p
+n^2\max_{j\in[p]}\|\psi_j\|_{L^4(P^2)}^4\log^{3}p
%+n^{1+4/q}\norm{\max_{j\in[p]}P(\psi_{j}^4)}_{L^{q/4}(P)}(\log p)^{4-4/q}
+n\E\sbra{\max_{j\in[p]}M\bra{P(\psi_{j}^4)}}\log^4 p
\\
+n^{2}\E\sbra{\max_{j\in[p]}M\bra{P(\psi_j^2)}^{2}}\log^{3} (np)
+\E\sbra{\max_{j\in[p]}M(\psi_j)^4}\log^5(np)
%n^{2+4/q}\norm{\max_{j\in[p]}P(\psi_j^2)}_{L^q(P)}^2\log^{3} (np)
%+n^{8/q}\norm{\max_{j\in[p]}|\psi_j|}_{L^q(P^2)}^4\log^5(np)
\Biggr).
}
\end{lemma}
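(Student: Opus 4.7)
The plan is to prove \eqref{influence-mom-1} first and then bootstrap to \eqref{influence-mom-2} via \cref{lem:nonneg-ada}. For \eqref{influence-mom-1}, fix $j$ and observe that, since $\psi_j$ is degenerate, for each fixed $x\in S$ the process $M_i^{j,x}:=\sum_{i'<i}\psi_j(X_{i'},x)$ is a martingale in $i$ with respect to $\mcl F_i:=\sigma(X_1,\dots,X_i)$. Consequently $|M_i^{j,x}|^4$ is a non-negative submartingale in $i$, and integrating over $x\sim P$ preserves this: $Y_i^j:=\int|M_i^{j,x}|^4 P(dx)$ is a non-negative submartingale. The standard $L^q$-trick $\E[\max_j U_j]\leq\|\max_j U_j\|_{L^{\log p}}\leq e\max_j\|U_j\|_{L^{\log p}}$, combined with Doob's $L^{\log p}$ maximal inequality, reduces the LHS of \eqref{influence-mom-1} to estimating $\max_j\|Y_n^j\|_{L^{\log p}}$. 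By Minkowski's integral inequality, $\|Y_n^j\|_{L^{\log p}}\leq \int\|M_n^{j,x}\|_{L^{4\log p}(\pr)}^4 P(dx)$. For each fixed $x$, the scalar version of \cref{max-rosenthal} yields $\|M_n^{j,x}\|_{L^{4\log p}(\pr)}\lesssim \sqrt{n\log p}\sqrt{P(\psi_j^2)(x)}+\log p\cdot\|\max_{i'}|\psi_j(X_{i'},x)|\|_{L^{4\log p}(\pr)}$. Raising to the fourth power and integrating in $x$ produces the three target terms, with the third one handled by the tail bound $\E[\max_{i'}|\psi_j(X_{i'},x)|^{4\log p}]\leq nP(|\psi_j|^{4\log p})(\cdot)$.

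For \eqref{influence-mom-2}, I would split $\sum_{i'\neq i}\psi_j(X_{i'},X_i)=A_j^i+B_j^i$ with $A_j^i:=\sum_{i'<i}\psi_j(X_{i'},X_i)$ and $B_j^i:=\sum_{i'>i}\psi_j(X_{i'},X_i)$, and use $|a+b|^4\leq 8(|a|^4+|b|^4)$. A reversal-of-indices argument combined with exchangeability shows that it is enough to bound $\E[\max_j\sum_i|A_j^i|^4]$. Since $|A_j^i|^4$ is $\mcl F_i$-measurable and non-negative, \cref{lem:nonneg-ada} gives
\[
\E\bra{\max_j\sum_i|A_j^i|^4}
\leq C\,\E\bra{\max_j\sum_i\E[|A_j^i|^4\mid \mcl F_{i-1}]}
+C\,\E\bra{\max_{i,j}|A_j^i|^4}\log p.
\]
The conditional expectation equals $\int|\sum_{i'<i}\psi_j(X_{i'},x)|^4 P(dx)$ because $X_i$ is independent of $\mcl F_{i-1}$; bounding the sum over $i$ by $n$ times its maximum and invoking \eqref{influence-mom-1} accounts for the first three terms on the RHS of \eqref{influence-mom-2}.

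To bound the remaining contribution $\E[\max_{i,j}|A_j^i|^4]\log p$, I would apply the $L^q$-trick with $q=\log(np)$ (so that $(np)^{1/q}\leq e^2$), which reduces matters to $\max_{i,j}\|A_j^i\|_{L^{4\log(np)}}^4$. For each fixed $i,j$, $A_j^i$ is conditionally on $X_i$ an i.i.d.\ mean-zero sum; applying the scalar Burkholder inequality followed by a Rosenthal-type estimate for $\sum_{i'<i}\psi_j(X_{i'},X_i)^2$ and taking the expectation in $X_i$ produces, after simplification, the two additional contributions $n^2\E[\max_j M(P(\psi_j^2))^2]\log^3(np)$ (from the conditional variance evaluated at $X_i$) and $\E[\max_j M(\psi_j)^4]\log^5(np)$ (from the martingale jumps). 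The main technical obstacle will be threading the $L^q$-tricks carefully enough to preserve the exact polylogarithmic exponents stated in the lemma; in particular, extracting $\E[\max_j M(P(\psi_j^4))]$ in the third term of \eqref{influence-mom-1} requires a delicate Fubini interchange together with a secondary $L^{\log p}$-argument that places the $j$-maximum in the correct position relative to the spatial integration against $P(dx)$.
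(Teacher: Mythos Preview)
Your structural plan for \eqref{influence-mom-2}---split into $A_j^i+B_j^i$, reverse indices, apply \cref{lem:nonneg-ada}, and reduce the predictable part to $n$ times the left-hand side of \eqref{influence-mom-1}---matches the paper exactly. The gap is in how you control \eqref{influence-mom-1} and the residual term $\E[\max_{i,j}|A_j^i|^4]$.

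The chain ``$\E[\max_j U_j]\le e\max_j\|U_j\|_{L^{\log p}}$, then Doob, then Minkowski'' forces you to estimate $\|M_n^{j,x}\|_{L^{4\log p}(\pr)}$ and hence, via Rosenthal, $\|\max_{i'}|\psi_j(X_{i'},x)|\|_{L^{4\log p}(\pr)}$. But the lemma only assumes $\psi_j\in L^4(P^2)$; your union bound $\E[\max_{i'}|\psi_j(X_{i'},x)|^{4\log p}]\le nP(|\psi_j|^{4\log p})(x)$ may be $+\infty$, and even when finite it returns $\|\psi_j(\cdot,x)\|_{L^{4\log p}(P)}^4$ rather than $P(\psi_j^4)(x)$, so neither the second nor the third term on the right of \eqref{influence-mom-1} is recovered. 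The same inflation recurs in your $L^{\log(np)}$ treatment of $\E[\max_{i,j}|A_j^i|^4]$. This is not a matter of ``threading the $L^q$-tricks carefully''---the moment order has genuinely gone up.

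The paper avoids this by applying the $L^q$-trick \emph{after} symmetrizing, so that the boosted moment falls on Rademacher variables rather than on $\psi_j$. Concretely, for \eqref{influence-mom-1} it views $\Psi_i:=(\psi_1(X_i,\cdot),\dots,\psi_p(X_i,\cdot))$ as i.i.d.\ elements of the Banach space $L^4(P)^p$ with the max-$L^4$ norm, symmetrizes, and applies Khintchine's inequality in $L^4(P)$ to bound the conditional $L^{4+\log p}$ norm of $\sum_i\eps_i\Psi_{ij}$ by $\sqrt{\log p}$ times the norm of $(\sum_i\psi_j(X_i,\cdot)^2)^{1/2}$ (\cref{nemirovski-Lp}); this yields $(\log p)^2\E\bigl[\max_j\int(\sum_i\psi_j(X_i,x)^2)^2P(dx)\bigr]$, and a second pass with the same lemma (now $m=2$) plus Lemma~9 of \cite{CCK15} delivers the three stated terms using only $L^4$ data. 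For $\E[\max_{i,j}|A_j^i|^4]$ in \eqref{influence-mom-2}, the paper reverses indices once more so that $Y_{i',(i,j)}:=\psi_j(X_{i'},X_i)1_{\{i'>i\}}$ is a martingale difference array in $i'$ with $np$ coordinates and applies \cref{max-rosenthal} directly at $m=4$; the $\log(np)$ factors then come from the dimension, not from raising the moment order.
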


\subsection{Proof of Theorem \ref{thm:clt-ustat}} \label{subsec:proof_clt_ustat}

For $\mathbf i=(i_1,\dots,i_r)\in I_{n,r}$, we write $X_{\mathbf i}=(X_{i_1},\dots,X_{i_r})$ for short. 
The following technical lemma is useful to simplify some estimates. 
\begin{lemma}\label{lem:mp-reduce}
    Let $\psi_j\in L^1(P^r)$ ($j\in[p]$) be symmetric kernels of order $r\geq1$. For any $1\leq l\leq r$,
\ben{\label{eq:mp-reduce}
\E\sbra{\max_{j\in[p]}M(P^l(\psi_j))}\leq \frac{r!}{(r-l)!}\E\sbra{\max_{j\in[p]}M(\psi_j)}.
}
\end{lemma}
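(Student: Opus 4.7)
The plan is to reduce to a single non-negative kernel, iterate to get down to the case $l = 1$, and dispatch the base case by adjoining an extra i.i.d.\ copy of the data. To eliminate the dependence on $p$, put $\phi := \max_{j \in [p]} |\psi_j|$. By conditional Jensen $|P^l \psi_j| \leq P^l |\psi_j|$, and since $P^l$ is the positivity-preserving averaging operator and $|\psi_j| \leq \phi$ pointwise, $P^l |\psi_j| \leq P^l \phi$. Taking the max over $j$ yields $\max_j |P^l \psi_j| \leq P^l \phi$, hence $\max_j M(P^l \psi_j) \leq M(P^l \phi)$; meanwhile $\max_j M(\psi_j) = M(\phi)$. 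Thus it suffices to prove $\E M(P^l \phi) \leq \frac{r!}{(r-l)!}\, \E M(\phi)$ for any non-negative symmetric $\phi$ of order $r$. I would then induct on $l$: writing $P^l \phi = P(P^{l-1}\phi)$ and noting that $P^{l-1}\phi$ is itself a non-negative symmetric kernel of order $r - l + 1$, the $l = 1$ case applied to it gives $\E M(P^l \phi) \leq (r - l + 1)\, \E M(P^{l-1}\phi)$, and the inductive hypothesis contributes the remaining factor $r!/(r - l + 1)!$.

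The crux is the $l = 1$ case: $\E M(P\phi) \leq r\, \E M(\phi)$ for a non-negative symmetric kernel $\phi$ of order $r$. I would introduce an i.i.d.\ copy $X_{n+1}$ of $X_1$ independent of the data, set $W := (X_1, \dots, X_{n+1})$, and exploit the identity $P\phi(X_{\mathbf{i}^-}) = \E[\phi(X_{\mathbf{i}^-}, X_{n+1}) \mid X]$ together with the convexity of $\max$ to obtain
\[
\E M(P\phi) \leq \E \max_{B \subset [n+1],\, |B| = r,\, n+1 \in B} \phi(W_B).
\]
The right-hand side I would handle by a secondary induction on $r - |S|$, showing that for every $S \subset [n+1]$ with $|S| \leq r$,
\[
\E \max_{B \subset [n+1],\, B \supset S,\, |B| = r} \phi(W_B) \leq (r - |S| + 1)\, \E M(\phi).
\]
The base case $|S| = r$ reduces to $\E \phi(W_S) \leq \E M(\phi)$ by exchangeability of $W$. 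For the inductive step, exchangeability lets me assume $n+1 \notin S$, and the elementary bound $\max(a, b) \leq a + b$ for $a, b \geq 0$ gives
\[
\max_{B \supset S} \phi(W_B) \leq \max_{B \supset S,\, B \subset [n]} \phi(X_B) + \max_{B \supset S \cup \{n+1\}} \phi(W_B);
\]
the first summand is dominated by $M(\phi)$, while the second is bounded by $(r - |S|)\, \E M(\phi)$ via the inductive hypothesis with $S \cup \{n+1\}$ in place of $S$. Applying this with $S = \{n+1\}$ recovers the required factor $r$.

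The main obstacle is resisting the naive chain $|P^l \psi_j(X_{\mathbf{i}^-})| \leq \E[\max_{j'} M(\psi_{j'}) \mid X_{\mathbf{i}^-}]$, which would leave one bounding $\E \max_{\mathbf{i}^-} \E[\,\cdot \mid X_{\mathbf{i}^-}]$; this quantity cannot in general be controlled by a constant multiple of $\E[\,\cdot\,]$, since the conditioning $\sigma$-field varies with $\mathbf{i}^-$, and a direct union bound would lose a factor of $|I_{n, r-l}|$. Adjoining a single fresh i.i.d.\ copy circumvents this difficulty: the new variable is independent of $X$, so Jensen applied to the convex function $\max$ passes cleanly through the conditional expectation, and one trades an unwieldy family of conditional expectations for a self-contained exchangeability-based combinatorial induction on the enlarged sample $W$.
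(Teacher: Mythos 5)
Your proof is correct, and it shares the paper's outer skeleton --- the reduction to the single non-negative kernel $\phi=\max_{j\in[p]}|\psi_j|$ and the iteration of the $l=1$ case --- but the core $l=1$ argument is organized differently. The paper proves $\E[M(P\psi_1)]\le r\,\E[M(\psi_1)]$ by induction on the kernel order $r$: it splits the maximum over $(r-1)$-tuples according to whether the tuple contains the last sample index $n$, handles the tuples avoiding $n$ by Jensen's inequality with $X_n$ itself serving as the fresh integration variable, and for the tuples containing $n$ freezes $x=X_n$ and applies the induction hypothesis to the order-$(r-1)$ kernel $\psi_1(\cdot,x)$ based on the remaining $n-1$ points, so the constant accumulates as $1+(r-1)=r$. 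You instead adjoin one extra i.i.d.\ observation $X_{n+1}$, apply Jensen a single time to replace $P\phi$ by $\phi(\cdot,X_{n+1})$, and then run a purely combinatorial induction over forced index sets $S\subset[n+1]$, using $\max(a,b)\le a+b$ for non-negative terms and exchangeability of the enlarged sample; each peeled index again costs one $\E[M(\phi)]$, and $S=\{n+1\}$ delivers the factor $r$. The trade-off: your induction statement involves only the fixed kernel $\phi$ and the fixed (enlarged) sample, so the hypothesis never has to be invoked for a different kernel or a smaller sample --- a uniformity in $(n,\psi)$ that the paper's recursion on $\psi_1(\cdot,x)$ with $n-1$ points uses implicitly --- at the price of the auxiliary observation and the exchangeability bookkeeping, which the paper avoids entirely since its induction statement is literally the lemma for smaller $r$. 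Both routes produce exactly the constant $r!/(r-l)!$, and your implicit uses of the convention $\max\emptyset=0$ and of $r\le n$ (so that $M(\phi)$ is a maximum over a nonempty family) are covered by the paper's standing assumption $r\le n/4$.
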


\begin{proof}
Since $P^l\psi_j=P(P^{l-1}\psi_j)$, 
%\[
%P^l\psi_j=\underbrace{P(P(\cdots(P}_l\psi_j)\cdots)),
%\]
the claim for general $l$ follows from repeated applications of the claim for $l=1$. Hence, it suffices to consider the case $l=1$. 
Moreover, with $\psi^*:=\max_{j\in[p]}|\psi_j|$, we have $\max_{j\in[p]}M(P(\psi_j))\leq M(P^l(\psi^*))$ and $\max_{j\in[p]}M(\psi_j)=M(\psi^*)$; hence we may also assume $p=1$ and $\psi_1\geq0$ without loss of generality. 

Under the above assumptions, we shall prove \eqref{eq:mp-reduce} by induction on $r$. When $r=1$,
\ba{
\E[M(P(\psi_1))]
=P(\psi_1)=\E[\psi_1(X_1)]\leq\E[M(\psi_1)],
}
so \eqref{eq:mp-reduce} holds. 
\if0
When $r=2$, 
\ban{
\E[M(P(\psi_1))]
&=\E\sbra{\max_{i\in[n]}P(\psi_1)(X_i)}
\leq\E\sbra{\max_{i\in[n-1]}P(|\psi_1|)(X_i)}
+\E\sbra{P(|\psi_1|)(X_n)}\notag\\
&=:I+II.\label{eq:mp-reduce-1}
}
We have $II=\E[|\psi_1(X_1,X_n)|]\leq\E[M(\psi_1)]$. 
Meanwhile, noting that $X_{i+1}$ is independent of $(X_1,\dots,X_i)$ for all $i\in[n-1]$, we can rewrite $I$ as
\ba{
I=\E\sbra{\max_{i\in[n-1]}\E[|\psi_1(X_i,X_{i+1})|\mid X_i]}
=\E\sbra{\max_{i\in[n-1]}\E[|\psi_1(X_i,X_{i+1})|\mid \mcl G_i]},
}
where $\mcl G_i:=\sigma(X_1,\dots,X_i)$. Thus, by Lemma 1 in \cite{delbaen1995inequality} with $p=1$ and $q=\infty$,
\[
I\leq2\E\sbra{\max_{i\in[n-1]}|\psi_1(X_i,X_{i+1})|}
\leq2\E[M(\psi_1)].
\]
Combining these estimates gives \eqref{eq:mp-reduce}. 
\fi
Next, suppose that $r>1$ and \eqref{eq:mp-reduce} holds for any symmetric kernel $\psi_1$ of order less than $r$. 
%By the symmetry of $\psi_1$, we have $M(P(\psi_1))=\max_{i\in[n]}\max_{\mathbf i\in I_{i-1,r-2}}P(\psi_1)(X_{\mathbf i},X_i)$, where we interpret $\max_{\mathbf i\in I_{i-1,r-2}}P(\psi_1)(X_{\mathbf i},\cdot)$ as $P(\psi_1)(\cdot)$ when $r=2$. 
%Hence,
Classifying whether an $r$-tuple $(i_1,\dots,i_r)\in I_{n,r}$ contains $n$ or not, we bound $M(P(\psi_1))$ as
\ban{
\E[M(P(\psi_1))]
&\leq\E\sbra{\max_{\mathbf i\in I_{n-1,r-1}}P(\psi_1)(X_{\mathbf i})}
+\E\sbra{\max_{\mathbf i\in I_{n-1,r-2}}P(\psi_1)(X_{\mathbf i},X_n)}\notag\\
&=:I+II,\label{eq:mp-reduce-1}
}
where we interpret $\max_{\mathbf i\in I_{i-1,r-2}}P(\psi_1)(X_{\mathbf i},\cdot)$ as $P(\psi_1)(\cdot)$ when $r=2$. 
Since $X_{n}$ is independent of $\mcl G:=\sigma(X_1,\dots,X_{n-1})$, we have
\ban{
I&=\E\sbra{\max_{\mathbf i\in I_{n-1,r-1}}\E[\psi_1(X_{\mathbf i},X_{n})\mid \mcl G]}
\leq\E\sbra{\max_{\mathbf i\in I_{n-1,r-1}}\psi_1(X_{\mathbf i},X_{n})}
=\E[M(\psi_1)],\label{eq:mp-reduce-2}
}
where the inequality is by Jensen's inequality. 
Meanwhile, we can rewrite $II$ as
\ba{
II=\int_S\E\sbra{\max_{\mathbf i\in I_{n-1,r-2}}P(\psi_1)(X_{\mathbf i},x)}P(dx).
}
Applying the assumption of the induction to the kernel $S^{r-1}\ni \mathbf y\mapsto\psi_1(\mathbf y,x)\in\mathbb R$ for $P$-a.s.~$x\in S$ gives
\ban{
II&\leq(r-1)\int_S\E\sbra{\max_{\mathbf i\in I_{n-1,r-1}}\psi_1(X_{\mathbf i},x)}P(dx)
=(r-1)\E\sbra{\max_{\mathbf i\in I_{n-1,r-1}}\psi_1(X_{\mathbf i},X_n)}\notag\\
&=(r-1)\E[M(\psi_1)],\label{eq:mp-reduce-3}
}
where the first equality follows from the fact that $(X_{\mathbf i})_{\mathbf i\in I_{n-1,r-1}}$ is independent of $X_n$. 
Combining \eqref{eq:mp-reduce-1}--\eqref{eq:mp-reduce-3} gives \eqref{eq:mp-reduce}. 
\end{proof}

\begin{proof}[Proof of \cref{thm:clt-ustat}]
Let $\varphi_j:=\psi_j/\sigma_j$ for $j\in[p]$ and set $\tilde W:=(J_r(\varphi_1)-\E[J_r(\varphi_1)],\dots,J_r(\varphi_p)-\E[J_r(\varphi_p)])^\top$. 
Then we have
\[
\sup_{A\in\mathcal{R}_p}\left|\pr(W\in A)-\pr(Z\in A)\right|
=\sup_{A\in\mathcal{R}_p}\left|\pr(\tilde W\in A)-\pr(\tilde Z\in A)\right|,
\]
where $\tilde Z\sim N(0,\Cov[\tilde W])$. 
Also, observe that $\Delta_1(a,b)$ and $\Delta_2(a)$ corresponding to $\varphi_j$ are the same as those corresponding to $\psi_j$, respectively. 
Consequently, replacing $\psi_j$ by $\varphi_j$, we may assume $\sigma_j=1$ for all $j\in[p]$ without loss of generality. 

For the rest of the proof, we proceed in three steps.
\vspace{-5mm}

\paragraph{Step 1.} 
Regarding $X=(X_i)_{i=1}^n$ as a random element taking values in the measurable space $(E,\mcl E)=(S^n,\mcl S^{\otimes n})$, we are going to apply \cref{coro:exch} to 
\[
\mathsf{W}(X):=\bra{J_{r,X}(\psi_{1})-\E[J_{r}(\psi_{1})],\dots,J_{r,X}(\psi_{p})-\E[J_{r}(\psi_{p})]}^\top.
\]
For this purpose, we need to construct an appropriate exchangeable pair $(X,X')$ and an antisymmetric function $\mathsf{G}$. 
Let $X^*=(X^*_i)_{i=1}^n$ be an independent copy of $X=(X_i)_{i=1}^n$. 
Also, let $\alpha$ be a random index uniformly distributed on $[n]$ and such that $X,X^*$ and $\alpha$ are independent. 
Then, define $X'=(X'_i)_{i=1}^n$ as $X_i':=X_i^*$ if $i=\alpha$ and $X_i':=X_i$ otherwise.
\if0
\be{
X_{i}'=
\begin{cases}
X_{i}^*, & \text{if}\ i=\alpha,\\
X_{i}, & \text{otherwise}.
\end{cases}
}
\fi
It is well-known that $(X,X')$ is an exchangeable pair. 
In addition, define a random vector $G=\mathsf{G}(X,X')$ in $\mathbb R^p$ as $G_j:=n\sum_{s=1}^rs^{-1}D_{j,s}$ for $j=1,\dots,p$, where
\[
D_{j,s}:=J_{s,X'}(\psi_{j,s})-J_{s,X}(\psi_{j,s})
\quad\text{with }
\psi_{j,s}:=\binom{n-s}{r-s}\pi_s\psi_j.
\]
$\mathsf{G}$ is antisymmetric by construction. 
Moreover, \eqref{eq:hoef-decomp} and Lemma 3.2 in \cite{DoPe17} give
\[
\E[G\mid X]=-W.
\]
Therefore, applying \cref{coro:exch} with $\Sigma=\Cov[W]$, we obtain 
\ben{\label{exch-applied}
\sup_{A\in\mathcal{R}_p}\left|\pr(W\in A)-\pr(Z\in A)\right|
\lesssim 
\sqrt{\E\sbra{\|V\|_\infty}}\log p
+\bra{\E[\|G\|_\infty\|D\|_\infty^3]}^{1/4}(\log p)^{5/4},
}
where $V$ and $D$ are defined in the same way as in \cref{coro:exch} with $(Y,Y')$ replaced by $(X,X')$. 
In Steps 2 and 3, we will show
\ban{
\E\sbra{\|V\|_\infty}
&\leq C_r\max_{a,b\in[r]}\Delta_{1}(a,b),\label{eq:v-bound}\\
\E[\|G\|_\infty\|D\|_\infty^3]
&\leq C_r\max_{a\in[r]}\Delta_2(a).\label{eq:d4-bound}
}
Inserting these bounds into \eqref{exch-applied} gives the desired result.
\vspace{-5mm}

\paragraph{Step 2.} 
In this step, we prove \eqref{eq:v-bound}. 
For $j,k\in[p]$, observe that
\[
G_jD_k=n\sum_{a,b=1}^ra^{-1}D_{j,a}D_{k,b}.
\]
For $a,b\in[r]$, $J_a(\psi_{j,a})J_b(\psi_{k,b})$ has the following Hoeffding decomposition by Proposition 2.6 in \cite{DoPe19}:
\[
J_a(\psi_{j,a})J_b(\psi_{k,b})=\sum_{t=0}^{2(a\wedge b)}J_{a+b-t}(\chi^{(j,k)}_{a+b-t}),
\]
where, for $t\in[2(a\wedge b)]$, 
\[
\chi^{(j,k)}_{a+b-t}=\chi^{(j,k,a,b)}_{a+b-t}
:=\sum_{s=\lceil t/2\rceil}^{t\wedge a\wedge b}\binom{n-a-b+t}{t-s}\binom{a+b-t}{a-s,b-s,2s-t}\pi_{a+b-t}\wt{\bra{\psi_{j,a}\star^{t-s}_s\psi_{k,b}}}
\]
is a degenerate, symmetric kernel. 
%\blue{[Note: In the notation of Lemma 3.3 in \cite{DoPe17}, we have $U_M(j,k)=\chi_{|M|}^{(j,k)}(X_i,i\in M)$ for $M\subset[2(a\wedge b)]$.]} 
Hence, by Lemma 3.3 in \cite{DoPe17}
\ben{\label{v-hoef}
n\E[D_{j,a}D_{k,b}\mid X]=\sum_{t=1}^{2(a\wedge b)}tJ_{a+b-t}(\chi^{(j,k)}_{a+b-t}).
}
In addition,
\[
\E[W_jW_k]=\sum_{a=1}^r\E[J_a(\psi_{j,a})J_a(\psi_{k,a})]
=\sum_{a=1}^r\E[J_0(\chi^{(j,k,a,a)}_{0})].
\]
Consequently, we obtain
\ba{
2V_{jk}
&=\sum_{a=1}^ra^{-1}\sum_{t=1}^{2a-1}tJ_{2a-t}(\chi^{(j,k)}_{2a-t})
+\sum_{1\leq a<b\leq r}(a^{-1}+b^{-1})\sum_{t=1}^{2(a\wedge b)}tJ_{a+b-t}(\chi^{(j,k)}_{a+b-t}),
}
and thus
\ba{
\E[\|V\|_\infty]
&\leq C_r\bra{\sum_{a=1}^r\sum_{t=1}^{2a-1}\E\sbra{\max_{j,k\in[p]}|J_{2a-t}(\chi^{(j,k)}_{2a-t})|}
+\sum_{1\leq a<b\leq r}\sum_{t=1}^{2(a\wedge b)}\E\sbra{\max_{j,k\in[p]}|J_{a+b-t}(\chi^{(j,k)}_{a+b-t})|}}.
}
To bound the summands on the right-hand side, we are going to apply \cref{thm:max-is}. 
By the triangle inequality and \cite[Lemma 2.9]{DoPe19},
\ba{
|\chi^{(j,k)}_{a+b-t}|
&\leq C_{r}\frac{\sqrt{\binom{n}{a}}\sqrt{\binom{n}{b}}}{\sqrt{\binom{n}{a+b-t}}}\sum_{s=\lceil t/2\rceil}^{t\wedge a\wedge b}n^{t/2-s}|\pi_{a+b-t}\wt{\bra{\psi_{j,a}\star^{t-s}_s\psi_{k,b}}}|\\
&\leq C_{r}\sum_{s=\lceil t/2\rceil}^{t\wedge a\wedge b}n^{t-s}|\pi_{a+b-t}\wt{\bra{\psi_{j,a}\star^{t-s}_s\psi_{k,b}}}|.
}
Hence, for any $0\leq u\leq a+b-t$,
\ba{
&\E\sbra{\max_{j,k\in[p]}M\bra{P^{a+b-t-u}(|\chi^{(j,k)}_{a+b-t}|^2)}}\\
&\leq C_{r}\sum_{s=\lceil t/2\rceil}^{t\wedge a\wedge b}n^{2(t-s)}\E\sbra{\max_{j,k\in[p]}M\bra{P^{a+b-t-u}(|\pi_{a+b-t}\wt{\bra{\psi_{j,a}\star^{t-s}_s\psi_{k,b}}}|^2)}}\\
&\leq C_{r}\sum_{s=\lceil t/2\rceil}^{t\wedge a\wedge b}n^{2(t-s)}\E\sbra{\max_{j,k\in[p]}M\bra{P^{a+b-t-u}(|\wt{\psi_{j,a}\star^{t-s}_s\pi_b\psi_{k,b}}|^2)}},
}
where the last inequality follows from \eqref{eq:hoef-proj}, Jensen's inequality and \cref{lem:mp-reduce}. 
Hence, we obtain by \cref{thm:max-is}
\ba{
&\E\sbra{\max_{j,k\in[p]}|J_{a+b-t}(\chi^{(j,k)}_{a+b-t})|}\\
&\leq C_r\sum_{s=\lceil t/2\rceil}^{t\wedge a\wedge b}\max_{0\leq u\leq a+b-t}n^{\frac{a+b+t-u}{2}-s}(\log p)^{\frac{a+b-t+u}{2}}\sqrt{\E\sbra{\max_{j,k\in[p]}M\bra{P^{a+b-t-u}(|\wt{\psi_{j,a}\star^{t-s}_s\psi_{k,b}}|^2)}}}.
}
Noting that $|\psi_{j,s}|\leq n^{r-s}|\pi_s\psi_j|$, we deduce
\ba{
\E[\|V\|_\infty]
%&\leq C_r\sum_{a=1}^r\sum_{t=1}^{2a-1}\sum_{s=\lceil t/2\rceil}^{t\wedge a}\max_{0\leq u\leq 2a-t}n^{2r+\frac{t-2a-u}{2}-s}(\log p)^{\frac{a+b-t+u}{2}}\sqrt{\E\sbra{\max_{j,k\in[p]}M\bra{P^{a+b-t-u}(|\wt{\pi_a\psi_j\star^{t-s}_s\pi_b\psi_{k}}|^2)}}}\\
%&\quad+C_r\sum_{1\leq a<b\leq r}\sum_{t=1}^{2(a\wedge b)}\sum_{s=\lceil t/2\rceil}^{t\wedge a\wedge b}\max_{0\leq u\leq a+b-t}n^{2r+\frac{t-a-b-u}{2}-s}(\log p)^{\frac{a+b-t+u}{2}}\sqrt{\E\sbra{\max_{j,k\in[p]}M\bra{P^{a+b-t-u}(|\wt{\pi_a\psi_j\star^{t-s}_s\pi_b\psi_{k}}|^2)}}}\\
&\leq C_r\sum_{a=1}^r\sum_{t=1}^{2a-1}\sum_{s=\lceil t/2\rceil}^{t\wedge a}\max_{0\leq u\leq 2a-t}\Delta_1(a,a;s,t-s,u)\\
&\quad+C_r\sum_{1\leq a<b\leq r}\sum_{t=1}^{2(a\wedge b)}\sum_{s=\lceil t/2\rceil}^{t\wedge a\wedge b}\max_{0\leq u\leq a+b-t}\Delta_1(a,b,s,t-s,u)\\
%&= C_r\sum_{a=1}^r\sum_{s=1}^a\sum_{t=s}^{2s\wedge(2a-1)}\max_{0\leq u\leq 2a-t}n^{2r+\frac{t-2a-u}{2}-s}(\log p)^{\frac{a+b-t+u}{2}}\sqrt{\E\sbra{\max_{j,k\in[p]}M\bra{P^{a+b-t-u}(|\wt{\pi_a\psi_j\star^{t-s}_s\pi_b\psi_{k}}|^2)}}}\\
%&\quad+C_r\sum_{1\leq a<b\leq r}\sum_{s=1}^{a\wedge b}\sum_{t=s}^{2s}\max_{0\leq u\leq a+b-t}n^{2r+\frac{t-a-b-u}{2}-s}(\log p)^{\frac{a+b-t+u}{2}}\sqrt{\E\sbra{\max_{j,k\in[p]}M\bra{P^{a+b-t-u}(|\wt{\pi_a\psi_j\star^{t-s}_s\pi_b\psi_{k}}|^2)}}}\\
&\leq C_r\max_{a,b\in[r]}\Delta_1(a,b).
}
\vspace{-5mm}

\paragraph{Step 3.} 
It remains to prove \eqref{eq:d4-bound}. 
Since
\ba{
\E[\|G\|_\infty\|D\|_\infty^3]
\leq n\E\sbra{\max_{j\in[p]}\bra{\sum_{a=1}^r|D_{j,a}|}^4}
\leq C_r\max_{a\in[r]}n\E\sbra{\max_{j\in[p]}D_{j,a}^4},
}
it suffices to prove
\ben{\label{aim:d4-bound}
n\E\sbra{\max_{j\in[p]}D_{j,a}^4}
\leq C_{r}\Delta_2(a)
}
for all $a\in[r]$. 
Observe that
\ba{
D_{j,a}
%&=\frac{1}{a!}\sum_{\begin{subarray}{c}
%(i_1,\dots,i_a)\in I_{n,a}\\
%i_s=\alpha\text{ for some }s\in[a]
%\end{subarray}}\cbra{\psi_{j,a}(X_{i_1},\dots,X_{i_s}^*,\dots,X_{i_a})-\psi_{j,a}(X_{i_1},\dots,X_{i_s},\dots,X_{i_a})}\\
&=\frac{1}{(a-1)!}\sum_{\begin{subarray}{c}
\mathbf i=(i_1,\dots,i_{a-1})\in I_{n,a-1}\\
i_s\neq\alpha\text{ for all }s\in[a-1]
\end{subarray}}\cbra{\psi_{j,a}(X_{\mathbf i},X_\alpha^*)-\psi_{j,a}(X_{\mathbf i},X_\alpha)}.
}
Therefore, noting the fact that $X_1,\dots,X_n$ are i.i.d., we obtain
\ban{
\E\sbra{\max_{j\in [p]}D_{j,a}^4}
%&=\frac{1}{n}\sum_{i=1}^n\E\sbra{\max_{j\in [p]}\abs{\frac{1}{(a-1)!}\sum_{\begin{subarray}{c}
%\mathbf i=(i_1,\dots,i_{a-1})\in I_{n,a-1}\\
%i_s\neq i\text{ for all }s\in[a-1]
%\end{subarray}}\cbra{\psi_{j,a}(X_{\mathbf i},X_i^*)-\psi_{j,a}(X_{\mathbf i},X_i)}}^4}\\
&\leq\frac{16}{n}\sum_{i=1}^n\E\sbra{\max_{j\in [p]}\abs{\frac{1}{(a-1)!}\sum_{\begin{subarray}{c}
\mathbf i=(i_1,\dots,i_{a-1})\in I_{n,a-1}\\
i_s\neq i\text{ for all }s\in[a-1]
\end{subarray}}\psi_{j,a}(X_{\mathbf i},X_i)}^4}\notag\\
&= 16\E\sbra{\max_{j\in [p]}\abs{\frac{1}{(a-1)!}\sum_{\mathbf i\in I_{n-1,a-1}}\psi_{j,a}(X_{\mathbf i},X_n)}^4}.\label{d4-order-r}
%&\leq16\int_S\E\sbra{\max_{j\in [p]}\abs{\frac{1}{(a-1)!}\sum_{(i_1,\dots,i_{a-1})\in I_{n-1,a-1}}\psi_{j,a}(X_{i_1},\dots,X_{i_{a-1}},x)}^4}P(dx).
}
Observe that conditional on $X_n$,
\[
\frac{1}{(a-1)!}\sum_{\mathbf i\in I_{n-1,a-1}}\psi_{j,a}(X_{\mathbf i},X_n)
\]
is a degenerate $U$-statistic of order $a-1$, based on $(X_i)_{i=1}^{n-1}$. 
Hence, \cref{thm:max-is} gives
\ba{
&\E\sbra{\max_{j\in [p]}\abs{\frac{1}{(a-1)!}\sum_{\mathbf i\in I_{n-1,a-1}}\psi_{j,a}(X_{\mathbf i},X_n)}^4\mid X_n}\\
&\leq C_{r}\max_{0\leq s\leq a-1}n^{2(a-1-s)}(\log p)^{2(a-1+s)}\E\sbra{\max_{j\in[p]}\max_{\mathbf i\in I_{n-1,s}}P^{a-1-s}\bra{\psi_{j,a}^2}(X_{\mathbf i},X_n)^2\mid X_n}.
}
%where we applied $|\psi_{j,s}|\leq n^{r-s}|\pi_s\psi_j|$ in the last line. 
Combining this with \eqref{d4-order-r} and $|\psi_{j,s}|\leq n^{r-s}|\pi_s\psi_j|$ gives \eqref{aim:d4-bound}.
\end{proof}

\subsection{Proof of Corollary \ref{coro:clt-ustat}}
\label{subsec:proof_coro_clt_ustat}

We need the following technical estimate to simplify the first term on the right-hand side of \eqref{eq:clt-ustat}. 
\begin{lemma}\label{lem:delta}
Under the assumptions of \cref{thm:order-2}, there exists a universal constant $C$ such that
\ban{
\Delta_1(1,1)\log^2p&\leq C\sqrt{\Delta_{2,*}(1)\log^5p},\label{lem:delta-eq11}\\
\Delta_1(2,2)\log^2p&\leq C\bra{\Delta_1^{(0)}\log^3 p
+\sqrt{\Delta_{2,*}(2)\log^5p}},\label{lem:delta-eq22}
}
and
\bmn{
\Delta_1(1,2)\log^2p\leq
C\Biggl(
\Delta_{1}^{(1)}\log^{5/2}p
%+\Delta_1^{(2)}\log^3p
%n^{3/2}\max_{j,k\in[p]}\|\pi_1\psi_j\|_{L^2(P)}\bra{\sqrt{\Delta_1^{(0)}\log^{5}p}+\bra{\Delta_{2,q}^{(5)}(2)\log^{9}p}^{1/4}}\\
+n^{3/2}\max_{j,k\in[p]}\frac{\|\pi_1\psi_j\|_{L^2(P)}}{\sigma_j}\bra{\Delta_{2,*}^{(5)}(2)\log^{9}p}^{1/4}\\
+\sqrt{\bra{\Delta_{2,*}(1)+\Delta_{2,*}(2)}\log^5p}
\Biggr),\label{lem:delta-eq12}
}
\if0
Moreover,
\ben{\label{delta12-eq1}
\Delta_1^{(1)}
\leq C\sqrt{\Delta_1^{(0)}}
\quad\text{and}\quad
\Delta_1^{(2)}
\leq C\bra{n^{2+4/q}\norm{\max_{j\in[p]}\frac{P(|\pi_2\psi_j|^2)}{\sigma_j^2}}_{L^{q/2}(P)}^2}^{1/4}.
}
\fi
where
$\Delta_{2,*}(1):=\sum_{\ell=1}^2\Delta_{2,*}^{(\ell)}(1)$ and 
$\Delta_{2,*}(2):=\sum_{\ell=1}^5\Delta_{2,*}^{(\ell)}(2)$. 
\end{lemma}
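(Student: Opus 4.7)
The plan is to expand each $\Delta_1(a,b)$ into its defining sum over triples $(s,l,u)$ and bound every summand by one of the quantities on the right-hand side of the target inequality. Since $r=2$, the constraints $1\le s\le a\wedge b$, $0\le l\le s\wedge(a+b-s-1)$, and $0\le u\le a+b-l-s$ leave only a handful of admissible triples in each of the three cases, so an exhaustive enumeration is feasible.

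For \eqref{lem:delta-eq11}, only $s=1,l=0$ is admissible and $u\in\{0,1\}$. The contraction reduces to the pointwise product $\pi_1\psi_j\star_1^0\pi_1\psi_k(y)=\pi_1\psi_j(y)\pi_1\psi_k(y)$, which is already symmetric in its single argument. I would apply Cauchy--Schwarz in $(j,k)$ to collapse the double maximum into a single maximum over $j$, and then match the two values of $u$ to $\Delta_{2,*}^{(1)}(1)$ (for $u=0$, via the $L^4(P)$ norm) and $\Delta_{2,*}^{(2)}(1)$ (for $u=1$, via the maximal functional $M$), tracking the exponents of $n$ and $\log p$ to obtain the claimed $\sqrt{\Delta_{2,*}(1)\log^5p}$ bound.

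For \eqref{lem:delta-eq22}, the admissible triples have $s\in\{1,2\}$ and $l\in\{0,1\}$. The single triple $(s,l,u)=(1,1,0)$ yields exactly the quantity measured by $\Delta_1^{(0)}$, giving the first term on the right-hand side. For every other triple, I would apply the symmetrization estimate \eqref{sym-lp} to pass from $\wt{\pi_2\psi_j\star^l_s\pi_2\psi_k}$ to $\pi_2\psi_j\star^l_s\pi_2\psi_k$, then use Lemma 2.4 of \cite{DoPe19} together with Cauchy--Schwarz in $(j,k)$ and H\"older's inequality on the $P^{a+b-l-s-u}$ projection to reduce everything to norms or maxima of $\pi_2\psi_j$ alone. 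Each resulting contribution matches one of $\Delta_{2,*}^{(\ell)}(2)$ for $\ell=1,\dots,5$, with the choice of $\ell$ dictated by how many $P$-integrations and how many $M$-maxima are taken.

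The delicate case is \eqref{lem:delta-eq12}, with $s=1$ and $l\in\{0,1\}$. The triple $(s,l,u)=(1,1,0)$ produces $\Delta_1^{(1)}\log^{5/2}p$ directly from the definition of $\Delta_1^{(1)}$. The $l=0$ contractions have the form $\pi_1\psi_j(y)\pi_2\psi_k(y,v)$, and here I would use a Cauchy--Schwarz-type split that extracts $\max_j\|\pi_1\psi_j\|_{L^2(P)}/\sigma_j$ on one side and leaves a fourth-root quantity $\bigl(\Delta_{2,*}^{(5)}(2)\log^9p\bigr)^{1/4}$ on the other, producing the third term on the right-hand side of \eqref{lem:delta-eq12}. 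The remaining admissible triples are dominated by the square-root bounds already developed for $(1,1)$ and $(2,2)$, so they fold into $\sqrt{(\Delta_{2,*}(1)+\Delta_{2,*}(2))\log^5p}$.

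The main obstacle is not any single analytical step but the exponent bookkeeping: matching the $n^{2r+(l-s-a-b-u)/2}$ and $(\log p)^{(a+b-l-s+u)/2}$ prefactors, together with the extra $\log^2p$ on the left-hand side, to the exponents of $n$ and $\log p$ hidden inside each $\Delta_{2,*}^{(\ell)}$. Only the $(1,2)$ case involves a genuine interpolation choice: to produce the one-quarter power on $\Delta_{2,*}^{(5)}(2)$ the split must pair an $L^2$ bound on $\pi_1\psi_j$ with an $L^{q/2}$-type control of $P(|\pi_2\psi_j|^2)$ rather than with any of the $L^4$ or pointwise-max quantities, and verifying that this is the tightest pairing is the crux of the argument.
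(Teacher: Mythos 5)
Your overall strategy---enumerate the admissible triples $(s,l,u)$ for each pair $(a,b)$, symmetrize via \eqref{sym-lp}/\eqref{sym-M}, and match each summand to a $\Delta_{2,*}$ quantity by Cauchy--Schwarz/AM--GM bookkeeping---is exactly the paper's, and your treatment of \eqref{lem:delta-eq11} and of the triples $(s,l,u)=(1,1,0)$ in the other two cases is sound. But the plan has a concrete misstep in \eqref{lem:delta-eq12}: you attribute the mixed term $n^{3/2}\max_j\|\pi_1\psi_j\|_{L^2(P)}\bigl(\Delta_{2,*}^{(5)}(2)\log^9p\bigr)^{1/4}$ to the $l=0$ contractions $\pi_1\psi_j(y)\pi_2\psi_k(y,v)$, and you claim the remaining triples fold into $\sqrt{(\Delta_{2,*}(1)+\Delta_{2,*}(2))\log^5p}$. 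In fact the roles are swapped: the $l=0$ terms are the ones absorbed into the square-root bound (after Jensen/Schwarz splits producing pieces of $\Delta_{2,*}^{(1)}(1),\Delta_{2,*}^{(2)}(1),\Delta_{2,*}^{(2)}(2),\Delta_{2,*}^{(3)}(2),\Delta_{2,*}^{(5)}(2)$), while the fourth-root term is forced by the $l=1$, $u=1$ triple, whose contraction $\pi_1\psi_j\star_1^1\pi_2\psi_k$ is controlled pointwise by $\|\pi_1\psi_j\|_{L^2(P)}^2\,M\bigl(P(|\pi_2\psi_k|^2)\bigr)$ (the analogue of \cref{max-contraction}\ref{contr-b}) followed by Lyapunov. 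That term cannot be dominated by the square-root quantities: its size is $n^2\log^3p\,\|\pi_1\psi_j\|_{L^2(P)}\sqrt{\E[\max_k M(P(|\pi_2\psi_k|^2))]}$, and any AM--GM pairing against $\sqrt{\Delta_{2,*}^{(1)}(1)\log^5p}$ leaves a factor of order $n^{3/2}\log^{7/2}p$ against the $n\log^4p$ inside $\sqrt{\Delta_{2,*}^{(5)}(2)\log^5p}$, which is too large when $n\gg\log p$. So under your assignment the argument would not close; the extra mixed term in the statement exists precisely because of this triple.

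A second, smaller inaccuracy: in \eqref{lem:delta-eq22} the triple $(s,l,u)=(1,1,1)$ does not reduce to the $\Delta_{2,*}^{(\ell)}(2)$ family by generic Cauchy--Schwarz; the workable bound goes through the contraction estimate $\int_S(\pi_2\psi_j\star_1^1\pi_2\psi_k)^2(\cdot,v)\,P(du)\leq\|\pi_2\psi_j\star_1^1\pi_2\psi_j\|_{L^2(P^2)}\,P(|\pi_2\psi_k|^2)(v)$ (\cref{max-contraction}\ref{contr-d}) and contributes another $\Delta_1^{(0)}\log^3p$ alongside $\sqrt{\Delta_{2,*}^{(5)}(2)\log^5p}$---harmless for the stated bound, but not the reduction you describe. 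More generally, the intermediate-$u$ terms (the paper's $I_1$, $I_2$, $II_1$ for $(2,2)$ and $\mathbf I_1$ for $(1,2)$) are exactly where the work lies, and they need the specific contraction-kernel inequalities (\cref{max-contraction}\ref{contr-b}--\ref{contr-d} together with Lemma 2.4(iii),(iv),(vi) of the D\"obler--Peccati calculus), not just ``Cauchy--Schwarz and H\"older''; your proposal does not identify these tools, so as written it is an outline with the hardest steps unverified.
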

The proof of this lemma is deferred to \cref{proof:delta}. 

\begin{proof}[Proof of \cref{coro:clt-ustat}]
First, for any $f\in L^m(P)$ with $m\geq1$, we have $\|f\|_{L^m(P)}^m=\E[|f(X_1)|^m]\leq\E[M(f)^m]$. 
Hence we have $\Delta_{2,*}^{(1)}(1)\leq\Delta_2(1)$ and $\Delta_{2,*}^{(2)}(2)\leq\Delta_2(2;0)$. 
In particular,
\ba{
\{\Delta_2(1)+\Delta_{2,*}^{(1)}(2)\}\log^5p
&\geq\max_{j\in[p]}\frac{n^5\|\pi_1\psi_j\|_{L^2(P)}^4+n^2\|\pi_2\psi_j\|_{L^2(P^2)}^4}{\sigma_j^4}\log^5p\\
&\geq\max_{j\in[p]}\frac{\bra{n^3\|\pi_1\psi_j\|_{L^2(P)}^2+n^2\|\pi_2\psi_j\|_{L^2(P^2)}^2}^2}{\sigma_j^4}\frac{\log^5p}{2n^2}
\geq\frac{\log^5p}{2n^2},
}
where the last inequality follows by \eqref{u-anova}. 
Thus, the claim asserted is trivial if $\log p>n$; hence it suffices to consider the case $\log p\leq n$. 
In this case, we have $\Delta_{2,*}^{(2)}(1)\leq\Delta_2(1)$, $\Delta_{2,*}^{(5)}(2)\leq\Delta_2(2;0)$ and $\Delta_{2,*}^{(4)}(2)\leq\Delta_2(2;1)$ by definition. 
Meanwhile, \cref{lem:mp-reduce} gives $\Delta_{2,*}^{(3)}\leq2\Delta_{2}(2;1)$. 
Therefore, \cref{lem:delta} gives 
\ba{
\max_{a,b\in[2]}\Delta_2(a,b)\lesssim \Delta_1'+\sqrt{\cbra{\Delta_2(1)+\Delta_2(2)+\Delta_{2,*}^{(1)}(2)}\log^5p}.
}
Inserting this bound into \eqref{eq:clt-ustat} gives the desired result. 
\end{proof}

\subsection{Proof of Theorem \ref{thm:order-2}}
\label{subsec:proof_thm_order-2}

%\begin{proof}[Proof of \cref{thm:order-2}]
In this proof, we use the same notation as in the proof of \cref{thm:clt-ustat}. 
First, by the same reasoning as in the proof of \cref{thm:clt-ustat}, we may assume $\sigma_j=1$ for all $j\in[p]$ without loss of generality. 
Next, since $n^{1/\log n}=e\leq en^{1/q}$ and the $L^q$-norm with respect to a probability measure is non-decreasing in $q\in[1,\infty]$, the asserted claim for $q>\log n$ follows from the one for $q=\log n$. 
Hence, we may assume $q\leq\log n$ without loss of generality. 

Now, using \cref{thm:gexch} instead of \cref{coro:exch} in the proof of \cref{thm:clt-ustat}, we obtain for any $\eps>0$
\ba{
&\sup_{A\in\mathcal{R}_p}\left|\pr(W\in A)-\pr(Z\in A)\right|\\
&\lesssim 
\E\sbra{\|R^\eps\|_\infty}\sqrt{\log p}
+\eps^{-1}\E\sbra{\|V^\eps\|_\infty}(\log p)^{3/2}
+\eps^{-3}\E\sbra{\Gamma^\eps}(\log p)^{7/2}
+\eps\sqrt{\log p},
}
where $R^\eps,V^\eps$ and $\Gamma^\eps$ are defined in the same way as in \cref{thm:gexch} with $R=0$ and $(Y,Y')$ replaced by $(X,X')$. 
Since
\[
\|R^\eps\|_\infty
\leq n\max_{j\in[p]}\sum_{s=1}^2\E[|D_{j,s}|1_{\{\|D\|_\infty>\beta^{-1}\}}\mid X]
\]
and
\[
\|V^\eps-V\|_\infty
\leq n\max_{j\in[p]}\sum_{s=1}^2\E\sbra{|D_{j,s}|^21_{\{\|D\|_\infty>\beta^{-1}\}}\mid X},
\]
Young's inequality for products gives
\ba{
\|R^\eps\|_\infty
&\leq n\max_{j\in[p]}\sum_{s=1}^2\bra{\frac{\beta^3}{4}\E[|D_{j,s}|^4\mid X]+\frac{3}{4\beta}\E[1_{\{\|D\|_\infty>\beta^{-1}\}}\mid X]}
}
and
\ba{
\|V^\eps-V\|_\infty
&\leq n\max_{j\in[p]}\sum_{s=1}^2\bra{\frac{\beta^3}{2}\E[|D_{j,s}|^4\mid X]+\frac{1}{2\beta^3}\E[1_{\{\|D\|_\infty>\beta^{-1}\}}\mid X]}.
}
Hence we have
\ba{
&\E\sbra{\|R^\eps\|_\infty}\sqrt{\log p}
+\eps^{-1}\E\sbra{\|V^\eps\|_\infty}(\log p)^{3/2}\\
&\lesssim \frac{n\sqrt{\log p}}{\beta\wedge\beta^3}\pr(\|D\|_\infty>\beta^{-1})
+\eps^{-1}\E\sbra{\|V\|_\infty}(\log p)^{3/2}
+\eps^{-3}\E[\Gamma_1+\Gamma_2](\log p)^{7/2},
}
where $\Gamma_s:=n\max_{j\in[p]}\E[|D_{j,s}|^4\mid X]$ for $s=1,2$. 
Also, we have
\ba{
\Gamma^\eps
\leq n\max_{j\in[p]}\E\sbra{\bra{\sum_{s=1}^r|D_{j,s}|}^4\mid X}
\leq 8(\Gamma_1+\Gamma_2).
}
Consequently, we obtain
\besn{\label{order2-aim}
&\sup_{A\in\mathcal{R}_p}\left|\pr(W\in A)-\pr(Z\in A)\right|\\
&\lesssim
\frac{n\sqrt{\log p}}{\beta\wedge\beta^3}\pr(\|D\|_\infty>\beta^{-1})
+\eps^{-1}\E\sbra{\|V\|_\infty}(\log p)^{3/2}
+\eps^{-3}\E\sbra{\Gamma_1+\Gamma_2}(\log p)^{7/2}
+\eps\sqrt{\log p}.
}
In the remaining proof, we will bound the quantities on the right-hand side and then choose $\eps$ appropriately. 
Recall that we already show (cf.~Eq.\eqref{eq:v-bound})
\ben{\label{order2-v-bound}
\E\sbra{\|V\|_\infty}\lesssim\max_{a,b\in[2]}\Delta_1(a,b).
}
\if0
Also, by assumption
\ben{\label{ass-bdd}
|\psi_{j,1}|\leq\frac{B_{n,1}}{\sqrt n}
\quad\text{and}\quad
|\psi_{j,2}|\leq\frac{4B_{n,2}}{n}.
}
\fi
Also, by construction
\ben{\label{ass-var}
1=\Var[W_j]=\sum_{s=1}^r\Var[J_s(\psi_{j,s})]=\sum_{s=1}^r\binom{n}{s}\|\psi_{j,s}\|_{L^2(P^s)}^2
}
and
\ben{\label{psi-pi}
|\psi_{j,1}|\leq n|\pi_1\psi_j|,\qquad
|\psi_{j,2}|\leq |\pi_2\psi_j|.
}

\paragraph{Step 1.} 
In this step, we bound $\E[\Gamma_1]$ and $\E[\Gamma_2]$. 
Observe that
\ba{
\Gamma_1
&=\max_{j\in[p]}\sum_{i=1}^n\E\sbra{\abs{\psi_{j,1}(X_{i}^*)-\psi_{j,1}(X_{i})\}}^4\mid X}
\leq8\max_{j\in[p]}\bra{n\|\psi_{j,1}\|_{L^4(P)}^4+\sum_{i=1}^n\psi_{j,1}(X_{i})^4}.
}
By Lemma 9 in \cite{CCK15} and \eqref{eq:max-out},
\ba{
\E\sbra{\max_{j\in[p]}\sum_{i=1}^n\psi_{j,1}(X_{i})^4}
%\lesssim n\max_{j\in[p]}\|\psi_{j,1}\|_{L^4}^4+\E\sbra{\max_{j\in[p]}M(\psi_{j,1}^4)}\log p
\lesssim n\max_{j\in[p]}\|\psi_{j,1}\|_{L^4(P)}^4+n^{4/q}\norm{\max_{j\in[p]}|\psi_{j,1}|}_{L^{q}(P)}^4\log p.
}
\if0
By \cref{coro:nonneg},
\ba{
\E\sbra{\max_{j\in[p]}\sum_{i=1}^n\psi_{j,1}(X_{i})^4}
%\lesssim n\max_{j\in[p]}\|\psi_{j,1}\|_{L^4}^4+\E\sbra{\max_{j\in[p]}M(\psi_{j,1}^4)}\log p
\lesssim n\max_{j\in[p]}\|\psi_{j,1}\|_{L^4(P)}^4+n^{4/q}\norm{\max_{j\in[p]}|\psi_{j,1}|}_{L^{q}(P)}^4(\log p)^{1-4/q}.
}
\fi
Combining these bounds with \eqref{psi-pi} gives
\ben{\label{gamma1-bound}
\E[\Gamma_1]
%\lesssim n\max_{j\in[p]}\|\psi_{j,1}\|_{L^4(P)}^4+\frac{B_{n,1}^4}{n^2}\log p
\lesssim\Delta_{2,q}(1).
}
Next, observe that
\ba{
\Gamma_2
&=\max_{j\in[p]}\sum_{i=1}^n\E\sbra{\abs{\sum_{i'\in[n]:i'\neq i}\{\psi_{j,2}(X_{i'},X_{i}^*)-\psi_{j,2}(X_{i'},X_{i})\}}^4\mid X}\\
&\leq8\bra{\max_{j\in[p]}\sum_{i=1}^n\E\sbra{\abs{\sum_{i'\in[n]:i'\neq i}\psi_{j,2}(X_{i'},X_{i}^*)}^4\mid X}
+\max_{j\in[p]}\sum_{i=1}^n\abs{\sum_{i'\in[n]:i'\neq i}\psi_{j,2}(X_{i'},X_{i})}^4}\\
&=:8(\Gamma_{2,1}+\Gamma_{2,2}).
}
Since $(X_i^*)_{i=1}^n$ is an i.i.d.~sequence with the common law $P$ and independent of $X$, 
\ba{
\E[\Gamma_{2,1}]
&=\E\sbra{\max_{j\in[p]}\sum_{i=1}^n\int_S\abs{\sum_{i'\in[n]:i'\neq i}\psi_{j,2}(X_{i'},x)}^4P(dx)}\\
&\leq\sum_{i=1}^n\E\sbra{\max_{j\in[p]}\int_S\abs{\sum_{i'\in[n]:i'\neq i}\psi_{j,2}(X_{i'},x)}^4P(dx)}\\
&=n\E\sbra{\max_{j\in[p]}\int_S\abs{\sum_{i=1}^{n-1}\psi_{j,2}(X_{i},x)}^4P(dx)},
}
where the last equality follows from the fact that $(X_i)_{i=1}^n$ is i.i.d. 
Therefore, \cref{influence-mom}, \eqref{eq:max-out} and \eqref{psi-pi} give
\ban{
\E[\Gamma_2]
%&\lesssim n^{3}\max_{j\in[p]}\|\psi_{j,2}\|_{L^4(P^2)}^4\log^2 p
%+n^{2}\E\sbra{\max_{j\in[p]}M(P(|\psi_{j,2}|^4))}\log^{3}(np)
%+\E\sbra{\max_{j\in[p]}M(\psi_{j,2})^4}\log^{5}(np)\\
%&\lesssim n^3\max_{j\in[p]}\|P(\psi_{j,2}^2)\|_{L^{2}(P)}^{2}\log^{2}p
%+n^2\max_{j\in[p]}\|\psi_{j,2}\|_{L^4(P^2)}^4\log^{3}p\notag\\
%&\quad+n\E\sbra{\max_{j\in[p]}M(P(\psi_{j,2}^4))}\log^{4} p
%+n^{2}\E\sbra{\max_{j\in[p]}M\bra{P(\psi_{j,2}^2)}^{2}}\log^{3} (np)
%+\frac{B_{n,2}^4\log^5(np)}{n^4}\notag\\
&\lesssim n^3\max_{j\in[p]}\|P(\psi_{j,2}^2)\|_{L^{2}(P)}^{2}\log^{2}p
+n^2\max_{j\in[p]}\|\psi_{j,2}\|_{L^4(P^2)}^4\log^{3}p\notag\\
&\quad+n\E\sbra{\max_{j\in[p]}M\bra{P(\psi_{j,2}^4)}}\log^4 p
+n^{2+4/q}\norm{\max_{j\in[p]}P(\psi_{j,2}^2)}_{L^{q/2}(P)}^2\log^{3} (np)\notag\\
&\quad+n^{8/q}\norm{\max_{j\in[p]}|\psi_{j,2}|}_{L^q(P^2)}^4\log^5(np)
\leq\Delta_{2,q}(2).\label{gamma2-bound}
}

\paragraph{Step 2.} In this step, we bound $\pr\bra{\|D\|_\infty>\beta^{-1}}$. 
By Markov's inequality,
\ba{
\pr\bra{\|D\|_\infty>\beta^{-1}}
\leq\beta^q\E[\|D\|_\infty^q]
\leq(2\beta)^q\bra{\E\sbra{\max_{j\in[p]}|D_{j,1}|^q}
+\E\sbra{\max_{j\in[p]}|D_{j,2}|^q}}.
}
By definition, \eqref{eq:max-out} and \eqref{psi-pi},
\ba{
\E\sbra{\max_{j\in[p]}|D_{j,1}|^q}
&=\frac{1}{n}\sum_{i=1}^n\E\sbra{\max_{j\in[p]}|\psi_{j,1}(X_i^*)-\psi_{j,1}(X_i)|^q}\\
&\leq2^qn^q\norm{\max_{j\in[p]}|\pi_1\psi_{j}|}_{L^q(P)}^q
\leq2^q\bra{\frac{\Delta_{2,q}^{(2)}(1)^{1/4}}{n^{1/q}(\log p)^{1/4}}}^{q}.
}
Also, noting that $(X_i)_{i=1}^n$ is i.i.d., we have
\ba{
\E\sbra{\max_{j\in[p]}|D_{j,2}|^q}
&=\frac{1}{n}\sum_{i'=1}^n\E\sbra{\max_{j\in[p]}\abs{\sum_{i:i\neq i'}\{\psi_{j,2}(X_{i},X_{i'})-\psi_{j,2}(X_{i},X_{i'}^*)\}}^q}\\
&\leq2^q\E\sbra{\max_{j\in[p]}\abs{\sum_{i=1}^{n-1}\psi_{j,2}(X_{i},X_{n})}^q}.
}
Since $(X_i)_{i=1}^{n-1}$ is centered and independent conditional on $X_n$, \cref{max-rosenthal} together with the assumption $q\leq\log n$ and \eqref{psi-pi} imply that there exists a universal constant $C_1$ such that
\ba{
&\E\sbra{\max_{j\in[p]}|D_{j,2}|^q}\\
&\leq C_1^q\bra{\E\sbra{\bra{\log (np)\max_{j\in[p]}\sum_{i=1}^{n-1}P(\psi_{j,2}^2)(X_{n})}^{q/2}}+\log^q(np)\E\sbra{\max_{i\in[n]}\max_{j\in[p]}|\psi_{j,2}(X_i,X_n)|^q}}\\
&\leq C_1^q\bra{\bra{n\log(n p)\norm{\max_{j\in[p]}P(|\pi_2\psi_{j}|^2)}_{L^{q/2}(P)}}^{q/2}+n\log^q(np)\norm{\max_{j\in[p]}|\pi_2\psi_{j}|}_{L^q(P^2)}^q}\\
&\leq 2C_1^q\bra{\frac{\{\Delta_{2,q}^{(5)}(2)+\Delta_{2,q}^{(4)}(2)\}^{1/4}}{n^{1/q}\log^{1/4}(np)}}^q.
}
Consequently, there exists a universal constant $C_2>0$ such that
\ba{
n\pr\bra{\|D\|_\infty>\beta^{-1}}\leq \eps^{-q}(\log p)^q\bra{C_2\frac{\{\Delta_{2,q}(1)+\Delta_{2,q}(2)\}^{1/4}}{\log^{1/4}(np)}}^q.
}
\if0
where
\ba{
\Delta_{3,q}
&:=n^{1+1/q}\norm{\max_{j\in[p]}\frac{|\pi_1\psi_{j}|}{\sigma_j}}_{L^q(P)}\log p
+\tau^{3/2}n^{1/2+1/q}\norm{\max_{j\in[p]}\frac{\sqrt{P(|\pi_2\psi_{j}|^2)}}{\sigma_j}}_{L^q(P)}\log(np)\\
&\qquad+\tau^2n^{2/q}\norm{\max_{j\in[p]}\frac{|\pi_2\psi_{j}|}{\sigma_j}}_{L^q(P^2)}\log^2(np).
}
\ba{
\pr\bra{\|D\|_\infty>\beta^{-1}}
&\leq\sum_{j=1}^p\pr\bra{|D_j|>\beta^{-1}}\\
&\leq p\max_{j\in[p]}\cbra{\pr\bra{|D_{j,1}|>(2\beta)^{-1}}+\pr\bra{|D_{j,2}|>(2\beta)^{-1}}}.
}
\eqref{ass-bdd} gives
\ba{
\pr\bra{|D_{j,1}|>(2\beta)^{-1}}
=\frac{1}{n}\sum_{i=1}^n\pr\bra{|\psi_{j,1}(X_i^*)-\psi_{j,1}(X_i)|>(2\beta)^{-1}}=0,
}
provided that 
\ben{\label{beta-rest-1}
(2\beta)^{-1}\geq2B_{n,1}/n.
}
Meanwhile,
\ba{
\pr\bra{|D_{j,2}|>(2\beta)^{-1}}
&=\frac{1}{n}\sum_{i'=1}^n\pr\bra{\abs{\sum_{i:i\neq i'}\{\psi_{j,2}(X_{i},X_{i'})-\psi_{j,2}(X_{i},X_{i'}^*)\}}>(2\beta)^{-1}}\\
&\leq2\pr\bra{\abs{\sum_{i=1}^{n-1}\psi_{j,2}(X_{i},X_n)}>(4\beta)^{-1}}.
}
For any $j\in[p]$ and $\varpi\geq2$, \cref{rosenthal} and \eqref{ass-bdd} yield
\ba{
\bra{\E\sbra{\abs{\sum_{i=1}^{n-1}\psi_{j,2}(X_{i},X_{n})}^\varpi\mid X_n}}^{1/\varpi}
&\lesssim\sqrt{\varpi\sum_{i=1}^{n-1}\E[\psi_{j,2}(X_{i},X_{n})^2\mid X_n]}
+\varpi\frac{B_{n,2}}{n},
}
where $c>0$ is a universal constant. 
Hence 
\ba{
\E\sbra{\abs{\sum_{i=1}^{n-1}\psi_{j,2}(X_{i},X_{n})}^\varpi}
&\leq c^\varpi\bra{\sqrt{\varpi n}\|\psi_{j,2}\|_{L^2}
+\varpi B_{n,2}/n}^\varpi
\leq (2c)^\varpi\bra{\sqrt{\varpi/n}
+\varpi B_{n,2}/n}^\varpi,
}
where the second inequality follows by \eqref{ass-var}. 
Therefore, with $\varpi=2\log(np)$, we have by Markov's inequality
\ba{
\pr\bra{\abs{\sum_{i=1}^{n-1}\psi_{j,2}(X_{i},X_n)}>(4\beta)^{-1}}
\leq e^{-\varpi}=\frac{1}{(np)^2},
}
provided that
\ben{\label{beta-rest-2}
(4\beta)^{-1}\geq 2ec\bra{\sqrt{2\log(np)/n}+2B_{n,2}\log(np)/n}.
}
Consequently, if $\beta$ satisfies \eqref{beta-rest-1} and \eqref{beta-rest-2}, we have
\ben{\label{order2-step2}
\pr\bra{\|D\|_\infty>\beta^{-1}}\leq\frac{1}{n^2p}.
}
\fi

\paragraph{Step 3.} In this step, we choose the value of $\eps$ appropriately and complete the proof. 
Let
\ba{
%\eps=\sqrt{\max_{a,b\in[2]}\Delta_1(a,b)\log p}
%+\bra{\bra{\Delta_{2,q}(1)+\Delta_{2,q}(2)}\log^3(np)}^{1/4}
%+C_2\Delta_{3,q}(\log p)^{-1/q}
\eps=\sqrt{\max_{a,b\in[2]}\Delta_1(a,b)\log p}
+C_2\bra{\bra{\Delta_{2,q}(1)+\Delta_{2,q}(2)}\log^3p}^{1/4}
}
so that
\ben{\label{eps-bound}
%\frac{\eps^{-q+1}}{\sqrt{\log p}}(C_2\Delta_{3,q})^q
\eps^{-1}\max_{a,b\in[2]}\Delta_1(a,b)(\log p)^{3/2}
+\eps^{-3}\bra{\Delta_{2,q}(1)+\Delta_{2,q}(2)}(\log p)^{7/2}
\lesssim\eps\sqrt{\log p}.
}
Also, Step 2 gives 
$
n\pr\bra{\|D\|_\infty>\beta^{-1}}\leq1.
$
If $\beta=\eps^{-1}\log p<1$, then $\eps>1$, and the asserted bound is trivially valid for any $C\geq1$. 
Hence, it suffices to consider the case $\beta\geq1$. 
%Then, since $\beta$ satisfies \eqref{beta-rest-1} and \eqref{beta-rest-2} by construction, we have by \eqref{order2-step2}
%\[
%\frac{n\sqrt{\log p}}{\beta\wedge\beta^3}\pr(\|D\|_\infty>\beta^{-1})
%\leq\frac{\sqrt{\log p}}{np}\lesssim\frac{1}{n}.
%\]
Then, 
\[
\frac{n\sqrt{\log p}}{\beta\wedge\beta^3}\pr(\|D\|_\infty>\beta^{-1})
%\leq\frac{\eps^{-q+1}}{\sqrt{\log p}}(C_2\Delta_{3,q})^q.
\leq\frac{\eps}{\sqrt{\log p}}\leq\eps\sqrt{\log p}.
\]
Combining this with \eqref{order2-aim}--\eqref{order2-v-bound} and \eqref{gamma1-bound}--\eqref{eps-bound} gives 
\ba{
\sup_{A\in\mathcal{R}_p}\left|\pr(W\in A)-\pr(Z\in A)\right|
&\lesssim \sqrt{\max_{a,b\in[2]}\Delta_1(a,b)\log^2 p}+\cbra{\bra{\Delta_{2,q}(1)+\Delta_{2,q}(2)}\log^5 p}^{1/4}.
}
Now the desired result follows by \cref{lem:delta}.\qed
%\end{proof}

\subsection{Proof of Corollary \ref{coro:order-2}} \label{subsec:proof_coro_order_2}

%The claim of \cref{coro:order-2} is an immediate consequence of the following lemma.
\begin{lemma}\label{drop-pi}
There exists a universal constant $C$ such that
\ba{
\|\pi_2\psi\star_1^1\pi_2\psi\|_{L^2(P^2)}&\leq C\bra{\|\psi\star_1^1\psi\|_{L^2(P^2)} + \|\psi\|_{L^2(P^2)}\|P\psi\|_{L^2(P)}}
}
for any $\psi\in L^2(P^2)$. 
\end{lemma}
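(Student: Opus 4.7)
The plan is to expand $\pi_2\psi$ according to its definition \eqref{eq:hoef-proj}, compute the star product explicitly, and then apply the triangle inequality in $L^2(P^2)$ to the resulting terms.

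First, I would set $\phi:=P\psi$ and $\mu:=P^2\psi=P\phi$, so that $\pi_2\psi(x,y)=\psi(x,y)-\phi(x)-\phi(y)+\mu$. Since the contraction $\pi_2\psi\star_1^1\pi_2\psi(u,v)=\E[\pi_2\psi(X_1,u)\,\pi_2\psi(X_1,v)]$, the next step is to expand the product term by term and integrate over $X_1$. Because $\E[\phi(X_1)]=\mu$, the pure ``$\mu$'' cross-terms telescope, and one is left with an identity of the schematic form
\[
\pi_2\psi\star_1^1\pi_2\psi(u,v)
=\psi\star_1^1\psi(u,v)-\phi(u)\phi(v)-\{Ph(u)-\mu\phi(u)\}-\{Ph(v)-\mu\phi(v)\}+\Var[\phi(X_1)],
\]
where $h:=\psi\star_1^1\psi$ and $Ph(u)=\E[\psi(X_1,u)\phi(X_1)]$. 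Deriving this expression cleanly is the main bookkeeping step.

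Next, I would apply the triangle inequality in $L^2(P^2)$. The first term contributes $\|\psi\star_1^1\psi\|_{L^2(P^2)}$ directly. The term $\phi(u)\phi(v)$ contributes $\|\phi\|_{L^2(P)}^2$, and the constant term $\Var[\phi(X_1)]$ is likewise bounded by $\|\phi\|_{L^2(P)}^2$. For $Ph(u)-\mu\phi(u)$, which depends only on one variable, Jensen's inequality gives $\|Ph\|_{L^2(P)}\leq\|\psi\star_1^1\psi\|_{L^2(P^2)}$, and $\|\mu\phi\|_{L^2(P)}\leq\|\phi\|_{L^2(P)}^2$ by Cauchy--Schwarz.

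Finally, to match the stated bound, I would use $\|\phi\|_{L^2(P)}^2=\|P\psi\|_{L^2(P)}\cdot\|P\psi\|_{L^2(P)}\leq\|P\psi\|_{L^2(P)}\|\psi\|_{L^2(P^2)}$, where the last inequality is Jensen applied to the conditional expectation defining $P\psi$. Putting all pieces together yields the claim with an explicit universal constant (e.g.\ $C=4$). No step presents a genuine obstacle; the only care needed is in keeping track of the nine cross-terms in the expansion and recognizing that the $\phi$-only pieces can be absorbed into the product $\|\psi\|_{L^2(P^2)}\|P\psi\|_{L^2(P)}$ via Jensen.
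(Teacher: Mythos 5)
Your proposal is correct, and it reaches the bound by a genuinely different route from the paper. The paper does not expand anything by hand: it invokes Lemma 5.7 of D\"obler--Kasprzak--Peccati (cited as \cite{dobler2022functional}), which bounds $\|\pi_2\psi\star_1^1\pi_2\psi\|_{L^2(P^2)}$ by a maximum of contraction norms built from $\psi$, $P\psi$ and $P^2\psi$ (namely $\|P^{2-a}\psi\star_0^0P^{2-b}\psi\|_{L^2(P^{a+b})}$, $\|P\psi\|_{L^2(P)}^2$, $\|\psi\star_1^1P\psi\|_{L^2(P)}$ and $\|\psi\star_1^1\psi\|_{L^2(P^2)}$), and then reduces each of these to $\|\psi\star_1^1\psi\|_{L^2(P^2)}$ or $\|\psi\|_{L^2(P^2)}\|P\psi\|_{L^2(P)}$ using Lemma 2.4(v) of \cite{DoPe19} and Jensen's inequality. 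You instead substitute $\pi_2\psi(x,y)=\psi(x,y)-P\psi(x)-P\psi(y)+P^2\psi$, compute $\pi_2\psi\star_1^1\pi_2\psi$ explicitly, and bound the resulting terms; I checked your schematic identity and it is exactly right (with $h=\psi\star_1^1\psi$, the contraction equals $h(u,v)-Ph(u)-Ph(v)-P\psi(u)P\psi(v)+\mu P\psi(u)+\mu P\psi(v)+\Var[P\psi(X_1)]$, all identities holding $P^2$-a.e., consistent with the convention that contraction kernels are defined up to null sets), and each of your term-by-term estimates (Jensen for $\|Ph\|_{L^2(P)}\leq\|h\|_{L^2(P^2)}$, $|\mu|\leq\|P\psi\|_{L^2(P)}$, and $\|P\psi\|_{L^2(P)}\leq\|\psi\|_{L^2(P^2)}$) is valid, giving the claim with, say, $C=4$. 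The trade-off: your argument is elementary, self-contained, and yields an explicit constant, whereas the paper's is shorter and leans on a structural lemma that covers Hoeffding projections of arbitrary order, which is why citing it is the natural choice in that framework; for the second-order case treated here both are equally legitimate. One cosmetic remark: in your final cleanup you could bound $Ph$ directly by Cauchy--Schwarz as $\|Ph\|_{L^2(P)}\leq\|\psi\|_{L^2(P^2)}\|P\psi\|_{L^2(P)}$, which feeds straight into the second term of the stated bound, but your Jensen bound via $\|h\|_{L^2(P^2)}$ works just as well.
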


\begin{proof}
By Lemma 5.7 in \cite{dobler2022functional},
\besn{\label{dkp-applied}
&\|\pi_2\psi\star_1^1\pi_2\psi\|_{L^2(P^2)}\\
&\lesssim \max_{a,b\geq0,a+b\leq2}\|P^{2-a}\psi\star^0_0P^{2-b}\psi\|_{L^2(P^{a+b})}
%\vee\max_{a,b\in[2]}\|P^{2-a}\psi_j\star^1_1P^{2-b}\psi_k\|_{L^2(P^{a+b-2})}
\vee\|P\psi\|_{L^2(P)}^2\vee\|\psi\star^1_1P\psi\|_{L^2(P)}\vee\|\psi\star^1_1\psi\|_{L^2(P^{2})}.
}
Lemma 2.4(v) in \cite{DoPe19} gives
$
\|\psi\star^1_1P\psi\|_{L^2(P)}
\leq \|\psi\|_{L^2(P^2)}\|P\psi\|_{L^2(P)}
$
and
\ba{
\max_{a,b\geq0,a+b\leq2}\|P^{2-a}\psi\star^0_0P^{2-b}\psi\|_{L^2(P^{a+b})}
&\leq\max_{a,b\geq0,a+b\leq2}\|P^{2-a}\psi\|_{L^2(P^a)}\|P^{2-b}\psi\|_{L^2(P^b)}\\
&\leq \|\psi\|_{L^2(P^2)}\|P\psi\|_{L^2(P)},
}
where the last inequality follows by Jensen's inequality. 
%{\color{red} $\|P\psi\|^2_{L^2(P)}$は常に何かにdominateされている? ←これはa=1, b=1として考えれば良いので,大丈夫そう.}
Inserting these bounds into \eqref{dkp-applied} gives the desired result. 
\end{proof}

\begin{proof}[Proof of \cref{coro:order-2}]
Again, by the same reasoning as in the proof of \cref{thm:clt-ustat}, we may assume $\sigma_j=1$ for all $j\in[p]$ without loss of generality. 

First, \eqref{eq:hoef-proj}, Jensen's inequality and \cref{lem:mp-reduce} yield $\Delta_{2,q}(1)\lesssim\tilde\Delta_{2,q}(1)$, $\Delta_{2,q}(2)\lesssim\tilde\Delta_{2,q}(2)$ and $\Delta_{2,*}^{(5)}(2)\lesssim\tilde\Delta_{2,q}^{(5)}(2)$. 
Next, observe that $\|\psi_j\|_{L^2(P^2)}^2=\|P(\psi_j^2)\|_{L^1(P)}$. Hence, combining \cref{drop-pi} with the Lyapunov and AM-GM inequalities gives
\ba{
\Delta_1^{(0)}\log^3p
&\lesssim\tilde\Delta_1^{(0)}\log^3p+\frac{n^{3/2}}{2}\|P(\psi_j^2)\|_{L^2(P)}\log^{7/2}p
+\frac{n^{5/2}}{2}\|P\psi_j\|_{L^4(P)}^2\log^{5/2}p\\
&\leq\tilde\Delta_1^{(0)}\log^3p+\sqrt{\cbra{\tilde\Delta_{2,*}^{(2)}(2)+\tilde\Delta_{2,*}^{(1)}(1)}\log^5p}.
}
Third, since $\E[\pi_1\psi_j(X_1)]=0$, inserting the expression \eqref{eq:hoef-proj} in $\pi_2\psi_k$ gives
\ba{
\pi_1\psi_j\star_1^1\pi_2\psi_k(v)
&=\E[\pi_1\psi_j(X_1)\{\psi_k(X_1,v)-P\psi_k(X_1)\}]
=\pi_1\psi_j\star_1^1\psi_k(v)-P(\pi_1\psi_j\star_1^1\psi_k).
}
Hence $\|\pi_1\psi_j\star_1^1\pi_2\psi_k\|_{L^2(P)}\leq\|\pi_1\psi_j\star_1^1\psi_k\|_{L^2(P)}$. 
Thus, Lemma 2.4(vi) in \cite{DoPe19} gives
\ba{
\Delta_1^{(1)}\leq n^{5/2}\max_{j,k\in[p]}\|\pi_1\psi_j\|_{L^2(P)}\|\psi_k\star_1^1\psi_k\|_{L^2(P)}^{1/2}
=n^{3/2}\max_{j\in[p]}\|\pi_1\psi_j\|_{L^2(P)}\sqrt{\tilde\Delta_1^{(0)}}.
}
\if0
On one hand, Lemma 2.4(vi) in \cite{DoPe19} gives
\ba{
\Delta_1^{(1)}\leq n^{5/2}\max_{j,k\in[p]}\|\pi_1\psi_j\|_{L^2(P)}\|\pi_2\psi_k\star_1^1\psi_k\|_{L^2(P)}^{1/2}
=n^{3/2}\max_{j\in[p]}\sqrt{\Var[P\psi_j(X_1)]\tilde\Delta_1^{(0)}}.
}
On the other hand,  by the triangle inequality,
\[
\Delta_1^{(1)}\leq n^{5/2}\max_{j,k\in[p]}\bra{\|P\psi_j\star_1^1\psi_k\|_{L^2(P)}+|P^2\psi_j|\|P\psi_k\|_{L^2(P)}}.
\]
\fi
Finally, observe that $\|\pi_1\psi_j\|_{L^2(P)}^2=\Var[P\psi_j]$ for all $j\in[p]$ by definition. 
Combining these bounds shows that $\sqrt{\Delta_1'}$ is bounded by the right-hand side of \eqref{eq:coro:order-2} up to a universal constant. 
Now, the desired result follows by inserting the obtained bounds into \eqref{eq:order-2}. 
\end{proof}

\section{Proofs for Section \ref{sec:gof}} \label{sec:proof_gof}

Before starting the discussion, we introduce some notation. 
Throughout this section, we abbreviate $\|\cdot\|_{L^q(\mathbb R^d)}$ to $\|\cdot\|_{L^q}$ for $q\in[1,\infty]$. 
Note that $\|K\|_{L^q}<\infty$ for all $q\in[1,\infty]$ under \cref{ass:kernel-gof}. 
For any $g\in L^1(\mathbb R^d)$, we write $P_g$ for the signed measure on $\mathbb R^d$ with density $g$. That is, $P_g(B)=\int_Bg(x)dx$ for any Borel set $B\subset\mathbb R^d$. 
Then, for any symmetric bounded function $\psi:\mathbb R^d\times\mathbb R^d\to\mathbb R$, we define a function $P_g\psi:\mathbb R^d\to\mathbb R$ as $P_g\psi(x)=\int_{\mathbb R^d}\psi(x,y)P_g(dy)$, $x\in\mathbb R^d$. 
For $f\in\pdf_d$ and a symmetric kernel $\psi\in L^1(P_f^2)$, we denote by $\pi_2^f\psi$ the second-order Hoeffding projection of $\psi$ under $P_f$. That is,
\[
\pi_2^f\psi(x,y)=\psi(x,y)-P_f\psi(x)-P_f\psi(y)+P_f^2\psi,\qquad x,y\in\mathbb R^d.
\]
We omit the superscript $f$ when no confusion can arise. 
For every $h>0$, we define a function $K_h:\mathbb R^d\to\mathbb R$ as $K_h(t)=h^{-d}K(t/h)$, $t\in\mathbb R^d$.

%\subsection{Preliminary lemmas}

\if0
\begin{lemma}\label{lem:gmax}
Let $\zeta_1,\dots,\zeta_p$ be centered Gaussian random variables. Then 
\[
\E[\max_{j\in[p]}\zeta_j]\leq\sqrt{2\log p}\max_{j\in[p]}\|\zeta_j\|_{L^2(\pr)}.
\]
\end{lemma}

\begin{proof}
See Lemma 2.1 in \cite{chatterjee2008chaos}.
\end{proof}
\fi

\subsection{Proof of Proposition \ref{gof-ga-null}}

For later use, we prove a slightly generalized version of \cref{gof-ga-null}. Set
\[
H^\alpha_{R,b}:=\{f\in \pdf_d:\|f\|_{H^\alpha}\leq R,\|f\|_{L^2}^2\geq b\}
\]
for every $b>0$. 
\begin{proposition}\label{gof-ga}
Let $\alpha>0$ and $b>0$. Under \cref{ass:kernel-gof},
\[
\sup_{f\in H^\alpha_{R,b}}\sup_{t\in\mathbb R}\abs{\pr_{f}\bra{\max_{h\in\mcl H_n}J_2(\pi_2^f\psi_h)\leq t}-\pr_{f}\bra{\max_{h\in\mcl H_n}Z_h\leq t}}\to0\quad\text{as }n\to\infty,
\]
where $Z=(Z_h)_{h\in\mcl H_n}$ is a centered Gaussian random vector such that
\ba{%\label{gof-gvar}
\E_f[Z_{h}Z_{h'}]=(hh')^{d/2}P_f^2(\pi_2^f\varphi_{h}\pi_2^f\varphi_{h'})
}
for all $f\in H^\alpha_{R,b}$ and $h,h'\in\mcl H_n$. 
\end{proposition}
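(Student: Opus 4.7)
My plan is to apply \cref{thm:order-2}, in the simplified form of \cref{rmk:order-2}(ii), to the random vector $W_f := (J_2(\pi_2^f \psi_h))_{h \in \mcl H_n}$ under $\pr_f$. Treating $\mcl H_n$ as the index set, the effective dimension is $p := |\mcl H_n| \lesssim \log n$, so $\log p \lesssim \log\log n$. Each component kernel $\pi_2^f \psi_h$ is symmetric, square-integrable, and degenerate under $P_f$; thus $\E_{\pr_f}[J_2(\pi_2^f\psi_h)] = 0$, and \cref{rmk:order-2}(ii) reduces the task to verifying, uniformly in $f \in H_{R,b}^\alpha$,
\begin{equation*}
\Delta_1^{(0)} \log^3 p \to 0 \qquad\text{and}\qquad \Delta_{2,q}(2) \log^5 p \to 0,
\end{equation*}
where the $\Delta$'s are formed with $\psi_j$ replaced by $\pi_2^f \psi_h$ and $\sigma_j$ by $\sigma_h := \sqrt{\Var_f[J_2(\pi_2^f\psi_h)]}$. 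I would choose $q = \infty$.

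The first step is a uniform variance lower bound $\sigma_h \geq c > 0$. Degeneracy and \eqref{u-anova} give $\sigma_h^2 = \binom{n}{2}\|\pi_2^f\psi_h\|_{L^2(P_f^2)}^2$. A calculation analogous to \eqref{ade-var-approx} (substitution $x = y + uh$ combined with \cref{lem:sobolev}) shows $\binom{n}{2}\|\psi_h\|_{L^2(P_f^2)}^2 \to \|K\|_{L^2}^2 \|f\|_{L^2}^2$ uniformly in $f \in H_{R,b}^\alpha$, while the $P_f\psi_h$ and $P_f^2\psi_h$ corrections arising from the Hoeffding projection are of lower order. Combined with $\|f\|_{L^2}^2 \geq b$, this yields $\sigma_h \geq c$ for $n$ large, with $c$ depending only on $K, R, b, \alpha$. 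Next, I would bound each summand of $\Delta_1^{(0)}$ and $\Delta_{2,\infty}(2)$ using the template of \cref{sec:app_ade}: the pointwise estimate $\|\psi_h\|_\infty \lesssim (n^2 h^d)^{-1/2}$ controls the maximal quantities $\Delta_{2,*}^{(3)}(2)$ and $\Delta_{2,\infty}^{(4)}(2)$, while Young's inequality for convolutions, together with $\|f\|_{L^2} \leq R$, controls the integral moments entering $\tilde\Delta_1^{(0)}$, $\Delta_{2,*}^{(1)}(2)$, $\Delta_{2,*}^{(2)}(2)$, and $\Delta_{2,\infty}^{(5)}(2)$. Each term turns out to be a negative power of $n^2 \ul h_n^d$ times a $\mathrm{poly}(\log n)$ factor, and the bandwidth condition \eqref{ass:bandwidth}, combined with $\log p \lesssim \log\log n$, is precisely tuned so that the two displayed conditions hold.

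The main obstacle I anticipate is ensuring uniformity in $f$: when $\alpha \leq d/2$, the Sobolev bound $\|f\|_{H^\alpha} \leq R$ does not imply $\|f\|_\infty < \infty$, so terms like $\|P_f(\psi_h^2)\|_{L^2(P_f)}^2$ cannot be handled by pulling out $\|f\|_\infty$. I would address this by expressing each integral of the form $\int K((\cdot-y)/h)^k g(y) f(y) dy$ as a convolution $\tilde K_h \ast f$ evaluated inside an appropriate $L^r$-norm, and applying Young's inequality $\|\tilde K_h \ast f\|_{L^r} \leq \|\tilde K_h\|_{L^s}\|f\|_{L^2}$. Since $\|\tilde K_h\|_{L^s} = h^{-d(1 - 1/s)}\|K^k\|_{L^s}$, the resulting bounds depend on $f$ only through $\|f\|_{L^2} \leq R$, yielding the required uniformity. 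Finally, since $\{\max_{h\in\mcl H_n} J_2(\pi_2^f\psi_h) \leq t\}$ is the hyperrectangle $(-\infty, t]^{|\mcl H_n|}$, \cref{thm:order-2} applied to $W_f$ delivers the Kolmogorov-distance bound, uniformly in $t \in \mathbb R$ and $f \in H_{R,b}^\alpha$.
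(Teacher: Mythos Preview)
Your overall strategy matches the paper's: apply \cref{thm:order-2} with $q=\infty$ to the degenerate vector $(J_2(\pi_2^f\psi_h))_{h\in\mcl H_n}$, establish a uniform variance lower bound via a computation like \cref{mmd-var}, and verify the $\Delta$ conditions. Your treatment of the $\Delta_{2,\infty}(2)$ terms via convolution estimates with $\|f\|_{L^2}\leq R$ is essentially the content of \cref{lem:gof} and works as you describe.

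The gap is in $\Delta_1^{(0)}$ (equivalently $\tilde\Delta_1^{(0)}$). You claim it is controlled by Young's inequality and $\|f\|_{L^2}\leq R$, and that it is ``a negative power of $n^2\ul h_n^d$''. Neither is correct. With only $f\in L^2$, the best one gets is
\[
\|\varphi_h\star_1^1\varphi_h\|_{L^2(P_f^2)}\leq P_f^2(\varphi_h^2)\lesssim h^{-d}\|f\|_{L^2}^2,
\]
(e.g.\ via Cauchy--Schwarz in the inner integral and \cref{lem:gof}\ref{gof-full}), which after the $\psi_h$ normalization yields $n^2\|\psi_h\star_1^1\psi_h\|_{L^2(P_f^2)}=O(1)$, so $\Delta_1^{(0)}\log^3p$ does \emph{not} vanish. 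The paper's proof needs a genuinely different ingredient here: after reducing to $\alpha<d/4$, it invokes the Sobolev embedding $H^\alpha\hookrightarrow L^m$ with $m=2/(1-2\alpha/d)>2$, then applies H\"older/Young with this higher integrability of $f$ to obtain $\|\varphi_h\star_1^1\varphi_h\|_{L^2(P_f^2)}^2\lesssim h^{-2d+4\alpha}$, whence $\Delta_1^{(0)}\log^3p=O(\bar h_n^{2\alpha}\log^3|\mcl H_n|)\to0$. Note also that this term is governed by the \emph{largest} bandwidth $\bar h_n\to0$, not by $n^2\ul h_n^d$; your blanket description of the rates misses this qualitative distinction. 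Without the Sobolev embedding step, your argument for $\Delta_1^{(0)}$ does not close.
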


Since $\hat\psi_h=\pi_2^{f_0}\psi_h$ for every $h>0$, \cref{gof-ga-null} is an immediate consequence of \cref{gof-ga} with $R=\|f_0\|_{H^\alpha}$, $\alpha=\gamma$ and $b=\|f_0\|_{L^2}^2$. 

%For the proof of \cref{gof-ga}, we need the following well-known fact in the theory of Sobolev spaces:
%\begin{lemma}[Sobolev's inequality]
%Let $0<\alpha<1/2$. There exists a constant $C_{d,\alpha}$ depending only on $d$ and $\alpha$ such that $\|f\|_{L^m}\leq C_{d,\alpha}\|f\|_{H^\alpha}$ for any $f\in L^2(\mathbb R^d)$, where $m:=2/(1-2\alpha/d)$. 
%\end{lemma}

Turning to the proof of \cref{gof-ga}, we begin by proving a few technical estimates.  
\begin{lemma}\label{lem:sobolev}
Let $f\in L^2(\mathbb R^d)$ satisfy $\|f\|_{H^\alpha}<\infty$ for some $0<\alpha\leq1$. Then, for any $u\in\mathbb R^d$,
\[
\int_{\mathbb R^d}|f(x+u)-f(x)|^2dx\leq2^{2(1-\alpha)}\|f\|_{H^\alpha}^2|u|^{2\alpha}.
\]
\end{lemma}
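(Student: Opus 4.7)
My plan is to reduce the translation-difference inequality to a pointwise inequality on the Fourier side and then apply Plancherel's theorem. The translation operator $\tau_u f(\cdot):=f(\cdot+u)$ has the effect of multiplying $\mathfrak{F}f$ by $e^{i\lambda\cdot u}$, so $\mathfrak{F}(\tau_u f - f)(\lambda) = (e^{i\lambda\cdot u}-1)\mathfrak{F}f(\lambda)$. Since $f\in L^2(\mathbb R^d)$, Plancherel gives
\[
\int_{\mathbb R^d}|f(x+u)-f(x)|^2\,dx = \int_{\mathbb R^d}|e^{i\lambda\cdot u}-1|^2|\mathfrak{F}f(\lambda)|^2\,d\lambda.
\]
(Strictly speaking, if $f\notin L^1$, $\mathfrak{F}f$ is defined as the $L^2$ extension, but the translation property still holds for the extension.)

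Next I would bound the Fourier multiplier pointwise. Using $|e^{i\theta}-1|^2 = 4\sin^2(\theta/2)$ and the elementary inequality $|\sin t|^2 \le |t|^{2\alpha}$, valid for $0<\alpha\le 1$ because $|\sin t|\le\min(|t|,1)$ implies $|\sin t|^2 = |\sin t|^{2\alpha}|\sin t|^{2(1-\alpha)}\le |t|^{2\alpha}$, I obtain
\[
|e^{i\lambda\cdot u}-1|^2 = 4\sin^2\!\bra{\tfrac{\lambda\cdot u}{2}} \le 4\cdot 2^{-2\alpha}|\lambda\cdot u|^{2\alpha} = 2^{2(1-\alpha)}|\lambda\cdot u|^{2\alpha}.
\]
Then Cauchy--Schwarz gives $|\lambda\cdot u|^{2\alpha} \le |\lambda|^{2\alpha}|u|^{2\alpha} \le (1+|\lambda|^2)^\alpha|u|^{2\alpha}$.

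Combining these two estimates yields
\[
\int_{\mathbb R^d}|f(x+u)-f(x)|^2\,dx \le 2^{2(1-\alpha)}|u|^{2\alpha}\int_{\mathbb R^d}(1+|\lambda|^2)^\alpha|\mathfrak{F}f(\lambda)|^2\,d\lambda = 2^{2(1-\alpha)}\|f\|_{H^\alpha}^2|u|^{2\alpha},
\]
which is exactly the claim. There is no real obstacle here: the result is a standard Sobolev regularity fact, and the entire argument is a one-line Plancherel computation followed by the elementary bound $|\sin t|^2\le|t|^{2\alpha}$. The only minor care needed is to track the constant $2^{2(1-\alpha)}$ sharply, which is done by extracting the factor of $1/2$ inside the sine before interpolating.
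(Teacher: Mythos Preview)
Your proof is correct and follows essentially the same approach as the paper: Plancherel's theorem to pass to the Fourier side, then a pointwise interpolation bound on the multiplier $|e^{i\lambda\cdot u}-1|$. The only cosmetic difference is that the paper uses $|e^{it}-1|\le 2\wedge|t|$ directly and interpolates as $(2\wedge|t|)^2\le 2^{2(1-\alpha)}|t|^{2\alpha}$, whereas you rewrite $|e^{i\theta}-1|^2=4\sin^2(\theta/2)$ and interpolate on $|\sin t|\le \min(|t|,1)$; these yield the identical constant.
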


\begin{proof}
By the Plancherel theorem and the inequality $|e^{\sqrt{-1}t}-1|\leq2\wedge|t|$ for any real number $t$, 
\ba{
\int_{\mathbb R^d}|f(x+u)-f(x)|^2dx
&=\int_{\mathbb R^d}|\fourier{f}(\lambda)\{e^{\sqrt{-1}\lambda\cdot u}-1\}|^2d\lambda\\
&\leq2^{2(1-\alpha)}\int_{\mathbb R^d}|\fourier{f}(\lambda)|^2|\lambda\cdot u|^{2\alpha}d\lambda
\leq2^{2(1-\alpha)}\|f\|_{H^\alpha}^2|u|^{2\alpha}.
}    
This completes the proof. 
\end{proof}

\begin{lemma}\label{lem:gof}
For any $g,g_1\in L^1(\mathbb R^d)\cap L^2(\mathbb R^d)$, $h>0$ and integer $m\geq1$, we have the following:
\begin{enumerate}[label=(\alph*)]

\item\label{gof-infty} $\|P_g(\varphi_h^m)\|_{L^\infty}\leq h^{-(m-1/2)d}\|K\|_{L^{2m}}^m\|g\|_{L^2}$.

\item\label{gof-full} $|\int_{\mathbb R^{d}}|P_g(\varphi_h^m)(x)|P_{g_1}(dx)|\leq h^{-(m-1)d}\|K\|_{L^m}^m\|g\|_{L^2}\|g_1\|_{L^2}$.

\item\label{gof-l2} $\|P_g(\varphi_h^m)\|_{L^2(P_f)}\leq h^{-(m-3/4)d}\|K\|_{L^m}^{m/2}\|K\|_{L^{2m}}^{m/2}\|g\|_{L^2}\|f\|_{L^2}^{1/2}$ for any $f\in\pdf_d$.

\end{enumerate}
\end{lemma}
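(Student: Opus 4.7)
My plan is to reduce all three bounds to standard convolution inequalities after recognizing that $P_g(\varphi_h^m)$ is essentially a convolution. Specifically, writing $\varphi_h(x,y)^m = h^{-md}K((x-y)/h)^m$ and substituting $u=(x-y)/h$, one obtains
\begin{equation*}
P_g(\varphi_h^m)(x) = (\tilde{K}_h^{[m]} * g)(x),
\qquad \tilde{K}_h^{[m]}(t) := h^{-md}K(t/h)^m.
\end{equation*}
A change of variable then gives the two basic norm identities $\|\tilde{K}_h^{[m]}\|_{L^1} = h^{-(m-1)d}\|K\|_{L^m}^m$ and $\|\tilde{K}_h^{[m]}\|_{L^2} = h^{-(m-1/2)d}\|K\|_{L^{2m}}^m$, which will drive everything.

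For part \ref{gof-infty}, I would apply Young's convolution inequality with exponents $(2,2,\infty)$, giving $\|\tilde{K}_h^{[m]}*g\|_{L^\infty} \leq \|\tilde{K}_h^{[m]}\|_{L^2}\|g\|_{L^2}$, which is exactly the claimed bound. Alternatively, one can derive this directly by Cauchy-Schwarz in the integral representation $|P_g(\varphi_h^m)(x)| \leq h^{-(m-1)d}\int |K(u)|^m |g(x-hu)|\,du$ followed by the substitution $y = x-hu$ to transform $\int|g(x-hu)|^2\,du$ into $h^{-d}\|g\|_{L^2}^2$.

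For part \ref{gof-full}, I would first use Cauchy-Schwarz to separate $g_1$:
\begin{equation*}
\left|\int_{\mathbb{R}^d}|P_g(\varphi_h^m)(x)|\,g_1(x)\,dx\right| \leq \|P_g(\varphi_h^m)\|_{L^2}\,\|g_1\|_{L^2},
\end{equation*}
and then bound $\|P_g(\varphi_h^m)\|_{L^2} = \|\tilde{K}_h^{[m]}*g\|_{L^2}$ via Young's inequality with exponents $(1,2,2)$, yielding the factor $\|\tilde{K}_h^{[m]}\|_{L^1}\|g\|_{L^2} = h^{-(m-1)d}\|K\|_{L^m}^m\|g\|_{L^2}$.

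For part \ref{gof-l2}, I would combine parts \ref{gof-infty} and \ref{gof-full} via the elementary trick $|P_g(\varphi_h^m)|^2 \leq \|P_g(\varphi_h^m)\|_{L^\infty}\cdot |P_g(\varphi_h^m)|$. Integrating against $f$ and applying \ref{gof-infty} to the first factor and \ref{gof-full} (with $g_1=f$) to the integral of the second factor yields
\begin{equation*}
\|P_g(\varphi_h^m)\|_{L^2(P_f)}^2 \leq h^{-(m-1/2)d}\|K\|_{L^{2m}}^m\|g\|_{L^2}\cdot h^{-(m-1)d}\|K\|_{L^m}^m\|g\|_{L^2}\|f\|_{L^2},
\end{equation*}
and taking square roots gives the desired $h^{-(m-3/4)d}\|K\|_{L^m}^{m/2}\|K\|_{L^{2m}}^{m/2}\|g\|_{L^2}\|f\|_{L^2}^{1/2}$. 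There is no real obstacle; the whole lemma is a straightforward bookkeeping exercise in convolution inequalities, and the only subtle point is to organize parts \ref{gof-infty} and \ref{gof-full} so that \ref{gof-l2} falls out immediately by interpolation.
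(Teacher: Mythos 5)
Your proposal is correct and follows essentially the same route as the paper: both exploit the representation $P_g(\varphi_h^m)(x)=h^{-(m-1)d}\int K(u)^m g(x+uh)\,du$ (i.e.\ a rescaled convolution), bound (a) and (b) by Cauchy--Schwarz/Young with the scaling identities for $\|K\|_{L^{2m}}$ and $\|K\|_{L^m}$, and obtain (c) from the identical interpolation $\|P_g(\varphi_h^m)\|_{L^2(P_f)}^2\leq\|P_g(\varphi_h^m)\|_{L^\infty}\int|P_g(\varphi_h^m)|\,dP_f$ combined with (a) and (b). The only cosmetic difference is in (b), where you apply Cauchy--Schwarz in $x$ first and then Young's inequality, whereas the paper uses Fubini and applies Cauchy--Schwarz inside the $u$-integral; the bounds and ingredients are the same.
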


\begin{proof}
Observe that for any $x\in\mathbb R^d$, 
\ben{\label{gof-int1}
P_g(\varphi_h^m)(x)
=\frac{1}{h^{md}}\int_{\mathbb R^d}K\bra{\frac{x-y}{h}}^mg(y)dy
=\frac{1}{h^{(m-1)d}}\int_{\mathbb R^d}K(u)^mg(x+uh)du.
}
Hence, the Schwarz inequality gives
\ba{
|P_g(\varphi_h^m)(x)|\leq\frac{1}{h^{md}}\sqrt{\int_{\mathbb R^d} K\bra{\frac{x-y}{h}}^{2m}dy}\|g\|_{L^2}
\leq h^{-(m-1/2)d}\|K\|_{L^{2m}}^m\|g\|_{L^2}.
}
This shows \ref{gof-infty}. 
Next, using \eqref{gof-int1} again, we obtain
\ba{
&\abs{\int_{\mathbb R^{2d}}|P_g(\varphi_h^m)(x)|P_{g_1}(dx)}\\
&\leq\frac{1}{h^{(m-1)d}}\int_{\mathbb R^{d}}|K(u)|^m\bra{\int_{\mathbb R^d}|g(x+uh)g_1(x)|dx}du
\leq\frac{\|K\|_{L^m}^m\|g\|_{L^2}\|g_1\|_{L^2}}{h^{(m-1)d}},
}
where the last inequality follows from the Schwarz inequality. 
This shows \ref{gof-full}. 
Finally, since $\|P_g(\varphi_h^m)\|_{L^2(P_f)}^2\leq\|P_g(\varphi_h^m)\|_{L^\infty}\int_{\mathbb R^{d}}|P_g(\varphi_h^m)(x)|P_{f}(dx)$, \ref{gof-infty} and \ref{gof-full} give \ref{gof-l2}. 
\end{proof}

\if0
The following lemma extends Lemma 15 in \cite{li2024optimality} to general kernel functions. 
\begin{lemma}\label{ly-lem15}
Let $g\in L^2(\mathbb R^d)$. 
Also, let $L\in L^1(\mathbb R^d)$ be a positive definite function such that $c_L:=\int_{\mathbb R^d}L(u)du>0$. 
\begin{enumerate}[label=(\alph*)]

\item For any $g\in L^2(\mathbb R^d)$, there exists a constant $c_1>0$ depending only on $L$ and $g$ such that
\ben{\label{eq:ly-lem15}
\int_{\mathbb R^{2d}} \frac{1}{h^d}L\bra{\frac{x-y}{h}}g(x)g(y)dxdy\geq \frac{c_L}{2}\|g\|_{L^2}^2
}
for any $0<h\leq c_1$. 

\item There exists a constant $c_2>0$ depending only on $L$ such that if $\|g\|_{H^\alpha}\leq R$ for some $\alpha>0$ and $R>0$, we have \eqref{eq:ly-lem15} for any $0<h\leq c_2\bra{\|g\|_{L^2}/(2R)}^{1/\alpha}$.
    
\end{enumerate}
\end{lemma}

\begin{proof}
Observe that the left-hand side of \eqref{eq:ly-lem15} equals to
\ba{
h^{-d}\int L(y/h)g(x)g(x+y)dxdy
=(2\pi)^{d/2}\int \fourier{L}(h\lambda)|\fourier{g}(\lambda)|^2d\lambda.
}
Since $L\in L^1(\mathbb R^d)$, we have $(2\pi)^{d/2}\fourier{L}(\lambda)\to c_L$ as $\lambda\to0$ by the dominated convergence theorem. 
Thus, there exists a constant $\eta>0$ depending only on $L$ such that $|(2\pi)^{d/2}\fourier{L}(\lambda)-c_L|\leq c_L/3$ for any $|\lambda|\leq \eta$. 
Meanwhile, since $\int_{|\lambda|\leq T}|\fourier{g}(\lambda)|^2d\lambda\to\|g\|_{L^2}^2$ as $T\to\infty$ by the dominated convergence and Plancherel theorems, there exists a constant $T>0$ depending only on $g$ such that
\ben{\label{from-ly-lem15}
\int_{|\lambda|\leq T}|\fourier{g}(\lambda)|^2d\lambda\geq\frac{3}{4}\|g\|_{L^2}^2.
}
In addition, since $L$ is a positive definite function, $\fourier L\geq0$. 
Consequently, if $|Th|\leq \eta$,
\ba{
\int \frac{1}{h^d}L\bra{\frac{x-y}{h}}g(x)g(y)dxdy
\geq(2\pi)^{d/2}\int_{|\lambda|\leq T} \fourier{L}(h\lambda)|\fourier{g}(\lambda)|^2d\lambda
\geq\frac{c_L}{2}\|g\|_{L^2}^2.
}
This completes the proof of (a). 

To prove (b), note that \eqref{from-ly-lem15} holds for $T=(2R/\|g\|_{L^2})^{1/\alpha}$ under the assumption of (b) by the proof of Lemma 15 in \cite{li2024optimality}. 
Then, the desired result follows from the same argument as above. 
\end{proof}
\fi

\begin{lemma}\label{mmd-var}
%\ba{
%\abs{h^d\binom{n}{2}^{-1}\|J_2(\hat\varphi_h)\|_{L^2(P_0)}^2-\|K\|_{L^2(\mathbb R^d)}^2\|f_0\|_{L^2(\mathbb R^d)}^2}
%\leq C_K\bra{\|f_0\|_{H^\alpha}^2h^\alpha+(1+\|f_0\|_{L^2(\mathbb R^d)}^3)h^d}
%}
Let $h,h'>0$ and $f\in \pdf_d\cap L^2(\mathbb R^d)$. Then
\besn{\label{mmd-var-eq1}
&(hh')^{d/2}\abs{P_f^2(\pi_2^f\varphi_{h}\pi_2^f\varphi_{h'})-P_f^2(\varphi_{h}\varphi_{h'})}\\
&\leq6\|K\|_{L^1}\|K\|_{L^2}\|f\|_{L^2}^3(h\wedge h')^{d/2}
+(hh')^{d/2}\|K\|_{L^1}^2\|f\|_{L^2}^4.
}
Moreover, there exists a constant $c>0$ depending only on $K$ such that if $\|f\|_{H^\alpha}<\infty$ for some $0<\alpha\leq1$,
%\[
%\|J_2(\hat\psi_h)\|_{L^2(\pr_{f_0})}^2\geq c\bra{\|f_0\|_{L^2}^2-2^{1-\alpha}\|f_0\|_{H^\alpha}^2h^\alpha-\|f_0\|_{L^2}^3h^d}
%\]
\ben{\label{mmd-var-eq2}
\frac{\|K\|_{L^2}^2}{2}\bra{\|f\|_{L^2}^2-c\|f\|_{H^\alpha}^2h^\alpha}
\leq h^d\|\varphi_h\|_{L^2(P_{f}^2)}^2\leq \|K\|_{L^2}^2\|f\|_{L^2}^2.
}
\end{lemma}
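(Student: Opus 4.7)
The proof will handle the two displayed inequalities separately. For \eqref{mmd-var-eq1}, the strategy is an algebraic reduction that exploits the $P_f$-degeneracy of the Hoeffding projection, followed by a direct application of \cref{lem:gof}. For \eqref{mmd-var-eq2}, the upper bound is immediate from a change of variables, whereas the lower bound will require a Fourier-analytic argument.

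For \eqref{mmd-var-eq1}, write $g_1:=P_f\varphi_h$, $g_1':=P_f\varphi_{h'}$, $g_0:=P_f^2\varphi_h$, $g_0':=P_f^2\varphi_{h'}$. The relation $\int\pi_2^f\varphi_h(x,y)f(y)dy=0$ (and its symmetric counterpart) allows $\pi_2^f\varphi_{h'}$ to be replaced by $\varphi_{h'}$ inside $P_f^2(\pi_2^f\varphi_h\cdot\pi_2^f\varphi_{h'})$, and a direct expansion together with Fubini yields
\[
P_f^2(\pi_2^f\varphi_h\cdot\pi_2^f\varphi_{h'}) - P_f^2(\varphi_h\varphi_{h'}) = -2P_f(g_1 g_1') + g_0 g_0'.
\]
It then remains to bound each term using \cref{lem:gof}. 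For $P_f(g_1 g_1')$, combine the $L^\infty$ bound $\|g_1\|_{L^\infty}\le h^{-d/2}\|K\|_{L^2}\|f\|_{L^2}$ from part (a) with $\int|g_1'|f\le\|K\|_{L^1}\|f\|_{L^2}^2$ from part (b), both with $m=1$, and swap the roles of $h$ and $h'$ to obtain $(hh')^{d/2}|P_f(g_1 g_1')|\le(h\wedge h')^{d/2}\|K\|_{L^1}\|K\|_{L^2}\|f\|_{L^2}^3$. For $|g_0 g_0'|$, part (b) again gives $|g_0|,|g_0'|\le\|K\|_{L^1}\|f\|_{L^2}^2$, yielding the second summand.

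For \eqref{mmd-var-eq2}, the substitution $u=(x-y)/h$ produces
\[
h^d\|\varphi_h\|_{L^2(P_f^2)}^2 = \int_{\mathbb R^d}K(u)^2\bra{\int_{\mathbb R^d}f(y+uh)f(y)dy}du,
\]
and the upper bound follows from $\int f(y+uh)f(y)dy\le\|f\|_{L^2}^2$ (Cauchy--Schwarz with translation invariance of the $L^2$ norm). For the lower bound, setting $L:=K^2$ and using $\int L_h*f\cdot f=(2\pi)^{d/2}\int\fourier{L}(h\lambda)|\fourier{f}(\lambda)|^2 d\lambda$ (with $L_h(z):=h^{-d}L(z/h)$, which is even since $K$ is symmetric), combined with $(2\pi)^{d/2}\fourier{L}(0)=\|K\|_{L^2}^2$, rewrites the quantity in Fourier form. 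Since $K^2\in L^1$, $\fourier{L}$ is continuous at the origin, so there exists $\eta\in(0,1]$ depending only on $K$ with $(2\pi)^{d/2}\fourier{L}(\xi)\ge\|K\|_{L^2}^2/2$ whenever $|\xi|\le\eta$. Splitting the integral at $|\lambda|=\eta/h$, using the global bound $|(2\pi)^{d/2}\fourier{L}|\le\|K\|_{L^2}^2$ on the complement, and controlling the tail of $|\fourier{f}|^2$ by $(h/\eta)^{2\alpha}\|f\|_{H^\alpha}^2$ via the Sobolev definition then yields
\[
h^d\|\varphi_h\|_{L^2(P_f^2)}^2 \ge \frac{\|K\|_{L^2}^2}{2}\|f\|_{L^2}^2 - \frac{3\|K\|_{L^2}^2}{2}(h/\eta)^{2\alpha}\|f\|_{H^\alpha}^2.
\]

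The main obstacle is producing an $h^\alpha$ remainder (rather than $h^{2\alpha}$) and a constant $c$ depending only on $K$, uniformly over $\alpha\in(0,1]$. When $h\le\eta$, the chain $(h/\eta)^{2\alpha}\le(h/\eta)^\alpha=h^\alpha/\eta^\alpha\le h^\alpha/\eta$ (the last inequality using $\eta\le 1$ and $\alpha\le 1$) converts the $h^{2\alpha}$ remainder into $h^\alpha$ with an $\alpha$-free constant. When $h>\eta$, the target right-hand side is non-positive for $c=3/\eta$: since $\|f\|_{L^2}^2\le\|f\|_{H^\alpha}^2$, $h^\alpha>\eta^\alpha$, and $\eta^{\alpha-1}\ge1$, one has $ch^\alpha\|f\|_{H^\alpha}^2\ge 3\eta^{\alpha-1}\|f\|_{H^\alpha}^2\ge\|f\|_{L^2}^2$, and the inequality holds trivially. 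Hence $c=3/\eta$ works in both regimes. Note that a direct $L^2$-modulus argument via \cref{lem:sobolev} would instead demand $\int K^2(u)|u|^{2\alpha}du<\infty$, a moment condition not assumed in \cref{ass:kernel-gof}; the Fourier detour replaces this requirement by the weaker continuity of $\fourier{K^2}$ at the origin, which follows automatically from $K^2\in L^1$.
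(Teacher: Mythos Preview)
Your proof is correct, and both parts take genuinely different routes from the paper.

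For \eqref{mmd-var-eq1}, the paper expands $P_f^2(\pi_2^f\varphi_h\pi_2^f\varphi_{h'})-P_f^2(\varphi_h\varphi_{h'})$ into four terms $-2I-2II+2III+IV$ and bounds each of $I,II,III$ separately by $h^{-d/2}\|K\|_{L^1}\|K\|_{L^2}\|f\|_{L^2}^3$ (after assuming $h\ge h'$), which is where the coefficient $6$ comes from. Your degeneracy trick collapses this to the single identity $-2P_f(g_1g_1')+g_0g_0'$; in fact $I=II=III=P_f(g_1g_1')$, so the paper simply does not exploit the cancellation $-2I-2II+2III=-2I$. Your argument therefore yields the sharper constant $2$ in the first summand, which of course implies the stated bound with $6$.

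For the lower bound in \eqref{mmd-var-eq2}, the paper proceeds in physical space: it fixes $a\ge 1$ with $\int_{|u|\le a}K(u)^2du\ge\|K\|_{L^2}^2/2$, restricts the $u$-integral to $|u|\le a$, and then applies \cref{lem:sobolev} to control $\int K(u)^2\int f(x)\{f(x+uh)-f(x)\}dxdu$ over this bounded region, giving the $h^\alpha$ remainder directly with $c=4a$. Your Fourier-side splitting at $|\lambda|=\eta/h$ is equally valid and perhaps conceptually cleaner, though it first produces an $h^{2\alpha}$ remainder that you then convert to $h^\alpha$ by the case split $h\lessgtr\eta$. Your closing remark, however, misstates the obstruction to the modulus-of-continuity approach: the paper avoids any moment condition on $K^2$ precisely by truncating to $|u|\le a$, so that $\int_{|u|\le a}K(u)^2|uh|^\alpha du\le a^\alpha h^\alpha\|K\|_{L^2}^2$. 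This inaccuracy is only in a parenthetical comment and does not affect the correctness of your argument.
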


\begin{proof}
%Observe that $P_0\hat\varphi_h=0$. Hence, by \eqref{u-anova-2},
%\[
%\Var_{f_0}[J_2(\hat\varphi_h)]=\frac{n(n-1)}{2}\Var_{f_0}[\hat\varphi_h(X_1,X_2)]
%\]
We may assume $h\geq h'$ without loss of generality. 
A straightforward computation shows
\ba{
&P_f^2(\pi_2\varphi_{h}\pi_2\varphi_{h'})-P_f^2(\varphi_{h}\varphi_{h'})\\
&=-2\E_f[\varphi_h(X_1,X_2)P_f\varphi_{h'}(X_1)]
-2\E_f[\varphi_{h'}(X_1,X_2)P_f\varphi_{h}(X_1)]\\
&\quad+2\E_f[P_f\varphi_h(X_1)P_f\varphi_{h'}(X_1)]
+(P^2_f\varphi_h)(P^2_f\varphi_{h'})\\
&=:2I+2II+2III+IV.
}
\cref{lem:gof}\ref{gof-infty}--\ref{gof-full} give $|III|\leq h^{-d/2}\|K\|_{L^1}\|K\|_{L^2}\|f\|_{L^2}^3$ and $|IV|\leq\|K\|_{L^1}^2\|f\|_{L^2}^4$. 
Meanwhile, by \eqref{gof-int1},
\ba{
|I|
&\leq\int_{\mathbb R^{3d}}|K_{h}(x-y)||K(u)|f(x+uh)f(x)f(y)dudxdy.
}
Using the Schwarz inequality twice, we obtain
\ba{
\int_{\mathbb R^{2d}}f(x+uh)|K_{h}(x-y)|f(x)f(y)dxdy
&\leq\|K_{h}\|_{L^2}\|f\|_{L^2}\int_{\mathbb R^{2d}}f(x+uh)f(x)dx\\
&\leq h^{-d/2}\|K\|_{L^2}\|f\|_{L^2}^3.
}
Thus, $|I|\leq h^{-d/2}\|K\|_{L^1}\|K\|_{L^2}\|f\|_{L^2}^3$. 
Further, another application of \eqref{gof-int1} gives
\ba{
|II|
&\leq\int_{\mathbb R^{3d}}|K_{h}(x-y)||K(u)|f(x+uh')f(x)f(y)dudxdy.
}
Hence the above argument also shows $|II|\leq h^{-d/2}\|K\|_{L^1}\|K\|_{L^2}\|f\|_{L^2}^3$. 
All together, we complete the proof of \eqref{mmd-var-eq1}. 

Next, we prove \eqref{mmd-var-eq2}. 
The upper bound follows from \cref{lem:gof}\ref{gof-full}. 
Meanwhile, since $\int_{|u|\leq a}K(u)^2du\to\|K\|_{L^2}^2$ as $a\to\infty$, there exists a constant $a\geq1$ such that $\int_{|u|\leq a}K(u)^2du\geq\|K\|_{L^2}^2/2$. 
Then, using \eqref{gof-int1}, we obtain
\ba{
h^d\|\varphi_h\|_{L^2(P_f^2)}^2
%&=\int_{\mathbb R^{2d}}K(u)^2f(x)f(x+uh)dxdu\\
&\geq\int_{|u|\leq a}\bra{\int_{\mathbb R^{d}}K(u)^2f(x)f(x+uh)dx}du.
}
A similar argument to the derivation of \eqref{ade-var-approx} gives
\ba{
&\abs{\int_{|u|\leq a}\bra{\int_{\mathbb R^{d}}K(u)^2f(x)\{f(x+uh)-f(x)\}dx}du}\\
%&\leq\int_{|u|\leq a}K(u)^2\sqrt{\int_{\mathbb R^d}|f_0(y+uh)-f_0(y)|^2dy}\|f_0\|_{L^2}du\\
&\leq 2^{1-\alpha}\|f\|_{H^\alpha}\|f\|_{L^2}\int_{|u|\leq a}K(u)^2|uh|^{\alpha}du
\leq2a\|f\|_{H^\alpha}^2\|K\|_{L^2}^2h^\alpha,
}
where we used $a\geq1$ and $\alpha\leq1$ for the last inequality. 
Consequently, 
\ba{
h^d\|\varphi_h^2\|_{L^2(P_f^2)}^2
%&\geq\int_{|u|\leq a}\bra{\int_{\mathbb R^{d}}K(u)^2f(x)^2dx}du-2a\|f\|_{H^\alpha}^2\|K\|_{L^2}^2h^\alpha\\
&\geq\frac{\|K\|_{L^2}^2}{2}\|f\|_{L^2}^2-2a\|f\|_{H^\alpha}^2\|K\|_{L^2}^2h^\alpha.
}
This gives \eqref{mmd-var-eq2} for $c=4a$.
\end{proof}

\begin{proof}[Proof of \cref{gof-ga}]
Since $H^\alpha_{R,b}\subset H^{\alpha'}_{R,b}$ if $\alpha'\leq\alpha$, we may assume $\alpha<d/4$ without loss of generality. 
We apply \cref{thm:order-2} to $W:=(J_2(\pi_2\psi_h))_{h\in\mcl H_n}$. 
Observe that \cref{mmd-var} gives 
\ben{\label{gof-minvar}
\sup_{f\in H^\alpha_{R,b},h\in\mcl H_n}\|J_2(\pi_2\psi_h)\|_{L^2(\pr_f)}^{-1}
=\sup_{f\in H^\alpha_{R,b},h\in\mcl H_n}(h^{d/2}\|\pi_2\varphi_h\|_{L^2(P_f^2)})^{-1}
=O(1).
}
Then, since $\pi_2\psi_j$ are degenerate, we obtain
\ben{\label{gof-ga-aim}
\sup_{f\in H^\alpha_{R,b}}\sup_{A\in\mcl R_{|\mcl H_n|}}|\pr_f(W\in A)-\pr_f(Z\in A)|\to0,
}
once we verify the following conditions:
\ba{
\delta_0&:=n^2\sup_{f\in H^\alpha_{R,b}}\max_{h\in\mcl H_n}\|\pi_2\psi_h\star_1^1\pi_2\psi_h\|_{L^2(P_f^2)}\log^3|\mcl H_n|\to0,\\
\delta_1&:=n^2\sup_{f\in H^\alpha_{R,b}}\max_{h\in\mcl H_n}\|\pi_2\psi_{h}\|_{L^4(P_f^2)}^4\log^{8}|\mcl H_n|\to0,\\
%=O(n^{-4}h^{2d}\cdot n^2h^{-3d}\log^{3}p)=O(n^{-2}h^{-d}\log^{3}p),
\delta_2&:=n^3\sup_{f\in H^\alpha_{R,b}}\max_{h\in\mcl H_n}\|P_f(|\pi_2\psi_{h}|^2)\|_{L^{2}(P_f)}^{2}\log^{7}|\mcl{H}_n|\to0,\\
%=O(n^{-4}h^{2d}\cdot n^3h^{-2d}\log^{2}p)=O(n^{-1}\log^{2}p),\\
\delta_3&:=n\sup_{f\in H^\alpha_{R,b}}\E_f\sbra{\max_{h\in\mcl H_n}M(P_f(|\pi_2\psi_{h}|^4))}\log^9 |\mcl H_n|\to0,\\
%=O(n^{-4}h^{2d}\cdot nh^{-7d/2}\log^{4}p)=O(n^{-3}h^{-3d/2}\log^{4}p),\\
\delta_4&:=\sup_{f\in H^\alpha_{R,b}}\norm{\max_{h\in\mcl H_n}|\pi_2\psi_{h}|}_{L^\infty(P_f^2)}^4(\log^5n)\log^5|\mcl H_n|\to0,\\
%=O(n^{-4}h^{2d}\cdot h^{-4d}\log^{5}(np))=O(n^{-4}h^{-2d}\log^{5}(np)),\\
\delta_5&:=n^{2}\sup_{f\in H^\alpha_{R,b}}\norm{\max_{h\in\mcl H_n}P_f(|\pi_2\psi_{h}|^2)}_{L^{\infty}(P_f)}^2(\log^{3} n)\log^5|\mcl H_n|\to0.
%=O(n^{-4}h^{2d}\cdot n^2h^{-3d}\log^{3}(np))=O(n^{-2}h^{-d}\log^{3}(np)).
}
Here, $|\mcl H_n|$ denotes the number of elements in $\mcl H_n$. 
The claim of \cref{gof-ga} follows applying \eqref{gof-ga-aim} to $A=(-\infty,t]^{|\mcl H_n|}$, $t\in\mathbb R$. 

First, since $K$ is bounded, $\delta_4=O(n^{-4}\ul h_n^{-2d}(\log^5n)\log^5|\mcl H_n|)$. 
Next, \cref{lem:gof}\ref{gof-infty} gives $\delta_3=O(n^{-3}\ul h_n^{-3d/2}\log^9|\mcl H_n|)$ and $\delta_5=O(n^{-2}\ul h_n^{-d}(\log^{3}n)\log^5|\mcl H_n|)$. 
Third, \cref{lem:gof}\ref{gof-full} gives $\delta_1=O(n^{-2}\ul h_n^{-d}\log^8|\mcl H_n|)$. 
Fourth, \cref{lem:gof}\ref{gof-l2} yields 
$
\delta_2
%=O(n^3n^{-4}h^{2d}h^{-5d/2})
=O(n^{-1}\ul h_n^{-d/2}\log^7|\mcl H_n|).
$
Therefore, we have $\delta_\ell\to0$ for all $\ell\in[5]$ by \eqref{ass:bandwidth} and $|\mcl H_n|=O(\log n)$. 

It remains to prove $\delta_0\to0$. \cref{drop-pi} gives $\delta_0\lesssim\delta_{00}+\delta_{01}$, where
\ba{
\delta_{00}&:=n^2\sup_{f\in H^\alpha_{R,b}}\max_{h\in\mcl H_n}\|\psi_h\star_1^1\psi_h\|_{L^2(P_f^2)}\log^3|\mcl H_n|,\\
\delta_{01}&:=n^2\sup_{f\in H^\alpha_{R,b}}\max_{h\in\mcl H_n}\|\psi_h\|_{L^2(P_f^2)}\|P_f\psi_{h}\|_{L^2(P_f)}\log^{3}|\mcl H_n|.
%\delta_{02}&:=n^2\sup_{f\in\mcl H^\alpha_{R,b}}\max_{h\in\mcl H_n}\|\psi_{h}\|_{L^4(P_f^2)}^4\log^{8}|\mcl H_n|.
}
\cref{lem:gof}\ref{gof-full}--\ref{gof-l2} yield $\delta_{01}
%=O(n^2n^{-2}h^{d}h^{-d/2}h^{-d/4})
=O(\bar h_n^{d/4}\log^3|\mcl H_n|)=o(1)
$. 

To bound $\delta_{00}$, fix $f\in H^\alpha_{R,b}$ and $h\in\mcl H_n$ arbitrarily. 
A straightforward computation shows
\ba{
\|\varphi_h\star_1^1\varphi_h\|_{L^2(P_f)}^2
&=\int_{\mathbb R^{4d}}\varphi_h(z,x)\varphi_h(z,y)\varphi_h(w,x)\varphi_h(w,y)f(x)f(y)f(z)f(w)dxdydzdw\\
&=\int_{\mathbb R^{2d}}\varphi_h(0,x)\varphi_h(w,0)I_h(x,w)dxdw,
}
where 
\ba{
I(x,w)&:=\int_{\mathbb R^{2d}}\varphi_h(z,y)\varphi_h(w+y,x+z)f(x+z)f(y)f(z)f(w+y)dydz\\
&=\int_{\mathbb R^{2d}}K_h(z-y)K_h(z-y+x-w)f(x+z)f(y)f(z)f(w+y)dydz.
}
Let $m:=2/(1-2\alpha/d)>2$. Then we have $\|f\|_{L^m}\leq C_{d,\alpha}\|f\|_{H^\alpha}$ by Sobolev's inequality (see e.g.~Theorem 6.5 in \cite{di2012hitchhiker}). 
Hence, with $m':=1/(2-4/m)=d/(4\alpha)$, we have by Young's convolution inequality (see e.g.~Theorem 2.24 in \cite{adams2003sobolev})
\ba{
|I(x,w)|&\leq\bra{\int_{\mathbb R^{d}}f(x+z)^{m/2}f(z)^{m/2}dz}^{2/m}\bra{\int_{\mathbb R^{d}}f(y+w)^{m/2}f(y)^{m/2}dy}^{2/m}\\
&\quad\times\bra{\int_{\mathbb R^{d}}|K_h(t)|^{m'}|K_h(t+x-w)|^{m'}dt}^{1/m'}\\
&\leq C_{d,\alpha}^4\|f\|_{H^\alpha}^4\bra{\int_{\mathbb R^{d}}|K_h(t)|^{m'}|K_h(t+x-w)|^{m'}dt}^{1/m'}.
}
Hence
\ba{
&\|\varphi_h\star_1^1\varphi_h\|_{L^2(P_f)}^2\\
&\leq C_{d,\alpha}^4\|f\|_{H^\alpha}^4\int_{\mathbb R^{2d}}|K_h(x)K_h(w)|\bra{\int_{\mathbb R^{d}}|K_h(t)|^{m'}|K_h(t+x-w)|^{m'}dt}^{1/m'}dxdw\\
&=C_{d,\alpha}^4\|f\|_{H^\alpha}^4\int_{\mathbb R^{2d}}|K(u)K(v)|\bra{\int_{\mathbb R^{d}}|K_h(t)|^{m'}|K_h(t+(u-v)h)|^{m'}dt}^{1/m'}dudv.
}
Using Young's convolution inequality again, we deduce
\ba{
\|\varphi_h\star_1^1\varphi_h\|_{L^2(P_f)}^2
&\leq\|K\|_{L^{m/2}}^2\bra{\int_{\mathbb R^{2d}}|K_h(t)|^{m'}|K_h(t+sh)|^{m'}dtds}^{1/m'}\\
&=h^{-2d+d/m'}\|K\|_{L^{m/2}}^2\|K\|_{L^{m'}}^2.
}
Consequently, 
\ben{\label{gof:star11}
\delta_{00}=\sup_{f\in\mcl H^\alpha_{R,b}}\max_{h\in\mcl H_n}h^d\|\varphi_h\star_1^1\varphi_h\|_{L^2(P_f^2)}\log^3|\mcl H_n|
=O\bra{\bar h_n^{2\alpha}\log^3|\mcl H_n|}=o(1).
}
This completes the proof. 
\end{proof}

\subsection{Proof of Theorem \ref{gof-size}}

\cref{gof-size} is an immediate consequence of the following Gaussian approximation result for $T_n^*$. 
\begin{proposition}\label{gof-boot}
Let $\alpha>0$ and $b>0$. Under \cref{ass:kernel-gof},
\[
\sup_{f\in H^\alpha_{R,b}}\E_f\sbra{\sup_{t\in\mathbb R}\abs{\pr^*(T_n^*\leq t)-\pr_{f}\bra{\max_{h\in\mcl H_n}Z_h\leq t}}}\to0,
\]
where $Z$ is the same as in \cref{gof-ga}.
\end{proposition}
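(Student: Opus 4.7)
My plan is to work conditionally on $X=(X_i)_{i=1}^n$ and establish a chaos CLT for $W^*:=(J_2^*(\psi_h))_{h\in\mcl H_n}$, which, given $X$, is a vector of degenerate second-order Gaussian chaoses in the multipliers $\zeta=(\zeta_i)_{i=1}^n$. I would split the integrand $\sup_t|\pr^*(T_n^*\leq t)-\pr_f(\max_h Z_h\leq t)|$ as $A(X)+B(X)$, where $A(X)$ is the conditional Gaussian approximation error of $W^*$ to a centered Gaussian $\tilde Z^X$ with matching conditional covariance
\[
\Sigma^*(X)_{hh'}:=\sum_{i<j}\psi_h(X_i,X_j)\psi_{h'}(X_i,X_j),
\]
and $B(X)$ is the Gaussian-to-Gaussian comparison error between $\max_h\tilde Z^X_h$ and $\max_h Z_h$.

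For $B(X)$ I would invoke the CCK Gaussian comparison inequality (\cite[Theorem 2]{CCK15}), which bounds $B(X)$ by a power of $\|\Sigma^*(X)-\Cov_f[Z]\|_\infty\log|\mcl H_n|$. Since $\Sigma^*(X)$ is a second-order $U$-statistic in $X$ whose $P_f$-expectation is $(hh')^{d/2}P_f^2(\varphi_h\varphi_{h'})$, and this agrees with $\Cov_f[Z]_{hh'}=(hh')^{d/2}P_f^2(\pi_2\varphi_h\pi_2\varphi_{h'})$ up to the error controlled in \cref{mmd-var}\eqref{mmd-var-eq1}, bounding $\E_f[B(X)]$ reduces to applying \cref{thm:max-is} to the centered kernels $\psi_h\psi_{h'}-P_f^2(\psi_h\psi_{h'})$, very much in the spirit of the moment computations in the proof of \cref{gof-ga}.

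For $A(X)$ I would apply \cref{thm:gexch} conditionally on $X$ via the resampling exchangeable pair: let $(\zeta_i^*)_{i=1}^n$ be an independent copy of $\zeta$, let $\alpha$ be uniform on $[n]$ independent of the rest, and set $\zeta'_i:=\zeta_i^*$ if $i=\alpha$ and $\zeta'_i:=\zeta_i$ otherwise. Writing $D:=W^*(\zeta')-W^*(\zeta)$, $G:=(n/2)D$ and $S_h^{(k)}:=\sum_{i\neq k}\zeta_i\psi_h(X_i,X_k)$, the identities $\E[\zeta_i^*]=0$ and $\E[(\zeta_k^*-\zeta_k)^2\mid\zeta]=1+\zeta_k^2$ give $\E^*[G\mid\zeta,X]=-W^*$ and
\[
\tfrac12\E^*[G_h D_{h'}\mid\zeta,X]=\tfrac14\sum_{k=1}^n(1+\zeta_k^2)S_h^{(k)}S_{h'}^{(k)},
\]
so \cref{thm:gexch} applies conditionally with $R=0$ and target $\Sigma=\Sigma^*(X)$. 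After taking $\E_f$, the quantities $\E^*[\|R^\epsilon\|_\infty]$, $\E^*[\|V^\epsilon\|_\infty]$ and $\E^*[\Gamma^\epsilon]$ reduce to maximal moments of the linear Gaussian forms $S_h^{(k)}$ and of the centered quartic chaos $V_{hh'}=\tfrac14\sum_k(1+\zeta_k^2)S_h^{(k)}S_{h'}^{(k)}-\Sigma^*(X)_{hh'}$; I would control the former by standard Gaussian maximal inequalities conditional on $X$ followed by \cref{thm:max-is} in $X$, and the latter by a Hanson--Wright-type concentration conditional on $X$ followed again by \cref{thm:max-is}. The optimization of $\epsilon$ would mirror that in the proof of \cref{thm:order-2}.

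The hard part will be matching the Gaussian-chaos fluctuation bounds for the quartic form in $V$ against the growth of the coefficient matrices built from $\psi_h(X_i,X_j)$, which blow up polynomially in $h^{-d}$ as $h\downarrow0$. The necessary cancellations emerge from applying \cref{thm:max-is} to contraction kernels $\psi_h\star_1^1\psi_{h'}$ and from the bandwidth regime \eqref{ass:bandwidth}, exactly as in the computation \eqref{gof:star11}; verifying that these bounds stay uniform over $f\in H^\alpha_{R,b}$ is the most delicate technical point.
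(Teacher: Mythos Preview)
Your decomposition into a conditional chaos CLT plus a Gaussian comparison is sound, and the exchangeable-pair construction $G=(n/2)D$ with the resample-one coupling is correctly set up; the identity $\tfrac12\E^*[G_hD_{h'}\mid\zeta,X]=\tfrac14\sum_k(1+\zeta_k^2)S_h^{(k)}S_{h'}^{(k)}$ is right. However, the paper takes a considerably shorter route: conditionally on $X$, $W^*$ is a vector of \emph{Gaussian} quadratic forms, so rather than invoking the general \cref{thm:gexch}, the paper applies the specialized fourth-moment-type bound of \cref{gqf}. That lemma reduces the conditional approximation error directly to two quantities, (i) $\|\Sigma^*(X)-\Cov_f[Z]\|_\infty$ and (ii) $\max_h\kappa_4(W^*_h\mid X)=48\max_h\trace(M_h^4)$, where $M_h$ is the symmetric matrix with off-diagonal entries $\tfrac12\psi_h(X_i,X_j)$. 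So there is no need to analyze the centered quartic chaos $V$ by Hanson--Wright or to optimize over~$\epsilon$; the whole conditional CLT collapses to the explicit trace condition. Moreover, because $|\mcl H_n|=O(\log n)$, the paper does not need \cref{thm:max-is} either: a crude union bound \eqref{eq:max-out} over $h,h'\in\mcl H_n$ already suffices to control $\E_f[\max_{h,h'}|\Sigma^*(X)_{hh'}-(hh')^{d/2}P_f^2(\varphi_h\varphi_{h'})|]$ and $\E_f[\max_h\trace(M_h^4)]$.

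One technical point to watch in your approach: applying \cref{thm:gexch} with target $\Sigma=\Sigma^*(X)$ makes $\ul\sigma(X)=\min_h\sqrt{\Sigma^*(X)_{hh}}$ random, and the bound \eqref{eq:gech} carries a factor $1/\ul\sigma(X)$. The paper avoids this by taking the \emph{deterministic} target $\Cov_f[Z]$ inside \cref{gqf}, so that $\ul\sigma$ is controlled uniformly by \eqref{gof-minvar}; if you insist on the two-step split, you would need either a high-probability lower bound for $\ul\sigma(X)$ or to target $\Cov_f[Z]$ directly in \cref{thm:gexch} and absorb the covariance discrepancy into $V^\eps$.
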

Combining this result with \cref{gof-ga-null}, \cref{mmd-var} and \cite[Proposition 3.2]{koike2019mixed}, we obtain the conclusion of \cref{gof-size}.

The proof of \cref{gof-boot} relies on a Gaussian approximation result for maxima of Gaussian quadratic forms \cite[Theorem 3.1]{koike2019gaussian}. 
Although this result suffices for our purpose, we record a refined version for future reference. 
\begin{lemma}[High-dimensional CLT for Gaussian quadratic forms]\label{gqf}
Let $\zeta$ be a centered Gaussian vector in $\mathbb R^n$. 
Also, for every $j\in[p]$, let $M_j$ be an $n\times n$ symmetric matrix and define a random vector $W$ in $\mathbb R^p$ as $W_j:=\zeta^\top M_{j}\zeta-\E[\zeta^\top M_{j}\zeta]$, $j\in[p]$. 
In addition, let $Z$ be a centered Gaussian vector in $\mathbb R^p$ such that $\ul\sigma:=\min_{j\in[p]}\|Z_j\|_{L^2(\pr)}>0$. 
Then there exists a universal constant $C$ such that
\ba{
\sup_{A\in\mcl R_p}|\pr(W\in A)-\pr(Z\in A)|
\leq\frac{C}{\ul\sigma}\bra{\sqrt{\|\Cov[W]-\Cov[Z]\|_\infty\log^2p}+\bra{\max_{j\in[p]}\kappa_4(W_j)\log^6p}^{1/4}},
}
where $\kappa_4(W_j):=\E[W_j^4]-3(\E[W_j^2])^2$ is the fourth cumulant of $W_j$. 
\end{lemma}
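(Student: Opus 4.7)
The plan is to apply \cref{coro:exch} to an exchangeable pair built from the Ornstein--Uhlenbeck semigroup. Without loss of generality I will assume $\zeta\sim N(0,I_n)$ (absorbing the covariance of $\zeta$ into each $M_j$) and $\ul\sigma=1$.

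First I construct, for a free parameter $t>0$ to be optimized at the end, the pair $(\zeta,\zeta')$ with $\zeta':=e^{-t}\zeta+\sqrt{1-e^{-2t}}\,\zeta^*$, where $\zeta^*$ is an independent copy of $\zeta$. Setting $W'_j:=(\zeta')^\top M_j\zeta'-\E[(\zeta')^\top M_j\zeta']$, $D:=W'-W$, $C:=1-e^{-2t}$, and the antisymmetric function $G:=D/C$, Mehler's formula $\E[W'\mid\zeta]=e^{-2t}W$ (reflecting the fact that the second Wiener chaos is an eigenspace of the OU generator with eigenvalue $-2$) yields $\E[G\mid\zeta]=-W$, so the residual term $R$ in \cref{coro:exch} vanishes. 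It then remains to bound $\E[\|V\|_\infty]$ and $\E[\|G\|_\infty\|D\|_\infty^3]$, where $V:=\tfrac12\E[GD^\top\mid\zeta]-\Sigma$.

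A direct Isserlis-formula computation of $\E[D_jD_k\mid\zeta]$ (treating $\zeta$ as fixed and $\zeta^*$ as Gaussian) gives
\[
V_{jk}=\tfrac{C}{2}\bra{W_jW_k-\Cov[W_j,W_k]}+2e^{-2t}\bra{\zeta^\top M_jM_k\zeta-\trace(M_jM_k)}+\bra{\Cov[W_j,W_k]-\Sigma_{jk}}.
\]
The last piece contributes exactly $\|\Cov[W]-\Cov[Z]\|_\infty$ to $\E[\|V\|_\infty]$. The other two are, respectively, fourth- and second-order mean-zero Gaussian polynomials in $\zeta$, and the standard maximal inequality for Gaussian chaos of degree $d$ (namely $\E[\max_{j\in[p]}|F_j|]\lesssim(\log p)^{d/2}\max_{j}\|F_j\|_{L^2}$, obtained from Nelson's hypercontractivity) yields bounds of order $C(\log p)^{2}\max_{j,k}\sqrt{\Var[W_jW_k]}$ and $e^{-2t}(\log p)\max_{j,k}\sqrt{\trace((M_jM_k)^2)}$, respectively. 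The tail quantity $\E[\|G\|_\infty\|D\|_\infty^{3}]=C^{-1}\E[\|D\|_\infty^{4}]$ is handled analogously, exploiting that each $D_j$ is a second-order chaos with $\|D_j\|_{L^2}^{2}=O(C)$.

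The main obstacle is converting these second-moment quantities into the single scalar $\max_j\kappa_4(W_j)$ with only polylogarithmic loss in $p$. The key identities I will use are $\kappa_4(W_j)=48\,\trace(M_j^4)$, the product formula for the second Wiener chaos (which decomposes $W_jW_k$ into $I_4$, $I_2$, and $\Cov[W_j,W_k]$ pieces with explicit kernels involving $M_jM_k+M_kM_j$), and the Cauchy--Schwarz chain $|\trace((M_jM_k)^2)|\leq\trace(M_j^2M_k^2)\leq\sqrt{\trace(M_j^4)\trace(M_k^4)}$, together with analogous controls on $\Var[W_jW_k]$. Finally, choosing $t$ so that the $C$-dependent and $e^{-2t}$-dependent contributions balance, and substituting back into \cref{coro:exch}, delivers the stated bound.
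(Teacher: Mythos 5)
Your proposal is correct, but it takes a genuinely different route from the paper. The paper proves \cref{gqf} in two lines by citation: it runs the proof of Theorem 3.1 of \cite{koike2019gaussian} (a Malliavin--Stein argument for maxima of vectors of second Wiener chaoses), with Proposition 3.7 of \cite{NPS14} supplying the fourth-cumulant control and Theorem 3.2 of \cite{CCKK22} replacing the older high-dimensional CLT inputs so as to get the stated log powers. You instead give a self-contained derivation from the paper's own exchangeable-pair machinery, applying \cref{coro:exch} to the Ornstein--Uhlenbeck pair $\zeta'=e^{-t}\zeta+\sqrt{1-e^{-2t}}\,\zeta^*$. Your Mehler/Isserlis computation is exactly right: with $C_t:=1-e^{-2t}$ one indeed gets $R=0$ and $V_{jk}=\tfrac{C_t}{2}\bra{W_jW_k-\Cov[W_j,W_k]}+2e^{-2t}\bra{\zeta^\top M_jM_k\zeta-\trace(M_jM_k)}+\bra{\Cov[W_j,W_k]-\Sigma_{jk}}$, and the hypercontractive maximal inequalities combined with $\kappa_4(W_j)=48\trace(M_j^4)$ and $\trace(M_j^2M_k^2)\leq\sqrt{\trace(M_j^4)\trace(M_k^4)}$ reproduce precisely the two terms $\sqrt{\|\Cov[W]-\Cov[Z]\|_\infty\log^2p}$ and $(\max_{j}\kappa_4(W_j)\log^6p)^{1/4}$ after plugging into \cref{coro:exch}. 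What your route buys is self-containedness (it shows \cref{coro:exch} already covers Gaussian quadratic forms, no external Malliavin--Stein input); what the paper's route buys is brevity. Two refinements to your write-up: (i) the conditional variance in the middle term is that of the symmetrized form, namely $\trace((M_jM_k)^2)+\trace(M_j^2M_k^2)$ — note $\trace((M_jM_k)^2)$ can be negative, so work with $\tfrac12(M_jM_k+M_kM_j)$ and your Cauchy--Schwarz chain with absolute values, as you indicate; (ii) the announced conversion of $\Var[W_jW_k]$ and of the $\E[\|G\|_\infty\|D\|_\infty^3]=C_t^{-1}\E[\|D\|_\infty^4]$ term into $\kappa_4$ via the product formula is unnecessary: those contributions carry positive powers of $C_t$ while the $e^{-2t}$-term is bounded by $(\max_j\kappa_4(W_j)\log^6p)^{1/4}$ uniformly in $t$, so you can simply let $t\downarrow0$ (or choose $t$ small enough, handling the degenerate case $\max_j\kappa_4(W_j)=0$, i.e.\ all $M_j=0$, separately, where the bound is trivial); no balancing of the two contributions is needed, and since $D_j$ is a chaos of order at most two with $\|D_j\|_{L^2(\pr)}^2\lesssim C_t\Var[W_j]$, everything vanishes in that limit except the two displayed terms.
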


\begin{proof}
In view of Proposition 3.7 in \cite{NPS14}, the desired result follows from the proof of Theorem 3.1 in \cite{koike2019gaussian} once we replace Theorem 2.1 and Corollary 2.1 there by Theorem 3.2 in \cite{CCKK22}. 
\end{proof}

\begin{proof}[Proof of \cref{gof-boot}]
We apply \cref{gqf} to $W:=(J_2^*(\psi_h))_{h\in\mcl H_n}$ conditional on the data. 
Recall that we have \eqref{gof-minvar}. 
Also, note that $\kappa_4(\zeta^\top M\zeta)=48\trace(M^4)$ for any $\zeta\sim N(0,I_p)$ and $p\times p$ symmetric matrix $M$ (cf.~Eq.(11) of \cite{dalalyan2011second}). 
Then, the claim asserted follows once we verify the following conditions:
\ba{
I&:=\sup_{f\in H^\alpha_{R,b}}\E_f\sbra{\max_{h,h'\in\mcl H_n}\abs{\E^*[J_2^*(\psi_h)J_2^*(\psi_{h'})]-(hh')^{d/2}P_f^2(\pi_2\varphi_h\pi_2\varphi_{h'})}}\log^2|\mcl H_n|\to0,\\
II&:=\sup_{f\in H^\alpha_{R,b}}\E_f\sbra{\max_{h\in\mcl H_n}\sum_{i,j=1}^n\bra{\sum_{k:k\neq i,j}\psi_h(X_i,X_k)\psi_h(X_j,X_k)}^2}\log^6|\mcl H_n|\to0.
}
First we prove $I\to0$. In view of \eqref{mmd-var-eq1}, it suffices to prove
\[
I':=\sup_{f\in H^\alpha_{R,b}}\E_f\sbra{\max_{h,h'\in\mcl H_n}\abs{\E^*[J_2^*(\psi_h)J_2^*(\psi_{h'})]-(hh')^{d/2}P_f^2(\varphi_h\varphi_{h'})}}\log^2|\mcl H_n|\to0.
\]
For any $h,h'\in\mcl H_n$, observe that $\E^*[J_2^*(\psi_h)J_2^*(\psi_{h'})]=(hh')^{d/2}\binom{n}{2}^{-1}J_2(\varphi_h\varphi_{h'})$. Hence, by \eqref{eq:max-out},
\ba{
I'&\leq |\mcl H_n|\sup_{f\in H^\alpha_{R,b}}\max_{h,h'\in\mcl H_n}\sqrt{(hh')^{d}\binom{n}{2}^{-2}\Var_f[J_2(\varphi_h\varphi_{h'})]}\log^2|\mcl H_n|,
}
where $\Var_f[\cdot]$ denotes the variance with respect to $\pr_f$.  
For any $f\in H^\alpha_{R,b}$, \eqref{u-anova-2} gives
\ba{
\Var_f[J_2(\varphi_h\varphi_{h'})]
&=\binom{n}{2}\bra{2(n-2)\Var[P(\varphi_h\varphi_{h'})(X_1)]+\Var[\varphi_h(X_1,X_2)\varphi_{h'}(X_1,X_2)]}\\
&\leq n^3\E[P(\varphi_h\varphi_{h'})(X_1)^2]+n^2\E[\varphi_h(X_1,X_2)^2\varphi_{h'}(X_1,X_2)^2].
}
Thus, by the AM-GM inequality,
\ba{
\max_{h,h'\in\mcl H_n}(hh')^{d}\Var[J_2(\varphi_h\varphi_{h'})]
\leq n^3\max_{h\in\mcl H_n}h^{2d}\E[P(\varphi_h^2)(X_1)^2]+n^2\max_{h\in\mcl H_n}h^{2d}\E[\varphi_h(X_1,X_2)^4].
}
Combining this with \cref{lem:gof}\ref{gof-full}--\ref{gof-l2} gives
\ba{
I'=O\bra{(\log n)\sqrt{n^{-1}\ul h_n^{-d/2}+n^{-2}\ul h_n^{-d}}\log^2(\log n)}=o(1),
}
where the last equality follows from \eqref{ass:bandwidth}. 

Next we prove $II\to0$. 
A straightforward computation shows
\ba{
&\sum_{i,j=1}^n\bra{\sum_{k:k\neq i,j}\varphi_h(X_i,X_k)\varphi_h(X_j,X_k)}^2\\
%&=\sum_{i,j:i\neq j}\bra{\sum_{k:k\neq i,j}\varphi_h(X_i,X_k)\varphi_h(X_j,X_k)}^2
%+\sum_{i=1}^n\bra{\sum_{k:k\neq i,j}\varphi_h(X_i,X_k)\varphi_h(X_j,X_k)}^2\\
%&=\sum_{i,j:i\neq j}\sum_{k,l:k,l\neq i,j}\varphi_h(X_i,X_k)\varphi_h(X_j,X_k)\varphi_h(X_i,X_l)\varphi_h(X_j,X_l)
%+\sum_{i=1}^n\sum_{k,l:k,l\neq i}\varphi_h(X_i,X_k)^2\varphi_h(X_i,X_l)^2\\
%&=\sum_{i,j:i\neq j}\sum_{k,l:k,l\neq i,j,k\neq l}\varphi_h(X_i,X_k)\varphi_h(X_j,X_k)\varphi_h(X_i,X_l)\varphi_h(X_j,X_l)
%+\sum_{i,j:i\neq j}\sum_{k:k\neq i,j}\varphi_h(X_i,X_k)^2\varphi_h(X_j,X_k)^2\\
%&\quad+\sum_{i=1}^n\sum_{k,l:k,l\neq i,k\neq l}\varphi_h(X_i,X_k)^2\varphi_h(X_i,X_l)^2
%+\sum_{i=1}^n\sum_{k:k\neq i}\varphi_h(X_i,X_k)^4\\
&=\sum_{(i,j,k,l)\in I_{n,4}}\varphi_h(X_i,X_k)\varphi_h(X_j,X_k)\varphi_h(X_i,X_l)\varphi_h(X_j,X_l)\\
&\quad+2\sum_{(i,j,k)\in I_{n,3}}\varphi_h(X_i,X_k)^2\varphi_h(X_j,X_k)^2
+\sum_{(i,j)\in I_{n,2}}\varphi_h(X_i,X_j)^4.
}

\begin{comment}
{\color{red}
\begin{align*}
    &\sum_{i=1}^n\sum_{j=1}^n\bra{\sum_{k:k\neq i,j}\varphi_h(X_i,X_k)\varphi_h(X_j,X_k)}^2 \\
    & = \underset{i\neq j}{\sum\sum} \bra{\sum_{k:k\neq i,j}\varphi_h(X_i,X_k)\varphi_h(X_j,X_k)}^2 
    + \sum_{i=1}^n \bra{\sum_{k:k\neq i,j}\varphi_h(X_i,X_k)^2}^2 \\
    & = \underset{i\neq j \neq k \neq l}{\sum\sum\sum\sum} \varphi_h(X_i,X_k)\varphi_h(X_j,X_k) \varphi_h(X_i,X_l)\varphi_h(X_j,X_l) \\
    & \quad + \underset{i\neq j\neq k}{\sum\sum\sum} \varphi_h(X_i,X_k)^2\varphi_h(X_j,X_k)^2 + \underset{i\neq k \neq l}{\sum\sum\sum} \varphi_h(X_i,X_k)^2\varphi_h(X_i,X_l)^2  +  \underset{i\neq k }{\sum\sum} \varphi_h(X_i,X_k)^4 \\
    & = \sum_{(i,j,k,l)\in I_{n,4}}\varphi_h(X_i,X_k)\varphi_h(X_j,X_k)\varphi_h(X_i,X_l)\varphi_h(X_j,X_l)\\
&\quad+2\sum_{(i,j,k)\in I_{n,3}}\varphi_h(X_i,X_k)^2\varphi_h(X_j,X_k)^2
+\sum_{(i,j)\in I_{n,2}}\varphi_h(X_i,X_j)^4
\end{align*}
}
\end{comment}
Hence, by \eqref{eq:max-out},
\ba{
II&\leq|\mcl H_n|\sup_{f\in H^\alpha_{R,b}}\max_{h\in\mcl H_n}h^{2d}\bra{\|\varphi_h\star_1^1\varphi_h\|_{L^2(P_f)}^2
+2n^{-1}\|P_f(\varphi_h^2)\|_{L^2(P_f)}^2+n^{-2}\|\varphi_h\|_{L^4(P_f)}^4}\log^6|\mcl H_n|\\
&=O\bra{(\log n)\bra{\bar h_n^{4\alpha}+n^{-1}\ul h_n^{-d/2}+n^{-2}\ul h_n^{-d}}\log^6(\log n)}=o(1),
}
where the second line follows from \eqref{gof:star11} and \cref{lem:gof}\ref{gof-full}--\ref{gof-l2}. 
This completes the proof. 
\end{proof}

\subsection{Proof of Theorem \ref{gof-ada}}

%\if0
The following lemma extends Lemma 15 in \cite{li2024optimality} to general kernel functions. 
\begin{lemma}\label{ly-lem15}
Let $g\in L^2(\mathbb R^d)$ satisfy $\|g\|_{H^\alpha}\leq R$ for some $\alpha>0$. 
Under \cref{ass:kernel-gof}, there exists a constant $c>0$ depending only on $K$ such that
\ba{
\int_{\mathbb R^{2d}} \varphi_h(x,y)g(x)g(y)dxdy\geq \frac{\|g\|_{L^2}^2}{2}
}
for any $0<h\leq c\bra{\|g\|_{L^2}/(2R)}^{1/\alpha}$.
\end{lemma}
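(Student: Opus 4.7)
The plan is to pass to Fourier space and combine a continuity-at-the-origin estimate for $\mathfrak{F}(K)$ with a Sobolev truncation estimate for $\mathfrak{F}(g)$. First I would rewrite the integral as a convolution inner product $\langle K_h\ast g,\,g\rangle_{L^2}$ (using that $K$ is symmetric so $K_h(x-y)$ is the convolution kernel in both variables), and then apply the Plancherel/convolution formula under the paper's Fourier convention to get
\[
\int_{\mathbb R^{2d}}\varphi_h(x,y)g(x)g(y)\,dx\,dy
=(2\pi)^{d/2}\int_{\mathbb R^d}\mathfrak{F}(K)(h\lambda)\,|\mathfrak{F}(g)(\lambda)|^2\,d\lambda,
\]
using $\mathfrak{F}(K_h)(\lambda)=\mathfrak{F}(K)(h\lambda)$.

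Next I would exploit the two structural properties of $K$ in \cref{ass:kernel-gof}. Positive definiteness of $K$ together with $K\in L^1(\mathbb R^d)$ gives, by Bochner's theorem, $\mathfrak{F}(K)\geq 0$ pointwise; the normalization $\int K=1$ gives $(2\pi)^{d/2}\mathfrak{F}(K)(0)=1$, and $\mathfrak{F}(K)$ is continuous since $K\in L^1(\mathbb R^d)$. Continuity at the origin therefore yields a constant $\eta=\eta(K)>0$ such that
\[
(2\pi)^{d/2}\mathfrak{F}(K)(\mu)\geq 3/4\qquad\text{whenever}\ |\mu|\leq\eta.
\]

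On the $g$-side I would truncate at the frequency $T:=(2R/\|g\|_{L^2})^{1/\alpha}$. The hypothesis $\|g\|_{H^\alpha}\leq R$ and Plancherel give
\[
\int_{|\lambda|>T}|\mathfrak{F}(g)(\lambda)|^2\,d\lambda
\leq T^{-2\alpha}\|g\|_{H^\alpha}^2\leq \tfrac14\|g\|_{L^2}^2,
\quad\text{hence}\quad
\int_{|\lambda|\leq T}|\mathfrak{F}(g)(\lambda)|^2\,d\lambda\geq \tfrac34\|g\|_{L^2}^2.
\]
Setting $c:=\eta$, the assumption $h\leq c(\|g\|_{L^2}/(2R))^{1/\alpha}=\eta/T$ yields $hT\leq\eta$, so $(2\pi)^{d/2}\mathfrak{F}(K)(h\lambda)\geq 3/4$ on $\{|\lambda|\leq T\}$; combining this with the Sobolev truncation and discarding the (non-negative) high-frequency contribution gives
\[
\int_{\mathbb R^{2d}}\varphi_h(x,y)g(x)g(y)\,dx\,dy
\geq \tfrac34\cdot\tfrac34\,\|g\|_{L^2}^2=\tfrac{9}{16}\|g\|_{L^2}^2\geq \tfrac12\|g\|_{L^2}^2,
\]
which is the claim.

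There is no real obstacle once the Fourier representation is in place, since positivity of $\mathfrak{F}(K)$ removes all sign issues. The only mildly delicate point is numerical bookkeeping: the two $3/4$-margins must be chosen so that the resulting bound $hT\leq\eta$ coincides with the advertised condition $h\leq c(\|g\|_{L^2}/(2R))^{1/\alpha}$ and so that $c$ can be taken independent of $\alpha$. The choice above (constants $3/4$ on both sides, cutoff $T=(2R/\|g\|_{L^2})^{1/\alpha}$) makes this match with $c=\eta$ for every $\alpha>0$.
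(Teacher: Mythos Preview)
Your proposal is correct and follows essentially the same approach as the paper: pass to Fourier variables, use $\mathfrak{F}(K)\geq 0$ from positive definiteness together with continuity of $\mathfrak{F}(K)$ at the origin, and control the high-frequency mass of $g$ via the Sobolev norm at the cutoff $T=(2R/\|g\|_{L^2})^{1/\alpha}$. The only differences are cosmetic: the paper takes the margin $(2\pi)^{d/2}\mathfrak{F}(K)\geq 2/3$ near zero (so that $\tfrac{2}{3}\cdot\tfrac{3}{4}=\tfrac{1}{2}$ exactly) and cites \cite{li2024optimality} for the truncation estimate, whereas you supply the short Sobolev-tail computation directly.
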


\begin{proof}
Observe that
\ba{
\int \varphi_h(x,y)g(x)g(y)dxdy
=h^{-d}\int K(y/h)g(x)g(x+y)dxdy
=(2\pi)^{d/2}\int \fourier{K}(h\lambda)|\fourier{g}(\lambda)|^2d\lambda.
}
Since $\int K(u)du=1$, we have $(2\pi)^{d/2}\fourier{K}(\lambda)\to1$ as $\lambda\to0$ by the dominated convergence theorem. 
Thus, there exists a constant $c>0$ depending only on $K$ such that $|(2\pi)^{d/2}\fourier{K}(\lambda)-1|\leq1/3$ for any $|\lambda|\leq c$. 
Meanwhile, by the proof of Lemma 15 in \cite{li2024optimality},
\[
\int_{|\lambda|\leq T}|\fourier{g}(\lambda)|^2d\lambda\geq\frac{3}{4}\|g\|_{L^2}^2,
\]
where $T=(2R/\|g\|_{L^2})^{1/\alpha}$. 
In addition, since $K$ is a positive definite function, $\fourier K\geq0$. 
Consequently, if $|Th|\leq c$,
\ba{
\int \varphi_h(x,y)g(x)g(y)dxdy
\geq(2\pi)^{d/2}\int_{|\lambda|\leq T} \fourier{K}(h\lambda)|\fourier{g}(\lambda)|^2d\lambda
\geq\frac{1}{2}\|g\|_{L^2}^2.
}
This completes the proof. 
\end{proof}
%\fi

\begin{proof}[Proof of \cref{gof-ada}]
For every $\alpha>0$, set
$
h_{n}(\alpha):=\max\{h\in\mcl H_n:h\leq\rho_n^{ad}(\alpha)^{1/\alpha}\}.
$
Note that the maximum always exists for sufficiently large $n$ and has the same order as $\rho_n^{ad}(\alpha)^{1/\alpha}$ by the construction of $\mcl H_n$. 
Then, it suffices to prove
$
\pr_{f_n}(J_2(\hat\psi_{h_{n}(\alpha_n)})\leq\hat c_\tau)\to0
$
for any sequences $\alpha_n\in(\alpha_0,\alpha_1)$ and $f_n\in H_1(\rho_n(\alpha_n);\alpha_n)$. 
First, since $n\rho_n^{ad}(\alpha)^{\frac{d}{2\alpha}+2}=\sqrt{\log\log n}$ for any $\alpha>0$, we have
\ben{\label{cond-bandwidth}
\inf_{\alpha_0<\alpha<\alpha_1}\frac{nh_{n}(\alpha)^{d/2}\rho_{n}(\alpha)^2}{\sqrt{\log\log n}}\to\infty.
}
Next, since $h_n(\alpha_n)/\rho_n(\alpha_n)^{1/\alpha}\to0$, 
we have by \eqref{eq:mmd} and \cref{ly-lem15}
\ben{\label{eq:alt0}
\E_{f_n}[J_2(\hat\psi_{h_n(\alpha_n)})]\geq\frac{nh_n(\alpha_n)^{d/2}}{4}\|f_n-f_0\|_{L^2}^2
}
for sufficiently large $n$. 
Hence
\ben{\label{eq:alt}
\frac{\E_{f_n}[J_2(\hat\psi_{h_n(\alpha_n)})]}{\sqrt{\log\log n}}\to\infty.
}
Now, a straightforward computation shows
%\ba{
%\hat\psi_h(x,y)
%&=\pi_2\psi_h(x,y)+(P_f-P_0)\psi_h(x)+(P_f-P_0)\psi_h(y)-(P_f^2-P_0^2)\psi_h\\
%&=\pi_2\psi_h(x,y)+P_{f-f_0}\psi_h(x)-\E[P_{f-f_0}\psi_h(X_1)]+P_{f-f_0}\psi_h(y)-\E[P_{f-f_0}\psi_h(X_1)]+\E[\hat\psi_h(X_1,X_2)]
%}
\ba{
J_2(\hat\psi_{h_{n}(\alpha_n)})
=J_2(\pi_2^{f_n}\psi_{h_{n}(\alpha_n)})
+S_n
+\E_{f_n}[J_2(\hat\psi_{h_n(\alpha_n)})],
}
where
\[
S_{n}:=(n-1)\sum_{i=1}^n\bra{P_{f_n-f_0}\psi_{h_{n}(\alpha_n)}(X_i)-\E_{f_n}[P_{f_n-f_0}\psi_{h_{n}(\alpha_n)}(X_i)]}.
\]
Hence,
\ba{
\pr_{f_n}(J_2(\hat\psi_{h_{n}(\alpha_n)})\leq\hat c_\tau)
%&\leq\pr_{f_n}\bra{\E_{f_n}[J_2(\hat\psi_{h_n(\alpha_n)})]<c^*_\tau+|J_2(\pi_2^{f_n}\psi_{h_{n}(\alpha_n)}|)+|S_{n}|}\\
&\leq\pr_{f_n}(a_n<\hat c_\tau)
+\pr_{f_n}(a_n<|J_2(\pi_2^{f_n}\psi_{h_{n}(\alpha_n)}|))
+\pr_{f_n}(a_n<|S_{n}|)\\
&=:I+II+III,
}
where $a_n:=\E_{f_n}[J_2(\hat\psi_{h_{n}(\alpha_n)})]/4$. 
Let us bound $I$. By the definition of $\hat c_\tau$,
$I=\pr_{f_n}(\pr^*(T_n^*> a_n)>\tau).$ Hence, Markov's inequality gives $I\leq\tau^{-1}\E_{f_n}[\pr^*(T_n^*> a_n)]$. 
Recall that $\|f_0\|_{H^\gamma}<\infty$ for some $\gamma>0$. 
Hence, $f_n\in H^{\alpha_0\wedge\gamma}_{R_1,b}$ with $R_1:=R+\|f_0\|_{H^\gamma}$ and $b:=\|f_0\|_{L^2}^2/2$ for sufficiently large $n$, so $\E_{f_n}[\pr^*(T_n^*> a_n)]=\pr_{f_n}(\max_{h\in\mcl H_n}Z_h> a_n)+o(1)$ by \cref{gof-boot}. 
%Since $Z_h$ are Gaussian with mean 0 and the variances are given by \eqref{gof-gvar}, we have $\E_{f_n}[\max_{h\in\mcl H_n}Z_h]=O(\sqrt{\log|\mcl H_n|})$ by \cref{mmd-var}. Thus, we obtain $I\to0$ by \eqref{eq:alt}. 
Since $\E_{f_n}[\max_{h\in\mcl H_n}Z_h]=O(\sqrt{\log|\mcl H_n|})$ by \cite[Lemma 2.3.4]{gine2016mathematical} and \cref{mmd-var}, we obtain $I\to0$ by \eqref{eq:alt}. 
Next, observe that $II\leq\pr_{f_n}(\max_{h\in\mcl H_n}J_2(\pi_2^{f_n}\psi_h)>a_n)=\pr_{f_n}(\max_{h\in\mcl H_n}Z_h>a_n)+o(1)$, where the equality follows from \cref{gof-ga}. Hence, the same argument as above gives $II\to0$. 
Finally, since $X_i\overset{i.i.d.}{\sim}P_{f_n}$ under $\pr_{f_n}$, 
\ba{
\E_{f_n}[S_{n}^2]\leq n^3\|P_{f_n-f_0}\psi_{h_{n}(\alpha_n)}\|_{L^2(P_{f_n})}^2\leq C_Knh_{n}(\alpha_n)^{d/2}\|f_n-f_0\|_{L^2}^2\|f_n\|_{L^2},
}
where the second inequality follows from \cref{lem:gof}\ref{gof-l2}. Thus, by \eqref{eq:alt0}, for sufficiently large $n$, 
\ba{
a_n^{-2}\E_{f_n}[S_{n}^2]
\leq C_K\frac{\|f_n\|_{L^2}}{nh_{n}(\alpha_n)^{d/2}\|f_n-f_0\|_{L^2}^2}
\leq\frac{C_K(R+\|f_0\|_{L^2})}{nh_{n}(\alpha_n)^{d/2}\rho_n(\alpha_n)^2},
}
where the second inequality is due to $f_n\in H_1(\rho_n(\alpha_n);\alpha_n)$. 
Therefore, $III\to0$ by Markov's inequality and \eqref{cond-bandwidth}. 
Consequently, we complete the proof. 
\end{proof}

\section{Proof for Section \ref{sec:DWAD}} \label{sec:proof_DWAD}
\subsection{Proof of Theorem \ref{thm:DWAD_size}}

%We employ the variance estimator by Theorem 2(c) in \cite{cattaneo2014small} and the uniform consistency of the variance estimator $\hat{\sigma}_{jk}$, that is, $\max_{(j,k,h_n)\in\mathcal{A}_n}\left|\frac{\hat{\sigma}_{jk}}{\sigma_{jk}} -1\right| = o_p\left( \sqrt{1/\log |\mathcal{A}_n|}\right)$ immediately follows from the convergence of $(\rho^\sharp.II)$, which is shown latter. 

\paragraph{Notations:}
Recall that, for notational simplicity, we define $\mathcal{A}_n \coloneqq [p]\times[d]\times\mathcal{H}_n$ and $|\mathcal{A}_n| \coloneqq p\times d\times |\mathcal{H}_n|$. 
Also, for each $(j,k,h_n) \in\mathcal{A}_n$, we define
\begin{align*}
        & J_{jk,h_n} \coloneqq \frac{\hat{\theta}_{jk, h_n} - \theta_{jk}}{\sigma_{jk,h_n}}, 
        \quad 
        \hat{J}_{jk,h_n} \coloneqq \frac{\hat{\theta}_{jk, h_n} - \theta_{jk}}{\hat{\sigma}_{jk,h_n'}},\\
        & \hat{J}_{jk,h_n'}^{\sharp} \coloneqq  \frac{1}{\hat{\sigma}_{jk,h_n'}} \left(  \frac{1}{n}\sum_{i=1}^n z_i \left[ \sum_{l\neq i}^n \psi_{jk,h'_n}(Z_i,Z_l) - \hat{\theta}_{jk,h_n'} \right] \right).
\end{align*}
Also we define
\begin{align*}
    %& T_n \coloneqq \max_{(j,k,h_n)\in\mathcal{A}_n}|J_{jk,h_n}|, ~~ \hat{T}_n \coloneqq \max_{(j,k,h_n)\in\mathcal{A}_n}|\hat{J}_{jk,h_n}|, ~~ \hat{T}_n^\sharp \coloneqq \max_{(j,k,h_n)\in\mathcal{A}_n}|\hat{J}_{jk,h_n'}^{\sharp}|, \\
    & J \coloneqq (J_{jk,h_n})_{1\le j \le p, 1\le k\le d, h_n\in\mathcal{H}_n}, ~~ \hat{J} \coloneqq (\hat{J}_{jk,h_n})_{1\le j \le p, 1\le k\le d, h_n\in\mathcal{H}_n}, ~~ \hat{J}^\sharp \coloneqq (\hat{J}_{jk,h_n}^\sharp)_{1\le j \le p, 1\le k\le d, h_n\in\mathcal{H}_n}.
\end{align*}

Throughout this section, we use $C$ to denote positive constants, which possibly depend only on quantities independent of $n$. Specifically, we suppose that $C$ possibly depends only on $r$, $d$, $\int_{\mathbb{R}^d} |u|K(u)du$, $\sup_{k\in[d]}\|\partial_k K\|_{L^2(\mathbb{R}^d)}$ and $\sup_{k\in[d]}\|\partial_k K\|_{L^\infty(\mathbb{R}^d)}$ as well as the constant in Assumption \ref{as:DWAD-DGP} and may be different in different expressions.

\paragraph{Proof of \cref{thm:DWAD_size}:}

To show the theorem, we show the convergence of 
\[ \sup_{t\in\mathbb{R}} \left| \mathbb{P}\left( \max_{(j,k,h_n)\in\mathcal{A}_n } | \hat{J}_{jk,h_n} | \le t\right) - \mathbb{P}^*\left( \max_{(j,k,h_n)\in\mathcal{A}_n } | \hat{J}_{jk,h_n}^\sharp | \le t \right) \right|.\]
From the triangular inequality, 
\begin{align*}
    &  \sup_{t\in\mathbb{R}} \left| \mathbb{P}\left( \max_{(j,k,h_n)\in\mathcal{A}_n } |\hat{J}_{jk,h_n}| \le t\right) - \mathbb{P}^*\left( \max_{(j,k,h_n)\in\mathcal{A}_n } |\hat{J}_{jk,h_n}^\sharp| \le t \right) \right| \\
    & \le  
    \sup_{t\in\mathbb{R}} \left|\mathbb{P}\left(  \max_{(j,k,h_n)\in\mathcal{A}_n } |J_{jk,h_n}| \le t \right) - \mathbb{P}\left(   \max_{(j,k,h_n)\in\mathcal{A}_n } |G_{jk,h_n}| \le t\right)\right| \\
    & \quad + \sup_{t\in\mathbb{R}} \left|\mathbb{P}^*\left(  \max_{(j,k,h_n)\in\mathcal{A}_n } |\hat{J}^\sharp_{jk,h_n}| \le t \right) - \mathbb{P}\left(   \max_{(j,k,h_n)\in\mathcal{A}_n } |G_{jk,h_n}| \le t\right)\right| \\
    & \quad + \sup_{t\in\mathbb{R}} \left|\mathbb{P}\left(  \max_{(j,k,h_n)\in\mathcal{A}_n } |\hat{J}_{jk,h_n}| \le t \right) - \mathbb{P}\left(   \max_{(j,k,h_n)\in\mathcal{A}_n } |J_{jk,h_n}| \le t\right)\right| \eqqcolon (\rho^\sharp.I) + (\rho^\sharp.II) + (\rho^\sharp.III),
\end{align*}
where $G \coloneqq (G_{jk,h_n})_{1\le j \le p, 1\le k\le d, h_n\in\mathcal{H}_n} \sim N(0, \Cov(J))$. 
Given the convergence of these terms, Theorem \ref{thm:DWAD_size} follows from the proof of \citep[Proposition 3.2]{koike2019mixed}.
%それぞれ, 正規近似誤差, ブートストラップの誤差(studentization込み), studentizationの誤差, IIIとIIは逆の方が自然な気がするものの, IIを抑えるための評価からIIIの評価が従うので, この順番になっています. 

\paragraph{Convergence of $(\rho^\sharp.I)$:}
To show the convergence of $(\rho^\sharp.I)$, it is sufficient to show the convergence of  $\sup_{A\in\mathcal{R}}\left|\mathbb{P}(J \in A) - \mathbb{P}\left( G \in A\right)\right|$.
The convergence is an immediate consequence of the approximation results in Corollary \ref{coro:order-2}. % with $q=\infty$. 

Before the evaluation we show the boundedness of some quantities.
\begin{lemma} \label{lem:norm-DWAD}
Under Assumption \cref{as:DWAD-DGP}, $\max_{j\in[p]}\|g_j\|_{L^\infty(\mathbb{R}^d)}$, $\max_{j\in[p]}\|v_j\|_{L^\infty(\mathbb{R}^d)}$,  $\|f\|_{L^\infty(\mathbb{R}^d)}$, $\|f\|_{L^2(\mathbb{R}^d)}$ and $\max_{j\in[p]}\|g_jf\|_{L^2(\mathbb{R}^d)}$ are bounded.
\end{lemma}
\begin{proof}
The boundedness of $\max_{j\in[p]}\|g_j\|_{L^\infty(\mathbb{R}^d)}$ and $\max_{j\in[p]}\|v_j\|_{L^\infty(\mathbb{R}^d)}$ immediately follows from Assumption \ref{as:DWAD-DGP}(iv).
Also, the boundedness of $\|f\|_{L^2(\mathbb{R}^d)}$ and $\max_{j\in[p]}\|g_jf\|_{L^2(\mathbb{R}^d)}$ follows from Assumption  \ref{as:DWAD-DGP}(ii) and Plancherel's Theorem.

Finally, we show the boundedness of $\|f\|_{L^\infty(\mathbb{R}^d)}$.
For some function $m(x)\in L^2(\mathbb{R}^d)$, assume $\int_{\mathbb{R}^d}m(x)dx = 1$ and $\int_{\mathbb{R}^d} |x||m(x)|dx < \infty$.
Then,
\begin{align*}
    & f(x) = \int_{\mathbb{R}^d} f(x)m(y)dy  = \int_{\mathbb{R}^d}f(x-y)m(y)dy + \int_{\mathbb{R}^d} \{f(x) - f(x-y)\}m(y)dy.
\end{align*}
Applying Young's inequality and the mean value theorem, we have
\begin{align*}
    \|f\|_{L^\infty(\mathbb{R}^d)} \le \|f\|_{L^2(\mathbb{R}^d)} \|m\|_{L^2(\mathbb{R}^d)} + \||\nabla f|\|_{L^\infty(\mathbb{R}^d)} \int_{\mathbb{R}^d} |y||m(y)|dy.
\end{align*} 
Therefore, $\|f\|_{L^\infty(\mathbb{R}^d)} $ is bounded under Assumption \ref{as:DWAD-DGP}. 
\end{proof}

In order to apply Corollary \ref{coro:order-2}, we evaluate $\sigma_{jk,h_n}^2$ at first.
From \eqref{u-anova-2}, we can see that 
\begin{align*}
    \sigma_{jk,h_n}^2 
    &= \frac{1}{n^2} n(n-1)(n-2) \Var[P\psi_{jk,h_n}(Z_i)] + \frac{1}{n^2} \frac{n(n-1)}{2} \Var[\psi_{jk,h_n}(Z_i,Z_k)].
\end{align*}
The fourth line of Eq.(3.15) in \cite{powell1989semiparametric} states that
\begin{align}
    P\psi_{jk,h_n}(Z_i) &= \frac{1}{n-1}\int_{\mathbb{R}^d} K(u)[-Y_{ij}\partial_kf(X_i+uh_n) + \partial_k \{g_j(X_i + uh_n)f(X_i+uh_n)\}]du. \label{eq:DWAD_hajek}
\end{align}
Also, Eq.(3.13) in \cite{powell1989semiparametric} states that
\begin{align}
    & P^2(\psi_{jk,h_n}^2) \nonumber\\
    &= \frac{1}{(n-1)^2h_n^{d+2}}\int_{\mathbb{R}^d\times \mathbb{R}^d} \{\partial_k K(u)\}^2 [v_j(x) + v_j(x+uh_n) - 2g_j(x)g_j(x+uh_n)]f(x)f(x+uh_n) dxdu. \label{eq:DWAD-ex-squared-kernel}
\end{align}
Therefore, from Assumption \ref{as:DWAD-DGP} and \cref{lem:norm-DWAD}, we have
\begin{align*}
    & \left\|(n-1)P\psi_{jk,h_n}(Z_i) - [-Y_{ij}\partial_kf(X_i) + \partial_k \{g_j(X_i)f(X_i)\}]\right\|_{L^2(P)}^2 \\
    & \le  2\int_{\mathbb{R}^d} v_j(x) \left| \int_{\mathbb{R}^d} K(u)\{\partial_kf(x+uh_n) - \partial_kf(x)\} du\right|^2 f(x)dx \\
    & \quad +   2 \int_{\mathbb{R}^d} \left| \int_{\mathbb{R}^d} K(u)[ \partial_k \{g_j(x + uh_n)f(x+uh_n)\} -  \partial_k \{g_j(x)f(x)\}] du\right|^2 f(x)dx \\
    & \le 2 \|f\|_{L^\infty(\mathbb{R}^d)} \|v\|_{L^\infty(\mathbb{R}^d)} \|K\|_{L^1(\mathbb{R}^d)} \int_{\mathbb{R}^d}\int_{\mathbb{R}^d} |K(u)| \{\partial_kf(x+uh_n) - \partial_kf(x)\}^2dudx \\
    & \quad + 2 \|f\|_{L^\infty(\mathbb{R}^d)}  \|K\|_{L^1(\mathbb{R}^d)} \int_{\mathbb{R}^d}\int_{\mathbb{R}^d} |K(u)|[ \partial_k \{g_j(x + uh_n)f(x+uh_n)\} -  \partial_k \{g_j(x)f(x)\}]^2 dudx \\
    & \le 2^{2(1-\alpha)+1}\|f\|_{L^\infty(\mathbb{R}^d)} \|v\|_{L^\infty(\mathbb{R}^d)} \|K\|_{L^1(\mathbb{R}^d)} \|\partial_k f\|_{H^\alpha}^2 \int_{\mathbb{R}^d} |K(u)||uh_n|^{2\alpha}du \\
    & \quad + 2^{2(1-\alpha)+1}\|f\|_{L^\infty(\mathbb{R}^d)}\|K\|_{L^1(\mathbb{R}^d)} \|\partial_k \{gf\}\|_{H^\alpha}^2 \int_{\mathbb{R}^d} |K(u)||uh_n|^{2\alpha}du,
\end{align*}
where the final inequality follows from \cref{lem:sobolev}.
Therefore
\begin{align}
    &\max_{j\in[p]}\left| \Var[(n-1)P\psi_j(Z_i)] - \Var[-Y_{ij}\partial_kf(X_i) + \partial_k \{g_j(X_i)f(X_i)\}]\right| \nonumber\\
    & = \max_{j\in[p]}\left| \Var[(n-1)P\psi_j(Z)] - \Var[\partial_k \{g_jf\}(X) -Y_{j}\partial_kf(X)]\right|\to 0. \label{eq:DWAD-variance1}
\end{align}
as $n\to\infty$.
Also, since $f(x)f(x+uh_n) = f(x)^2 + f(x)\{f(x+uh_n) - f(x)\}$, from Assumption \ref{as:DWAD-DGP} and \cref{lem:norm-DWAD}, we have
\begin{align*}
    & \left| (n-1)^2h_n^{d+2}P^2(\psi_{jk,h_n}^2) - \|\partial_kK\|_{L^2(\mathbb{R}^d)}^2 \E[\{2v_j(X_i) - 2g_j(X_i)^2\}f(X_i)] \right| \\
    & \lesssim \int_{\mathbb{R}^d}\int_{\mathbb{R}^d} \{\partial_kK(u)\}^2 |v_j(x+uh_n) - v_j(x) | f(x)^2 dudx \\
    & \quad + \int_{\mathbb{R}^d}\int_{\mathbb{R}^d} \{\partial_kK(u)\}^2 | v_j(x) + v_j(x+uh_n)||f(x+uh_n)-f(x)|f(x)dudx \\
    & \quad + \int_{\mathbb{R}^d}\int_{\mathbb{R}^d} \{\partial_kK(u)\}^2 |g_j(x+uh_n) - g_j(x)|g_j(x)f(x)^2 dudx \\
    & \quad + \int_{\mathbb{R}^d}\int_{\mathbb{R}^d} \{\partial_kK(u)\}^2 |f(x+uh_n) - f(x)|g_j(x)g_j(x+uh_n)f(x)dudx \\
    & \le  \|f\|_{L^\infty(\mathbb{R}^d)}\|f\|_{L^2(\mathbb{R}^d)} \int_{\mathbb{R}^d}  \{\partial_kK(u)\}^2du\sqrt{\int_{\mathbb{R}^d}|v_j(x+uh_n) - v_j(x)|^2 dx}   \\
    & \quad + 2\|v_j\|_{L^\infty(\mathbb{R}^d)}  \|f\|_{L^2(\mathbb{R}^d)} \int_{\mathbb{R}^d} \{\partial_kK(u)\}^2 du \sqrt{\int_{\mathbb{R}^d}|f(x+uh_n) - f(x)|^2 dx} \\
    & \quad + \|f\|_{L^\infty(\mathbb{R}^d)}\|g_jf\|_{L^2(\mathbb{R}^d)}\int_{\mathbb{R}^d} \{\partial_kK(u)\}^2du \sqrt{\int_{\mathbb{R}^d}|g_j(x+uh_n) - g_j(x)|^2 dx} \\
    & \quad + \|g_j\|_{L^\infty(\mathbb{R}^d)}\|g_jf\|_{L^2(\mathbb{R}^d)} \int_{\mathbb{R}^d} \{\partial_kK(u)\}^2 du \sqrt{\int_{\mathbb{R}^d}|f(x+uh_n) - f(x)|^2 dx} \\
    & \le 2^{2(1-\alpha)}\|f\|_{L^\infty(\mathbb{R}^d)}\|f\|_{L^2(\mathbb{R}^d)} \|\partial_kK\|_{L^\infty(\mathbb{R}^d)} \|v_j\|_{H^\alpha} \int_{\mathbb{R}^d}  |\partial_kK(u)||uh_n|^\alpha du \\
    & \quad + 2^{2(1-\alpha) + 1} \|v_j\|_{L^\infty(\mathbb{R}^d)}  \|f\|_{L^2(\mathbb{R}^d)} \|\partial_kK\|_{L^\infty(\mathbb{R}^d)} \|f\|_{H^\alpha} \int_{\mathbb{R}^d}  |\partial_kK(u)||uh_n|^\alpha du \\
    & \quad + 2^{2(1-\alpha)}\|f\|_{L^\infty(\mathbb{R}^d)}\|g_jf\|_{L^2(\mathbb{R}^d)} \|\partial_kK\|_{L^\infty(\mathbb{R}^d)} \|g_j\|_{H^\alpha}\int_{\mathbb{R}^d}  |\partial_kK(u)||uh_n|^\alpha du \\
    & \quad + 2^{2(1-\alpha)}\|g_j\|_{L^\infty(\mathbb{R}^d)}\|g_jf\|_{L^2(\mathbb{R}^d)}\|\partial_kK\|_{L^\infty(\mathbb{R}^d)} \|f\|_{H^\alpha}\int_{\mathbb{R}^d}  |\partial_kK(u)||uh_n|^\alpha du
\end{align*}
where the second inequality follows from Schwarz inequality and the final inequality follows from \cref{lem:sobolev}.
Therefore
\begin{align}
    & \max_{j\in[p]} \left| (n-1)^2h_n^{d+2}P^2(\psi_{jk,h_n}^2) - \|\partial_kK\|_{L^2(\mathbb{R}^d)}^2 \E[\{2v_j(X_i) - 2g_j(X_i)^2\}f(X_i)] \right| \nonumber\\
    & = \max_{j\in[p]} \left| (n-1)^2h_n^{d+2}P^2(\psi_{jk,h_n}^2) - 2\|\partial_kK\|_{L^2(\mathbb{R}^d)}^2 \E[\Var(Y_j\mid X) f(X)] \right|\to 0. \label{eq:DWAD-variance-2}
\end{align}
From \eqref{eq:DWAD-variance1}, \eqref{eq:DWAD-variance-2}, Assumption \ref{as:DWAD-DGP}(v) and Assumption \ref{as:DWAD-Kernel}(ii) we have $\max_{(j,k,h_n)\in\mathcal{A}_n}\sigma_{jk,h_n}^{-2} = O\left( \min\{n, n^2\ol{h}_n^{d+2}\}\right)$.

Next, we verify the conditions in Remark \ref{rmk:coro:order-2}. 
Note that, from \eqref{eq:DWAD_hajek}, we can see that
\begin{align}
    & |P\psi_{jk,h_n} |\lesssim  n^{-1}(|Y_j| + 1),  \label{eq:eval-DWAD-hajek}
\end{align}
for each $j\in[p]$ and $k\in[d]$.
In what follows, we use this evaluation.
In view of \eqref{eq:eval-DWAD-hajek}, it holds that
\begin{align*}
    & \Tilde{\Delta}_{2,*}^{(1)} \log^5(n|\mathcal{A}_n|) = n^5\max_{(j,k,h_n)\in\mathcal{A}_n} \frac{\|P(n^{-1}\psi_{jk,h_n})\|_{L^4(P)}^4}{\sigma^4_{jk,h_n}}  \log^5(n|\mathcal{A}_n|) = O(n^{-1}  \log^5(n|\mathcal{A}_n|)), \\
    & \Tilde{\Delta}_{2,q}^{(2)}(1)\log^5(n|\mathcal{A}_n|) \le n^4 \left\| \max_{(j,k,h_n)\in\mathcal{A}_n} \frac{|P(n^{-1}\psi_{jk,h_n})|}{\sigma_{jk,h_n}} \right\|^4_{L^\infty(P)}\log^6(n|\mathcal{A}_n|)  = O(n^{-2} \log^6 (n|\mathcal{A}_n|)).
\end{align*}
By the proof of Lemma 4 in \cite{Ro95} states that $P^2(\psi_{jk,h_n}^4) = O(n^{-4}h_n^{-3d -4})$ for each $j\in[p]$ and $k\in[d]$, we can see that
\begin{align*}
    & \Tilde{\Delta}_{2,*}^{(1)}(2)\log^5(n|\mathcal{A}_n|)  \le n^2 \max_{(j,k,h_n)\in\mathcal{A}_n}  \frac{\|n^{-1}\psi_{jk,h_n}\|^4_{L^4(P^2)}}{\sigma_{jk,h_n}^4} \log^8(n|\mathcal{A}_n|) = O(n^{-2}\underline{h}_n^{-d} \log^8(n|\mathcal{A}_n|)).
\end{align*}
With a slight modification of Lemma 4 in \cite{NiRo00}, we can see that $P(\psi_{jk,h_n}^m) \lesssim (|Y_{ij}|^m+1)n^{-m}h_n^{-(m-1)d-m}$ for $2\le m\le 4$ and each $j\in[p]$ and $k\in[d]$, so it holds that
%ここでslight modificationと言っているのは, Nishiyama and RobinsonのLemma 4はm=3までしかカバーしていないという意図.
\begin{align*}
    &\Tilde{\Delta}_{2,*}^{(2)}(2)\log^5(n|\mathcal{A}_n|) \le n^3\max_{(j,k,h_n)\in\mathcal{A}_n}\frac{\|P(n^{-2}\psi_{jk,h_n}^2)\|^2_{L^2(P)}}{\sigma_{jk,h_n}^4} \log^7(n|\mathcal{A}_n|) = O(n^{-1} \log^7(n|\mathcal{A}_n|)),\\
    &\Tilde{\Delta}_{2,*}^{(3)}(2)\log^5(n|\mathcal{A}_n|) \le n\left\| \max_{(j,k,h_n)\in\mathcal{A}_n} \frac{P(n^{-4}\psi_{jk,h_n}^4)}{\sigma_{jk,h_n}^4} \right\|_{L^\infty(P)} \log^9(n|\mathcal{A}_n|) = O(n^{-3}\underline{h}_n^{-d}\log^9(n|\mathcal{A}_n|)),\\
    &\Tilde{\Delta}_{2,q}^{(5)}(2)\log^9(n|\mathcal{A}_n|) = n^2 \left\|\max_{(j,k,h_n)\in\mathcal{A}_n} \frac{P(n^{-2}\psi_{jk,h_n}^2)}{\sigma_{jk,h_n}^2}\right\|^2_{L^\infty(P)} \log^{12}(n|\mathcal{A}_n|) = O(n^{-2} \log^{12}(n|\mathcal{A}_n|)).
\end{align*}
Also, since $|\psi_{jk,h_n}| \lesssim  (|Y_{ij} + |Y_{lj}|)n^{-1}h_n^{-(d+1)}$ for each $j\in[p]$ and $k\in[d]$,
\begin{align*}
    \Tilde{\Delta}_{2,q}^{(4)}(2)\log^5(n|\mathcal{A}_n|) = \left\| \max_{(j,k,h_n)\in\mathcal{A}_n} \frac{|n^{-1}\psi_{jk,h_n}|}{\sigma_{jk,h_n}}\right\|_{L^\infty(P^2)}^4 \log^{10}(n|\mathcal{A}_n|) = O(n^{-4}\underline{h}_n^{-2d} \log^{10}(n|\mathcal{A}_n|)).
\end{align*}
In addition, since $ \psi_{jk,h_n} \star_1^1 \psi_{jk,h_n}(Z_2, Z_3) = \E[\psi_{jk,h_n}(Z_1, Z_2)  \psi_{jk,h_n}(Z_1, Z_3) \mid Z_2, Z_3] $, from Lemma 6 in \cite{NiRo00}, 
we have $\|\psi_{jk,h_n} \star_1^1 \psi_{jk,h_n}(Z_1, Z_2)\|_{L(P^2)} = O(n^{-2} h^{-(d+4)/2})$ for each $j\in[p]$ and $k\in[d]$.
Hence,
\begin{align*}
    \Tilde{\Delta}_1^{(0)} \log^5 |\mathcal{A}_n| = n^2 \max_{(j,k,h_n)\in\mathcal{A}_n} \frac{\|n^{-1}\psi_{jk,h_n} \star_1^1 n^{-1}\psi_{jk,h_n}(Z_1, Z_2)\|_{L^2(P^2)}}{\sigma_{jk,h_n}^2} \log^5|\mathcal{A}_n| = O( \overline{h}^{d/2} \log^5 |\mathcal{A}_n|) .
\end{align*}
Consequently, under Assumption \ref{as:DWAD-bandwdith}(ii), we have 
\begin{align*}
    \sup_{A\in\mathcal{R}_p}\left| \mathbb{P}\left( \Tilde{J} \in A\right) - \mathbb{P}(G\in A) \right| \to 0,
\end{align*}
where $\Tilde{J}$ is defined as $\Tilde{J} \coloneqq (\Tilde{J}_{jk,h_n})_{1\le j \le p, 1\le k\le d, h_n\in\mathcal{H}_n}$ with $\tilde{J}_{jk,h_n} \coloneqq (\hat{\theta}_{jk,h_n} - \E[\hat{\theta}_{jk,h_n}])/\sigma_{jk,h_n}$.

Finally, we evaluate the bias term.
Eq.(3.19) in \cite{powell1989semiparametric} gives
\begin{align*}
    \mathbb{E}[\hat{\theta}_{jk,h_n}] 
    &=  -2\int_{\mathbb{R}^d \times \mathbb{R}^d} K(u)g_j(x_1)f(x_1)\partial_k f(x_1+uh_n)dx_1du
\end{align*}
and by definition of $\theta_{jk}$, we can see that
\begin{align*}
    \theta_{jk} = \int_{\mathbb{R}^d} \partial_kg_j(x) f^2(x)  dx = -2\int_{\mathbb{R}^d} g_j(x)f(x)\partial_k f(x) dx = -2\int_{\mathbb{R}^d \times \mathbb{R}^d} K(u)g(x)f(x)\partial_k f(x) dxdu.
\end{align*}
So, we have
\begin{align*}
    \mathbb{E}[\hat{\theta}_{jk,h_n}]  - \theta_j &= -2 \int_{\mathbb{R}^d}  \int_{\mathbb{R}^d}K(u)  \{\partial_k f(x+uh_n) - \partial_k f(x) \}g(x)f(x)dxdu \\
    &= -2 \int_{\mathbb{R}^d} K(u) [(\partial_k f * g_jf)(uh_n) - (\partial_k f * g_jf)(0)] du
\end{align*}
where $*$ denotes convolution.
If $\partial_kf $ and $g_jf$ satisfy Assumption \ref{as:DWAD-DGP}, Lemma 1 in \cite{gine2008simple} gives
\begin{align*}
    |\mathbb{E}[\hat{\theta}_{jk,h_n}]  - \theta_{jk}| &\lesssim h_n^{2\alpha} \|\partial_k f\|_{H^\alpha} \|g_jf\|_{H^\alpha} \int K(u) |u|^{2\alpha} du.
\end{align*}
Therefore, it holds that
\begin{align}
    \max_{(jk,h_n)\in\mathcal{A}_n} \left|\frac{\mathbb{E}[\hat{\theta}_{jk,h_n}]  - \theta_{jk}}{\sigma_{jk,h_n}}\right| = O(\min\{n,n^2\bar{h}_n^{d+2}\}^{1/2} \bar{h}_n^{2\alpha}). \label{eq:DWAD_bias}
\end{align}
Then, under Assumption \ref{as:DWAD-bandwdith}(i), Lemma 1 in \cite{chernozhukov2023high} gives
\begin{align}
    \sup_{A\in\mathcal{R}_p}\left| \mathbb{P}(J\in A) - \mathbb{P}(G\in A) \right| \to 0. \label{eq:DWAD_normal}
\end{align}

\paragraph{Convergence of $(\rho^\sharp.II)$:}
To show the convergence of $(\rho^\sharp.II)$, it is sufficient to show the convergence of $\sup_{A\in\mathcal{R}}\left|\mathbb{P}^*(\hat{J}^\sharp \in A) - \mathbb{P}\left( G\in A\right)\right|$.
In this paragraph, following \cite[Section 4]{chetverikov2021adaptive}, we index the elements in $\mathcal{A}_n$ by $a = 1,\dots, |\mathcal{A}_n|$ for notational simplicity.
Note that $a \mapsto (j_a, k_a, h_{n,a})$ is a one-to-one mapping from $\{1,\dots, |\mathcal{A}_n|\}$ to $\mathcal{A}_n$. 
We also use the notation $\psi'_a \coloneqq \psi_{j_ak_a, h_{n,a}'}$ and $\hat{\theta}'_a \coloneqq \hat{\theta}_{j_ak_a,h_a'}$ to stress the difference of bandwidths.

Since $\min_{a\in\mathcal{A}_n} \Var[\hat{\theta}_a/\sigma_a] = 1 >0$, Proposition 2.1 in \cite{CCKK22} gives
\begin{align*}
    (\rho^\sharp.II) \le C \left( \Delta^\sharp \log^2 |\mathcal{A}_n|\right)^{1/2}, \quad \text{with} \quad \Delta^\sharp \coloneqq \max_{(a_1,a_2)\in\mathcal{A}_n^2} |\Cov(J)_{a_1, a_2} - \Cov^*(\hat{J}^\sharp)_{a_1,a_2}|,
\end{align*}
where $\Cov(J)_{a_1,a_2}$ and $\Cov^*(J^\sharp)_{a_1,a_2}$ are the $(a_1,a_2)$-th elements of the covariance matrices of $J$ and $\hat{J}^\sharp$, respectively.
In order to confirm the convergence of $(\rho^{\sharp}.II)$, we show 
\begin{align*}
    & (\Delta^\sharp.I) \coloneqq \max_{(a_1,a_2)\in\mathcal{A}_n^2} |\Cov(J)_{a_1, a_2} - \Cov^*(J^\sharp)_{a_1,a_2}| = o_p(\log^{-2} |\mathcal{A}_n|), \\
    & (\Delta^\sharp.II) \coloneqq \max_{(a_1,a_2)\in\mathcal{A}_n^2} |\Cov^*(J^\sharp)_{a_1, a_2} - \Cov^*(\hat{J}^\sharp)_{a_1,a_2}| =  o_p(\log^{-2} |\mathcal{A}_n|).
\end{align*}
In terms of $ (\Delta^\sharp.I)$, it suffice to show that
\begin{align*}
    & (\Delta^\sharp.I.I) \\
    & \coloneqq \max_{(a_1,a_2)\in\mathcal{A}_n^2}\frac{\left| \sigma_{a_1}\sigma_{a_2}\Cov^*(J^\sharp)_{a_1,a_2} -  \{n \Cov(P\psi'_{a_1}, P\psi'_{a_2}) + \Cov(\psi'_{a_1}, \psi'_{a_2})\}\right|}{\sigma_{a_1}\sigma_{a_2}} =o_p\left( \frac{1}{\log^{2}|\mathcal{A}_n|}\right), \\
    & (\Delta^\sharp.I.II) \\
    & \coloneqq  \max_{(a_1,a_2)\in\mathcal{A}_n^2} \left| \frac{\{n \Cov(P\psi'_{a_1}, P\psi'_{a_2}) + \Cov(\psi'_{a_1}, \psi'_{a_2})\}}{\sigma_{a_1}\sigma_{a_2}}  - \Cov(J)_{a_1,a_2}  \right| = o\left( \frac{1}{\log^{2}|\mathcal{A}_n|}\right). %o\left(\frac{1}{n} + \frac{1}{n^2h_n^{(p+2)}}\right).
\end{align*}
In what follows, we often use 
\begin{align}
    \frac{1}{\sigma_{a_1}\sigma_{a_2}} \asymp \min \{n,n^2h_{n,a_1}^{(d+2)/2}h_{n,a_2}^{(d+2)/2}\}. \label{eq:asymp-variance}
\end{align}

First, we show the convergence of $(\Delta^\sharp.I.II)$.
From a variant of \eqref{u-anova-2}, we have
\begin{align}
    & \Cov(J)_{a_1,a_2} \nonumber \\
    &= \frac{1}{\sigma_{a_1}\sigma_{a_2}}\Cov\left( \hat\theta_{a_1}, \hat{\theta}_{a_2}\right) \nonumber \\
    &= \frac{1}{n^2}n(n-1)(n-2) \frac{\Cov(P\psi_{a_1}, P\psi_{a_2})}{\sigma_{a_1}\sigma_{a_2}} + \frac{1}{n^2} \frac{n(n-1)}{2} \frac{\Cov(\psi_{a_1}, \psi_{a_2})}{\sigma_{a_1}\sigma_{a_2}}. \label{eq:DWAD_COV_J} 
\end{align}
From \eqref{eq:eval-DWAD-hajek} and Lemma \ref{lem:sobolev}, 
\begin{align}
    & \max_{(a_1,a_2)\in\mathcal{A}_n^2}\left|\frac{\Cov(P\psi_{a_1}, P\psi_{a_2})}{\sigma_{a_1}\sigma_{a_2}}\right| = O(n^{-1}), \label{eq:DWAD_COV_J1}
\end{align}  
and from the Cauchy-Schwarz inequality and \eqref{eq:DWAD-ex-squared-kernel},
\begin{align}
     \max_{(a_1,a_2)\in\mathcal{A}_n^2}\left|\frac{\Cov(\psi_{a_1}, \psi_{a_2})}{\sigma_{a_1}\sigma_{a_2}}\right| 
    & \le \max_{a\in\mathcal{A}_n}\frac{\Var[\psi_{a}]}{\sigma_{a}^2} \le \max_{a\in\mathcal{A}_n} \frac{P^2\psi_{a}^2}{\sigma_{a}^2} = O\left( 1 \right). \label{eq:DWAD_COV_J2}
\end{align}
Therefore, we can approximate $\Cov(J)_{a_1,a_2}$ by $n\sigma^{-1}_{a_1}\sigma^{-1}_{a_2}\Cov(P\psi_{a_1}, P\psi_{a_2}) + \frac{1}{2} \sigma^{-1}_{a_1}\sigma^{-1}_{a_2} \Cov(\psi_{a_1}, \psi_{a_2})$ uniformly in $(a_1,a_2)\in\mathcal{A}_n^2$:
\begin{align*}
    & \max_{(a_1,a_2)\in\mathcal{A}_n^2} \left|\Cov(J)_{a_1,a_2}  - \frac{\left\{ n\Cov(P\psi_{a_1}, P\psi_{a_2}) + \frac{1}{2} \Cov(\psi_{a_1}, \psi_{a_2})\right\}}{\sigma_{a_1}\sigma_{a_2}}\right| \\
    & \asymp \max_{(a_1,a_2)\in\mathcal{A}_n^2} \frac{\left| \Cov(P\psi_{a_1}, P\psi_{a_2}) + \frac{1}{2n} \Cov(\psi_{a_1}, \psi_{a_2}) \right|}{\sigma_{a_1}\sigma_{a_2}}  = O\left( \frac{1}{n}\right).
\end{align*} 
This implies that
\begin{align*}
    (\Delta^\sharp.I.II) & \lesssim \max_{(a_1,a_2)_\in\mathcal{A}_n^2} \frac{n}{\sigma_{a_1}\sigma_{a_2}}\left|\Cov(P\psi'_{a_1}, P\psi'_{a_2}) - \Cov(P\psi_{a_1}, P\psi_{a_2})\right|\\
    &\quad\quad + \max_{(a_1,a_2)_\in\mathcal{A}_n^2}\frac{1}{\sigma_{a_1}\sigma_{a_2}} \left|\Cov(\psi_{a_1}', \psi'_{a_2}) - \frac{1}{2}\Cov(\psi_{a_1}, \psi_{a_2})\right|.
\end{align*} %%実はここまでの話はHoeffding分解の性質から直ちに成立する. 
In addition, from \eqref{eq:DWAD_hajek}, slight modification of \eqref{eq:DWAD-ex-squared-kernel} and Lemma \ref{lem:sobolev}, we can see that
\begin{align*}
    & \max_{(a_1,a_2)\in\mathcal{A}_n^2} \frac{n}{\sigma_{a_1}\sigma_{a_2}}\left| \Cov(P\psi_{a_1}', P\psi'_{a_2}) - \Cov(P\psi_{a_1}, P\psi_{a_2})\right| \\
    & \quad \lesssim\max_{(a_1,a_2)\in\mathcal{A}_n^2}  \frac{1}{n}\frac{1}{\sigma_{a_1}\sigma_{a_2}}\{|h_{n,a_1}' - h_{n,a_1}|^{\alpha} + |h_{n,a_2}' - h_{n,a_2}|^{\alpha}\} = o\left( \frac{1}{ \log^2|\mathcal{A}_n|} \right), \\
    & \max_{(a_1,a_2)\in\mathcal{A}_n^2} \frac{1}{\sigma_{a_1}\sigma_{a_2}}\left|\Cov(\psi'_{a_1}, \psi'_{a_2}) - \frac{1}{2}\Cov(\psi_{a_1}, \psi_{a_2})\right|\\
    & \quad \lesssim \max_{(a_1,a_2)\in\mathcal{A}_n^2} \frac{1}{n^2}\frac{1}{\sigma_{a_1}\sigma_{a_2}}\left|\left( \frac{1}{h_{n,a_1}'^{d+2}} - \frac{1}{2h_{n,a_1}^{d+2}}\right) + \left( \frac{1}{h_{n,a_2}'^{d+2}} - \frac{1}{2h_{n,a_2}^{d+2}}\right) \right| \\
    & \quad\quad +  \max_{(a_1,a_2)\in\mathcal{A}_n^2} \frac{1}{n^2h_{n,a_1}^{(d+2)/2}h_{n,a_2}^{(d+2)/2}}\frac{1}{\sigma_{a_1}\sigma_{a_2}}\{|h_{n,a_1}' - h_{n,a_1}|^{\alpha} + |h_{n,a_2}' - h_{n,a_2}|^{\alpha}\} = o\left( \frac{1}{\log^2|\mathcal{A}_n|} \right) .
\end{align*}
The first convergence follows from \cref{eq:asymp-variance} and $\bar{h}_n^\alpha\log^2|\mathcal{A}_n| \to 0$ and the second does from \cref{eq:asymp-variance}, $h_n' = 2^{1/(d+2)}h_n$ and $\bar{h}_n^\alpha\log^2|\mathcal{A}_n| \to 0$.
These complete the proof of $(\Delta^\sharp.I.II) = o(\log^{-2}|\mathcal{A}_n|)$.

Next, we show the convergence of $(\Delta^\sharp.I.I)$.
Define $\bar{\psi}_{jk,h_n} \coloneqq (n-1) \psi_{jk,h_n}$.
Then, the covariance between $J^\sharp_{a_1}$ and $J^\sharp_{a_2}$ conditional on observations is decomposed into six terms likewise the proof of Theorem 2 in \cite{cattaneo2014small} as follows:
\begin{align*}
    &\sigma_{a_1}\sigma_{a_2}\Cov^*(J^\sharp)_{a_1,a_2} \\
    &=  \frac{1}{n^2}\sum_{i=1}^n \left[ \sum_{l\neq i}^n \psi_{a_1}'(Z_i,Z_l) - \hat{\theta}_{a_1}' \right]\left[ \sum_{l\neq i}^n \psi_{a_2}'(Z_i,Z_l) - \hat{\theta}_{a_2}' \right] \\
    &= \frac{1}{n^2}\sum_{i=1}^n \left[ \sum_{l\neq i}^n \psi_{a_1}'(Z_i,Z_l) - P \bar\psi_{a_1}'(Z_i) + P \bar\psi_{a_1}'(Z_i) - P^2 \bar\psi_{a_1}' +  P^2\bar\psi_{a_1}' - \hat{\theta}_{a_1}' \right] \\
    & \quad \times \left[ \sum_{l\neq i}^n \psi_{a_2}'(Z_i,Z_l)  - P \bar\psi_{a_2}'(Z_i) + P \bar\psi_{a_2}'(Z_i) - P^2 \bar\psi_{a_2}' +  P^2\bar\psi_{a_2}' - \hat{\theta}_{a_2}' \right] \\
    & = I_{a_1,a_2} + II_{a_1,a_2} + III_{a_1,a_2} + IV_{a_1,a_2} + V_{a_1,a_2} + VI_{a_1,a_2},
\end{align*}
where 
\begin{align*}
    & I_{a_1,a_2} \coloneqq \frac{1}{n^2} \sum_{i=1}^n \left[ \sum_{l\neq i}^n \psi_{a_1}'(Z_i,Z_l) - P \bar\psi_{a_1}'(Z_i) \right] \left[ \sum_{l\neq i}^n \psi_{a_2}'(Z_i,Z_l) - P \bar\psi_{a_2}'(Z_i) \right], \\
    & II_{a_1,a_2} \coloneqq \frac{1}{n^2} \sum_{i=1}^n \left[ P \bar\psi_{a_1}'(Z_i) - P^2 \bar\psi_{a_1}'\right] \left[ P \bar\psi_{a_2}'(Z_i) - P^2 \bar\psi_{a_2}'\right], \\
    & III_{a_1,a_2} \coloneqq \frac{1}{n^2} \sum_{i=1}^n \left[ P^2 \bar\psi_{a_1}' - \hat{\theta}_{a_1}'  \right]\left[ P^2 \bar\psi_{a_2}' -\hat{\theta}_{a_2}'  \right], \\
    & IV_{a_1,a_2} \coloneqq \frac{1}{n^2} \sum_{i=1}^n \left[ \sum_{l\neq i}^n \psi_{a_1}'(Z_i,Z_l) - P\bar \psi_{a_1}'(Z_i) \right] \left[ P \bar\psi_{a_2}'(Z_i) - P^2 \bar\psi_{a_2}'\right] \\
    & \quad\quad + \frac{1}{n^2} \sum_{i=1}^n \left[ \sum_{l\neq i}^n \psi_{a_2}'(Z_i,Z_l) - P\bar \psi_{a_2}'(Z_i) \right] \left[ P \bar\psi_{a_1}'(Z_i) - P^2\bar\psi_{a_1}'\right], \\
    & V_{a_1,a_2} \coloneqq \frac{1}{n^2} \sum_{i=1}^n  \left[ \sum_{l\neq i}^n \psi_{a_1}'(Z_i,Z_l) - P \bar\psi_{a_1}'(Z_i) \right] \left[ P^2 \bar\psi_{a_2}' - \hat{\theta}_{a_2}'  \right] \\
    &\quad\quad + \frac{1}{n^2} \sum_{i=1}^n  \left[ \sum_{l\neq i}^n \psi_{a_2}'(Z_i,Z_l) - P \bar\psi_{a_2}'(Z_i) \right] \left[ P^2 \bar\psi_{a_1}' - \hat{\theta}_{a_1}'  \right], \\
    & VI_{a_1,a_2} \coloneqq \frac{1}{n^2} \sum_{i=1}^n \left[ P \bar\psi_{a_1}'(Z_i) - P^2 \bar\psi_{a_1}'\right]\left[ P^2 \bar\psi_{a_2}' - \hat{\theta}_{a_2}'  \right] + \frac{1}{n^2} \sum_{i=1}^n \left[ P \bar\psi_{a_2}'(Z_i) - P^2 \bar\psi_{a_2}'\right]\left[ P^2 \bar\psi_{a_1}' - \hat{\theta}_{a_1}'  \right] .
\end{align*}
To show the convergence of $(\Delta^\sharp.I.I)$, we first bound the $\sigma^{-1}_{a_1}\sigma_{a_2}^{-1}III_{a_1,a_2}$, $\sigma^{-1}_{a_1}\sigma_{a_2}^{-1}IV_{a_1,a_2}$, $\sigma^{-1}_{a_1}\sigma_{a_2}^{-1}V_{a_1,a_2}$ and $\sigma^{-1}_{a_1}\sigma_{a_2}^{-1}VI_{a_1,a_2}$ uniformly in $(a_1,a_2)\in\mathcal{A}_n^2$ and then show the uniform convergence of $\sigma^{-1}_{a_1}\sigma_{a_2}^{-1}I_{a_1,a_2}$ to $\sigma^{-1}_{a_1}\sigma_{a_2}^{-1}\Cov(\psi'_{a_1}, \psi_{a_2}')$ and of $\sigma^{-1}_{a_1}\sigma_{a_2}^{-1}II_{a_1,a_2}$ to $n\sigma^{-1}_{a_1}\sigma_{a_2}^{-1}\Cov(P\psi_{a_1}', P\psi_{a_2}')$.

\subparagraph{Convergence of $III$:}
Observe that
\begin{align*}
    & III_{a_1,a_2} \\
    &\coloneqq \frac{1}{n^2} \sum_{i=1}^n \left[ P^2 \bar\psi_{a_1}' - \hat{\theta}_{a_1}'  \right]\left[ P^2 \bar\psi_{a_2}' -\hat{\theta}_{a_2}'  \right] \\
    &= \frac{1}{n} \left[ P^2 \bar\psi_{a_1}' - \frac{1}{n(n-1)}\sum_{i=1}^n\sum_{l\neq i}^n \bar\psi_{a_1}'(Z_i,Z_l) \right] \left[ P^2 \bar\psi_{a_2}' - \frac{1}{n(n-1)}\sum_{i=1}^n\sum_{l\neq i}^n \bar\psi_{a_2}'(Z_i,Z_l) \right] \\
    &= \frac{1}{n} \left[ \frac{1}{n}\sum_{i=1}^n \pi_1\bar\psi_{a_1}'(Z_i) + \binom{n}{2}^{-1}\sum_{i=1}^n\sum_{l> i}^n \pi_2\bar\psi_{a_1}'(Z_i,Z_l) \right]  \left[ \frac{1}{n}\sum_{i=1}^n \pi_1\bar\psi_{a_2}'(Z_i) + \binom{n}{2}^{-1}\sum_{i=1}^n\sum_{l> i}^n \pi_2\bar\psi_{a_2}'(Z_i,Z_l) \right].
\end{align*}
Therefore the Cauchy-Schwarz inequality and the triangle inequality give
\begin{align*}
    & \left\|\max_{(a_1,a_2)\in\mathcal{A}_n^2}\frac{1}{\sigma_{a_1}\sigma_{a_2}} |III_{a_1,a_2}|\right\|_{L^1(\pr)}\\
    & \le \frac{1}{n}\left\| \max_{a_1\in\mathcal{A}_n} \frac{1}{\sigma_{a_1}}\left| \frac{1}{n}J( \pi_1\bar\psi_{a_1}') + \frac{1}{n(n-1)}J_2( \pi_2\bar\psi_{a_1}')  \right| \right\|_{L^2(\pr)} \left\| \max_{a_2\in\mathcal{A}_n} \left| \frac{1}{n}  \frac{1}{\sigma_{a_2}}J( \pi_1\bar\psi_{a_2}') + \frac{1}{n(n-1)}J_2( \pi_2\bar\psi_{a_2}')  \right| \right\|_{L^2(\pr)} \\
    & =  \frac{1}{n}\left\| \max_{a_1\in\mathcal{A}_n} \frac{1}{\sigma_{a_1}} \left| \frac{1}{n}J( \pi_1\bar\psi_{a_1}') + \frac{1}{n(n-1)}J_2( \pi_2\bar\psi_{a_1}')  \right| \right\|_{L^2(\pr)}^2 \\
    & \le  \frac{2}{n}\left( \left\| \max_{a_1\in\mathcal{A}_n}  \frac{1}{n}\frac{1}{\sigma_{a_1}} |J_1( \pi_1\bar\psi_{a_1}')|\right\|_{L^2(\pr)}^2 + \left\|\max_{a_1\in\mathcal{A}_n} \frac{1}{n^2}\frac{1}{\sigma_{a_1}}|J_2(\pi_2\bar\psi_{a_1}')|   \right\|_{L^2(\pr)}^2 \right).
\end{align*}
Under Assumption \ref{as:DWAD-DGP}, we can see $|\pi_1 \bar{\psi}_{jk,h_n'}|^2(Z_i) \lesssim |Y_{ij}|^2 + 1$ for each $(j,k,h_n)\in\mathcal{A}_n$ from Lemma 1(d) in \cite{NiRo00},
then Theorem \ref{thm:max-is}, \eqref{eq:max-out}, \eqref{eq:asymp-variance} and $\log^{1/2}|\mathcal{A}_n| = o(\sqrt{n})$ (Assumption \ref{as:DWAD-bandwdith}) give
\begin{align}
    & \left\| \max_{a_1\in\mathcal{A}_n} \frac{1}{n} \frac{1}{\sigma_{a_1}} |J_1( \pi_1\bar\psi_{a_1}')|\right\|_{L^2(\pr)} \nonumber\\
     &\lesssim \log^{1/2}|\mathcal{A}_n| \left(  \max_{a_1\in\mathcal{A}_n} P|\pi_1 \bar\psi_{a_1}'|^2  \right)^{1/2} +\frac{1}{\sqrt{n}} \log|\mathcal{A}_n|\left\|  \max_{a_1\in\mathcal{A}_n} M(|\pi_1 \bar\psi_{a_1}'|^2)  \right\|^{1/2}_{L^1(\pr)} \nonumber\\
     & \le  \log^{1/2}|\mathcal{A}_n| \left(  \max_{a_1\in\mathcal{A}_n} P|\pi_1 \bar\psi_{a_1}'|^2  \right)^{1/2} + \frac{1}{\sqrt{n}} \log|\mathcal{A}_n|\left\|  \max_{a_1\in\mathcal{A}_n} |\pi_1 \bar\psi_{a_1}'|^2  \right\|_{L^\infty(P)}^{1/2} \nonumber \\
     & \lesssim \log^{1/2}|\mathcal{A}_n|  + \frac{\log|\mathcal{A}_n|}{\sqrt{n}}  = O\left( \log^{1/2}|\mathcal{A}_n| \right). \label{eq:DWAD-max-hajek}
\end{align}  
Also, Theorem \ref{thm:max-is} and \eqref{eq:max-out} give
\begin{align*}
    \left\|\max_{a_1\in\mathcal{A}_n} \frac{1}{n^2}\frac{1}{\sigma_{a_1}} |J_2(\pi_2\bar\psi_{a_1}')|  \right\|_{L^2(\pr)}  
    & \lesssim \max_{0\le s\le 2} \frac{n^{1-s/2}}{n^2} \log^{1+s/2}|\mathcal{A}_n| \left\|  \max_{a_1\in\mathcal{A}_n} \frac{1}{\sigma_{a_1}^2}M(P^{2-s}|\pi_2 \bar\psi_{a_1}'|^2)  \right\|_{L^1(\pr)}^{1/2} \\
    & \le \max_{0\le s\le 2} \frac{n^{1-s/2}}{n^2} \log^{1+s/2}|\mathcal{A}_n| \left\|  \max_{a_1\in\mathcal{A}_n} \frac{1}{\sigma_{a_1}^2}P^{2-s}|\pi_2 \bar\psi_{a_1}'|^2  \right\|_{L^\infty(P^s)}^{1/2} .%\\
    %& \quad \vee \frac{\log^2|\mathcal{A}_n|}{n^2} n^{1/4}   \left\|  \max_{a_1\in\mathcal{A}_n}|\pi_2 \bar\psi_{a_1}'|^2  \right\|_{L_2(P^2)}^{1/2}
\end{align*}
 Lemma 5 in \cite{Ro95} states that $P|\pi_2\bar\psi_{jk,h_n}|^2(Z_i) \lesssim h_n^{-(d+2)} (Y_{ij}^2 + 1) $ for each $(j,k)\in[p]\times [d]$, which also implies $P^2|\pi_2\bar\psi_{jk,h_n}|^2 \lesssim h_n^{-(d+2)}$ for each $j\in[p]$ and $k\in[d]$.
Also, the first line of the proof of Lemma 5 in \cite{Ro95} and Assumption \ref{as:DWAD-Kernel} give 
\[
|\pi_2\bar\psi_{jk,h_n}|^2(Z_i,Z_l) \lesssim (Y_{ij}^2 + Y_{lj}^2)\{(Y_{ij} - Y_{lj})\partial_kK_{il,h_n'}\}^2 h_n^{-2(d+1)}
\]
for all $(j,k)\in[p]\times [d]$. 
Therefore, $ \left\|  \max_{a_1\in\mathcal{A}_n} \sigma_{a_1}^{-2}|\pi_2 \bar\psi_{a_1}'|^2  \right\|_{L^\infty(P^2)}^{1/2} = O(n \ul{h}_n^{-d/2}) $.
In conjunction with $n^{-1/2}\log^{1/2}|\mathcal{A}_n|\to 0$ and $n^{-1}\underline{h}_n^{-d/2} \log|\mathcal{A}_n| \to 0$ (Assumption \ref{as:DWAD-bandwdith}), we have %%ここまで修正ずみ(2025-10/03-13:17) 
\begin{align}
     & \left\|\max_{a_1\in\mathcal{A}_n} \frac{1}{n^2}\frac{1}{\sigma_{a_1}}|J_2(\pi_2\bar\psi_{a_1}')|  \right\|_{L^2(\pr)} \nonumber \\
     & \lesssim  \log|\mathcal{A}_n| \vee  \frac{\log^{3/2}|\mathcal{A}_n|}{n^{1/2}} \vee \frac{\log^2|\mathcal{A}_n|}{n\underline{h}_n'^{d/2}} \nonumber \\
     &=  \log|\mathcal{A}_n| \vee  \log|\mathcal{A}_n| \cdot \frac{\log^{1/2}|\mathcal{A}_n|}{n^{1/2}} \vee \log|\mathcal{A}_n| \cdot \frac{\log|\mathcal{A}_n|}{n\underline{h}_n'^{d/2}}  = O\left( \log|\mathcal{A}_n|\right). \label{eq:DWAD-max-second}
\end{align}
Summing up, since $\underline{h}_n' \ge \underline{h}_n$ and $n^{-1}\log^4 |\mathcal{A}_n| \to 0$ (Assumption \ref{as:DWAD-bandwdith}), it holds that
\begin{align}
    & \left\|\max_{(a_1,a_2)\in\mathcal{A}_n^2}  \frac{1}{\sigma_{a_1}\sigma_{a_2}}|III_{a_1,a_2}|\right\|_{L^1(\pr)}\lesssim \frac{1}{n} \log^2|\mathcal{A}_n| =  \frac{\log^4 |\mathcal{A}_n|}{n}\cdot \frac{1}{ \log^2|\mathcal{A}_n|} = o\left( \frac{1}{ \log^2|\mathcal{A}_n|}\right) . \label{eq:convergence-III}
\end{align}

\subparagraph{Convergence of $IV$:}
Since $\psi_{a}'(Z_i,Z_l) - P\psi_{a}'(Z_i) = \pi_2\psi_{a}'(Z_i,Z_l) + \pi_1\psi_{a}'(Z_l)$ and $P\bar \psi_{a}'(Z_i) - P^2\bar\psi_{a}' = \pi_1 \bar\psi_{a}'(Z_i)$,
\begin{align*}
    IV_{a_1,a_2} &\coloneqq \frac{1}{n^2} \sum_{i=1}^n \left[ \sum_{l\neq i}^n \psi_{a_1}'(Z_i,Z_l) - P\bar \psi_{a_1}'(Z_i) \right] \left[ P \bar\psi_{a_2}'(Z_i) - P^2 \bar\psi_{a_2}'\right] \\
    & \quad\quad + \frac{1}{n^2} \sum_{i=1}^n \left[ \sum_{l\neq i}^n \psi_{a_2}'(Z_i,Z_l) - P\bar \psi_{a_2}'(Z_i) \right] \left[ P \bar\psi_{a_1}'(Z_i) - P^2\bar\psi_{a_1}'\right] \\
    & = \frac{1}{n^2} \sum_{i=1}^n \left[ \sum_{l\neq i}^n \{ \pi_2\psi_{a_1}'(Z_i,Z_l) + \pi_1\psi_{a_1}'(Z_l)\} \right] [\pi_1\bar\psi_{a_2}'(Z_i) ] \\
    & \quad\quad + \frac{1}{n^2} \sum_{i=1}^n \left[ \sum_{l\neq i}^n \{ \pi_2\psi_{a_2}'(Z_i,Z_l) + \pi_1\psi_{a_2}'(Z_l)\} \right] [\pi_1\bar\psi_{a_1}'(Z_i) ] \\
    & = \frac{1}{n^2} \sum_{(i,l)\in I_{n,2}} (\pi_2\psi_{a_1}' \star_1^0 \pi_1\bar\psi_{a_2}')(Z_i,Z_l) +  \frac{1}{n^2} \sum_{(i,l)\in I_{n,2}}(\pi_1\psi_{a_1}'\star_0^0\pi_1\bar\psi_{a_2}')(Z_i,Z_l) \\
    & \quad\quad + \frac{1}{n^2} \sum_{(i,l)\in I_{n,2}} (\pi_2\psi_{a_2}' \star_1^0\pi_1\bar\psi_{a_1}')(Z_i,Z_l) +  \frac{1}{n^2} \sum_{(i,l)\in I_{n,2}}(\pi_1\psi_{a_2}'\star_0^0\pi_1\bar\psi_{a_2}')(Z_i,Z_l) \\
    & \eqqcolon (IV.I)_{a_1,a_2} + (IV.II)_{a_1,a_2} + (IV.III)_{a_1,a_2} + (IV.IV)_{a_1,a_2}.
\end{align*}
%contruction kernelで表記しているのは, 変数を共有していることを表現したいだけなので, わざわざcontruction kernelの表記で書かなくても良かったかも. でも, 積の形で書くよりも表記がすっきりしている気が, 個人的にはする. 
For $(IV.I)_{a_1,a_2}$, letting $(\pi_2\bar\psi_{a_1}' \star_1^0 \tilde{\pi}_1 \bar{\psi}_{a_2}')(Z_i,Z_l) \coloneqq \frac{n-1}{2}\pi_2\psi_{a_1}'(Z_i,Z_l)\{\pi_1\bar\psi_{a_2}'(Z_i) + \pi_1\bar\psi_{a_2}'(Z_l)\}$, 
\begin{align*}
    (IV.I)_{a_1,a_2} = \frac{1}{n^2(n-1)} \sum_{(i,l)\in I_{n,2}}(\pi_2\bar\psi_{a_1}' \star_1^0\tilde{\pi}_1 \bar{\psi}_{a_2}')(Z_i,Z_l) \asymp \frac{1}{n^3} J_2(\pi_2\bar\psi_{a_1}' \star_1^0\tilde{\pi}_1 \bar{\psi}_{a_2}').
\end{align*}
Then, from Theorem \ref{thm:max-is} and \eqref{eq:max-out}, we have
\begin{align*}
    & \left\|\max_{(a_1,a_2)\in\mathcal{A}_n^2}\frac{1}{\sigma_{a_1}\sigma_{a_2}} |(IV.I)_{a_1,a_2}| \right\|_{L^1(\pr)} \\
    & \lesssim  \max_{0\le s\le 2} \frac{n^{1-s/2}}{n^3} \log^{1+s/2}|\mathcal{A}_n| \left\|  \max_{(a_1,a_2)\in\mathcal{A}_n^2} \frac{1}{\sigma_{a_1}^2\sigma_{a_2}^2} M(P^{2-s}|\pi_2\bar\psi_{a_1}' \star_1^0\tilde{\pi}_1 \bar{\psi}_{a_2}'|^2)  \right\|_{L^1(\pr)}^{1/2} \\
    & \le \max_{0\le s\le 2} \frac{n^{1-s/2}}{n^3} \log^{1+s/2}|\mathcal{A}_n| \left\|  \max_{(a_1,a_2)\in\mathcal{A}_n^2}\frac{1}{\sigma_{a_1}^2\sigma_{a_2}^2} P^{2-s}|\pi_2\bar\psi_{a_1}' \star_1^0\tilde{\pi}_1 \bar{\psi}_{a_2}'|^2  \right\|_{L^\infty(P^s)}^{1/2} .
\end{align*}
The first line of the proof of Lemma 5 in \cite{Ro95} and Lemma 1(d) in \cite{NiRo00} imply 
\begin{align*}
    |\pi_2\bar\psi_{a_1}' \star_1^0 \tilde{\pi}_1 \bar{\psi}_{a_2}'|^2(Z_i,Z_l) 
    & = |\pi_2\bar\psi_{a_1}'|^2(Z_i,Z_l)|\pi_1 \bar{\psi}_{a_2}'|^2 (Z_i) \\
    & \lesssim (Y_{ij_{a_1}}^2 + Y_{lj_{a_1}}^2)(Y_{ij_{a_2}}^2 + Y_{lj_{a_2}}^2 + 2)\{(Y_{ij_{a_1}} - Y_{lj_{a_1}})\partial_{k_{a_1}}K_{il,h_{n,a_1}'}\}^2 h_{n,a_1}'^{-2(d+1)}
\end{align*}
for all $(a_1,a_2)\in\mathcal{A}_n^2$.
Then, from \eqref{eq:asymp-variance} and some straightforward computations of expectations, we have $\left\| \max_{(a_1,a_2)\in\mathcal{A}_n^2} \sigma_{a_1}^{-2}\sigma_{a_2}^{-2} |\pi_2\bar\psi_{a_1}' \star_1^0\tilde{\pi}_1 \bar{\psi}_{a_2}'|^2  \right\|_{L^\infty(P^2)}^{1/2} \lesssim n^2\ul{h}_n^{-d/2}\ol{h}_n^{(d+2)/2}$ and $P^{2-s}|\pi_2\bar\psi_{a_1}' \star_1^0 \tilde{\pi}_1 \bar{\psi}_{a_2}'|^2 \lesssim {h}_{n,a_1}'^{-(d+2)}$ for  all $(a_1,a_2)\in\mathcal{A}^2_n$ and each $s=0,1$.
Then, in conjunction with $n^{-1/2}\log^{7/2}|\mathcal{A}_n| \to 0 $, $\bar{h}_n^{d/2}\log^5|\mathcal{A}_n|\to 0$ and $n^{-1}\underline{h}_n^{-d/2} \log|\mathcal{A}_n| \to 0$ (Assumption \ref{as:DWAD-bandwdith}),
\begin{align*}
    & \left\|{\max_{(a_1,a_2)\in\mathcal{A}_n^2}}\frac{1}{\sigma_{a_1}\sigma_{a_2}} |(IV.I)_{a_1,a_2}| \right\|_{L^1(\pr)} \\
    & \lesssim\overline{h}_n'^{(d+2)/2}\log |\mathcal{A}_n| \vee \frac{\overline{h}_n'^{(d+2)/2}\log^{3/2} |\mathcal{A}_n| }{n^{1/2} }  \vee \frac{\ol{h}_n^{(d+2)/2}\log^2|\mathcal{A}_n|}{n \ul{h}_n^{d/2}} \\
    & = \frac{1}{\log^2|\mathcal{A}_n|} \left( \overline{h}_n'^{(d+2)/2}\log^3|\mathcal{A}_n| \vee \frac{\overline{h}_n'^{(d+2)/2}\log^{7/2} |\mathcal{A}_n| }{n^{1/2} }  \vee \ol{h}_n'^{(d+2)/2}\log^3|\mathcal{A}_n|\cdot \frac{\log |A_n|}{n \ul{h}_n^{d/2}} \right) = o\left( \frac{1}{\log^2|\mathcal{A}_n|}  \right).
\end{align*}
Similarly, $\left\|\max_{(a_1,a_2)\in\mathcal{A}_n^2} \sigma_{a_1}^{-2}\sigma_{a_2}^{-2}|(IV.III)_{a_1,a_2} |\right\|_{L^1(\pr)}  = o(\log^{-2}|\mathcal{A}_n|)$.
In a similar way, for $ (IV.II)_{a_1,a_2} $ and $(IV.IV)_{a_1,a_2}$, using Theorem \ref{thm:max-is}, $|\pi_1 \bar{\psi}_{jk,h_n'}|^2(Z_i)\lesssim (Y_{ij}^2 + 1)$ for all $(j,k)\in[p] \times [d]$ and $n^{-1}\log^3|\mathcal{A}_n|\to 0$, we have 
\begin{align*}
     &\left\| \max_{(a_1,a_2)\in\mathcal{A}_n^2}\frac{1}{\sigma_{a_1}\sigma_{a_2}}|(IV.II)_{a_1,a_2} |\right\|_{L^1(\pr)} \lesssim \frac{\log|\mathcal{A}_n|}{n} = o\left( \frac{1}{\log^2|\mathcal{A}_n|}  \right),\\  &\left\|\max_{(a_1,a_2)\in\mathcal{A}_n^2}\frac{1}{\sigma_{a_1}\sigma_{a_2}}|(IV.IV)_{a_1,a_2} |\right\|_{L^1(\pr)} \lesssim \frac{\log |\mathcal{A}_n|}{n} = o\left( \frac{1}{\log^2|\mathcal{A}_n|}  \right).
\end{align*}
Summing up, we have
\begin{align}
     & \left\|\max_{(a_1,a_2)\in\mathcal{A}_n^2} \frac{1}{\sigma_{a_1}\sigma_{a_2}}|IV_{a_1,a_2}| \right\|_{L^1(\pr)} = o\left( \frac{1}{\log^2|\mathcal{A}_n|}\right). \label{eq:convergence-IV}
\end{align}

\subparagraph{Convergence of $V$:}
\begin{align*}
    V_{a_1,a_2} &\coloneqq \frac{1}{n^2} \sum_{i=1}^n  \left[ \sum_{l\neq i}^n \psi_{a_1}'(Z_i,Z_l) - P \bar\psi'_{a_1}(Z_i) \right] \left[ P^2 \bar\psi_{a_2}' - \hat{\theta}_{a_2}'  \right] \\
    &\quad\quad + \frac{1}{n^2} \sum_{i=1}^n  \left[ \sum_{l\neq i}^n \psi_{a_2}'(Z_i,Z_l) - P \bar\psi_{a_2}'(Z_i) \right] \left[ P^2 \bar\psi_{a_1}' - \hat{\theta}_{a_1}'  \right] \\
    & = -\frac{1}{n^2} \left[ \sum_{i=1}^n  \sum_{l\neq i}^n \{ \pi_2\psi_{a_1}'(Z_i,Z_l) + \pi_1\psi_{a_1}'(Z_l)\}\right]  \left[ \frac{1}{n}\sum_{i=1}^n \pi_1\bar\psi_{a_2}'(Z_i) + \binom{n}{2}^{-1}\sum_{i=1}^n\sum_{l> i}^n \pi_2\bar\psi_{a_2}'(Z_i,Z_l) \right] \\
    & \quad - \frac{1}{n^2} \left[ \sum_{i=1}^n  \sum_{l\neq i}^n \{ \pi_2\psi_{a_2}'(Z_i,Z_l) + \pi_1\psi_{a_2}'(Z_l)\}\right]  \left[ \frac{1}{n}\sum_{i=1}^n \pi_1\bar\psi_{a_1}'(Z_i) + \binom{n}{2}^{-1}\sum_{i=1}^n\sum_{l> i}^n \pi_2\bar\psi_{a_1
    }'(Z_i,Z_l) \right] 
\end{align*}
Therefore the Cauchy-Schwarz inequality and the triangle inequality give
\begin{align*}
   & \left\| \max_{(a_1,a_2)\in\mathcal{A}_n^2} \frac{1}{\sigma_{a_1}\sigma_{a_2}}| V_{a_1,a_2}|\right\|_{L^1(\pr)} 
   %& \lesssim \frac{1}{n^2} \left( \left\|  \max_{j\in[p]} J_2(\pi_2\psi_{j,h_n'}) \right\|_{L^2(\pr)} +  \left\|  \max_{j\in[p]} n J_1(\pi_1\psi_{j,h_n'}) \right\|_{L^2(\pr)} \right) \\
   %& \quad \times \left( \left\| \max_{j\in[p]}  \frac{1}{n} J_1( \pi_1\bar\psi_{k,h_n'}) \right\|_{L^2(\pr)} + \left\|\max_{j\in[p]} \frac{1}{n^2}J_2(\pi_2\bar\psi_{k,h_n'})   \right\|_{L^2(\pr)} \right) \\
   %& \asymp 
   \lesssim  \frac{1}{n} \left( \left\| \max_{a_1\in\mathcal{A}_n}  \frac{1}{n}\frac{1}{\sigma_{a_1}} |J_1( \pi_1\bar\psi_{a_1}') |\right\|^2_{L^2(\pr)} + \left\|\max_{a_1\in\mathcal{A}_n} \frac{1}{n^2} \frac{1}{\sigma_{a_1}}|J_2(\pi_2\bar\psi_{a_1}') |  \right\|^2_{L^2(\pr)} \right)
\end{align*}
Since $n^{-1}\log^4 |\mathcal{A}_n| \to 0$ holds under Assumption \ref{as:DWAD-bandwdith}, in conjunction with \eqref{eq:DWAD-max-hajek} and \eqref{eq:DWAD-max-second}, we have 
\begin{align}
    &\left\| \max_{(a_1,a_2)\in\mathcal{A}_n^2} \frac{1}{\sigma_{a_1}\sigma_{a_2}} |V_{a_1,a_2}|\right\|_{L^1(\pr)}  \lesssim\frac{\log^4|\mathcal{A}_n|}{n}  = o\left( \frac{1}{\log^2|\mathcal{A}_n|} \right). \label{eq:convergence-V}
\end{align}

\subparagraph{Convergence of $VI$:}
\begin{align*}
    VI_{a_1,a_2} &\coloneqq \frac{1}{n^2} \sum_{i=1}^n \left[ P \bar\psi_{a_1}'(Z_i) - P^2 \bar\psi_{a_1}'\right]\left[ P^2 \bar\psi_{a_2}' - \hat{\theta}_{a_2}'  \right] + \frac{1}{n^2} \sum_{i=1}^n \left[ P \bar\psi_{a_2}'(Z_i) - P^2 \bar\psi_{a_2}'\right]\left[ P^2 \bar\psi_{a_1}' - \hat{\theta}_{a_1}'  \right]  \\
    &= \frac{-1}{n^2} \left[\sum_{i=1}^n \pi_1 \bar\psi_{a_1}'(Z_i)\right] \left[ \frac{1}{n}\sum_{i=1}^n \pi_1\bar\psi_{a_2}'(Z_i) + \binom{n}{2}^{-1}\sum_{i=1}^n\sum_{l> i}^n \pi_2\bar\psi_{a_2}'(Z_i,Z_l) \right] \\
    & \quad - \frac{1}{n^2} \left[\sum_{i=1}^n \pi_1 \bar\psi_{a_2}'(Z_i)\right] \left[ \frac{1}{n}\sum_{i=1}^n \pi_1\bar\psi_{a_1}'(Z_i) + \binom{n}{2}^{-1}\sum_{i=1}^n\sum_{l> i}^n \pi_2\bar\psi_{a_1}'(Z_i,Z_l) \right] 
\end{align*}
Therefore the Cauchy-Schwarz inequality and the triangle inequality give
\begin{align*}
    & \left\|\max_{(a_1,a_2)\in\mathcal{A}_n^2} \frac{1}{\sigma_{a_1}\sigma_{a_2}}|VI_{a_1,a_2}|\right\|_{L^1(\pr)} \\
    & \lesssim  \frac{1}{n}\left\| \max_{a_1\in\mathcal{A}_n}  \frac{1}{n} \frac{1}{\sigma_{a_1}}|J_1( \pi_1\bar\psi_{a_1}') |\right\|^2_{L^2(\pr)} +  \frac{1}{n}\left\| \max_{a_1\in\mathcal{A}_n}  \frac{1}{n}\frac{1}{\sigma_{a_1}}| J_1( \pi_1\bar\psi_{a_1}') |\right\|_{L^2(\pr)}\left\|\max_{a_2\in\mathcal{A}_n} \frac{1}{n^2}\frac{1}{\sigma_{a_2}}|J_2(\pi_2\bar\psi_{a_2}') | \right\|_{L^2(\pr)} . 
\end{align*}
Since $n^{-1}\log^{7/2} |\mathcal{A}_n| \to 0$ holds under Assumption \ref{as:DWAD-bandwdith}, in conjunction with \eqref{eq:DWAD-max-hajek} and \eqref{eq:DWAD-max-second}, we have 
\begin{align}
     & \left\|\max_{(a_1,a_2)\in\mathcal{A}_n^2} \frac{1}{\sigma_{a_1}\sigma_{a_2}}|VI_{a_1,a_2}|\right\|_{L^1(\pr)} \lesssim 
     \frac{\log^{3/2} |\mathcal{A}_n| }{n}= o\left( \frac{1}{\log^2 |\mathcal{A}_n|}\right). \label{eq:convergence-VI}
\end{align}

\subparagraph{Convergence of $I$:}
Since $\psi_{a}'(Z_i,Z_l) - P\psi_{a}'(Z_i) = \pi_2\psi_{a}'(Z_i,Z_l) + \pi_1\psi_{a}'(Z_l)$,
\begin{align*}
    I_{a_1,a_2}  &\coloneqq \frac{1}{n^2} \sum_{i=1}^n \left[ \sum_{l\neq i}^n \psi_{a_1}(Z_i,Z_l) - P \bar\psi_{a_1}'(Z_i) \right] \left[ \sum_{l\neq i}^n \psi_{a_2}'(Z_i,Z_l) - P \bar\psi_{a_2}'(Z_i) \right] \\
    %&= \frac{1}{n}\sum_{i=1}^n \left[ \sum_{l\neq i}^n \psi_{j,h_n'}(Z_i,Z_l) - P \bar\psi_{j,h_n'}(Z_i) \right] \left[ \sum_{l\neq i}^n \psi_{k,h_n'}(Z_i,Z_l) - P \bar\psi_{k,h_n'}(Z_i) \right] \\
    %&=\frac{1}{n}\sum_{i=1}^n \left[ \sum_{l\neq i}^n \{\psi_{j,h_n'}(Z_i,Z_l) - P \psi_{j,h_n'}(Z_i)\} \right] \left[ \sum_{l\neq i}^n \{\psi_{k,h_n'}(Z_i,Z_l) - P \psi_{k,h_n'}(Z_i)\} \right] \\
    &= \frac{1}{n^2}\sum_{i=1}^n \left[ \sum_{l\neq i}^n \{\pi_2\psi_{a_1}'(Z_i,Z_l) + \pi_1\psi_{a_1}'(Z_l)\} \right] \left[ \sum_{l\neq i}^n \{\pi_2\psi_{a_2}'(Z_i,Z_l) +  \pi_1\psi_{a_2}'(Z_l)\} \right] \\
    & = \frac{1}{n^2}\sum_{(i,l)\in I_{n,2}}\left[  \{\pi_2\psi_{a_1}'(Z_i,Z_l) +  \pi_1\psi_{a_1}'(Z_l)\}\{\pi_2\psi_{a_2}'(Z_i,Z_l) +  \pi_1\psi_{a_2}'(Z_l)\} \right] \\
    & \quad + \frac{1}{n^2} \sum_{(i,l,m) \in I_{n,3}} \left[  \{\pi_2\psi_{a_1}'(Z_i,Z_l) + \pi_1\psi_{a_1}'(Z_l)\}\{\pi_2\psi_{a_2}'(Z_i,Z_m) +  \pi_1\psi_{a_2}'(Z_m)\} \right] \\
    & \eqqcolon (I.I)_{a_1,a_2} + (I.II)_{a_1,a_2}.
\end{align*} 
Define $(I.I')_{a_1,a_2} \coloneqq \frac{1}{n^2}\sum_{i=1}^n \sum_{l\neq i}^n (\pi_2\psi_{a_1}' \star_2^0 \pi_2\psi_{a_2}')(Z_i, Z_l) $.
From \eqref{eq:eval-DWAD-hajek}, we can see that
\begin{align*}
     & \frac{1}{\sigma_{a_1}\sigma_{a_2}}\E[(\pi_2\psi_{a_1}' \star^0_2 \pi_2\psi_{a_2}')(Z_i, Z_l) ] 
     = \frac{1}{\sigma_{a_1}\sigma_{a_2}}\Cov(\psi_{a_1}',\psi_{a_2}') + O\left( \frac{1}{n}\right),
\end{align*}
for all $(a_1,a_2)\in\mathcal{A}_n^2$.
Therefore it is sufficient to show that $\sigma_{a_1}^{-1}\sigma_{a_2}^{-1}\{(I.I)_{a_1,a_2} - (I.I')_{a_1,a_2}\}$ and $\sigma_{a_1}^{-1}\sigma_{a_2}^{-1}(I.II)_{a_1,a_2}$ are negligible uniformly in $(a_1,a_2)\in\mathcal{A}_n^2$.
First, we show the difference $\sigma_{a_1}^{-1}\sigma_{a_2}^{-1}\{(I.I)_{a_1,a_2} - (I.I')_{a_1,a_2}\}$ is uniformly negligible.
Observe that
\begin{align*}
    (I.I)_{a_1,a_2} - (I.I')_{a_1,a_2}  &\asymp \frac{1}{n^2} \sum_{(i,l)\in I_{n,2}} (\pi_2\psi_{a_1}' \star^0_1 \pi_1\psi_{a_2}')(Z_i,Z_l)  \\
    & \quad+ \frac{1}{n^2} \sum_{(i,l)\in I_{n,2}} (\pi_2\psi_{a_2}' \star^0_1\pi_1\psi_{a_1}')(Z_i, Z_l)  + \frac{1}{n}\sum_{i=1}^n  (\pi_1\psi_{a_1}' \star_1^0 \pi_1\psi_{a_2}')(Z_i) \\
    & = \frac{1}{n^2} J_2(\pi_2\psi_{a_1}'\star_1^0 \tilde{\pi}_1\psi_{a_2}') + \frac{1}{n^2} J_2(\pi_2\psi_{a_2}' \star_1^0\tilde{\pi}_1\psi_{a_1}') + \frac{1}{n} J_1(\pi_1\psi_{a_1}'\star_1^0\pi_1\psi_{a_2}') \\
    & \asymp \frac{1}{n^4} J_2(\pi_2\bar\psi_{a_1}'\star_1^0 \tilde{\pi}_1\bar\psi_{a_2}') + \frac{1}{n^4} J_2(\pi_2\bar\psi_{a_2}' \star_1^0\tilde{\pi}_1\bar\psi_{a_1}') + \frac{1}{n^3} J_1(\pi_1\bar\psi_{a_1}'\star_1^0\pi_1\bar\psi_{a_2}') .
\end{align*}
Thus, from Theorem \ref{thm:max-is}, \eqref{eq:max-out}, we have
\begin{align*}
    & \left\| \max_{(a_1,a_2)\in\mathcal{A}_n^2 }\frac{1}{\sigma_{a_1}\sigma_{a_2}}|(I.I)_{a_1,a_2} - (I.I')_{a_1,a_2}| \right\|_{L^1(\pr)}  \\
    & \lesssim  \max_{0\le s\le 2} \frac{n^{1-s/2}}{n^4} \log^{1+s/2}|\mathcal{A}_n| \left\|  \max_{(a_1,a_2)\in\mathcal{A}_n^2}\frac{1}{\sigma_{a_1}^2\sigma_{a_2}^2} M(P^{2-s}|\pi_2\bar\psi_{a_1}' \star_1^0\tilde{\pi}_1 \bar{\psi}_{a_2}'|^2)  \right\|_{L^1(\pr)}^{1/2}\\
    & \quad + \max_{0\le s\le 1} \frac{n^{(1-s)/2}}{n^3} \log^{(1+s)/2}|\mathcal{A}_n| \left\|\max_{(a_1,a_2)\in\mathcal{A}_n^2}\frac{1}{\sigma_{a_1}^2\sigma_{a_2}^2} M(P^{1-s}|\pi_1\bar\psi_{a_1}'\star_1^0\pi_1\bar\psi_{a_2}'|^2)\right\|^{1/2}_{L^1(\pr)}\\
    & \lesssim \max_{0\le s\le 2} \frac{n^{1-s/2}}{n^4} \log^{1+s/2}|\mathcal{A}_n| \left\|  \max_{(a_1,a_2)\in\mathcal{A}_n^2} \frac{1}{\sigma_{a_1}^2\sigma_{a_2}^2} P^{2-s}|\pi_2\bar\psi_{a_1}' \star_1^0\tilde{\pi}_1 \bar{\psi}_{a_2}'|^2  \right\|_{L^\infty(P^s)}^{1/2} \\
    & \quad +  \max_{0\le s\le 1} \frac{n^{(1-s)/2}}{n^3} \log^{(1+s)/2}|\mathcal{A}_n| \left\|\max_{(a_1,a_2)\in\mathcal{A}_n^2} \frac{1}{\sigma_{a_1}^2\sigma_{a_2}^2} P^{1-s}|\pi_1\bar\psi_{a_1}'\star_1^0\pi_1\bar\psi_{a_2}'|^2\right\|^{1/2}_{L^\infty(P^s)}  .
\end{align*}
As stated above, $\left\| \max_{(a_1,a_2)\in\mathcal{A}_n^2} \sigma_{a_1}^{-2}\sigma_{a_2}^{-2} |\pi_2\bar\psi_{a_1}' \star_1^0\tilde{\pi}_1 \bar{\psi}_{a_2}'|^2  \right\|_{L^\infty(P^2)}^{1/2} \lesssim n^2\ul{h}_n^{-d/2}\ol{h}_n^{(d+2)/2}$ and $P^{2-s}|\pi_2\bar\psi_{a_1}' \star_1^0 \tilde{\pi}_1 \bar{\psi}_{a_2}'|^2 \lesssim h_{n,a_1}'^{-(d+2)}$ for  all $(a_1,a_2)\in\mathcal{A}^2_n$ and each $s=0,1$.
Also, from Lemma 1(d) in \cite{NiRo00}, we have  $|\pi_1\psi_{a_1}'\star_1^0\pi_1\psi_{a_2}'|^2(Z_i) = |\pi_1\psi_{a_1}'|^2(Z_i)|\pi_1\psi_{a_2}'|^2(Z_i) \lesssim (Y_{ij_1}^2 + 1)(Y_{ij_2}^2 + 1)$ for all $(a_1,a_2)\in\mathcal{A}_n^2$, so  $P^{1-s}|\pi_1\psi_{a_1}'\star_1^0\pi_1\psi_{a_2}'|^2 \lesssim 1$ for all $(a_1,a_2)\in\mathcal{A}_n^2$ and each $0 \le s \le 1$.
Therefore, under \cref{as:DWAD-bandwdith}, we can see that
\begin{align}
    & \left\| \max_{(a_1,a_2)\in\mathcal{A}_n^2 }\frac{1}{\sigma_{a_1}\sigma_{a_2}}|(I.I)_{a_1,a_2} - (I.I')_{a_1,a_2}| \right\|_{L^1(\pr)} \nonumber\\
    & \lesssim \frac{\ol{h}_n^{(d+2)/2}\log|\mathcal{A}_n|}{n} \vee \frac{\ol{h}_n^{(d+2)/2}\log^2|\mathcal{A}_n|}{n^{3/2}} \vee \frac{\ol{h}_n^{(d+2)/2}\log^{1/2}|\mathcal{A}_n|}{n^2\ul{h}_n^{d/2}} + \frac{\log^{1/2}|\mathcal{A}_n|}{n^{3/2}} \vee \frac{\log|\mathcal{A}_n|}{n^2}\nonumber\\
    & = o\left(\frac{1}{\log^2|\mathcal{A}_n|}\right). \label{eq:convergence-I.I}
\end{align}
Next, we show $(I.II)_{a_1,a_2}$ is also uniformly negligible.
Define $\widetilde{\pi_2\psi_{a_1}'\star_1^0 \pi_2\psi_{a_2}'}(Z_i,Z_l,Z_m) \coloneqq \frac{1}{3}\{\pi_2\psi_{a_1}'(Z_i,Z_l)\pi_2\psi_{a_2}'(Z_i,Z_m)+\pi_2\psi_{a_1}'(Z_l,Z_m)\pi_2\psi_{a_2}'(Z_l,Z_i)+\pi_2\psi_{a_1}'(Z_m,Z_i)\pi_2\psi_{a_2}'(Z_m,Z_l)\}$.
\begin{align*}
    (I.II)_{a_1,a_2} 
    &\asymp \frac{1}{n^2} J_3(\wt{\pi_2\psi_{a_1}'\star_1^0 \pi_2\psi_{a_2}'}) \\
    & \quad +  \frac{1}{n^2} J_2(\pi_2 \psi_{a_1}') J_1(\pi_1\psi_{a_2}') + \frac{1}{n^2} J_2(\pi_2 \psi_{a_2}') J_1(\pi_1\psi_{a_1}') + \frac{1}{n^2}J_1(\pi_1\psi_{a_1}')J_1(\pi_1\psi_{a_2}')\\
    & \asymp \frac{1}{n^4}J_{3}(\wt{\pi_{2}\bar{\psi}_{a_{1}}^{\prime}\star_{1}^{0}\pi_{2}\bar{\psi}_{a_{2}}^{\prime}})\\
    & \quad + \frac{1}{n^4} J_2(\pi_2 \bar\psi_{a_1}') J_1(\pi_1\bar\psi_{a_2}') + \frac{1}{n^4} J_2(\pi_2\bar\psi_{a_2}') J_1(\pi_1\bar\psi_{a_1}') + \frac{1}{n^4}J_1(\pi_1\bar\psi_{a_1}')J_1(\pi_1\bar\psi_{a_2}').
\end{align*}
From the Cauchy-Schwarz inequality, the triangle inequality, Theorem \ref{thm:max-is} and \eqref{eq:max-out}, we have
\begin{align*}
    & \left\|\max_{(a_1,a_2)\in\mathcal{A}_n^2} \frac{1}{\sigma_{a_1}\sigma_{a_2}}| (I.II)_{a_1,a_2}| \right\|_{L^1(\pr)}  \\
    & \lesssim \frac{1}{n^4} \left\|\max_{(a_1,a_2)\in\mathcal{A}_n^2} \frac{1}{\sigma_{a_1}\sigma_{a_2}}|J_3(\widetilde{\pi_2\bar{\psi}_{a_1}'\star_1^0 \pi_2\bar\psi_{a_2}'}) |   \right\|_{L^1(\pr)}  \\
    & \quad + \frac{1}{n^4} \left\|\max_{a_1\in\mathcal{A}_n} \frac{1}{\sigma_{a_1}} | J_2(\pi_2 \bar\psi_{a_1}')   |\right\|_{L^2(\pr)}\left\|\max_{a_2\in\mathcal{A}_n} \frac{1}{\sigma_{a_2}}|J_1(\pi_1\bar\psi_{a_2}')|  \right\|_{L^2(\pr)} + \frac{1}{n^4} \left\|\max_{a_1\in\mathcal{A}_n}   \frac{1}{\sigma_{a_1}}| J_1(\pi_1\bar\psi_{a_1}')| \right\|_{L^2(\pr)}^2 \\
    & \lesssim  \max_{0\le s\le 3} \frac{n^{(3-s)/2}}{n^4} \log^{(3+s)/2}|\mathcal{A}_n|   \left\| \max_{(a_1,a_2)\in\mathcal{A}_n^2} \frac{1}{\sigma_{a_1}^2\sigma_{a_2}^2}P^{3-s}|\wt{\pi_2\bar\psi_{a_1}'\star_1^0 \pi_2\bar\psi_{a_2}'}|^2\right\|_{L^\infty(P^s)}^{1/2} \\
    & \quad +  \frac{1}{n} \left\|\max_{a_1\in\mathcal{A}_n} \frac{1}{n^2}\frac{1}{\sigma_{a_1}}  |J_2(\pi_2 \bar\psi_{a_1}') |  \right\|_{L^2(\pr)}\left\|\max_{a_2\in\mathcal{A}_n} \frac{1}{n} \frac{1}{\sigma_{a_2}}|J_1(\pi_1\bar\psi_{a_2}')  |\right\|_{L^2(\pr)} + \frac{1}{n^2} \left\|\max_{a_1\in\mathcal{A}_n}  \frac{1}{n} \frac{1}{\sigma_{a_1}} |J_1(\pi_1\bar\psi_{a_1}')|\right\|_{L^2(\pr)}^2.
\end{align*}
From the first line of the proof of Lemma 5 in \cite{Ro95}, it holds that
{\small
\begin{align*}
    & |\wt{\pi_2\bar\psi_{a_1}'\star_1^0 \pi_2\bar\psi_{a_2}'}|^2(Z_i,Z_l,Z_m) \\
    & \asymp |\pi_2\bar\psi_{a_1}'|^2(Z_i,Z_l)|\pi_2\bar\psi_{a_2}'|^2(Z_i,Z_m) + |\pi_2\bar\psi_{a_1}'|^2(Z_l,Z_m)|\pi_2\bar\psi_{a_2}'|^2(Z_l,Z_i) + |\pi_2\bar\psi_{a_1}'|^2(Z_m,Z_i)|\pi_2\bar\psi_{a_2}'|^2(Z_m,Z_l) \\
    & \lesssim
    (Y_{ij_{a_1}}^2 + Y_{lj_{a_1}}^2)(Y_{ij_{a_2}}^2 + Y_{mj_{a_2}}^2)\{(Y_{ij_{a_1}} - Y_{lj_{a_1}})\partial_{k_{a_1}}K_{il,h_{n,a_1}'}\}^2\{(Y_{ij_{a_2}} - Y_{mj_{a_2}})\partial_{k_{a_2}}K_{im,h_{n,a_2}'}\}^2 h_{n,a_1}'^{-2(d+1)}h_{n,a_2}'^{-2(d+1)} \\
    & ~~ + (Y_{lj_{a_1}}^2 + Y_{mj_{a_1}}^2)(Y_{lj_{a_2}}^2 + Y_{ij_{a_2}}^2)\{(Y_{lj_{a_1}} - Y_{mj_{a_1}})\partial_{k_{a_1}}K_{lm,h_{n,a_1}'}\}^2\{(Y_{lj_{a_2}} - Y_{ij_{a_2}})\partial_{k_{a_2}}K_{li,h_{n,a_2}'}\}^2 h_{n,a_1}'^{-2(d+1)}h_{n,a_2}'^{-2(d+1)} \\
    & ~~ + (Y_{mj_{a_1}}^2 + Y_{ij_{a_1}}^2)(Y_{mj_{a_2}}^2 + Y_{lj_{a_2}}^2)\{(Y_{mj_{a_1}} - Y_{ij_{a_1}})\partial_{k_{a_1}}K_{mi,h_{n,a_1}'}\}^2\{(Y_{mj_{a_2}} - Y_{lj_{a_2}})\partial_{k_{a_2}}K_{ml,h_{n,a_2}'}\}^2 h_{n,a_1}'^{-2(d+1)}h_{n,a_2}'^{-2(d+1)} 
\end{align*}}
 %一回だけ期待値とるときは, 片方がh^{-2(d+1)}で残る. 二回期待値とる時も, (l,m)の時は良いが, (i,l)とか(i,m)の時は片方がh^{-2(d+1)}で残る.
for each $(a_1,a_2)\in\mathcal{A}_n^2$.
Given this result, some straightforward calculations give
\begin{align*}
    & \left\| \max_{(a_1,a_2)\in\mathcal{A}_n^2} \frac{1}{\sigma^2_{a_1}\sigma^2_{a_2}}|\wt{\pi_2\bar\psi_{a_1}'\star_1^0 \pi_2\bar\psi_{a_2}'}|^2\right\|_{L^\infty(P^3)}^{1/2} \lesssim n^2\underline{h}_n'^{-d},\\
    & \left\| \max_{(a_1,a_2)\in\mathcal{A}_n^2} \frac{1}{\sigma^2_{a_1}\sigma^2_{a_2}}P|\wt{\pi_2\bar\psi_{a_1}'\star_1^0 \pi_2\bar\psi_{a_2}'}|^2\right\|_{L^\infty(P^2)}^{1/2} \lesssim  n^2\underline{h}_n'^{-d/2}, \\
    & \left\| \max_{(a_1,a_2)\in\mathcal{A}_n^2}\frac{1}{\sigma^2_{a_1}\sigma^2_{a_2}} P^2 |\wt{\pi_2\bar\psi_{a_1}'\star_1^0 \pi_2\bar\psi_{a_2}'}|^2\right\|_{L^\infty(P)}^{1/2} \lesssim  n^2\underline{h}_n'^{-d/2},\\
    & \left(\max_{(a_1,a_2)\in\mathcal{A}_n^2} \frac{1}{\sigma^2_{a_1}\sigma^2_{a_2}}P^3 |\wt{\pi_2\bar\psi_{a_1}'\star_1^0 \pi_2\bar\psi_{a_2}'}|^2\right)^{1/2} \lesssim  n^2
\end{align*}
In conjunction with \eqref{eq:DWAD-max-hajek} and \eqref{eq:DWAD-max-second}%, which states that $\left\|\max_{a\in\mathcal{A}_n} n^{-1} J_1(\pi_1\bar\psi_{a}')  \right\|_{L^2(\pr)} = O(n^{-1/2}\log^{1/2}|\mathcal{A}_n|)$ and $\left\|\max_{a\in\mathcal{A}_n} n^{-2}  J_2(\pi_2 \bar\psi_{a}')   \right\|_{L^2(\pr)} = O(n^{-1}\underline{h}_n'^{-(d+2)}\log|\mathcal{A}_n|)$ respectively
, we can see that, under Assumption \ref{as:DWAD-bandwdith}, it holds that
\begin{align}
    & \left\|\max_{(a_1,a_2)\in\mathcal{A}_n^2} \frac{1}{\sigma_{a_1}\sigma_{a_2}}|(I.II)_{a_1,a_2}| \right\|_{L^1(\pr)}  \nonumber\\
    & \lesssim \frac{\log^{3/2}|\mathcal{A}_n|}{n^{1/2}} \vee \frac{\log^{2}|\mathcal{A}_n|}{n\underline{h}_n'^{d/2}} \vee \frac{\log^{5/2}|\mathcal{A}_n|}{n^{3/2}\underline{h}_n'^{d/2}} \vee \frac{\log^{3}|\mathcal{A}_n|}{n^{2}\underline{h}_n'^{d}} + \frac{\log|\mathcal{A}_n|}{n^2} = o\left( \frac{1}{ \log^2|\mathcal{A}_n|} \right). \label{eq:convergence-I.II}
\end{align}

\subparagraph{Convergence of $II$:}
Since $P\bar\psi_{jk,h_n'}(Z_i) - P^2\bar\psi_{jk,h_n'} = \pi_1\bar\psi_{jk,h_n'}(Z_i)$,
\begin{align*}
    II_{a_1,a_2} 
    &\coloneqq \frac{1}{n^2} \sum_{i=1}^n \left[ P \bar\psi_{a_1}'(Z_i) - P^2 \bar\psi_{a_1}'\right] \left[ P \bar\psi_{a_2}'(Z_i) - P^2 \bar\psi_{a_2}'\right] = \frac{1}{n^2} \sum_{i=1}^n (\pi_1\bar\psi_{a_1} \star_1^0 \pi_1\bar\psi_{a_2})(Z_i).
\end{align*}
Note that, from \eqref{eq:eval-DWAD-hajek} and $n^{-1}\log^7(n|\mathcal{A}_n|) \to 0$, 
\begin{align*}
    \frac{1}{\sigma_{a_1}\sigma_{a_2}}\E[II_{a_1,a_2}] = \frac{1}{n}\frac{1}{\sigma_{a_1}\sigma_{a_2}} \Cov(P\bar\psi_{a_1}', P\bar\psi_{a_2}') =  \frac{n}{\sigma_{a_1}\sigma_{a_2}} \Cov(P\psi_{a_1}', P\psi_{a_2}') + o\left( \frac{1}{ \log^2|\mathcal{A}_n|} \right),
\end{align*}
for all $(a_1,a_2)\in\mathcal{A}_n^2$.
Therefore, it is sufficient to show that $II_{a_1,a_2} - \E[II_{a_1,a_2}]$ are negligible uniformly in $(a_1,a_2)\in\mathcal{A}_n^2$.
Observe that
\begin{align*}
    & II_{a_1,a_2} - \E[II_{a_1,a_2}]= \frac{1}{n^2}\sum_{i=1}^n \{(\pi_1\bar\psi_{a_1}' \star_1^0\pi_1\bar\psi_{a_2}')(Z_i) - \E[(\pi_1\bar\psi_{a_1}' \star_1^0\pi_1\bar\psi_{a_2}')(Z_i)  ]\} %\\
    %&= \frac{1}{n^2}\sum_{i=1}^n \{(P\bar\psi_{a_1}' \star_1^0P\bar\psi_{a_2}')(Z_i) - \E[(P\bar\psi_{a_1}' \star_1^0P\bar\psi_{a_2}')(Z_i)]\} \\
    %& \quad - \frac{1}{n^2} (P^2\psi_{a_2}')\sum_{i=1}^n \{P\bar\psi_{a_1}'(Z_i) - \E[P\bar\psi_{a_1}'(Z_i)]\} - \frac{1}{n^2} (P^2\psi_{a_1}')\sum_{i=1}^n \{P\bar\psi_{a_2}'(Z_i) - \E[P\bar\psi_{a_2}'(Z_i)]\}.
\end{align*}
\begin{comment}
Therefore, Theorem \ref{thm:max-is}, \eqref{eq:max-out} and \eqref{eq:eval-DWAD-hajek} give 
\begin{align*}
    &\left\| \max_{(a_1,a_2)\in\mathcal{A}_n^2} |II_{a_1,a_2} - \E[II_{a_1,a_2}]|\right\|_{L^1(\pr)} \\
    &\lesssim \max_{0\le s \le 1}\frac{n^{(1-s)/2}}{n^2} \log^{(1+s)/2} |\mathcal{A}_n| \left\| \max_{(a_1,a_2)\in\mathcal{A}_n^2} P^{1-s}|(P\bar\psi_{a_1}' \star_1^0P\bar\psi_{a_2}')(Z_i) - P\{(P\bar\psi_{a_1}' \star_1^0P\bar\psi_{a_2}')(Z_i)\}|^2 \right\|_{L^\infty(P^s)}^{1/2} \\
    & \quad + \max_{0\le s \le 1} \frac{n^{(1-s)/2}}{n^2} \max_{a_2\in\mathcal{A}_n}|P^2\bar\psi_{a_2}'| \log^{(1+s)/2} |\mathcal{A}_n| \left\| \max_{a_1\in\mathcal{A}_n} P^{1-s}|P\bar\psi_{a_1}'(Z_i) - P^2\bar\psi_{a_1}'|^2 \right\|_{L^\infty(P^s)}^{1/2} \\
    & = \frac{\log^{1/2}|\mathcal{A}_n|}{n^{3/2}} \vee \frac{\log|\mathcal{A}_n|}{n^2} = o\left( \frac{1}{n \log^2|\mathcal{A}_n|}\right). 
\end{align*}
\end{comment}
Then, \cref{u-nemirovski}, \eqref{eq:eval-DWAD-hajek}, \eqref{eq:asymp-variance} and $n^{-1} \log^3|\mathcal{A}_n| \to 0$  give
\begin{align}
    &\left\| \max_{(a_1,a_2)\in\mathcal{A}_n^2} \frac{1}{\sigma_{a_1}\sigma_{a_2}} |II_{a_1,a_2} - \E[II_{a_1,a_2}]|\right\|_{L^1(\pr)} \nonumber\\
    &\lesssim \frac{1}{n^2} \log^{1/2}|\mathcal{A}_n| \left\| \max_{(a_1,a_2)\in\mathcal{A}_n^2} \frac{1}{\sigma_{a_1}\sigma_{a_2}}\sqrt{\sum_{i=1}^n \{(\pi_1\bar\psi_{a_1}' \star_1^0\pi_1\bar\psi_{a_2}')(Z_i) - \E[(\pi_1\bar\psi_{a_1}' \star_1^0\pi_1\bar\psi_{a_2}')(Z_i)  ]\}^2} \right\|_{L^1(\pr)} \nonumber\\
    & \le \frac{1}{n^{3/2}}\log^{1/2}|\mathcal{A}_n| \left\| \max_{(a_1,a_2)\in\mathcal{A}_n^2} \frac{1}{\sigma_{a_1}\sigma_{a_2}}|(\pi_1\bar\psi_{a_1}' \star_1^0\pi_1\bar\psi_{a_2}')(Z_i) - \E[(\pi_1\bar\psi_{a_1}' \star_1^0\pi_1\bar\psi_{a_2}')(Z_i)  ]| \right\|_{L^\infty(\pr)} \nonumber\\
    & = O\left(  \frac{\log^{1/2}|\mathcal{A}_n|}{n^{1/2}} \right) = o\left( \frac{1}{\log^2|\mathcal{A}_n|}\right). \label{eq:convergence-II}
\end{align}

From \eqref{eq:convergence-III}, \eqref{eq:convergence-IV}, \eqref{eq:convergence-V}, \eqref{eq:convergence-VI}, \eqref{eq:convergence-I.I}, \eqref{eq:convergence-I.II} and \eqref{eq:convergence-II}, we can see that $(\Delta^\sharp I.I) =o_p\left(\log^{-2}|\mathcal{A}_n|\right)$. 
In conjunction with $(\Delta^\sharp I.II) =o_p\left(\log^{-2}|\mathcal{A}_n|\right)$, we have shown $(\Delta^\sharp.I) = o_p(\log^{-2}|\mathcal{A}_n|)$.

Next, we show $(\Delta^\sharp.II) = o_p(\log^{-2}|\mathcal{A}_n|)$. 
%講義ノート(9.54)の証明の三角不等式の右辺の第一項の証明と大体同じ.
%分散推定量が, JMB統計量の共分散と同じ形をしているので, 計算自体は省略可能. 
%σ_1σ_2 = O(n^{-1} + n^{-2}h^{-(d+2)})
%Δ.I.Iの収束を証明するところで, σ_1σ_2 Cov(J)のレートが, n^{-1} + n^{-2}h^{-d+2} なことがわかっている. これは Cov(J)のレートは O(1)ということ.
%やるべきことは, hat{sigma}_1\hat_{sigma}_2 -sigma_1sigma_2を別々の差に分解すること. 
Observe that
\begin{align*}
    (\Delta^\sharp.II) & = \max_{(a_1,a_2)\in\mathcal{A}_n^2} \left| \left( \frac{1}{\hat{\sigma}_{a_1} \hat{\sigma}_{a_2}} - \frac{1}{\sigma_{a_1} \sigma_{a_2}} \right) \left( \frac{1}{n^2}\sum_{i=1}^n \left[ \sum_{l\neq i}^n \psi_{a_1}'(Z_i,Z_l) - \hat{\theta}_{a_1}' \right]\left[ \sum_{l\neq i}^n \psi_{a_2}'(Z_i,Z_l) - \hat{\theta}_{a_2}' \right] \right) \right| \\ 
    & \le  \max_{(a_1,a_2)\in\mathcal{A}_n^2} \left| \left( \frac{1}{\hat{\sigma}_{a_1} \hat{\sigma}_{a_2}} - \frac{1}{\sigma_{a_1} \sigma_{a_2}} \right) \sigma_{a_1} \sigma_{a_2} \Cov^*(J^\sharp)_{a_1,a_2}  \right| \\
    &\le  \max_{(a_1,a_2)\in\mathcal{A}_n^2} \left| \frac{\sigma_{a_1}(\sigma_{a_2}^2 - \hat{\sigma}_{a_2}^2)}{\hat{\sigma}_{a_1}\hat{\sigma}_{a_2}(\sigma_{a_2} + \hat{\sigma}_{a_2})} + \frac{(\sigma_{a_1}^2 - \hat{\sigma}_{a_1}^2)}{{\hat{\sigma}_{a_1}(\sigma_{a_1} + \hat{\sigma}_{a_1})}} \right| \max_{(a_1,a_2)\in\mathcal{A}_n^2} \left|  \Cov^*(J^\sharp)_{a_1,a_2}  \right| .
\end{align*}
%Jをstundetizationの形で定義してしまったせいで, notationがわかりづらくなっており, 説明のためにやや冗長な議論になってしまっています.
Since $\hat{\sigma}_{a_1}^2$ has the same form as $\sigma_{a_1}^2\Cov^*(J^\sharp)_{a_1,a_1}$ and $\sigma_{a_1}^2$ does as $\sigma_{a_1}^2 \Cov(J)_{a_1,a_1}$, from the evaluation on $(\Delta^\sharp.I.I)$, we can immediately see that 
\[
\max_{a_1\in\mathcal{A}_n}|\hat{\sigma}_{a_1}^2 - \sigma_{a_1}^2| = \max_{a_1\in\mathcal{A}_n}|\sigma_{a_1}^2\Cov^*(J^\sharp)_{a_1,a_1} - \sigma_{a_1}^2 \Cov(J)_{a_1,a_1}| = o_p\left\{ \frac{1}{\min\{n, n^2\ul{h}_n^{d+2}\}\log^2|\mathcal{A}_n|} \right\}.
\]
This also implies $\hat{\sigma}_{a_1}^2 = \sigma_{a_1}^2 +  o_p\left( \min\{n, n^2\ul{h}_n^{d+2}\}^{-1}\log^{-2}|\mathcal{A}_n| \right)$ uniformly on $\mathcal{A}_n$. 
Also, from \eqref{eq:DWAD_COV_J}, \eqref{eq:DWAD_COV_J1}, \eqref{eq:DWAD_COV_J2} and  $\max_{(a_1,a_2)\in\mathcal{A}_n^2} \sigma_{a_1}^{-1}\sigma_{a_2}^{-1} = O(\min\{n, n^2\ul{h}_n^{d+2}\})$, we have $\max_{(a_1,a_2)\in\mathcal{A}_n^2}\left|  \Cov^*(J^\sharp)_{a_1,a_2}  \right| = O_p(1)$.
Therefore
\begin{align*}
    (\Delta^\sharp.II) &\le \max_{(a_1,a_2)\in\mathcal{A}_n^2} \left| \frac{(\sigma_{a_2}^2 - \hat{\sigma}_{a_2}^2)}{2\sigma_{a_2}^2} + \frac{(\sigma_{a_1}^2 - \hat{\sigma}_{a_1}^2)}{{2\sigma}_{a_1}^2} \right| +   o_p\left( \frac{1}{\min\{n, n^2\ul{h}_n^{d+2}\}\log^2|\mathcal{A}_n|} \right)  = o_p\left( \frac{1}{\log^2|\mathcal{A}_n|}\right).
\end{align*}

\paragraph{Convergence of ($\rho^\sharp.III$):} 
Observe that
\begin{align*}
    \max_{(j,k,h_n)\in\mathcal{A}_n} \left| \hat{J}_{jk,h_n}  -   J_{jk,h_n} \right| &\le \max_{(j,k,h_n)\in\mathcal{A}_n} |J_{jk,h_n}| \max_{(j,k,h_n)\in\mathcal{A}_n} \frac{|\hat\sigma_{jk,h_n}^2 - \sigma^2_{jk,h_n}|}{\hat\sigma_{jk,h_n}(\hat\sigma_{jk,h_n} + \sigma_{jk,h_n})} .
\end{align*}
From $(\rho^\sharp.I) \to 0$ and $\max_{(j,k,h_n)\in\mathcal{A}_n} |G_{jk,h_n}| = O_p(\sqrt{\log |\mathcal{A}_n|})$ (cf. Lemma 2.2.10 in \citealp{van1996weak}), we have $\max_{(j,k,h_n)\in\mathcal{A}_n} |J_{jk,h_n}| = O_p(\sqrt{\log |\mathcal{A}_n|})$.
\begin{comment}
By Hoeffiding decomposition, we can see that $\hat{\theta}_{jk,h_n} - \theta_{jk} = (\mathbb{E}[\hat{\theta}_{jk,h_n}] - \theta_{jk}) + n^{-1} J_1(\pi_1\bar{\psi}_{jk,h_n}) + n^{-1}(n-1)^{-1} J_2(\pi_2\bar{\psi}_{jk,h_n})$. 
From \eqref{eq:DWAD_bias}, $\max_{(j,k,h_n)\in\mathcal{A}_n} |\sigma_{jk,h_n}^{-1}\{\mathbb{E}[\hat{\theta}_{jk,h_n}] - \theta_{jk}\}| = O(\min\{n, n^2\ol{h}_n^{d+2}\}^{1/2}\ol{h}_n^{2\alpha})$.
Also, from \eqref{eq:DWAD-max-hajek}, \eqref{eq:DWAD-max-second}, we can see that % and Cauchy-Schwarz's inequality,
\begin{align*}
    & \left\|\max_{(j,k,h_n)\in\mathcal{A}_n}  \frac{1}{\sigma_{jk,h_n}}\frac{1}{n} J_1(\pi_1\bar{\psi}_{jk,h_n})\right\|_{L^1(\mathbb{P})}  = O\left( \min \left\{1 , n^{1/2} \underline{h}_n^{(d+2)/2}  \right\}  \log^{1/2}|\mathcal{A}_n|\right), \\
    & \left\|\max_{(j,k,h_n)\in\mathcal{A}_n}  \frac{1}{\sigma_{jk,h_n}}\frac{1}{n^2} J_2(\pi_2\bar{\psi}_{jk,h_n})\right\|_{L^1(\mathbb{P})} = O\left( \min\{n^{-1/2}\ul{h}_n^{-(d+2)/2}, 1\} \log|\mathcal{A}_n| \right).
\end{align*}
\end{comment}
In conjunction with the evaluation of $\max_{(j,k,h_n))\in\mathcal{A}_n}|\hat{\sigma}_{jk,h_n}^2 - \sigma_{jk,h_n}^2|/\sigma_{jk,h_n}^2 = o_p(\log^{-2}|\mathcal{A}_n|)$,
\begin{align*}
    & \max_{(j,k,h_n)\in\mathcal{A}_n} \left| \hat{J}_{jk,h_n}  -   J_{jk,h_n} \right|  \ll \log^{1/2}|\mathcal{A}_n| \cdot \left( \frac{1}{\log^2|\mathcal{A}_n|}  \right)  =  \frac{1}{\log^{3/2}|\mathcal{A}_n|} .
\end{align*}
Therefore, under Assumption \ref{as:DWAD-bandwdith}, $\max_{(j,k,h_n)\in\mathcal{A}_n} \left| \hat{J}_{jk,h_n}  -   J_{jk,h_n} \right|  = o_p(1/\sqrt{\log|\mathcal{A}_n|})$ so Lemma 1 in \cite{chernozhukov2023high} completes the proof. 

\section{Proofs of auxiliary results}\label{sec:proof-aux}

\subsection{Proof of Theorem \ref{thm:gexch}}\label{sec:gexch}

Without loss of generality, we may assume that $(Y,Y')$ and $Z$ are independent. 
First, we see that it suffices to prove \eqref{eq:gech} with $\mcl R_p$ replaced by $\mcl R_p^0:=\{\prod_{j=1}^p(-\infty, y_j]:y_1,\dots,y_p\in\mathbb R\}$. 
In fact, define functions $\bar{\mathsf W}:E\to\mathbb R^{2p}$ and $\bar{\mathsf G}:E^2\to\mathbb R^{2p}$ as $\bar{\mathsf W}(y)=(\mathsf W(y)^\top,-\mathsf W(y)^\top)^\top$ and $\bar{\mathsf G}(y,y')=(\mathsf G(y,y')^\top,-\mathsf G(y,y')^\top)^\top$ for $y,y'\in E$. 
For $\bar W:=\bar{\mathsf W}(Y)$ and $\bar G:=\bar{\mathsf G}(Y,Y')$, we evidently have $\E[\bar G\mid Y]=-(\bar W+\bar R)$ with $\bar R:=(R^\top,-R^\top)^\top$. 
We also have
\[
\sup_{A\in\mathcal R_p}\left|\pr(W\in A)-\pr(Z\in A)\right|
=\sup_{A\in\mathcal R_{2p}^0}\left|\pr(\bar W\in A)-\pr(\bar Z\in A)\right|,
\]
where $\bar Z:=(Z^\top,-Z^\top)^\top$. 
Moreover, for any $\eps>0$, we have with $\eps':=\eps\log(2p)/\log p$,
\ba{
\|\bar R+\E[G1_{\{\|\bar D\|_\infty>\eps'/\log(2p)\}}\mid Y]\|_\infty&=\|R^\eps\|_\infty,\\
\norm{\frac{1}{2}\E[GD^\top1_{\{\|\bar D\|_\infty\leq\eps'/\log(2p)\}}\mid Y]-\bar\Sigma}_\infty&=\|V^\eps\|_\infty,\\
\max_{j,k,l,m\in[2p]}\E[|\bar G_j\bar D_k\bar D_l\bar D_m|1_{\{\|\bar D\|_\infty\leq\eps'/\log(2p)\}}\mid Y]&=\Gamma^\eps,
}
where $\bar D:=\bar{\mathsf W}(Y')-\bar W$ and $\bar\Sigma:=\Cov[\bar Z]$. 
Therefore, noting that $\eps\leq\eps'\leq2\eps$, we can derive the claim asserted from the corresponding one with $\mcl R_p$ and $p$ replaced by $\mcl R_{2p}^0$ and $2p$, respectively. 

In the remaining proof, we proceed in five steps. \vspace{-5mm}

\paragraph{Step 1.} Fix a non-increasing $C^4$ function $g_0\colon\mathbb R\to\mathbb R$ such that (i) $g_0(t)\geq 0$ for all $t\in\mathbb R$, (ii) $g_0(t) = 0$ for all $t\geq 1$, and (iii) $g_0(t) = 1$ for all $t\leq 0$. For this function, there exists a constant $C_g>0$ such that
$$
\sup_{t\in\mathbb R}\Big(|g_0^{(1)}(t)|\vee|g_0^{(2)}(t)|\vee|g_0^{(3)}(t)|\vee|g_0^{(4)}(t)|\Big) \leq C_g.
$$
Since the function $g_0$ is fixed and can be chosen to be universal, we can also take the constant $C_g$ to be universal. 
Next, define a function $F_\beta:\mathbb R^p\to\mathbb R$ as
\[
F_\beta(w)=\beta^{-1}\log\bra{\sum_{j=1}^pe^{\beta w_j}},\quad w\in\mathbb R^p.
\]
By Eq.(8) in \cite{CCK13},
\begin{equation}\label{max-smooth}
\max_{j\in[p]}w_j  \leq F_\beta(w)  \leq  \max_{j\in[p]}w_j + \eps,\ \text{for all }w\in\mathbb R^p.
\end{equation}
Also, for all $y\in\mathbb R^p$, define a function $m^y\colon\mathbb R^p\to\mathbb R$ as
$
m^y(w) = g_0(\eps^{-1}F_\beta(w - y))$, $w\in\mathbb R^p.
$
Further, set
$
\mathcal I^{y} :=m^y(W) - m^y(Z).
$
By Step 2 of the proof of \cite[Lemma 5.1]{CCK17}, we have
\[
\sup_{A\in\mathcal R_p^0}\Big|\pr(W\in A) - \pr(Z\in A)\Big| \lesssim \frac{\eps}{\ul\sigma}\sqrt{\log p} + \sup_{y\in\mathbb R^p}|\E[\mathcal I^y]|.
\]
Therefore, we complete the proof once we show
\ben{\label{iy-bound}
\sup_{y\in\mathbb R^p}|\E[\mathcal I^y]|
\lesssim \frac{1}{\ul\sigma}\bra{\E\sbra{\|R^\eps\|_\infty}\sqrt{\log p}
+\eps^{-1}\E\sbra{\|V^\eps\|_\infty}(\log p)^{3/2}
+\eps^{-3}\E\sbra{\Gamma^\eps}(\log p)^{7/2}}.
}

\paragraph{Step 2.} 
Define a function $f:\mathbb{R}^p\to\mathbb{R}$ as
\[
f(w)=\int_0^1\frac{1}{2t}\E[m^y(\sqrt{t}w+\sqrt{1-t}Z)-m^y(Z)]dt,\quad w\in\mathbb{R}^p.
\]
$f$ is a solution to the following Stein equation (cf.~\citealp[Lemma 1]{meckes2009stein}):
\begin{equation*}%\label{eq:stein}
m^y(w)-\E[m^y(Z)]=w\cdot\nabla f(w)-\langle\Sigma,\nabla^2f(w)\rangle,\quad w\in\mathbb R^p.
\end{equation*}
Hence we have 
\ben{\label{eq:stein}
\E[\mcl I^y]=\E[W\cdot f(W)-\langle\Sigma,\nabla^2f(W)\rangle].
}
We expand the right-hand side of this identity by a standard argument in Stein's method. 
Since $(Y,Y')$ is an exchangeable pair and $\mathsf G(Y',Y)=-G$, we have
\ba{
\E[G\cdot\{\nabla f(W)+\nabla f(W')\}1_{\{\|D\|_\infty\leq\beta^{-1}\}}]
&=-\E[G\cdot\{\nabla f(W')+\nabla f(W)\}1_{\{\|D\|_\infty\leq\beta^{-1}\}}].
%&=-\E[\Lambda^{-1}(W'-W)\cdot\{\nabla f(W)+\nabla f(W')\}]
}
Hence
\ben{\label{exch-eq1-m}
\E[G\cdot\{\nabla f(W)+\nabla f(W')\}1_{\{\|D\|_\infty\leq\beta^{-1}\}}]=0.
}
Meanwhile, by the fundamental theorem of calculus, 
\ba{
&\E[G\cdot\{\nabla f(W')-\nabla f(W)\}1_{\{\|D\|_\infty\leq\beta^{-1}\}}]
=\sum_{j=1}^p\E[G_j\{\partial_jf(W')-\partial_jf(W)\}1_{\{\|D\|_\infty\leq\beta^{-1}\}}]\\
&=\sum_{j,k=1}^p\E[G_jD_k\partial_{jk}f(W)1_{\{\|D\|_\infty\leq\beta^{-1}\}}]
+\sum_{j,k,l=1}^p\E[(1-U)G_jD_kD_l\partial_{jkl}f(W+UD)1_{\{\|D\|_\infty\leq\beta^{-1}\}}]\\
&=\E[\langle \E[GD^\top1_{\{\|D\|_\infty\leq\beta^{-1}\}}\mid Y],\nabla^2 f(W)\rangle]
+\Delta,
}
where $U$ is a uniform random variable on $[0,1]$ independent of everything else and
\ba{
\Delta%&:=\E[(1-U)\langle G\otimes D^{\otimes2},\nabla^3 f(W+UD)\rangle1_{\{\|D\|_\infty\leq\beta^{-1}\}}]\\
&:=\sum_{j,k,l=1}^p\E[(1-U)G_jD_kD_l\partial_{jkl}f(W+UD)1_{\{\|D\|_\infty\leq\beta^{-1}\}}].
}
Hence, we can rewrite the left-hand side of \eqref{exch-eq1-m} as
\besn{\label{exch-eq2-m}
&\E[G\cdot\{\nabla f(W)+\nabla f(W')\}1_{\{\|D\|_\infty\leq\beta^{-1}\}}]\\
&=2\E[G\cdot \nabla f(W)1_{\{\|D\|\leq\beta^{-1}\}}]+\E[G\cdot\{\nabla f(W')-\nabla f(W)\}1_{\{\|D\|_\infty\leq\beta^{-1}\}}]\\
&=2\E[G\cdot \nabla f(W)]-2\E[G\cdot \nabla f(W)1_{\{\|D\|_\infty>\beta^{-1}\}}]\\
&\quad+\E[\langle \E[GD^\top1_{\{\|D\|_\infty\leq\beta^{-1}\}}\mid Y],\nabla^2 f(W)\rangle]
+\Delta.
}
Since $\E[G\cdot \nabla f(W)]=-\E[(W+R)\cdot \nabla f(W)]$ by \eqref{eq:nlr}, 
we deduce from \eqref{exch-eq1-m} and \eqref{exch-eq2-m}
\ba{
\E[W\cdot \nabla f(W)]
&=-\E[R^\eps\cdot \nabla f(W)]
+\frac{1}{2}\left(\E[\langle \E[GD^\top1_{\{\|D\|_\infty\leq\beta^{-1}\}}\mid Y],\nabla^2 f(W)\rangle]
+\Delta\right).
}
This and \eqref{eq:stein} give
\ba{
\E[\mcl I^y]
%&=\E[\langle \Sigma,\nabla^2 f(W)\rangle]-\E[W\cdot \nabla f(W)]\\
&=-\E[R^\eps\cdot \nabla f(W)]+\E[\langle V^\eps,\nabla^2 f(W)\rangle]
+\frac{1}{2}\Delta.
}
Therefore, \eqref{iy-bound} follows once we prove the following inequalities:
\ban{
|\E[R^\eps\cdot \nabla f(W)]|
&\lesssim\frac{\E\sbra{\|R^\eps\|_\infty}\sqrt{\log p}}{\ul\sigma},\label{eq:gexch-I}\\
|\E[\langle V^\eps,\nabla^2 f(W)\rangle]|
&\lesssim\frac{\eps^{-1}\E\sbra{\|V^\eps\|_\infty}(\log p)^{3/2}}{\ul\sigma},\label{eq:gexch-II}\\
|\Delta|&\lesssim\frac{\eps^{-3}\E\sbra{\Gamma^\eps}(\log p)^{7/2}}{\ul\sigma}.\label{eq:gexch-III}
}

\paragraph{Step 3.} 
This step proves \eqref{eq:gexch-I}. We rewrite $\E[R^\eps\cdot \nabla f(W)]$ as
\ba{
\E[R^\eps\cdot \nabla f(W)]
=\int_0^1\frac{1}{2\sqrt t}\E[R^\eps\cdot\nabla m^y(\sqrt{t}W+\sqrt{1-t}Z)]dt.
}
Since $g_0'(x)=0$ if $x\notin[0,1]$, we have $0\leq F_\beta(w-y)\leq \eps$ if $w\in\mathbb R^p$ satisfies $\nabla m^y(w)\neq0$. 
Since $-\eps\leq\max_{j\in[p]}(w_j-y_j)\leq\eps$ whenever $0\leq F_\beta(w-y)\leq \eps$ by \eqref{max-smooth}, we obtain
\ben{\label{ac-arg}
\E[R^\eps\cdot \nabla f(W)]
=\int_0^1\frac{1}{2\sqrt t}\E[R^\eps\cdot\nabla m^y(\sqrt{t}W+\sqrt{1-t}Z)1_{A(t)}]dt,
}
where $A(t):=\{-\eps\leq \max_{j\in[p]}(\sqrt{t}W_j+\sqrt{1-t}Z_j-y_j)\leq \eps\}$. 
Meanwhile, by Lemma A.2 in \cite{CCK13} and the chain rule,
$
\sum_{j=1}^p|\partial_j m^y(w)|\lesssim \eps^{-1}$ for all $w\in\mathbb R^p.$ 
Hence,
\ba{
|\E[R^\eps\cdot \nabla f(W)]|
\lesssim\eps^{-1}\int_0^1\frac{1}{\sqrt t}\E[\|R^\eps\|_\infty1_{A(t)}]dt.
}
Noting that $Y$ and $Z$ are independent and $W=\mathsf{W}(Y)$, we have for every $0<t<1$
\besn{\label{nazarov-arg}
\E[\|R^\eps\|_\infty1_{A(t)}]
&=\E\sbra{\|R^\eps\|_\infty\pr\bra{-\eps\leq \max_{j\in[p]}(\sqrt{t}W_j+\sqrt{1-t}Z_j-y_j)\leq \eps\mid Y}}\\
&\leq\E\sbra{\|R^\eps\|_\infty}\sup_{z\in\mathbb R^p}\pr\bra{-\eps\leq \max_{j\in[p]}(\sqrt{1-t}Z_j-z_j)\leq \eps}\\
&\lesssim\frac{\eps\E\sbra{\|R^\eps\|_\infty}\sqrt{\log p}}{\ul\sigma\sqrt{1-t}},
}
where the last inequality follows from Nazarov's inequality \cite[Lemma A.1]{CCK17}. 
Hence we conclude
\ba{
|\E[R^\eps\cdot \nabla f(W)]|
\lesssim\frac{\E\sbra{\|R^\eps\|_\infty}\sqrt{\log p}}{\ul\sigma}\int_0^1\frac{1}{\sqrt{t(1-t)}}dt
\lesssim\frac{\E\sbra{\|R^\eps\|_\infty}\sqrt{\log p}}{\ul\sigma}.
}

\paragraph{Step 4.} 
This step proves \eqref{eq:gexch-II}. Similarly to the derivation of \eqref{ac-arg}, we deduce
\[
\E[\langle V^\eps,\nabla^2 f(W)\rangle]=\int_0^1\frac{1}{2}\E[\langle V^\eps,\nabla^2 m^y(\sqrt{t}W+\sqrt{1-t}Z)1_{A(t)}]dt.
\]
Also, by Eqs.(C.4) and (C.7) in \cite{CCKK22}, we have 
$\sum_{j,k=1}^p|\partial_{jk} m^y(w)|\lesssim \eps^{-2}\log p$ for all $w\in\mathbb R^p.$ 
Hence
\[
|\E[\langle V^\eps,\nabla^2 f(W)\rangle]|\lesssim\eps^{-2}(\log p)\int_0^1\E[\| V^\eps\|_\infty1_{A(t)}]dt.
\]
By a similar argument to the proof of \eqref{nazarov-arg}, we obtain
\[
\E[\| V^\eps\|_\infty1_{A(t)}]\lesssim\frac{\eps\E\sbra{\|V^\eps\|_\infty}\sqrt{\log p}}{\ul\sigma\sqrt{1-t}}.
\]
Hence we conclude
\[
|\E[\langle V^\eps,\nabla^2 f(W)\rangle]|\lesssim\frac{\eps^{-1}\E\sbra{\|V^\eps\|_\infty}(\log p)^{3/2}}{\ul\sigma}\int_0^1\frac{1}{\sqrt{1-t}}dt
\lesssim\frac{\eps^{-1}\E\sbra{\|V^\eps\|_\infty}(\log p)^{3/2}}{\ul\sigma}.
\]

\paragraph{Step 5.} 
In this step, we prove \eqref{eq:gexch-III} and complete the proof. 
We begin by further expanding $\Delta$ using a symmetry trick introduced in \cite{fang2021high} (cf.~Eq.(2.16) ibidem); using the fact that $(Y,Y')$ is an exchangeable pair and $\mathsf G(Y',Y)=-G$ again, we rewrite $\Delta$ as
\besn{\label{exch-key}
\Delta&=\sum_{j,k,l=1}^p\E[(1-U)(-G_j)(-D_k)(-D_l)\partial_{jkl}f(W'-UD)1_{\{\|D\|_\infty\leq\beta^{-1}\}}]\\
&=-\sum_{j,k,l=1}^p\E[(1-U)G_jD_kD_l\partial_{jkl}f(W+(1-U)D)1_{\{\|D\|_\infty\leq\beta^{-1}\}}].
}
Therefore, 
\ba{
\Delta&=\frac{1}{2}\sum_{j,k,l=1}^p\E[(1-U)G_jD_kD_l\{\partial_{jkl}f(W+UD)-\partial_{jkl}f(W+(1-U)D)\}1_{\{\|D\|_\infty\leq\beta^{-1}\}}]\\
%&=\frac{1}{2}\sum_{j,k,l=1}^p\E[(1-U)G_jD_kD_l\{\partial_{jkl}f(W+UD)-\partial_{jkl}f(W+UD+(1-2U)D)\}1_{\{\|D\|_\infty\leq\beta^{-1}\}}]\\
&=-\frac{1}{2}\sum_{j,k,l,r=1}^p\E[(1-U)G_jD_kD_lD_r\partial_{jklr}f(W+UD+U'\tilde UD)1_{\{\|D\|_\infty\leq\beta^{-1}\}}],
}
where $\tilde U:=1-2U$ and $U'$ is a uniform random variable on $[0,1]$ independent of everything else. 
Using the definition of $f$, we obtain
\ba{
\Delta
&=-\frac{1}{4}\sum_{j,k,l,r=1}^p\int_0^1t\E[(1-U)G_jD_kD_lD_r\partial_{jklr}m^y(W(t)+\sqrt t(U+U'\tilde U)D)1_{\{\|D\|_\infty\leq\beta^{-1}\}}]dt,
}
where $W(t):=\sqrt tW+\sqrt{1-t}Z$. 
Now, observe that $|U+U'\tilde U|\leq U\vee(U+\tilde U)=U\vee(1-U)\leq1$. 
Also, for any $j,k,l,r\in[p]$, a similar argument to the derivation of \eqref{ac-arg} shows
\[
\partial_{jklr}m^y(W(t)+\sqrt t(U+U'\tilde U)D)\neq 0\\
\Rightarrow-\eps\leq \max_{i\in[p]}(W(t)_i+\sqrt t(U+U'\tilde U)D_i-y_i)\leq \eps.
\]
Hence, on the event $\{\|D\|_\infty\leq\beta^{-1}\}$,
\[
\partial_{jklr}m^y(W(t)+\sqrt t(U+U'\tilde U)D)\neq 0\\
\Rightarrow-\eps-\beta^{-1}\leq \max_{i\in[p]}(W(t)_i-y_i)\leq \eps+\beta^{-1}.
\]
Therefore, with $A'(t):=\{-\eps-\beta^{-1}\leq \max_{j\in[p]}(W(t)_j-y_j)\leq \eps+\beta^{-1}\}$, we have 
\ba{
\Delta
&=-\frac{1}{4}\sum_{j,k,l,r=1}^p\int_0^1t\E[(1-U)G_jD_kD_lD_r\partial_{jklr}m^y(W(t)+\sqrt t(U+U'\tilde U)D)1_{\{\|D\|_\infty\leq\beta^{-1}\}\cap A'(t)}]dt.
}
By Eqs.(C.5), (C.6) and (C.8) in \cite{CCKK22}, there exist functions $U^y_{jklr}:\mathbb{R}^p\to\mathbb R$ ($j,k,l,r\in[p]$) such that for any $w,w'\in\mathbb R^p$ with $\|w'\|_\infty\leq\beta^{-1}$, 
\begin{equation}\label{m4:property1}
|\partial_{jklr}m^y(w)|\leq U^y_{jklr}(w),\qquad
U^y_{jklr}(w+w')\lesssim U^y_{jklr}(w)
\end{equation}
for all $j,k,l,r\in[p]$ and
\begin{equation}\label{m4:property2}
\sum_{j,k,l,r=1}^pU^y_{jklr}(w)\lesssim\eps^{-4}\log^3p.
\end{equation}
By \eqref{m4:property1}, 
\ba{
|\Delta|
&\lesssim\sum_{j,k,l,r=1}^p\int_0^1\E[|G_jD_kD_lD_r|U^y_{jklr}(W(t))1_{\{\|D\|_\infty\leq\beta^{-1}\}\cap A'(t)}]dt\\
&=\sum_{j,k,l,r=1}^p\int_0^1\E[\E[|G_jD_kD_lD_r|1_{\{\|D\|_\infty\leq\beta^{-1}\}}\mid Y]U^y_{jklr}(W(t))1_{A'(t)}]dt,
}
where the second line follows from the fact that both $W(t)$ and $A'(t)$ are $\sigma(Y,Z)$-measurable and $Z$ is independent of $(Y,Y')$. 
Using \eqref{m4:property2}, we obtain
\ben{\label{delta-aim}
|\Delta|
\lesssim\eps^{-4}(\log p)^3\int_0^1\E\sbra{\Gamma^\eps1_{A'(t)}}dt.
}
Similarly to the derivation of \eqref{nazarov-arg}, we deduce
\ba{
\E\sbra{\Gamma^\eps1_{A'(t)}}
\lesssim\frac{\eps\E\sbra{\Gamma^\eps}\sqrt{\log p}}{\ul\sigma\sqrt{1-t}}.
}
Combining this with \eqref{delta-aim} gives \eqref{eq:gexch-III}. 
\qed

\subsection{Proof of Corollary \ref{coro:exch}}

If $G=0$ or $D=0$, then $\sqrt{\E[\|V\|_\infty]}=\sqrt{\|\Sigma\|_\infty}\geq\ul\sigma$, so the claim trivially holds for any $C'\geq1$. 
Hence, we may assume $G\neq0$ and $D\neq0$ without loss of generality. In particular, we have $\E[\|G\|_\infty\|D\|_\infty^3]>0$ in this case. 

For every $\eps>0$, observe that
\ba{
\E[\|R^\eps\|_\infty]
&\leq\E[\|R\|_\infty]
+\beta^3\E[\|G\|_\infty\|D\|_\infty^3],\\
\E[\|V^\eps-V\|_\infty]
&\leq\frac{1}{2}\E[\|G\|_\infty\|D\|_\infty1_{\{\|D\|_\infty>\beta^{-1}\}}]
\leq\frac{\beta^{2}}{2}\E[\|G\|_\infty\|D\|_\infty^3],\\
\E[\Gamma^\eps]
&\leq\E[\|G\|_\infty\|D\|_\infty^3].
}
Inserting these bounds into \eqref{eq:gech} gives 
\bm{
\sup_{A\in\mathcal{R}_p}\left|\pr(W\in A)-\pr(Z\in A)\right|\\
\lesssim \frac{1}{\ul\sigma}\Biggl(
\sqrt{\log p}\E\sbra{\|R\|_\infty}
+\eps^{-1}(\log p)^{3/2}\E\sbra{\|V\|_\infty}
+\eps^{-3}(\log p)^{7/2}\E[\|G\|_\infty\|D\|_\infty^3]
+\eps\sqrt{\log p}
\Biggr).
}
Taking
$
\eps=\sqrt{\E\sbra{\|V\|_\infty}\log p}+\bra{\E[\|G\|_\infty\|D\|_\infty^3](\log p)^3}^{1/4}
$
gives the desired result.\qed

\subsection{Proof of Theorems \ref{thm:max-is} and \ref{lem:max-is}}\label{sec:max-is}

%We need two auxiliary lemmas. The first one is a $U$-statistic analog of Nemirovski's moment inequality \cite[Lemma 14.24]{buhlmann2011statistics}. 
The proofs of Theorems \ref{thm:max-is} and \ref{lem:max-is} are more or less natural extensions of those of Lemmas 8 and 9 in \cite{CCK15}, respectively. 
In particular, the starting point is a symmetrization argument, which is summarized as the following lemma. 
\begin{lemma}\label{u-nemirovski}
Let $q\geq1$ and $\psi_j\in L^{q}(P^r)$ $(j=1,\dots,p)$ be degenerate, symmetric kernels of order $r\geq1$. 
Then there exists a constant $C_r$ depending only on $r$ such that
\ba{
\norm{\max_{j\in[p]}|J_r(\psi_j)|}_{L^q(\pr)}
&\leq C_r(q+\log p)^{r/2}\norm{\max_{j\in[p]}\sqrt{J_r(\psi_j^2)}}_{L^{q}(\pr)}.
}
\end{lemma}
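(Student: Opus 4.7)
The plan is to prove this symmetrization-type moment inequality by iterating a Rademacher randomization $r$ times, each step peeling off one coordinate of the $U$-statistic and contributing a factor $\sqrt{q+\log p}$ via a CCK15-style sub-Gaussian maximal bound.

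First, I would apply the decoupling inequality of de la Pe\~na and Montgomery-Smith for canonical symmetric $U$-statistics. This step uses the degeneracy hypothesis $P\psi_j=0$ and replaces $J_r(\psi_j)$ inside the $L^q$ norm with the fully decoupled statistic
\[
J_r^{\mathrm{dec}}(\psi_j) := \frac{1}{r!}\sum_{(i_1,\ldots,i_r)\in I_{n,r}}\psi_j\bigl(X_{i_1}^{(1)},\ldots,X_{i_r}^{(r)}\bigr),
\]
built from $r$ i.i.d.\ copies $X^{(1)},\ldots,X^{(r)}$ of the sample $X$, at the cost of a multiplicative constant depending only on $r$.

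Next, I would randomize one coordinate at a time. Fix $k\in[r]$ and condition on all variables other than $X^{(k)}$ and on any Rademacher signs already introduced at previous levels. By canonicity, as a function of $X_1^{(k)},\ldots,X_n^{(k)}$, the quantity inside the max over $j$ is a sum of $n$ independent mean-zero summands, so a Gin\'e--Zinn symmetrization inserts i.i.d.\ Rademacher signs $\eps_i^{(k)}$ at positions $i=1,\ldots,n$ with only a constant loss. Applying Lemma~9 of \cite{CCK15} conditionally on everything else to this linear Rademacher sum under the maximum then yields a factor $\sqrt{q+\log p}$ and passes the linear sum in $i_k$ to its quadratic variation. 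Iterating through $k=1,\ldots,r$, the signs cancel via $(\eps_i^{(k)})^2=1$ and I arrive at
\[
\norm{\max_{j\in[p]}|J_r(\psi_j)|}_{L^q(\pr)}\leq C_r(q+\log p)^{r/2}\norm{\max_{j\in[p]}\sqrt{J_r^{\mathrm{dec}}(\psi_j^2)}}_{L^q(\pr)}.
\]

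Finally, since $\psi_j^2\ge 0$ is symmetric, I would re-couple by comparing $J_r^{\mathrm{dec}}(\psi_j^2)$ to $J_r(\psi_j^2)$: writing $J_r(\varphi)$ as a conditional expectation of $J_r^{\mathrm{dec}}(\varphi)$ under a random relabeling of the independent copies (equivalently, via an Ibragimov--Sharakhmetov-style moment comparison for non-negative kernels) gives $\|\max_j\sqrt{J_r^{\mathrm{dec}}(\psi_j^2)}\|_{L^q}\le C_r\|\max_j\sqrt{J_r(\psi_j^2)}\|_{L^q}$, which completes the proof. The main obstacle is this last re-coupling step, because the standard de la Pe\~na decoupling runs from coupled to decoupled, so the reverse direction requires a separate argument exploiting only non-negativity (not canonicity) of $\psi_j^2$. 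A secondary subtlety is ensuring that the iterated Rademacher maximal inequality keeps the sharp dependence $(\log p)^{r/2}$, rather than $(\log p)^r$, which is secured by retaining the max over $j\in[p]$ inside Lemma~9 of \cite{CCK15} at every level instead of interchanging it with the sum.
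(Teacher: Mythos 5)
Your architecture (decouple, randomize coordinate by coordinate, then re-couple) is genuinely different from the paper's, but as written it has two real gaps. First, the iteration step is not justified by the tools you invoke. After you apply a sub-Gaussian maximal inequality in the first coordinate, the object you must control is $\max_{j\in[p]}\bigl(\sum_{i_1}A_j(i_1)^2\bigr)^{1/2}$, i.e.\ a maximum of $\ell_2$-norms of vector-valued Rademacher/empirical sums in the remaining coordinates; the next level therefore requires Banach-space-valued symmetrization together with a Khintchine--Kahane inequality in $\ell_2$ (or hypercontractivity of the full chaos), not a scalar maximal inequality. The lemma you cite (Lemma 9 of \cite{CCK15}) is a bound for sums of \emph{non-negative} terms and does not produce the $\sqrt{q+\log p}$-times-quadratic-variation bound you use, and the phrase ``the signs cancel via $(\eps_i^{(k)})^2=1$'' is not an argument: once a level's maximal inequality has been applied, that level's signs are already absorbed into the quadratic variation, and the nested square-root structure is exactly what must be handled. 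Second, the re-coupling step you flag as the main obstacle is indeed essential and your sketch does not resolve it: representing the coupled statistic as a conditional expectation of the decoupled one under a random relabeling is the mechanism behind the \emph{forward} (coupled-to-decoupled) direction, not the reverse. What you actually need is the reverse decoupling inequality of de la Pe\~na and Montgomery-Smith, valid for symmetric (vector-valued) kernels in tail-probability form and hence for all moments $q\geq1$; this would close the gap since $\psi_j^2$ is symmetric, but it must be cited or proved, and it hinges on symmetry rather than on non-negativity as you claim.

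For comparison, the paper avoids both issues entirely: it applies the randomization theorem for degenerate $U$-processes (Theorem 3.5.3 in \cite{de1999decoupling}), which inserts the Rademacher signs $\eps_{i_1}\cdots\eps_{i_r}$ on the \emph{same} sample using degeneracy, so no decoupling or re-coupling is needed; it then bounds the maximum over $j$ by the $p^{1/m}$ trick with $m=q+\log p$ and uses hypercontractivity of Rademacher chaoses to pass from $L^m$ to the conditional $L^2$ norm, which is exactly $\sqrt{J_r(\psi_j^2)}$. If you want to salvage your route, replace the level-by-level scalar argument with vector-valued Khintchine inequalities and invoke the symmetric-kernel reverse decoupling explicitly; but the paper's one-step randomization-plus-hypercontractivity proof is substantially shorter and already delivers the sharp $(q+\log p)^{r/2}$ factor.
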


\begin{proof}
First, by the the randomization theorem for $U$-processes \cite[Theorem 3.5.3]{de1999decoupling}, we have
\ba{
\norm{\max_{j\in[p]}|J_r(\psi_j)|}_{L^q(\pr)}
\leq C_r\norm{\max_{j\in[p]}|J^\eps_r(\psi_j)|}_{L^q(\pr)},
} 
where
\[
J^\eps_r(\psi_j):=\sum_{1\leq i_1<\cdots<i_r\leq n}\eps_{i_1}\cdots\eps_{i_r}\psi_j(X_{i_1},\dots,X_{i_r}),
\]
and $\eps_1,\dots,\eps_n$ are i.i.d.~Rademacher variables independent of $X$. 
For any $m\geq q\vee2$, we have
\ba{
\bra{\E\sbra{\max_{j\in[p]}|J^\eps_r(\psi_j)|^q\mid X}}^{1/q}
%&\leq\bra{\E\sbra{\max_{j\in[p]}|J^\eps_r(\psi_j)|^m\mid X}}^{1/m}\\
&\leq p^{1/m}\max_{j\in[p]}\bra{\E\sbra{|J^\eps_r(\psi_j)|^m\mid X}}^{1/m}\\
&\leq p^{1/m}m^{r/2}\max_{j\in[p]}\bra{\E\sbra{|J^\eps_r(\psi_j)|^2\mid X}}^{1/2}\\
%&= p^{1/m}m^{r/2}\max_{j=1,\dots,p}\sqrt{\sum_{1\leq i_1<\cdots<i_r\leq n}\psi_j(X_{i_1},\dots,X_{i_r})^2},
&= p^{1/m}m^{r/2}\max_{j\in[p]}\sqrt{J_r(\psi_j^2)},
}
where the first inequality follows by \eqref{eq:max-out} and the second by the hypercontractivity of Rademacher chaoses \cite[Theorem 3.2.5]{de1999decoupling}. 
Taking $m=q+\log p$, we obtain
\ba{
\E\sbra{\max_{j\in[p]}|J^\eps_r(\psi_j)|^q\mid X}
\leq \bra{e(q+\log p)^{r/2}\max_{j\in[p]}\sqrt{J_r(\psi_j^2)}}^q.
}
The desired result follows by taking the expectation. 
\end{proof}

We first prove \cref{lem:max-is}. Then, \cref{thm:max-is} is obtained as its simple corollary after an application of \cref{u-nemirovski}. 
\begin{proof}[Proof of \cref{lem:max-is}]
First, since $\max_{j\in[p]}J_r(\psi_j)\leq n^r\max_{j\in[p]}M(\psi_j),$ 
the claim trivially holds if $q+\log p>n$; hence it suffices to consider the case $q+\log p\leq n$.

We prove the claim by induction on $r$. 
It is trivial when $r=0$. 
Next, suppose $r>0$ and that the claim holds for all non-negative integers less than $r$. We are going to show that there exists a constant $c_r\geq1$ depending only on $r$ such that \eqref{eq:max-is} holds. 
The following argument was inspired by the proof of \cite[Theorem 5.1]{chen2018gaussian}. 
By \eqref{eq:hoef-decomp}, we have
\ben{\label{max-is-step1}
I:=\norm{\max_{j\in[p]}J_r(\psi_j)}_{L^q(\pr)}
%=\norm{\max_{j\in[p]}\sum_{s=0}^r\binom{n-s}{r-s}J_s(\pi_s\psi_j)}_{L^q(\pr)}
\leq\max_{j\in[p]}\E[J_r(\psi_j)]+\sum_{s=1}^r\binom{n-s}{r-s}\norm{\max_{j\in[p]}J_s(\pi_s\psi_j)}_{L^q(\pr)}.
}
%↑はHoeffding分解して, 三角不等式.
For every $s\in[r]$, \cref{u-nemirovski} gives 
\ben{\label{applied-nemirovski}
\norm{\max_{j\in[p]}J_s(\pi_s\psi_j)}_{L^q(\pr)}
\leq C_r(q+\log p)^{s/2}\norm{\max_{j\in[p]}\sqrt{J_s\bra{(\pi_s\psi_j)^2}}}_{L^{q}(\pr)}.
}
%↑は一つ前に証明した補題を各次のHoeffding分解に対して適用.
By \eqref{eq:hoef-proj} and the so-called $c_r$-inequality,
\ba{
(\pi_s\psi_j)^2(x_1,\dots,x_s)
\leq C_r\sum_{k=0}^s\sum_{1\leq l(1)<\cdots<l(k)\leq s}(P^{r-k}\psi_j)^2(x_{l(1)},\dots,x_{l(k)}).
}
Thus, 
\ba{
J_s\bra{(\pi_s\psi_j)^2}
&\leq C_r\sum_{k=0}^s\sum_{1\leq l(1)<\cdots<l(k)\leq s}\sum_{1\leq i_1<\cdots<i_s\leq n}(P^{r-k}\psi_j)^2(X_{i_{l(1)}},\dots,X_{i_{l(k)}})\\
&=C_r\sum_{k=0}^s\binom{n-k}{s-k}\sum_{1\leq l(1)<\cdots<l(k)\leq s}\sum_{1\leq i_{l(1)}<\cdots<i_{l(k)}\leq n}(P^{r-k}\psi_j)^2(X_{i_{l(1)}},\dots,X_{i_{l(k)}})\\
&=C_r\sum_{k=0}^s\binom{n-k}{s-k}\binom{s}{k}\sum_{1\leq i_{1}<\cdots<i_{k}\leq n}(P^{r-k}\psi_j)^2(X_{i_{1}},\dots,X_{i_{k}})\\
&\leq C_r\sum_{k=0}^sn^{s-k}M(P^{r-k}\psi_j)J_k\bra{P^{r-k}\psi_j}.
}
Combining this bound with \eqref{applied-nemirovski}, the inequality $\sqrt{x+y}\leq\sqrt{x}+\sqrt{y}$ for any $x,y\geq0$ and Minkowski's inequality, we obtain
\ba{
\norm{\max_{j\in[p]}\sqrt{J_s\bra{(\pi_s\psi_j)^2}}}_{L^{q}(\pr)}
&\leq C_r\max_{0\leq k\leq s}n^{(s-k)/2}\norm{\max_{j\in[p]}\sqrt{M(P^{r-k}\psi_j)J_k\bra{P^{r-k}\psi_j}}}_{L^{q}(\pr)}\\
&\leq C_r\max_{0\leq k\leq s}n^{(s-k)/2}\norm{\max_{j\in[p]}M(P^{r-k}\psi_j)}_{L^q(\pr)}^{1/2}\norm{\max_{j\in[p]}J_k\bra{P^{r-k}\psi_j}}_{L^{q}(\pr)}^{1/2},
}
where the last inequality follows from the Schwarz inequality. 
Therefore, we have
\besn{\label{max-is-step2}
&\sum_{s=1}^r\binom{n-s}{r-s}\norm{\max_{j\in[p]}J_s(\pi_s\psi_j)}_{L^q(\pr)}\\
&\leq C_r\sum_{s=1}^{r}n^{r-s}(q+\log p)^{s/2}\max_{0\leq k\leq s}n^{(s-k)/2}\norm{\max_{j\in[p]}M(P^{r-k}\psi_j)}_{L^q(\pr)}^{1/2}\norm{\max_{j\in[p]}J_k\bra{P^{r-k}\psi_j}}_{L^{q}(\pr)}^{1/2}.
}
Now, by the assumption of the induction, for every $0\leq k< r$, there exists a constant $c_k\geq1$ depending only on $k$ such that
\ba{
\norm{\max_{j\in[p]}J_k\bra{P^{r-k}\psi_j}}_{L^{q}(\pr)}
\leq c_k\max_{0\leq l\leq k}n^{k-l}(q+\log p)^l\norm{\max_{j\in[p]}M(P^{r-l}\psi_j)}_{L^q(\pr)}.
}
Hence
\ba{
&n^{r-s}(q+\log p)^{s/2}\max_{0\leq k\leq s,k<r}n^{(s-k)/2}\norm{\max_{j\in[p]}M(P^{r-k}\psi_j)}_{L^q(\pr)}^{1/2}\norm{\max_{j\in[p]}J_k\bra{P^{r-k}\psi_j}}_{L^{q}(\pr)}^{1/2}\\
&\leq \max_{0\leq k\leq s,k<r}\sqrt{c_k}\max_{0\leq l\leq k}n^{r-(s+l)/2}(q+\log p)^{(s+l)/2}\norm{\max_{j\in[p]}M(P^{r-k}\psi_j)}_{L^q(\pr)}^{1/2}\norm{\max_{j\in[p]}M(P^{r-l}\psi_j)}_{L^q(\pr)}^{1/2}.
}
For any $0\leq k\leq s$, we have $n^{-s/2}(q+\log p)^{s/2}\leq n^{-k/2}(q+\log p)^{k/2}$ because $q+\log p\leq n$. 
Thus we obtain
\ba{
&n^{r-s}(q+\log p)^{s/2}\max_{0\leq k\leq s,k<r}n^{(s-k)/2}\norm{\max_{j\in[p]}M(P^{r-k}\psi_j)}_{L^q(\pr)}^{1/2}\norm{\max_{j\in[p]}J_k\bra{P^{r-k}\psi_j}}_{L^{q}(\pr)}^{1/2}\\
&\leq \max_{0\leq k\leq s,k<r}\sqrt{c_k}\max_{0\leq l\leq k}n^{r-(k+l)/2}(q+\log p)^{(k+l)/2}\norm{\max_{j\in[p]}M(P^{r-k}\psi_j)}_{L^q(\pr)}^{1/2}\norm{\max_{j\in[p]}M(P^{r-l}\psi_j)}_{L^q(\pr)}^{1/2}\\
&\leq\max_{0\leq k\leq s,k<r}c_kn^{r-k}(q+\log p)^{k}\norm{\max_{j\in[p]}M(P^{r-k}\psi_j)}_{L^q(\pr)}.
}
Inserting this bound into \eqref{max-is-step2} and then using \eqref{max-is-step1}, we obtain
\ba{
I
&\leq \max_{j\in[p]}\E[J_r(\psi_j)]+K_r\max_{0\leq k<r}c_kn^{r-k}(q+\log p)^{k}\norm{\max_{j\in[p]}M(P^{r-k}\psi_j)}_{L^q(\pr)}\\
&\quad+K_r(q+\log p)^{r/2}\norm{\max_{j\in[p]}M(\psi_j)}_{L^q(\pr)}^{1/2}\sqrt{I},
}
where $K_r\geq1$ is a constant depending only on $r$. 
By the AM-GM inequality,
\ba{
K_r(q+\log p)^{r/2}\norm{\max_{j\in[p]}M(\psi_j)}_{L^q(\pr)}^{1/2}\sqrt{I}
\leq \frac{K_r^2}{2}(q+\log p)^{r}\norm{\max_{j\in[p]}M(\psi_j)}_{L^q(\pr)}+\frac{I}{2}.
}
Hence we conclude
\ba{
I
&\leq 2\max_{j\in[p]}\E[J_r(\psi_j)]+K'_r\max_{0\leq k\leq r}n^{r-k}(q+\log p)^{k}\norm{\max_{j\in[p]}M(P^{r-k}\psi_j)}_{L^q(\pr)},
}
where $K_r':=K_r^2\vee\max_{0\leq k<r}2K_rc_k$. 
Since $\E[J_r(\psi_j)]=\binom{n}{r}P^r\psi_j\leq n^rP^r\psi_j$, \eqref{eq:max-is} holds with $c_r=2+K_r'$. 
\end{proof}

\begin{proof}[Proof of \cref{thm:max-is}]
By \cref{u-nemirovski} and Lyapunov's inequality,
\ba{
\norm{\max_{j\in[p]}|J_r(\psi_j)|}_{L^q(\pr)}
%&\leq C_r(q+\log p)^{r/2}\norm{\max_{j\in[p]}\sqrt{J_r\bra{\psi_j^2}}}_{L^{q}(\pr)}\\
&\leq C_r(q+\log p)^{r/2}\norm{\max_{j\in[p]}J_r\bra{\psi_j^2}}_{L^{1\vee\frac{q}{2}}(\pr)}^{1/2}.
}
Applying \cref{lem:max-is} to the last expression gives the desired result. 
\end{proof}

\subsection{Proof of Lemmas \ref{max-rosenthal} and \ref{lem:nonneg-ada}}

Unlike Theorems \ref{thm:max-is} and \ref{lem:max-is}, the proof strategy is essentially different from that of Lemmas 8 and 9 in \cite{CCK15}. 
This is because symmetrization of a $p$-dimensional martingale in the maximum norm is no longer free lunch, producing an additional $\log p$ factor; see Propositions 5.9 and 5.38 in \cite{Pi16}. 
To avoid this issue, we rely on a classical extrapolation argument. Specifically, we use it in the following form. 
\begin{lemma}[Extrapolation principle]\label{extrapolation}
    Let $(v_i)_{i=0}^N$ and $(w_i)_{i=0}^N$ be sequences of non-negative random variables adapted to a filtration $\mathbf G=(\mcl G_i)_{i=0}^N$. 
    Fix $\alpha>0$ and assume that for any $\mathbf G$-stopping time $T$
    \[
    \|v_T1_{\{T>0\}}\|_{L^\alpha(\pr)}\leq\|w_T1_{\{T>0\}}\|_{L^\alpha(\pr)}.
    \]
    Moreover, assume that there exists a $\mathbf G$-adapted non-negative sequence $(\lambda_i)_{i=0}^{N-1}$ such that
    \[
    w_{i+1}-w_i\leq\lambda_i\quad\text{for all }i=0,1,\dots,N-1.
    \]
    Then for any $0<m<\alpha$
    \[
    \E[v_N^m]\leq\frac{\alpha}{\alpha-m}\E\sbra{w^{*m}}
    +\E\sbra{(w^{*}+\lambda^*)^m},
    \]
    where $w^*:=\max_{i=0,1,\dots,N}w_i$ and $\lambda^*:=\max_{i=0,1,\dots,N-1}\lambda_i$.
\end{lemma}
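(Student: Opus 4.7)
The plan is to bound $\E[v_N^m]$ via $\E[(v^*)^m]$ where $v^* := \max_{0 \leq i \leq N} v_i$, and apply the layer-cake representation
\[
\E[(v^*)^m] = m\int_0^\infty t^{m-1}\pr(v^* > t)\,dt,
\]
controlling the tail $\pr(v^* > t)$ through a Lenglart-style stopping-time argument tailored to the present setting.

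For each $t > 0$, the natural choice is the stopping time $T_t := \min\{i \in [N] : v_i > t\} \wedge N$, which is a $\mathbf{G}$-stopping time taking values in $[N]$ (so the indicator $1_{\{T_t > 0\}}$ is automatic). On $\{v^* > t\}$, $v_{T_t} > t$, so the hypothesis applied at $T_t$ combined with Chebyshev's inequality yields
\[
t^\alpha \pr(v^* > t) \leq \E[v_{T_t}^\alpha] \leq \E[w_{T_t}^\alpha].
\]
The predictable-jump condition $w_{i+1} - w_i \leq \lambda_i$ gives the pointwise estimate $w_{T_t} \leq w_{T_t - 1} + \lambda_{T_t - 1}$; splitting the right-hand side according to whether $w^* > t$ or not—using $w_{T_t} \leq w^*$ on $\{w^* > t\}$ and $w_{T_t} \leq t + \lambda^*$ on $\{w^* \leq t\}$—should refine this to a tail bound of the form
\[
t^\alpha \pr(v^* > t) \leq \E\sbra{w^{*\alpha} 1_{\{w^* > t\}}} + \E\sbra{(t + \lambda^*)^\alpha 1_{\{w^* \leq t\}}}.
\]

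I would then integrate each piece against $m t^{m-1}$. For the first term, Fubini together with the identity $\int_{w^*}^\infty t^{m-1-\alpha}\,dt = (w^*)^{m-\alpha}/(\alpha - m)$ produces a contribution of $\frac{m}{\alpha - m}\E[w^{*m}]$ for the range $t > w^*$, while the trivial bound $\pr(v^* > t) \leq 1$ on $t \leq w^*$ contributes an additional $\E[w^{*m}]$; summing with the coefficient $1 + \frac{m}{\alpha-m} = \frac{\alpha}{\alpha-m}$ recovers $\frac{\alpha}{\alpha - m}\E[w^{*m}]$. For the second term, the bound $(t + \lambda^*)^\alpha \leq 2^{(\alpha-1)_+}(t^\alpha + \lambda^{*\alpha})$ followed by a similar layer-cake argument, now split at the random cutoff $w^* + \lambda^*$, would absorb everything into the boundary contribution $\E[(w^* + \lambda^*)^m]$ (the trivial bound below the cutoff supplies the $\E[(w^* + \lambda^*)^m]$ directly, and the Markov bound above the cutoff is of lower order).

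The main obstacle is passing from the \emph{unconditional} stopping-time hypothesis $\E[v_T^\alpha] \leq \E[w_T^\alpha]$ to the \emph{indicator-restricted} Markov bound containing $1_{\{w^* > t\}}$, which is needed to avoid the divergence of $\int_0^{\cdot} t^{m-1-\alpha}\,dt$ at $t=0$ in the Fubini step. Without this sharpening, the plain Markov estimate would only give $\E[(v^*)^m] \leq \frac{\alpha}{\alpha - m}\E[w^{*\alpha}]^{m/\alpha}$, which by Lyapunov's inequality is strictly weaker than the stated $\frac{\alpha}{\alpha - m}\E[w^{*m}]$. The predictable-jump correction $\lambda^*$ enters precisely to control the possible overshoot of $w$ at the level-crossing time $T_t$ on the event $\{w^* \leq t\}$, and this overshoot is exactly the source of the additive $\E[(w^* + \lambda^*)^m]$ term in the bound.
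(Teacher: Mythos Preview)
Your overall plan—layer-cake for $\E[v_N^m]$ plus a Lenglart-type stopping-time argument—is the right one, and you have correctly located the crux: one needs a tail bound whose $t$-integral converges both at $0$ and at $\infty$. But the execution has a genuine gap that your proposed patches do not close.

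The problem is the choice of stopping time. With $T_t$ defined as the first time $v$ exceeds $t$, the hypothesis yields only the \emph{unconditional} estimate $\E[v_{T_t}^\alpha]\le\E[w_{T_t}^\alpha]$. Splitting $\E[w_{T_t}^\alpha]$ according to $\{w^*>t\}$ versus $\{w^*\le t\}$ gives
\[
\pr(v_N>t)\le t^{-\alpha}\E\bigl[(w^*)^\alpha 1_{\{w^*>t\}}\bigr]+\pr(w^*\le t),
\]
since on $\{w^*\le t\}$ already $w_{T_t}\le w^*\le t$ (so the $\lambda^*$ you insert there is vacuous). Integrating the first term against $m t^{m-1}$ produces $m\,\E\bigl[(w^*)^\alpha\int_0^{w^*}t^{m-1-\alpha}\,dt\bigr]$, which diverges at $0$ because $m-1-\alpha<-1$; your claimed range ``$t>w^*$'' has the indicator reversed. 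The second term integrates to $+\infty$ at $\infty$. The informal device of ``splitting the $t$-integral at the random level $w^*$'' amounts to demanding a bound on $\E[v_{T_t}^\alpha 1_{\{w^*<t\}}]$, which the unconditional hypothesis does not supply.

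The missing idea is to base the stopping time on $w$ rather than $v$. Set $\sigma_t:=\inf\{i\ge 0:w_i+\lambda_i>t\}\wedge N$, a $\mathbf G$-stopping time since $w_i+\lambda_i$ is $\mcl G_i$-measurable. On $\{\sigma_t>0\}$ one has $w_{\sigma_t}\le w_{\sigma_t-1}+\lambda_{\sigma_t-1}\le t$, hence $w_{\sigma_t}1_{\{\sigma_t>0\}}\le w^*\wedge t$; and on $\{w^*+\lambda^*\le t\}$ one has $\sigma_t=N$, so $v_N=v_{\sigma_t}$ and $\sigma_t>0$. Splitting $\pr(v_N>t)$ at $\{w^*+\lambda^*\le t\}$ and applying the hypothesis to $\sigma_t$ yields
\[
\pr(v_N>t)\le t^{-\alpha}\E\bigl[(w^*\wedge t)^\alpha\bigr]+\pr(w^*+\lambda^*>t),
\]
and now both pieces integrate cleanly: Fubini gives $m\int_0^\infty t^{m-1-\alpha}\E[(w^*\wedge t)^\alpha]\,dt=\frac{\alpha}{\alpha-m}\E[(w^*)^m]$, while the second term yields $\E[(w^*+\lambda^*)^m]$. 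This is exactly the content of Lemma~5.23 in Pisier's \emph{Martingales in Banach Spaces}, which the paper simply cites after extending the finite sequences trivially to infinite ones.
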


\begin{proof}
    This is a straightforward consequence of \cite[Lemma 5.23]{Pi16} once we extend $(v_i)_{i=0}^N$, $(w_i)_{i=0}^N$, $(\mathcal{G}_i)_{i=0}^N$ and $(\lambda_i)_{i=0}^{N-1}$ to infinite sequences by setting $v_i=v_N,w_i=w_N$ and $\mcl G_i=\mcl G_N$ for $i>N$ and $\lambda_i=0$ for $i\geq N$. 
    %
    %In fact, the extended sequences $(v_i)_{i=0}^\infty$, $(w_i)_{i=0}^\infty$ and $(\lambda_i)_{i=0}^\infty$ are obviously adapted to the extended filtration $\bar{\mathbf G}=(\mcl G_i)_{i=0}^\infty$. 
    %It is also evident that $w_{i+1}-w_i\leq\lambda_i$ for all $i=0,1,\dots,$. 
    %Further, $(v_i)_{i=0}^\infty$ and $(w_i)_{i=0}^\infty$ converge a.s.~to limits $v_N$ and $w_N$, respectively. 
    %Finally, let $T$ be a $\bar{\mathbf G}$-stopping time. Then $T':=T\wedge N$ is a $\mathbf G$-stopping time and we have $v_T=v_{T'}$ and $w_T=w_{T'}$. Hence $\|v_T1_{\{T>0\}}\|_{L^\alpha(\pr)}\leq\|w_T1_{\{T>0\}}\|_{L^\alpha(\pr)}$. 
\end{proof}

\cref{extrapolation} allows us to reduce the proof of \cref{max-rosenthal} to moment estimates of \emph{one-dimensional} martingales. 
At this point, we need a Rosenthal type bound with sharp constants. 
\begin{lemma}[Rosenthal's inequality with sharp constants]\label{rosenthal}
Let $(\xi_i)_{i=1}^N$ be a martingale difference sequence with respect to a filtration $(\mcl G_i)_{i=0}^N$. 
There exists a universal constant $C$ such that for any $q\geq1$,
\[
\norm{\max_{n\in[N]}\abs{\sum_{i=1}^n\xi_{ij}}}_{L^q(\pr)}
\leq C \bra{\sqrt q\norm{\sqrt{\sum_{i=1}^N\E[\xi_{ij}^2\mid\mcl G_{i-1}]}}_{L^q(\pr)}
+q\norm{\max_{i\in[N]}|\xi_i|}_{L^q(\pr)}}.
\]
\end{lemma}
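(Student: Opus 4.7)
The proof will combine a Davis-type decomposition of $(\xi_i)$ with Freedman's exponential inequality, treating ``small'' and ``large'' jumps separately.

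First, set $\xi_i^*:=\max_{k\leq i}|\xi_k|$ and write $\xi_i=\eta_i+\zeta_i$, where $\eta_i:=\xi_i 1_{\{|\xi_i|\leq 2\xi_{i-1}^*\}}-\E[\xi_i 1_{\{|\xi_i|\leq 2\xi_{i-1}^*\}}\mid\mcl G_{i-1}]$ and $\zeta_i:=\xi_i-\eta_i$. Both $(\eta_i)$ and $(\zeta_i)$ are $(\mcl G_i)$-martingale differences, and the decomposition has two key features: (i) $|\eta_i|\leq 4\xi_{i-1}^*\leq 4\xi_N^*$ and $\sum_i\E[\eta_i^2\mid\mcl G_{i-1}]\leq V_N:=\sum_i\E[\xi_i^2\mid\mcl G_{i-1}]$; (ii) the non-decreasing process $X_n:=\sum_{i\leq n}|\xi_i|1_{\{|\xi_i|>2\xi_{i-1}^*\}}$ telescopes to $X_N\leq 2\xi_N^*$, since on $\{|\xi_i|>2\xi_{i-1}^*\}$ one has $\xi_i^*-\xi_{i-1}^*\geq|\xi_i|/2$.

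For the small-jump martingale $S_n^{(\eta)}:=\sum_{i\leq n}\eta_i$ I would invoke Freedman's inequality, which yields a subgaussian-subexponential tail whenever the predictable quadratic variation and maximum jump are deterministically bounded. To handle the random case, one introduces stopping times at the levels $V_N\leq\sigma^2$ and $\max_i|\eta_i|\leq M$, integrates the tail bound in $x$ after optimising $\sigma,M$ as functions of $x$, and uses the extrapolation principle \cref{extrapolation} to conclude $\|\sup_n|S_n^{(\eta)}|\|_{L^q(\pr)}\leq C\bigl(\sqrt{q}\|\sqrt{V_N}\|_{L^q(\pr)}+q\|\xi_N^*\|_{L^q(\pr)}\bigr)$. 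For the large-jump part $S_n^{(\zeta)}:=\sum_{i\leq n}\zeta_i$, we have $\sup_n|S_n^{(\zeta)}|\leq\sum_i|\zeta_i|\leq X_N+A_N$, where $A_N$ is the predictable compensator of $X_N$; property (ii) gives $\|X_N\|_{L^q(\pr)}\leq 2\|\xi_N^*\|_{L^q(\pr)}$, and Garsia's inequality for predictable non-decreasing compensators yields $\|A_N\|_{L^q(\pr)}\lesssim q\|\xi_N^*\|_{L^q(\pr)}$. The two contributions combine via the triangle inequality to give the claim.

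The main obstacle will be the sharp constant in the small-jump step. A naive iteration of the Burkholder-Davis-Gundy inequality (with sharp $\sqrt{q}$ constant), applied first to $(S_n)$ and then to the compensated-squares martingale $\sum_i(\xi_i^2-\E[\xi_i^2\mid\mcl G_{i-1}])$, loses an extra $q^{1/4}$ factor on $\|\sqrt{V_N}\|_{L^q(\pr)}$ and only produces a suboptimal $q^{3/4}$ in place of $\sqrt{q}$: the residual bracket expands into a quantity like $(\max_i|\xi_i|^2)\cdot[S]_N+V_N^2$, and closing the resulting quadratic inequality in $\|[S]_N^{1/2}\|_q$ via Young's inequality leaves behind this spurious factor. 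Working directly from Freedman's exponential bound, and handling the randomness of $V_N$ and $\xi_N^*$ via stopping times together with \cref{extrapolation}, is what yields the sharp constant.
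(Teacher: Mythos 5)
The paper does not actually prove \cref{rosenthal}: its ``proof'' consists of citing known sharp-constant Burkholder--Rosenthal inequalities (Theorems 2.6 and 4.1 of \citealp{Pi94}, or \citealp{van2021maximal}), together with the remark that $\mathbb R$ is a $(2,1)$-smooth space. Your plan is therefore a genuinely different, self-contained route, and its outer layer is sound: the Davis decomposition, the telescoping bound $\sum_i|\xi_i|1_{\{|\xi_i|>2\xi_{i-1}^*\}}\leq2\xi_N^*$, and the dual Doob (Garsia) bound for the predictable compensator are exactly the manipulations the paper itself carries out one level up, in the proof of \cref{max-rosenthal}, and they correctly dispose of the large-jump part at cost $O(q)\|\max_i|\xi_i|\|_{L^q(\pr)}$.

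The gap is in the small-jump step, which is where all the content of the lemma sits. First, the integration scheme you describe does not go through: to make the layer-cake integrals of $\pr(V_N>\sigma(x)^2)$ and $\pr(\xi_N^*>M(x))$ return $q^{q/2}\E[V_N^{q/2}]$ and $q^{q}\E[(\xi_N^*)^{q}]$ you are forced to take $\sigma(x)\asymp x/\sqrt q$ and $M(x)\asymp x/q$, but then Freedman's exponent $x^2/\{2(\sigma(x)^2+M(x)x/3)\}$ is constant in $x$, so the exponential term integrates to infinity. Second, \cref{extrapolation} cannot close this hole: its hypothesis demands, for \emph{every} stopping time $T$, an $L^\alpha$ domination of the stopped maximum by an adapted random majorant built from $V_T$ and $\xi_{T-1}^*$ --- which is precisely the (stopped) Rosenthal inequality you are trying to prove; indeed this is exactly how the paper deploys \cref{extrapolation} in the proof of \cref{max-rosenthal}, with \cref{rosenthal} supplied as the input, and the lemma only extrapolates \emph{downward} from $\alpha$ to $m<\alpha$, so the easy $\alpha=2$ input cannot reach $q>2$. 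A repair consistent with your architecture is a good-$\lambda$/restart argument: stop the small-jump martingale when the predictable envelopes $V_{\cdot+1}$ or $4\xi_{\cdot}^*$ exceed $(\delta\lambda)^2/q$ and $\delta\lambda/q$ respectively (here the predictability of the Davis jump bound is what makes Freedman applicable to the stopped, restarted martingale), deduce $\pr(S^{(\eta)*}>2\lambda,\ \sqrt{qV_N}\vee(4q\xi_N^*)\leq\delta\lambda)\leq 2e^{-cq/\delta}\,\pr(S^{(\eta)*}>\lambda)$, and choose $\delta$ a small universal constant so that $2^q\cdot2e^{-cq/\delta}\leq1/2$; the good-$\lambda$ lemma then yields $\|S^{(\eta)*}\|_{L^q(\pr)}\lesssim\sqrt q\,\|\sqrt{V_N}\|_{L^q(\pr)}+q\,\|\xi_N^*\|_{L^q(\pr)}$ with universal constants. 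Alternatively, follow Pinelis' exponential-supermartingale argument, or simply cite the references as the paper does.
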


\begin{proof}
For the case $1\leq q\leq 2$, see \cite[Theorem 2.6]{Pi94} or \cite[Corollary 3.6]{van2021maximal}. 
For the case $q\geq2$, see \cite[Theorem 4.1]{Pi94} or \cite[Theorem 3.1]{van2021maximal}. 
Note that $\mathbb R$ is a $(2,1)$-smooth Banach space as it is a Hilbert space. 
\end{proof}

\begin{proof}[Proof of \cref{max-rosenthal}]
We consider the Davis decomposition of $(\xi_i)_{i=1}^N$. 
Let $\xi_n^*:=\max_{i\in[n]}\|\xi_i\|_\infty$ for every $n=0,1,\dots,N$. 
Define
\[
\xi_i':=\xi_i1_{\{\xi_i^*\leq2\xi_{i-1}^*\}}-\E[\xi_i1_{\{\xi_i^*\leq2\xi_{i-1}^*\}}\mid\mcl G_{i-1}]
\quad\text{and}\quad
\xi_i'':=\xi_i-\xi_i'\quad\text{for every }i\in[N]. 
\]
Since $\E[\xi_i\mid\mcl G_{i-1}]=0$, we have $\xi_i''=\xi_i1_{\{\xi_i^*>2\xi_{i-1}^*\}}-\E[\xi_i1_{\{\xi_i^*>2\xi_{i-1}^*\}}\mid\mcl G_{i-1}]$. Hence
\ba{
\norm{\max_{j\in[p]}\max_{n\in[N]}\abs{\sum_{i=1}^n\xi''_{ij}}}_{L^m(\pr)}
%\leq\norm{\sum_{i=1}^N\|\xi''_{i}\|_\infty}_{L^m(\pr)}
&\leq\norm{\sum_{i=1}^N\|\xi_{i}\|_\infty1_{\{\xi_i^*>2\xi_{i-1}^*\}}}_{L^m(\pr)}
+\norm{\sum_{i=1}^N\E[\|\xi_{i}\|_\infty1_{\{\xi_i^*>2\xi_{i-1}^*\}}\mid\mcl G_{i-1}]}_{L^m(\pr)}\\
&\leq(1+m)\norm{\sum_{i=1}^N\|\xi_{i}\|_\infty1_{\{\xi_i^*>2\xi_{i-1}^*\}}}_{L^m(\pr)},
}
where the second inequality follows from the dual to Doob's inequality \cite[Theorem 1.26]{Pi16}. 
When $\xi_i^*>2\xi_{i-1}^*$, we have $\|\xi_i\|_\infty\leq\|\xi_i\|_\infty+(\xi_i^*-2\xi_{i-1}^*)\leq2(\xi_i^*-\xi_{i-1}^*)$. Hence $\|\xi_i\|_\infty1_{\{\xi_i^*>2\xi_{i-1}^*\}}\leq2(\xi_i^*-\xi_{i-1}^*)$. 
Consequently,
\ba{
\norm{\sum_{i=1}^N\|\xi_{i}\|_\infty1_{\{\xi_i^*>2\xi_{i-1}^*\}}}_{L^m(\pr)}
\leq2\norm{\sum_{i=1}^N(\xi_i^*-\xi_{i-1}^*)}_{L^m(\pr)}
=2\norm{\xi_N^*}_{L^m(\pr)}.
}
Therefore, we complete the proof once we show
\ben{\label{max-rosenthal-aim}
\norm{\max_{j\in[p]}\max_{n\in[N]}\abs{\sum_{i=1}^n\xi'_{ij}}}_{L^m(\pr)}
\lesssim \sqrt{\alpha}\norm{\max_{j\in[p]}\sqrt{\sum_{i=1}^N\E[\xi_{ij}^2\mid\mcl G_{i-1}]}}_{L^m(\pr)}+\alpha\norm{\xi^*_{N}}_{L^m(\pr)},
}
where $\alpha:=m+\log p$.
By construction, $(\xi_i')_{i=1}^N$ is a martingale difference sequence in $\mathbb R^p$. 
Set $S_n':=\sum_{i=1}^n\xi_i'$ for $n\in[N]$ and $S_0':=0\in\mathbb R^p$. 
Then $(S'_n)_{n=0}^N$ is a martingale in $\mathbb R^p$. 
For any $\mathbf G$-stopping time $T$, we have by \eqref{eq:max-out}
\ba{
\norm{\sup_{n\in[T]}\|S'_n\|_\infty1_{\{T>0\}}}_{L^\alpha(\pr)}
%\leq \bra{\E\sbra{\sum_{j=1}^p\sup_{n\in[T]}|S'_{n,j}|^\alpha}}^{1/\alpha}
\leq e\max_{j\in[p]}\norm{\sup_{n\in[T]}|S'_{n,j}|1_{\{T>0\}}}_{L^\alpha(\pr)}.
}
For every $j\in[p]$, $(S'_{n\wedge T,j}1_{\{T>0\}})_{n=0}^N$ is a martingale, so \cref{rosenthal} yields
\ba{
\norm{\sup_{n\in[T]}|S'_{n,j}|1_{\{T>0\}}}_{L^\alpha(\pr)}
\lesssim\sqrt{\alpha}\norm{\sqrt{\sum_{i=1}^T\E[|\xi_{ij}'|^2\mid\mcl G_{i-1}]1_{\{T>0\}}}}_{L^\alpha(\pr)}
+\alpha\norm{\sup_{i\in[T]}|\xi_{ij}'|1_{\{T>0\}}}_{L^\alpha(\pr)}.
}
Since $\E[|\xi_{ij}'|^2\mid\mcl G_{i-1}]\leq\E[|\xi_{ij}|^21_{\{\xi_i^*\leq2\xi_{i-1}^*\}}\mid\mcl G_{i-1}]\leq\E[|\xi_{ij}|^2\mid\mcl G_{i-1}]$, we conclude 
\[
\norm{\sup_{n\in[T]}\|S'_{n}\|_\infty1_{\{T>0\}}}_{L^\alpha(\pr)}\leq c\norm{B_T1_{\{T>0\}}}_{L^\alpha(\pr)},
\] 
where $c>0$ is a universal constant and
\[
B_n:=\max_{j\in[p]}\bra{\sqrt{\alpha\sum_{i=1}^n\E[\xi_{ij}^2\mid\mcl G_{i-1}]}+\alpha\sup_{i\in[n]}|\xi'_{ij}|}\quad\text{for }n\in[N]\text{ and }B_0:=0.
\]
Observe that $|\xi'_{ij}|\leq4\xi_{i-1}^*$ for every $i\in[N]$ by construction. 
Hence
\[
B_{n+1}-B_n\leq\max_{j\in[p]}\sqrt{\alpha\E[|\xi_{n+1,j}|^2\mid\mcl G_{n}]}+4\alpha\xi_{n}^*=:D_n\quad\text{for all }n=0,1,\dots,N-1.
\]
Since $(D_n)_{n=0}^{N-1}$ is $\mathbf G$-adapted, we can apply \cref{extrapolation} with $v_n=\sup_{k\in[n]}\|S'_{k}\|_\infty$, $w_n=cB_n$ and $\lambda_n=cD_n$. 
Since $\{\alpha/(\alpha-m)\}^{1/m}\leq (m+1)^{1/m}\lesssim1$, this gives
\[
\norm{\sup_{n\in[N]}\|S'_{n}\|_\infty}_{L^m(\pr)}
\lesssim \norm{\sup_{n\in[N]}(B_n\vee D_{n-1})}_{L^m(\pr)}.
\]
Since 
\[
\sup_{n\in[N]}(B_n\vee D_{n-1})\leq\max_{j\in[p]}\sqrt{\alpha\sum_{i=1}^N\E[\xi_{ij}^2\mid\mcl G_{i-1}]}+4\alpha\xi^*_N,
\]
we obtain \eqref{max-rosenthal-aim} via Minkowski's inequality.
\end{proof}

\if0
The following corollary is an extension of \cite[Proposition B.1]{KuPa22} to martingales:
\begin{corollary}\label{coro:max-rosenthal}
    Under the assumptions of \cref{max-rosenthal}, if $\max_{i\in[n]}\|\|\xi_i\|_\infty\|_{L^q(\pr)}<\infty$ for some $q\in[m,\infty]$, then there exists a universal constant $C$ such that
\ba{
&\norm{\max_{j\in[p]}\abs{\sum_{i=1}^N\xi_{ij}}}_{L^m(\pr)}\\
&\leq C \bra{\norm{\max_{j\in[p]}\sqrt{\sum_{i=1}^N\E[\xi_{ij}^2\mid\mcl G_{i-1}]}}_{L^m(\pr)}\sqrt{m+\log p}
%+\bra{\sum_{i=1}^N\norm{\|\xi_i\|_\infty}_{L^q(\pr)}^{q/m}}^{m/q}(m+\log p)^{1-m/q}
+N^{m/q}\max_{i\in[N]}\norm{\|\xi_i\|_\infty}_{L^q(\pr)}(m+\log p)^{1-m/q}
}.
}
\end{corollary}

\begin{proof}
    When $q=\infty$, the claim immediately follows from \cref{max-rosenthal}, so we assume $q<\infty$. 
    Take a constant $B\geq0$ arbitrarily. 
    Set $\xi_i':=\xi_i1_{\{\|\xi_i\|_\infty\leq B\}}-\E[\xi_i1_{\{\|\xi_i\|_\infty\leq B\}}\mid\mcl G_{i-1}]$ and $\xi_i'':=\xi_i-\xi_i'$ for every $i\in[n]$. Since $\E[\xi_{ij}^{\prime2}\mid\mcl G_{i-1}]\leq \E[\xi_{ij}^2\mid\mcl G_{i-1}]$ for every $i\in[n]$, $\max_{i\in[n]}\|\xi_i'\|_\infty\leq2B$ and $(\xi_i')_{i=1}^N$ is a $\mathbf G$-martingale difference sequence in $\mathbb R^p$ by construction, \cref{max-rosenthal} gives
    \ba{
    \norm{\max_{j\in[p]}\abs{\sum_{i=1}^N\xi'_{ij}}}_{L^m(\pr)}
    \lesssim \norm{\max_{j\in[p]}\sqrt{\sum_{i=1}^N\E[\xi_{ij}^2\mid\mcl G_{i-1}]}}_{L^m(\pr)}\sqrt{m+\log p}
+B(m+\log p).
    }
    Meanwhile, since $\xi_{i}''=\xi_i1_{\{\|\xi_i\|_\infty> B\}}-\E[\xi_i1_{\{\|\xi_i\|_\infty> B\}}\mid\mcl G_{i-1}]$ by assumption, we have
    \ba{
    \norm{\max_{j\in[p]}\abs{\sum_{i=1}^N\xi''_{ij}}}_{L^m(\pr)}
    \leq2\sum_{i=1}^N\norm{\|\xi_{i}\|_\infty1_{\{\|\xi_i\|_\infty> B\}}}_{L^m(\pr)}
    %\leq 2B^{1-q/m}\sum_{i=1}^N\norm{\|\xi_{i}\|_\infty}_{L^q(\pr)}^{q/m}.
    \leq 2B^{1-q/m}N\max_{i\in[N]}\norm{\|\xi_{i}\|_\infty}_{L^q(\pr)}^{q/m}.
    }
    Therefore, by the triangle inequality
    \bm{
    \norm{\max_{j\in[p]}\abs{\sum_{i=1}^N\xi_{ij}}}_{L^m(\pr)}
    \lesssim \norm{\max_{j\in[p]}\sqrt{\sum_{i=1}^N\E[\xi_{ij}^2\mid\mcl G_{i-1}]}}_{L^m(\pr)}\sqrt{m+\log p}\\
+B(m+\log p)
%+B^{1-q/m}\sum_{i=1}^N\norm{\|\xi_{i}\|_\infty}_{L^q(\pr)}^{q/m}.
+B^{1-q/m}N\max_{i\in[N]}\norm{\|\xi_{i}\|_\infty}_{L^q(\pr)}^{q/m}.
    }
    Setting 
    $
    %B=\bra{(m+\log q)^{-1}\sum_{i=1}^N\norm{\|\xi_{i}\|_\infty}_{L^q(\pr)}^{q/m}}^{m/q}
    B=(m+\log q)^{-m/q}N^{m/q}\max_{i\in[N]}\norm{\|\xi_{i}\|_\infty}_{L^q(\pr)}
    $
    gives the desired result. 
\end{proof}
\fi

\begin{proof}[Proof of \cref{lem:nonneg-ada}]
We follow the proof of \cite[Theorem 5.1]{Hi90}. 
Let $\eta_n^*:=\max_{i\in[n]}\|\eta_i\|_\infty$ for every $n\geq0$. 
Define $\eta_i':=\eta_i1_{\{\eta_i^*\leq2\eta_{i-1}^*\}}$ and $\eta_i'':=\eta_i-\eta_i'$. 
By the proof of \cref{max-rosenthal}, we have $\|\eta_i''\|_\infty=\|\eta_i\|_\infty1_{\{\eta_i^*>2\eta_{i-1}^*\}}\leq2(\eta_i^*-\eta_{i-1}^*)$. Hence
\[
\E\sbra{\max_{j\in[p]}\sum_{i=1}^N\eta''_{ij}}\leq2\E[\eta_N^*].
\]
Therefore, we complete the proof once we show
\ben{\label{nonneg-aim}
\E\sbra{\max_{j\in[p]}\sum_{i=1}^N\eta'_{ij}}
\lesssim \E\sbra{\max_{j\in[p]}\sum_{i=1}^N\E[\eta_{ij}\mid\mcl G_{i-1}]}
+\E[\eta_N^*]\log p.
}
With $\xi'_{i}:=\eta'_{i}-\E[\eta'_{i}\mid\mcl G_{i-1}]$ for every $i\in[N]$, we can bound the right hand side of \eqref{nonneg-aim} as
\ben{\label{nonneg-eq1}
\E\sbra{\max_{j\in[p]}\sum_{i=1}^N\eta'_{ij}}
\leq\E\sbra{\max_{j\in[p]}\sum_{i=1}^N\E[\eta'_{ij}\mid\mcl G_{i-1}]}
+\E\sbra{\max_{j\in[p]}\abs{\sum_{i=1}^N\xi_{ij}}}
=:I+II.
}
By definition,
\ben{\label{nonneg-eq2}
I\leq\E\sbra{\max_{j\in[p]}\sum_{i=1}^N\E[\eta_{ij}\mid\mcl G_{i-1}]}.
}
Meanwhile, since $(\xi_i)_{i=1}^N$ is a martingale difference sequence in $\mathbb R^p$ with respect to $(\mcl G_i)_{i=0}^N$ by construction, we have by \cref{max-rosenthal}
\ba{
II\lesssim\E\sbra{\max_{j\in[p]}\sqrt{\sum_{i=1}^N\E[\xi_{ij}^2\mid\mcl G_{i-1}]}}\sqrt{\log p}
+\E\sbra{\max_{i\in[N]}\|\xi_i\|_\infty}\log p.
}
Since $\|\xi_{i}\|_\infty\leq4\eta_{i-1}^*\leq4\eta_N^*$ by construction,
\ba{
II\lesssim\E\sbra{\max_{j\in[p]}\sqrt{\eta_N^*\sum_{i=1}^N\E[|\xi_{ij}|\mid\mcl G_{i-1}]}}\sqrt{\log p}
+\E[\eta_N^*]\log p.
}
By the AM-GM inequality,
\ba{
\E\sbra{\max_{j\in[p]}\sqrt{\eta_N^*\sum_{i=1}^N\E[|\xi_{ij}|\mid\mcl G_{i-1}]}}\sqrt{\log p}
\leq\frac{1}{2}\bra{\E[\eta_N^*]\log p+\E\sbra{\max_{j\in[p]}\sum_{i=1}^N\E[|\xi_{ij}|\mid\mcl G_{i-1}]}}.
}
Since $\E[|\xi_{ij}|\mid\mcl G_{i-1}]\leq2\E[\eta_{ij}'\mid\mcl G_{i-1}]\leq2\E[\eta_{ij}\mid\mcl G_{i-1}]$, we conclude
\ben{\label{nonneg-eq3}
II\lesssim\E\sbra{\max_{j\in[p]}\sum_{i=1}^N\E[\eta_{ij}\mid\mcl G_{i-1}]}+\E[\eta_N^*]\log p.
}
Combining \eqref{nonneg-eq1}--\eqref{nonneg-eq3} gives \eqref{nonneg-aim}. 
\end{proof}

\if0
%The following corollary can be derived from \cref{lem:nonneg-ada} via an analogous argument to the proof of \cref{coro:max-rosenthal}, so we omit the proof. 
The following corollary gives a version of \cite[Proposition B.1]{KuPa22} for non-negative adapted sequences. 
\begin{corollary}\label{coro:nonneg}
    Under the assumptions of \cref{lem:nonneg-ada}, if $\max_{i\in[n]}\E[\|\eta_i\|_\infty^q]<\infty$ for some $q\geq1$, then there exists a universal constant $C$ such that
    \[
\E\sbra{\max_{j\in[p]}\sum_{i=1}^N\eta_{ij}}
\leq C\bra{\E\sbra{\max_{j\in[p]}\sum_{i=1}^N\E[\eta_{ij}\mid\mcl G_{i-1}]}
%+\bra{\sum_{i=1}^N\E\sbra{\|\eta_{i}\|_\infty^q}}^{1/q}(\log p)^{1-1/q}
+N^{1/q}\max_{i\in[N]}\norm{\|\eta_{i}\|_\infty}_{L^q(\pr)}(\log p)^{1-1/q}
}.
\]
\end{corollary}

\begin{proof}
    Take a constant $B\geq0$ arbitrarily. 
    Set $\eta_i':=\eta_i1_{\{\|\eta_i\|_\infty\leq B\}}$ and $\eta_i'':=\eta_i-\eta_i'$ for every $i\in[n]$. Since $\max_{i\in[n]}\|\eta_i'\|_\infty\leq B$ and $(\eta_i')_{i=1}^N$ is $\mathbf G$-adapted by construction, \cref{lem:nonneg-ada} gives
    \ba{
    \E\sbra{\max_{j\in[p]}\abs{\sum_{i=1}^N\eta'_{ij}}}
    \lesssim \E\sbra{\max_{j\in[p]}\sum_{i=1}^N\E[\eta_{ij}\mid\mcl G_{i-1}]}+B\log p.
    }
    Meanwhile, since $\eta_{i}''=\eta_i1_{\{\|\eta_i\|_\infty> B\}}$, we have
    \ba{
    \E\sbra{\max_{j\in[p]}\abs{\sum_{i=1}^N\eta''_{ij}}}
    \leq\sum_{i=1}^N\E\sbra{\|\eta_{i}\|_\infty1_{\{\|\eta_i\|_\infty> B\}}}
    \leq B^{1-q}N\max_{i\in[N]}\E\sbra{\|\eta_{i}\|_\infty^q}.
    }
    Therefore, by the triangle inequality
    \ba{
    \E\sbra{\max_{j\in[p]}\abs{\sum_{i=1}^N\eta_{ij}}}
    \lesssim \E\sbra{\max_{j\in[p]}\sqrt{\sum_{i=1}^N\E[\eta_{ij}^2\mid\mcl G_{i-1}]}}
+B\log p+B^{1-q}N\max_{i\in[N]}\E\sbra{\|\eta_{i}\|_\infty^q}.
    }
    Setting 
    $
    B=(N/\log q)^{1/q}\max_{i\in[n]}\|\|\xi_{i}\|_\infty\|_{L^q(\pr)}
    $
    gives the desired result. 
\end{proof}
\fi

\subsection{Proof of Lemma \ref{influence-mom}}

We need the following auxiliary estimate for the proof of \eqref{influence-mom-1}. 
\begin{lemma}\label{nemirovski-Lp}
Let $m\geq1$ and $\psi_j\in L^m(P^2)$ $(j\in[p])$. 
There exists a universal constant $C$ such that
\besn{\label{eq:nemirovski-Lp}
&\E\sbra{\max_{i\in[n]}\max_{j\in[p]}\int_S\abs{\sum_{i'\in[n]:i'< i}\cbra{\psi_j(X_{i'},x)-\E[\psi_j(X_{i'},x)]}}^mP(dx)}\\
&\leq (C\sqrt{m+\log p})^m\E\sbra{\max_{j\in[p]}\int_S\bra{\sum_{i=1}^{n-1}\psi_j(X_i,x)^2}^{m/2}P(dx)}.
}
\end{lemma}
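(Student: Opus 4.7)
The plan is to combine Doob's maximal inequality with Gin\'e--Zinn symmetrization and Talagrand's convex-concentration inequality for Rademacher variables. First, for each fixed $j\in[p]$, the process $i\mapsto \hat S_i^{(j)}:=\sum_{i'<i}\{\psi_j(X_{i'},\cdot)-\E\psi_j(X_{i'},\cdot)\}$ is an $L^m(P)$-valued martingale with respect to $\mcl G_i:=\sigma(X_1,\dots,X_{i-1})$, so $\|\hat S_i^{(j)}\|_{L^m(P)}$ is a non-negative submartingale and hence $M_i:=\max_{j\in[p]}\|\hat S_i^{(j)}\|_{L^m(P)}$ is too. Doob's $L^m$ maximal inequality (for $m>1$) applied to $M_i$ reduces the problem, up to a factor $(m/(m-1))^m$, to bounding $\E[\max_j\|\hat S_n^{(j)}\|_{L^m(P)}^m]$; the case $m=1$ is handled via scalar BDG pointwise in $x$ and Fubini. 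By Gin\'e--Zinn symmetrization applied to the convex increasing function $t\mapsto t^m$ of the norm $\max_j\|\cdot\|_{L^m(P)}$ on $\prod_jL^m(P)$, this quantity is further bounded up to the factor $2^m$ by $\E[\max_j\|T^{(j)}\|_{L^m(P)}^m]$, where $T^{(j)}(x):=\sum_i\varepsilon_i\psi_j(X_i,x)$ and $(\varepsilon_i)$ are i.i.d.\ Rademachers independent of $X$.

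The remaining term is analysed by conditioning on $X$ and applying Talagrand's concentration inequality to $\Phi_j(\varepsilon):=\|T^{(j)}\|_{L^m(P)}$, viewed as a function of $\varepsilon\in\{\pm1\}^n$. Two observations are needed: (i) $\Phi_j$ is convex as a norm of an affine function of $\varepsilon$; (ii) applying Cauchy--Schwarz pointwise in $x$ yields $|\sum_ic_i\psi_j(X_i,x)|\leq\|c\|_2\,\sigma^{(j)}(x)$ with $\sigma^{(j)}(x):=(\sum_i\psi_j(X_i,x)^2)^{1/2}$, so that $|\Phi_j(\varepsilon)-\Phi_j(\varepsilon')|\leq\|\varepsilon-\varepsilon'\|_2\,\|\sigma^{(j)}\|_{L^m(P)}$. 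Talagrand's concentration for convex Lipschitz functions of Rademacher variables then yields the sub-Gaussian tail bound $\pr_\varepsilon(|\Phi_j-\operatorname{med}\Phi_j|>t\mid X)\leq C\exp(-ct^2/\|\sigma^{(j)}\|_{L^m(P)}^2)$. The scalar Khintchine moment inequality (applied pointwise in $x$ and integrated) gives $\E_\varepsilon\Phi_j^m\leq(C\sqrt m)^m\|\sigma^{(j)}\|_{L^m(P)}^m$, and hence by Markov $\operatorname{med}\Phi_j\lesssim\sqrt m\,\|\sigma^{(j)}\|_{L^m(P)}$. A union bound over $j\in[p]$ followed by integration of the sub-Gaussian tail then delivers
\[
\E_\varepsilon\sbra{\max_{j\in[p]}\Phi_j^m\mid X}\leq(C\sqrt{m+\log p})^m\max_{j\in[p]}\|\sigma^{(j)}\|_{L^m(P)}^m,
\]
and taking $\E_X$ closes the argument.

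The main obstacle is ensuring that $\max_{j\in[p]}$ remains outside the $x$-integral on the right-hand side of \eqref{eq:nemirovski-Lp}, which hinges on the Lipschitz estimate for $\Phi_j$ being derived via Cauchy--Schwarz applied pointwise in $x$ (producing $\sigma^{(j)}(x)$) \emph{before} the $L^m(P)$-norm is taken. The naive alternative of applying Khintchine's moment inequality to $\E_\varepsilon\max_j|T^{(j)}(x)|^m$ at each fixed $x$ and only then integrating over $x$ would bound the left-hand side by $\E\int\max_j\sigma^{(j)}(x)^m P(dx)$, which by $\int\max_j(\cdot)\geq\max_j\int(\cdot)$ is strictly larger than the target $\E\max_j\int\sigma^{(j)}(x)^m P(dx)$. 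It is Talagrand's convex concentration in $\varepsilon$, combined with the pointwise-in-$x$ Cauchy--Schwarz step, that delivers the correct placement of $\max_j$ relative to the $x$-integration.
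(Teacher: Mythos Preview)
Your argument after the symmetrization step is correct and is a genuine alternative to the paper's route. The paper, after symmetrizing, bounds $(\E_\varepsilon[\max_j\|T^{(j)}\|_{L^m(P)}^m\mid X])^{1/m}$ by pulling the max inside via the $L^\alpha$-trick with $\alpha=m+\log p$ (paying only the factor $p^{1/\alpha}=e$) and then applying the Banach-space Khintchine inequality in $L^m(P)$ (\cite[Proposition 6.3.3]{HvNVW17}), which directly returns $\sqrt{\alpha-1}\,\|\sigma^{(j)}\|_{L^m(P)}$. Your Talagrand route---exploiting that $\varepsilon\mapsto\|T^{(j)}\|_{L^m(P)}$ is convex and $\|\sigma^{(j)}\|_{L^m(P)}$-Lipschitz because Cauchy--Schwarz is applied pointwise in $x$ \emph{before} the $L^m(P)$-norm---achieves the same placement of $\max_j$ outside the $x$-integral via sub-Gaussian tails and a union bound. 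Both approaches are equally sharp here; the paper's is a one-liner once the Banach-space Khintchine inequality is available, while yours is more hands-on but avoids invoking that result.

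There is, however, a genuine gap in how you handle the running maximum over $i$. Doob's $L^m$ inequality costs a factor $(m/(m-1))^m$, which cannot be absorbed into $(C\sqrt{m+\log p})^m$ uniformly for $m$ close to $1$ (recall $C$ must be universal). Your proposed fix at $m=1$---``scalar BDG pointwise in $x$ and Fubini''---does not work as stated: applying BDG at each fixed $x$ gives an estimate of $\E\max_i|\hat S_i^{(j)}(x)|$, but you still need $\max_j$ outside the $x$-integral and outside the expectation simultaneously, and Fubini alone does not deliver this without losing a factor $p$ rather than $\sqrt{\log p}$. The clean repair is to drop Doob altogether and symmetrize with the running maximum in place, exactly as the paper does (cf.~the proof of \cite[Lemma 1.2.6]{de1999decoupling}): one obtains $\E\max_i\|\hat S_i\|_{\mathbb B}^m\leq 2^{m+1}\E\|\sum_i\varepsilon_i\Psi_i\|_{\mathbb B}^m$ directly, with no dependence on $m/(m-1)$, after which your Talagrand argument applies verbatim.
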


\begin{proof}
Consider the vector space $\mathbb B:=L^m(P)^p=\{(f_1,\dots,f_p):f_1,\dots,f_p\in L^m(\pr)\}$ equipped with a norm $(f_1,\dots,f_p)\mapsto\max_{j\in[p]}\|f_j\|_{L^m(\pr)}$. 
It is straightforward to check that $\mathbb B$ is a Banach space. 
Then, for every $i\in[n]$, we define a map $\Psi_i:\Omega\to\mathbb B$ as follows: First, for $j\in[p]$ and $x\in S$, define a function $\psi_j^x:S\to\mathbb R$ as $\psi_j^x(y)=\psi_j(x,y)$ for $y\in S$. Fubini's theorem implies that $\psi_j^x\in L^m(P)$ $P$-a.s.~$x$. Since the law of $X_i$ is $P$, this means that $\psi_j^{X_i(\omega)}\in L^m(P)$ $\pr$-a.s.~$\omega$. 
Hence we can define the map $\Psi_i$ as $\Psi_i(\omega)=(\Psi_{i1}(\omega),\dots,\Psi_{ip}(\omega)):=(\psi_1^{X_i(\omega)},\dots,\psi_p^{X_i(\omega)})$ for $\omega\in\Omega$. 
Using the fact that $\{1_{E_1\times E_2}:E_1,E_2\in\mcl S\}$ is total in $L^m(P^2)$, one can easily verify that $\Psi_i$ is strongly $\pr$-measurable (see \citealp[Definition 1.1.14]{HvNVW16}). In particular, there exists a closed separable subspace $\mathbb B_0\subset\mathbb B$ such that $\Psi_i(\omega)\in \mathbb B_0$ $\pr$-a.s.~$\omega$ by the Pettis measurability theorem \cite[Theorem 1.1.20]{HvNVW16}. 
Further, by construction
\[
\max_{j\in[p]}\int_S\abs{\sum_{i'\in[n]:i'< i}\cbra{\psi_j(X_{i'},x)-\E[\psi_j(X_{i'},x)]}}^mP(dx)
=\norm{\sum_{i'\in[n]:i'< i}\bra{\Psi_{i'}-\E[\Psi_{i'}]}}_{\mathbb B}^m.
\]
Therefore, the left-hand side of \eqref{eq:nemirovski-Lp} is equal to
\ba{
I%&:=\E\sbra{\max_{i\in[n]}\max_{j\in[p]}\int_S\abs{\sum_{i'\in[n]:i'< i}\cbra{\psi_j(X_{i'},x)-\E[\psi_j(X_{i'},x)]}}^mP(dx)}\\
&:=\E\sbra{\max_{i\in[n]}\norm{\sum_{i'\in[n]:i'< i}\bra{\Psi_{i'}-\E[\Psi_{i'}]}}_{\mathbb B}^m}.
}
By a standard symmetrization argument (cf.~the proof of \citealp[Lemma 1.2.6]{de1999decoupling}),
\[
I\leq 2^{m+1}\E\sbra{\norm{\sum_{i=1}^{n-1}\eps_i\Psi_{i}}_{\mathbb B}^m},
\]
where $\eps_1,\dots,\eps_n$ are i.i.d.~Rademacher variables independent of $X$. 
With $\alpha:=m+\log p$, we have by \eqref{eq:max-out}
\ba{
\bra{\E\sbra{\norm{\sum_{i=1}^{n-1}\eps_i\Psi_{i}}_{\mathbb B}^m\mid X}}^{1/m}
%&=\bra{\E\sbra{\max_{j\in[p]}\norm{\sum_{i=1}^{n-1}\eps_i\Psi_{ij}}_{L^m(P)}^m\mid X}}^{1/m}\\
%&\leq\bra{\E\sbra{\max_{j\in[p]}\norm{\sum_{i=1}^{n-1}\eps_i\Psi_{ij}}_{L^m(P)}^\alpha\mid X}}^{1/\alpha}\\
&\leq e\max_{j\in[p]}\bra{\E\sbra{\norm{\sum_{i=1}^{n-1}\eps_i\Psi_{ij}}_{L^m(P)}^\alpha\mid X}}^{1/\alpha}.
}
Khintchine's inequality in $L^m(P)$ \cite[Proposition 6.3.3]{HvNVW17} gives
\ba{
\bra{\E\sbra{\norm{\sum_{i=1}^{n-1}\eps_i\Psi_{ij}}_{L^m(P)}^\alpha\mid X}}^{1/\alpha}
\leq\sqrt{\alpha-1}\norm{\bra{\sum_{i=1}^{n-1}\Psi_{ij}^2}^{1/2}}_{L^m(P)}.
}
As a result,
\ba{
I
\leq 2(2e\sqrt{m+\log p})^m\E\sbra{\max_{j\in[p]}\norm{\bra{\sum_{i=1}^{n-1}\Psi_{ij}^2}^{1/2}}_{L^{m}(P)}^{m}}.
}
Since
\[
\norm{\bra{\sum_{i=1}^{n-1}\Psi_{ij}^2}^{1/2}}_{L^{m}(P)}^{m}=\int_S\bra{\sum_{i=1}^{n-1}\psi_j(X_i,x)^2}^{m/2}P(dx),
\]
we complete the proof.
\end{proof}

\begin{proof}[Proof of \cref{influence-mom}]
First, we prove \eqref{influence-mom-1}. 
By \cref{nemirovski-Lp},
\ba{
I&:=\E\sbra{\max_{i\in[n]}\max_{j\in[p]}\int_S\abs{\sum_{i'\in[n]:i'< i}\psi_j(X_{i'},x)}^4P(dx)}\\
&\lesssim (\log p)^2\E\sbra{\max_{j\in[p]}\int_S\bra{\sum_{i=1}^{n-1}\psi_j(X_i,x)^2}^{2}P(dx)}\\
&\lesssim (\log p)^2\Biggl(\max_{j\in[p]}\int_S\bra{\sum_{i=1}^{n-1}P(\psi_j^2)(x)}^{2}P(dx)\\
&\hphantom{\lesssim(\log p)^2}+\E\sbra{\max_{j\in[p]}\int_S\abs{\sum_{i=1}^{n-1}\{\psi_j(X_{i},x)^2-P(\psi_j^2)(x)\}}^{2}P(dx)}\Biggr)\\
&=:I_1+I_2.
}
By definition,
\[
I_1\leq n^{2}\max_{j\in[p]}\|P(\psi_j^2)\|_{L^{2}(P)}^{2}\log^2p.
\]
Meanwhile, applying \cref{nemirovski-Lp} to functions $(y,x)\mapsto\psi_j(y,x)^2$ $(j\in[p])$, we obtain
\ba{
I_2\lesssim (\log p)^3\E\sbra{\max_{j\in[p]}\int_S\sum_{i=1}^{n-1}\psi_j(X_{i},x)^4P(dx)}
=(\log p)^3\E\sbra{\max_{j\in[p]}\sum_{i=1}^{n-1}P(\psi_j^4)(X_{i})}.
}
\if0
\cref{coro:nonneg} gives
\ba{
\E\sbra{\max_{j\in[p]}\sum_{i=1}^{n-1}P(\psi_{j}^4)(X_i)}
&\lesssim \max_{j\in[p]}\sum_{i=1}^{n-1}\E\sbra{P(\psi_{j}^4)(X_i)}
+n^{4/q}\max_{i\in[n]}\norm{\max_{j\in[p]}P(\psi_{j}^4)}_{L^{q/4}(\pr)}(\log p)^{1-4/q}\\
&\leq n\max_{j\in[p]}\|\psi_j\|_{L^4(P^2)}^4+n^{4/q}\max_{i\in[n]}\norm{\max_{j\in[p]}P(\psi_{j}^4)}_{L^{q/4}(\pr)}(\log p)^{1-4/q}.
}
\fi
Lemma 9 in \cite{CCK15} gives
\ba{
\E\sbra{\max_{j\in[p]}\sum_{i=1}^{n-1}P(\psi_{j}^4)(X_i)}
&\lesssim \max_{j\in[p]}\sum_{i=1}^{n-1}\E\sbra{P(\psi_{j}^4)(X_i)}
+\E\sbra{\max_{i\in[n]}\max_{j\in[p]}P(\psi_{j}^4)(X_i)}\log p\\
&\leq n\max_{j\in[p]}\|\psi_j\|_{L^4(P^2)}^4+\E\sbra{\max_{j\in[p]}M(P(\psi_{j}^4))}\log p.
}
Consequently, we obtain \eqref{influence-mom-1}. 

Next, we prove \eqref{influence-mom-2}. 
Since
\[
\sum_{i'\in[n]:i'\neq i}\psi_j(X_{i'},X_i)
=\sum_{i'\in[n]:i'< i}\psi_j(X_{i'},X_i)
+\sum_{i'\in[n]:i'> i}\psi_j(X_{i'},X_i),
\]
we have
\ba{
&\E\sbra{\max_{j\in[p]}\sum_{i=1}^n\abs{\sum_{i'\in[n]:i'\neq i}\psi_j(X_{i'},X_i)}^4}\\
&\leq 8\bra{\E\sbra{\max_{j\in[p]}\sum_{i=1}^n\abs{\sum_{i'\in[n]:i'< i}\psi_j(X_{i'},X_i)}^4}
+\E\sbra{\max_{j\in[p]}\sum_{i=1}^n\abs{\sum_{i'\in[n]:i'> i}\psi_j(X_{i'},X_i)}^4}}.
}
Since $(X_i)_{i=1}^n$ is i.i.d., it has the same law as $(X_{n-i+1})_{i=1}^n$. Hence 
\ba{
\E\sbra{\max_{j\in[p]}\sum_{i=1}^n\abs{\sum_{i'\in[n]:i'> i}\psi_j(X_{i'},X_i)}^4}
=\E\sbra{\max_{j\in[p]}\sum_{i=1}^n\abs{\sum_{i'\in[n]:i'< i}\psi_j(X_{i'},X_i)}^4},
}
and thus
\ban{
\E\sbra{\max_{j\in[p]}\sum_{i=1}^n\abs{\sum_{i'\in[n]:i'\neq i}\psi_j(X_{i'},X_i)}^4}
&\leq 16\E\sbra{\max_{j\in[p]}\sum_{i=1}^n\abs{\sum_{i'\in[n]:i'< i}\psi_j(X_{i'},X_i)}^4}\notag\\
&=:16II.\label{non-u:1st-reduction}
}
To bound $II$, we are going to apply \cref{lem:nonneg-ada}. Define a filtration $(\mcl G_i)_{i=1}^n$ as $\mcl G_i:=\sigma(X_1,\dots,X_i)$ for $i\in[n]$. 
Also, for every $i\in[n]$, define a random vector $\eta_i=(\eta_{i1},\dots,\eta_{ip})^\top$ as 
\[
\eta_{ij}:=\abs{\sum_{i'\in[n]:i'< i}\psi_j(X_{i'},X_i)}^4,\quad j=1,\dots,p.
%,\qquad\xi_{ij}:=\eta_{ij}-\E[\eta_{ij}\mid\mcl G_{i-1}]
\]
Then $(\eta_i)_{i=1}^n$ is adapted to the filtration $(\mcl G_i)_{i=1}^n$. 
Hence \cref{lem:nonneg-ada} gives
\ben{
II\lesssim \E\sbra{\max_{j\in[p]}\sum_{i=1}^n\E[\eta_{ij}\mid\mcl G_{i-1}]}
+\E\sbra{\max_{i\in[n]}\max_{j\in[p]}\eta_{ij}}\log p
=:III+IV\log p,\label{non-u:I-bound}
%I\lesssim \E\sbra{\max_{j\in[p]}\sum_{i=1}^n\E[\eta_{ij}\mid\mcl G_{i-1}]}
%+\bra{\sum_{i=1}^n\E[\|\eta_{i}\|_\infty^{q/m}]}^{m/q}(\log p)^{1-m/q}
}
where we set $\mcl G_0:=\{\emptyset,\Omega\}$. 
Since $X_i$ is independent of $\mcl G_{i-1}$ for every $i$, we have
\ba{
III&=\E\sbra{\max_{j\in[p]}\sum_{i=1}^n\int_S\abs{\sum_{i'\in[n]:i'< i}\psi_j(X_{i'},x)}^4P(dx)}\leq nI.
}
Hence, the first part of the proof gives
\if0
\bmn{
III\lesssim 
n^3\max_{j\in[p]}\|P(\psi_j^2)\|_{L^{2}(P)}^{2}\log^{2}p
+n^2\max_{j\in[p]}\|\psi_j\|_{L^4(P^2)}^4\log^{3}p\\
+n^{1+4/q}\norm{\max_{j\in[p]}P(\psi_{j}^4)}_{L^{q/4}(\pr)}(\log p)^{4-4/q}.
}
\fi
\ben{
III\lesssim 
n^3\max_{j\in[p]}\|P(\psi_j^2)\|_{L^{2}(P)}^{2}\log^{2}p
+n^2\max_{j\in[p]}\|\psi_j\|_{L^4(P^2)}^4\log^{3}p\\
+n\E\sbra{\max_{j\in[p]}M(P(\psi_{j}^4))}\log^{4} p.
}
To bound $IV$, recall that $(X_i)_{i=1}^n$ has the same law as $(X_{n-i+1})_{i=1}^n$. 
Thus, $IV$ can be rewritten as
\ba{
IV=\E\sbra{\max_{i\in[n]}\max_{j\in[p]}\abs{\sum_{i'\in[n]:i'> i}\psi_j(X_{i'},X_i)}^4}
=\E\sbra{\max_{(i,j)\in[n]\times[p]}\abs{\sum_{i'=1}^nY_{i',(i,j)}}^4},
}
where $Y_{i',(i,j)}=\psi_j(X_{i'},X_i)$ if $i'>i$ and $Y_{i',(i,j)}=0$ otherwise. 
\if0
\[
Y_{i',(i,j)}=\begin{cases}
\psi_j(X_{i'},X_i) & \text{if }i'>i,\\
0 & \text{otherwise}.
\end{cases}
\]
\fi
Observe that $(Y_{i',(i,j)})_{i'=1}^n$ is a martingale difference sequence with respect to $(\mcl G_{i'})_{i'=0}^n$ for all $i\in[n]$ and $j\in[p]$. 
Hence \cref{max-rosenthal} gives
\ban{
IV&\lesssim \E\sbra{\max_{i\in[n],j\in[p]}\bra{\sum_{i'=1}^n\E[Y_{i',(i,j)}^2\mid\mcl G_{i'-1}]}^{2}}\log^{2} (np)
+\E\sbra{\max_{i,i'\in[n],j\in[p]}|Y_{i',(i,j)}|^4}\log^4(np)\notag\\
%&\leq n^{2}\E\sbra{\max_{i\in[n],j\in[p]}P(\psi_j^2)(X_i)^{2}}\log^{2} (np)
%+\E\sbra{\max_{i,i'\in[n],j\in[p]}|\psi_j(X_i,X_{i'})|^4}\log^4(np)\notag\\
&\leq n^{2}\E\sbra{\max_{i\in[n],j\in[p]}P(\psi_j^2)(X_i)^{2}}\log^{2} (np)
+\E\sbra{\max_{j\in[p]}M(\psi_j)^4}\log^4(np).
%&\leq n^{2+4/q}\norm{\max_{j\in[p]}P(\psi_j^2)}_{L^{q/2}(\pr)}^2\log^{2} (np)
%+n^{8/q}\norm{\max_{j\in[p]}|\psi_j|}_{L^q(\pr)}^4\log^4(np).
\label{non-u:III-bound}
}
%where the last inequality follows by Jensen's inequality. 
\if0
Further, observe that
\ba{
\max_{j\in[p]}\|\psi_j\|_{L^4(P^2)}^4
&=\max_{j\in[p]}\int_{S^2}\psi_j(x,y)^4P(dx)P(dy)
=\max_{j\in[p]}\int_SP(\psi_j^4)(y)P(dy)\\
&=\max_{j\in[p]}\E[P(\psi_j^4)(X_1)]
\leq\E\sbra{\max_{j\in[p]}M(P(\psi_{j}^4))}
}
\fi
Combining \eqref{non-u:1st-reduction} with \eqref{non-u:I-bound}--\eqref{non-u:III-bound} gives the desired result. 
\end{proof}

\subsection{Proof of Lemma \ref{lem:delta}}\label{proof:delta}

By the same reasoning as in the proof of \cref{thm:clt-ustat}, we may assume $\sigma_j=1$ for all $j\in[p]$ without loss of generality. 

The proof of \cref{lem:delta} is based on elementary but lengthy computations using properties of contraction kernels. 
In addition to the basic properties given in \cite[Lemma 2.4]{DoPe19}, we need the following ones. 
\begin{lemma}\label{max-contraction}
Given two symmetric kernels $\psi\in L^2(P^{r}),\varphi\in L^2(P^{r'})$ and two integers $0\leq l\leq s\leq r\wedge r'$, we have the following properties.  
\begin{enumerate}[label=(\alph*)]

    \item\label{contr-c} If $r=r'$, then $\psi\star_r^l\varphi=P^{l}(\psi\varphi)$.

    \item\label{contr-b} $M(\psi\star_s^l\varphi)^2\leq M(P^l(\psi^2))M(P^l(\varphi^2))$.

    \item \label{contr-d} For $P^{r'-s}$-a.s.~$v\in S^{r'-s}$,
    \[
\int_{S^{r-s}}\psi\star_s^s\varphi(u,v)^2P^{r-s}(du)\leq\|\psi\star_s^s\psi\|_{L^2(P^{2r-2s})}P^s(\varphi^2)(v).
    \]

\end{enumerate}
\end{lemma}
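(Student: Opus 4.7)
The plan is to prove the three parts separately; each reduces to a careful unpacking of the definition of the contraction kernel combined with one application of the Cauchy--Schwarz inequality and, in one case, Fubini's theorem. No deep machinery is needed.

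For (a), I would just substitute $r = r' = s$ into the defining formula of $\psi \star_s^l \varphi$. Since $s = r$ and $s = r'$, both the $u_1, \ldots, u_{r-s}$ slot and the $v_1, \ldots, v_{r'-s}$ slot disappear, leaving only the $y_1, \ldots, y_{s-l} = y_1, \ldots, y_{r-l}$ arguments. The defining conditional expectation is then $\E[\psi(X_1,\ldots,X_l,y_1,\ldots,y_{r-l})\varphi(X_1,\ldots,X_l,y_1,\ldots,y_{r-l})]$, which is exactly $P^l(\psi\varphi)(y_1,\ldots,y_{r-l})$.

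For (b), I would apply Cauchy--Schwarz pointwise to the defining expectation: for $P^{r+r'-s-l}$-a.e.\ $(y,u,v)$,
\[
|(\psi\star_s^l\varphi)(y,u,v)|^2 \leq \E[\psi(X_{[l]},y,u)^2]\,\E[\varphi(X_{[l]},y,v)^2] = P^l(\psi^2)(y,u)\cdot P^l(\varphi^2)(y,v).
\]
Then taking the maximum over tuples $\mathbf i \in I_{n,r+r'-s-l}$, I would split $\mathbf i$ into the coordinates corresponding to $y$, $u$, and $v$, observe that the $(y,u)$-block is a tuple in $I_{n,r-l}$ and the $(y,v)$-block is a tuple in $I_{n,r'-l}$, and bound each factor by $M(P^l(\psi^2))$ and $M(P^l(\varphi^2))$, respectively.

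For (c), I would introduce an independent copy $(X_1',\ldots,X_s')$ of $(X_1,\ldots,X_s)$ and rewrite
\[
\int_{S^{r-s}}(\psi\star_s^s\varphi)(u,v)^2\,P^{r-s}(du) = \E\!\left[\varphi(X_{[s]},v)\varphi(X'_{[s]},v)\,h(X_{[s]},X'_{[s]})\right],
\]
where $h(x,x') := \int \psi(x,u)\psi(x',u)\,P^{r-s}(du)$. Applying the Cauchy--Schwarz inequality in the expectation and using independence of $X_{[s]}$ and $X'_{[s]}$ gives $\E[\varphi(X_{[s]},v)^2\varphi(X'_{[s]},v)^2] = P^s(\varphi^2)(v)^2$, so
\[
\int_{S^{r-s}}(\psi\star_s^s\varphi)(u,v)^2\,P^{r-s}(du) \leq P^s(\varphi^2)(v)\sqrt{\E[h(X_{[s]},X'_{[s]})^2]}.
\]
The main (and really only) obstacle is to identify $\E[h(X_{[s]},X'_{[s]})^2]$ with $\|\psi\star_s^s\psi\|_{L^2(P^{2r-2s})}^2$. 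This is a transposition identity: expanding $h^2$ and swapping the order of integration via Fubini,
\[
\E[h(X_{[s]},X'_{[s]})^2] = \int\!\!\int\!\!\int\!\!\int \psi(x,u)\psi(x',u)\psi(x,u')\psi(x',u')\,P^s(dx)P^s(dx')P^{r-s}(du)P^{r-s}(du'),
\]
and reordering so that the $x,x'$ integrations are performed first produces $\int\!\!\int (\psi\star_s^s\psi)(u,u')^2 P^{r-s}(du)P^{r-s}(du')$, as required. Combining the two bounds then yields the assertion.
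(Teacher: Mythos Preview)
Your proposal is correct and follows essentially the same approach as the paper: part (a) is immediate from the definition, part (b) is a pointwise Cauchy--Schwarz bound, and part (c) uses Fubini to rewrite the integral as a bilinear form in $\varphi(y,v)\varphi(y',v)$ against $\psi\star_{r-s}^{r-s}\psi(y,y')$, followed by Cauchy--Schwarz and the transposition identity $\|\psi\star_{r-s}^{r-s}\psi\|_{L^2(P^{2s})}=\|\psi\star_s^s\psi\|_{L^2(P^{2r-2s})}$. The only cosmetic difference is that the paper quotes this last identity from \cite{DoPe19} (their Eq.~(7.7)), whereas you derive it directly by expanding and reordering the fourfold integral.
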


\begin{proof}
Property \ref{contr-c} immediately follows by definition. 
Property \ref{contr-b} follows from the Schwarz inequality. 
Let us prove property \ref{contr-d}. 
Using Fubini's theorem repeatedly, we obtain for $P^{r'-s}$-a.s.~$v$
\ba{
\int_{S^{r-s}}\psi\star_s^s\varphi(u,v)^2P^{r-s}(du)
&=\int_{S^{r+s}}\psi(y,u)\varphi(y,v)\psi(y',u)\varphi(y',v)P^s(dy)P^s(dy')P^{r-s}(du)\\
&=\int_{S^{2s}}\psi\star^{r-s}_{r-s}\psi(y,y')\varphi(y,v)\varphi(y',v)P^s(dy)P^s(dy').
}
Hence, the Schwarz inequality gives
\ba{
\int_{S^{r-s}}\psi\star_s^s\varphi(u,v)^2P(du)
&\leq\|\psi\star^{r-s}_{r-s}\psi\|_{L^2(P^{2s})}\sqrt{\int_{S^{2s}}\varphi(y,v)^2\varphi(y',v)^2P^s(dy)P^s(dy')}\\
&=\|\psi\star^s_s\psi\|_{L^2(P^{2r-2s})}P^s(\varphi^2)(v),
}
where the last equality follows by Eq.(7.7) in \cite{DoPe19} and Fubini's theorem.
\end{proof}

\begin{corollary}\label{aux-delta1}
For any $a,b\in[r]$, $s\in[a\wedge b]$ and $0\leq l\leq s\wedge(a+b-s-1)$,
\bm{
\Delta_1(a,b;s,l,a+b-l-s)
\leq 
n^{2r+l-2a}(\log p)^{2a-l-s}\sqrt{\E\sbra{\max_{j\in[p]}\frac{M(P^l(|\pi_a\psi_j|^2))^2}{\sigma_j^2}}}\\
+n^{2r+l-2b}(\log p)^{2b-l-s}\sqrt{\E\sbra{\max_{j\in[p]}\frac{M(P^l(|\pi_b\psi_{j}|^2))^2}{\sigma_j^2}}}.
}
\end{corollary}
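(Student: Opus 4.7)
The plan is to substitute $u = a+b-l-s$ into the definition of $\Delta_1(a,b;s,l,u)$ and simplify. With this choice, the outer projection $P^{a+b-l-s-u}$ becomes the identity, the $n$-exponent $2r + \tfrac{l-s-a-b-u}{2}$ collapses to $2r+l-a-b$, and the $\log p$ exponent $\tfrac{a+b-l-s+u}{2}$ collapses to $a+b-l-s$. Thus
\[
\Delta_1(a,b;s,l,a+b-l-s)
= n^{2r+l-a-b}(\log p)^{a+b-l-s}\sqrt{\E\sbra{\max_{j,k\in[p]}\frac{M\bigl(|\wt{\pi_a\psi_j\star^{l}_s\pi_b\psi_{k}}|^2\bigr)}{\sigma_j^{2}\sigma_k^{2}}}}.
\]

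Next, I would remove the symmetrization by \eqref{sym-M}, which together with the elementary identity $M(f^2)=M(f)^2$ yields $M\bigl(|\wt{\pi_a\psi_j\star^{l}_s\pi_b\psi_{k}}|^2\bigr) \leq M\bigl(\pi_a\psi_j\star^{l}_s\pi_b\psi_{k}\bigr)^2$. Then I would apply the contraction-kernel bound \cref{max-contraction}\ref{contr-b} with $r=a$, $r'=b$, which gives
\[
M\bigl(\pi_a\psi_j\star^{l}_s\pi_b\psi_{k}\bigr)^2
\leq M\bigl(P^l(|\pi_a\psi_j|^2)\bigr)\,M\bigl(P^l(|\pi_b\psi_k|^2)\bigr).
\]
Since the right-hand side factorises through $j$ and $k$, pushing the maxima inside the product lets me write the integrand as $U_a \, U_b$, where $U_a:=\max_{j\in[p]} M(P^l(|\pi_a\psi_j|^2))/\sigma_j^2$ and $U_b$ is defined analogously.

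Finally, with $c_a := n^{2r+l-2a}(\log p)^{2a-l-s}$ and $c_b := n^{2r+l-2b}(\log p)^{2b-l-s}$, the prefactor satisfies $n^{2r+l-a-b}(\log p)^{a+b-l-s} = \sqrt{c_a c_b}$. Applying AM-GM pointwise, $c_a c_b\, U_a U_b \leq \tfrac{1}{2}(c_a^2 U_a^2 + c_b^2 U_b^2)$, taking expectation and then using $\sqrt{x+y}\leq\sqrt{x}+\sqrt{y}$ yields
\[
\sqrt{c_a c_b\,\E[U_a U_b]}
\leq c_a\sqrt{\E[U_a^2]}+c_b\sqrt{\E[U_b^2]},
\]
which is precisely the asserted bound.

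There is no real obstacle here; the only point requiring a little care is the bookkeeping of the exponents of $n$ and $\log p$ at the AM-GM step, and the verification that the choice $u=a+b-l-s$ is admissible (which follows from $0\leq l\leq s\wedge(a+b-s-1)$, forcing $0\leq u\leq a+b-l-s$ as required in the definition of $\Delta_1(a,b;s,l,u)$).
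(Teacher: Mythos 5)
Your proposal is correct and follows essentially the same route as the paper's proof: substitute $u=a+b-l-s$, remove the symmetrization via \eqref{sym-M} together with $M(f^2)=M(f)^2$, apply \cref{max-contraction}\ref{contr-b}, and split the resulting product with AM-GM before taking square roots. The only (cosmetic) difference is that the paper first normalizes $\sigma_j=1$, whereas your unnormalized bookkeeping yields $\sigma_j^4$ in the denominators, which is the dimensionally consistent form of the stated bound and coincides with it under that normalization.
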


\begin{proof}
Recall that we may assume $\sigma_j=1$ for all $j\in[p]$. 
By \eqref{sym-M},
\ba{
\Delta_1(a,b;s,l,a+b-l-s)
\leq n^{2r+l-a-b}(\log p)^{a+b-l-s}
\sqrt{\E\sbra{\max_{j,k\in[p]}M(\pi_a\psi_j\star^{l}_s\pi_b\psi_{k})^2}}.
}
By \cref{max-contraction}\ref{contr-b} and the AM-GM inequality,
\ba{
&2n^{-2a-2b}(\log p)^{2(a+b)}\max_{j,k\in[p]}M(\pi_a\psi_j\star^{l}_s\pi_b\psi_{k})^2\\
&\leq n^{-4a}(\log p)^{4a}\max_{j\in[p]}M(P^l(|\pi_a\psi_j|^2))^2
+n^{-4b}(\log p)^{4b}\max_{k\in[p]}M(P^l(|\pi_b\psi_{k}|^2))^2.
}
Combining these bounds gives the desired result. 
\end{proof}

\begin{proof}[Proof of \eqref{lem:delta-eq11}]
By \cref{aux-delta1},
\ba{
\Delta_1(1,1;1,0,1)\log^2p\leq2n^2(\log p)^3\sqrt{\E\sbra{\max_{j\in[p]}M(\pi_1\psi_j)^4}}
\leq2\sqrt{\Delta_{2,*}^{(2)}(1)\log^5p}.
}
Also, by Lemma 2.4(iv) in \cite{DoPe19},
\ba{
\Delta_1(1,1;1,0,0)\log^2p\leq n^{\frac{5}{2}}\max_{j\in[p]}\|\pi_1\psi_j\|_{L^4(P)}^2\log^{5/2}p
=\sqrt{\Delta_{2,*}^{(1)}(1)\log^5p}.
}
Hence we obtain \eqref{lem:delta-eq11}.
\end{proof}

\begin{proof}[Proof of \eqref{lem:delta-eq22}]
Observe that
\if0
\ba{
\Delta_1(2,2)
&=\sum_{s=1}^{2}\sum_{l=0}^{s\wedge(3-s)}\max_{0\leq u\leq 4-l-s}n^{4+\frac{l-s-4-u}{2}}(\log p)^{\frac{4-l-s+u}{2}}\\
&\quad\times\sqrt{\E\sbra{\max_{j,k\in[p]}M\bra{P^{4-l-s-u}(|\wt{\pi_2\psi_j\star^{l}_s\pi_2\psi_{k}}|^2)}}}\\
&=\max_{0\leq u\leq 3}n^{4+\frac{-5-u}{2}}(\log p)^{\frac{3+u}{2}}\sqrt{\E\sbra{\max_{j,k\in[p]}M\bra{P^{3-u}(|\wt{\pi_2\psi_j\star^{0}_1\pi_2\psi_{k}}|^2)}}}\\
&\quad+\max_{0\leq u\leq 2}n^{4+\frac{-4-u}{2}}(\log p)^{\frac{2+u}{2}}\sqrt{\E\sbra{\max_{j,k\in[p]}M\bra{P^{2-u}(|\wt{\pi_2\psi_j\star^{1}_1\pi_2\psi_{k}}|^2)}}}\\
&\quad+\max_{0\leq u\leq 2}n^{4+\frac{-6-u}{2}}(\log p)^{\frac{2+u}{2}}\sqrt{\E\sbra{\max_{j,k\in[p]}M\bra{P^{2-u}(|\wt{\pi_2\psi_j\star^{0}_2\pi_2\psi_{k}}|^2)}}}\\
&\quad+\max_{0\leq u\leq 1}n^{4+\frac{-5-u}{2}}(\log p)^{\frac{1+u}{2}}\sqrt{\E\sbra{\max_{j,k\in[p]}M\bra{P^{1-u}(|\wt{\pi_2\psi_j\star^{1}_2\pi_2\psi_{k}}|^2)}}}\\
&=:\max_{0\leq u\leq 3}I_u
+\max_{0\leq u\leq 2}II_u
+\max_{0\leq u\leq 2}III_u
+\max_{0\leq u\leq 1}IV_u.
}
\fi
\ba{
\Delta_1(2,2)
%&=\sum_{s=1}^{2}\sum_{l=0}^{s\wedge(3-s)}\max_{0\leq u\leq 4-l-s}\Delta_1(2,2;s,l,u)\\
&=\max_{0\leq u\leq 3}\Delta_1(2,2;1,0,u)
+\max_{0\leq u\leq 2}\Delta_1(2,2;1,1,u)\\
&\quad+\max_{0\leq u\leq 2}\Delta_1(2,2;2,0,u)
+\max_{0\leq u\leq 1}\Delta_1(2,2;2,1,u)\\
&=:\max_{0\leq u\leq 3}I_u
+\max_{0\leq u\leq 2}II_u
+\max_{0\leq u\leq 2}III_u
+\max_{0\leq u\leq 1}IV_u.
}
By \cref{aux-delta1},
\ben{\label{I3-III2-bound}
\bra{I_3\vee III_2}\log^2p\leq \sqrt{\E\sbra{\max_{j\in[p]}M\bra{\pi_2\psi_j}^4}\log^{10}p}
\leq\sqrt{\Delta_{2,*}^{(4)}(2)\log^5p}.
}
and
\ben{\label{II2-IV1-bound}
(II_2\vee IV_1)\log^2p\leq n\sqrt{\E\sbra{\max_{j\in[p]}M\bra{P(|\pi_2\psi_j|^2)}^2}}\log^4p
\leq\sqrt{\Delta_{2,*}^{(5)}(2)\log^5p}.
}
Next, by \eqref{sym-lp},
\ba{
I_0&\leq n^{\frac{3}{2}}(\log p)^{\frac{3}{2}}\max_{j,k\in[p]}\|\pi_2\psi_j\star^{0}_1\pi_2\psi_{k}\|_{L^2(P^3)},\\
II_0&\leq n^2(\log p)\max_{j,k\in[p]}\|\pi_2\psi_j\star^{1}_1\pi_2\psi_{k}\|_{L^2(P^2)},\\
III_0&\leq n(\log p)\max_{j\in[p]}\|\pi_2\psi_j\star^{0}_2\pi_2\psi_{k}\|_{L^2(P^2)},\\
IV_0&\leq n^{\frac{3}{2}}(\log p)^{\frac{1}{2}}\max_{j,k\in[p]}\|\pi_2\psi_j\star^{1}_2\pi_2\psi_{k}\|_{L^2(P)}.
}
By \cite[Lemma 2.4(iii)]{DoPe19} and \cref{max-contraction}\ref{contr-c},
\ba{
\max_{j,k\in[p]}\|\pi_2\psi_j\star^{0}_1\pi_2\psi_{k}\|_{L^2(P^3)}
\leq\max_{j\in[p]}\|\pi_2\psi_j\star^{1}_2\pi_2\psi_{j}\|_{L^2(P)}
=\max_{j\in[p]}\|P(|\pi_2\psi_j|^2)\|_{L^2(P)}.
}
Hence
\ben{\label{I0-bound}
(I_0\vee IV_0)\log^2p\leq n^{\frac{3}{2}}\max_{j\in[p]}\|P(|\pi_2\psi_j|^2)\|_{L^2(P)}(\log p)^{\frac{7}{2}}
\leq\sqrt{\Delta_{2,*}^{(2)}(2)\log^5p}.
}
Also, Lemma 2.4(vi) in \cite{DoPe19} gives
\ben{\label{II0-bound}
II_0\log^2p\leq \Delta_1^{(0)}\log^3p.
}
Moreover, Lemma 2.4(iv) in \cite{DoPe19} gives
\ben{\label{III0-bound}
III_0\log^2p\leq n\max_{j\in[p]}\|\pi_2\psi_j\|_{L^4(P^2)}^2\log^3p
\leq\sqrt{\Delta_{2,*}^{(1)}(2)\log^5p}.
}
It remains to bound $I_1,I_2,II_1$ and $III_1$. 

\paragraph{Step 1.} Let us bound
\ba{
I_1=n(\log p)^2\sqrt{\E\sbra{\max_{j,k\in[p]}M\bra{P^{2}(|\wt{\pi_2\psi_j\star^{0}_1\pi_2\psi_{k}}|^2)}}}.
}
For any $j,k\in[p]$ and $i\in[n]$, Jensen's inequality gives
\ba{
&P^{2}(|\wt{\pi_2\psi_j\star^{0}_1\pi_2\psi_{k}}|^2)(X_i)\\
&\leq\frac{1}{6}\int\pi_2\psi_j(y,u)^2\pi_2\psi_k(y,X_i)^2P(dy)P(du)
+\frac{1}{6}\int\pi_2\psi_j(y,X_i)^2\pi_2\psi_k(y,u)^2P(dy)P(du)\\
&\quad+\frac{1}{6}\int\pi_2\psi_j(u,y)^2\pi_2\psi_k(u,X_i)^2P(dy)P(du)
+\frac{1}{6}\int\pi_2\psi_j(u,X_i)^2\pi_2\psi_k(u,y)^2P(dy)P(du)\\
&\quad+\frac{1}{6}\int\pi_2\psi_j(X_i,y)^2\pi_2\psi_k(X_i,u)^2P(dy)P(du)
+\frac{1}{6}\int\pi_2\psi_j(X_i,u)^2\pi_2\psi_k(X_i,y)^2P(dy)P(du)\\
%&=\frac{1}{3}\int\pi_2\psi_j(y,u)^2\pi_2\psi_k(y,X_i)^2P(dy)P(du)
%+\frac{1}{3}\int\pi_2\psi_j(y,X_i)^2\pi_2\psi_k(y,u)^2P(dy)P(du)\\
%&\quad+\frac{1}{3}\int\pi_2\psi_j(X_i,y)^2\pi_2\psi_k(X_i,u)^2P(dy)P(du)\\
&\leq\frac{2}{3}\max_{j,k\in[p]}M\bra{P\bra{|\pi_2\psi_j|^2}\star_1^1|\pi_2\psi_k|^2}
+\frac{1}{3}\max_{j\in[p]}M\bra{P\bra{|\pi_2\psi_j|^2}}^2.
}
By \cref{max-contraction}\ref{contr-b},
\ba{
M\bra{P\bra{|\pi_2\psi_j|^2}\star_1^1|\pi_2\psi_k|^2}
\leq\|P\bra{|\pi_2\psi_j|^2}\|_{L^2(P)}\sqrt{M\bra{P(|\pi_2\psi_k|^4)}}.
}
Hence, by the AM-GM inequality,
\ba{
n^2M\bra{P\bra{|\pi_2\psi_j|^2}\star_1^1|\pi_2\psi_k|^2}\log^8p
\leq \frac{n^3}{2}\|P\bra{|\pi_2\psi_j|^2}\|_{L^2(P)}^2\log^7p+\frac{n}{2}M\bra{P(|\pi_2\psi_k|^4)}\log^9p.
}
All together, we obtain
\ban{
(I_1\log^2p)^2
&\leq 
n^3\max_{j\in[p]}\|P\bra{|\pi_2\psi_j|^2}\|_{L^2(P)}^2\log^7p
+n\E\sbra{\max_{j\in[p]}M\bra{P\bra{|\pi_2\psi_j|^4}}}\log^9p\notag\\
&\quad+n^2\E\sbra{\max_{j\in[p]}M\bra{P\bra{|\pi_2\psi_j|^2}}^2}\log^8p\notag\\
&\leq\bra{\Delta_{2,*}^{(2)}(2)+\Delta_{2,*}^{(3)}(2)+\Delta_{2,*}^{(5)}(2)}\log^5p,\label{I1-bound}
}
where we used \eqref{eq:max-out} in the last line. 

\paragraph{Step 2.} Let us bound
\ba{
I_2&=n^{\frac{1}{2}}(\log p)^{\frac{5}{2}}\sqrt{\E\sbra{\max_{j,k\in[p]}M\bra{P(|\wt{\pi_2\psi_j\star^{0}_1\pi_2\psi_{k}}|^2)}}}.
}
For any $j,k\in[p]$ and $i_1,i_2\in[n]$, Jensen's inequality gives
\ba{
&P(|\wt{\pi_2\psi_j\star^{0}_1\pi_2\psi_{k}}|^2)(X_{i_1},X_{i_2})\\
&\leq\frac{1}{6}\int\pi_2\psi_j(y,X_{i_1})^2\pi_2\psi_k(y,X_{i_2})^2P(dy)
+\frac{1}{6}\int\pi_2\psi_j(y,X_{i_2})^2\pi_2\psi_k(y,X_{i_1})^2P(dy)\\
&\quad+\frac{1}{6}\int\pi_2\psi_j(X_{i_1},y)^2\pi_2\psi_k(X_{i_1},X_{i_2})^2P(dy)
+\frac{1}{6}\int\pi_2\psi_j(X_{i_1},X_{i_2})^2\pi_2\psi_k(X_{i_1},y)^2P(dy)\\
&\quad+\frac{1}{6}\int\pi_2\psi_j(X_{i_2},y)^2\pi_2\psi_k(X_{i_2},X_{i_1})^2P(dy)
+\frac{1}{6}\int\pi_2\psi_j(X_{i_2},X_{i_1})^2\pi_2\psi_k(X_{i_2},y)^2P(dy)\\
&\leq\frac{1}{3}\max_{j,k\in[p]}M\bra{(\pi_2\psi_j)^2\star_1^1(\pi_2\psi_k)^2}
+\frac{2}{3}\max_{j,k\in[p]}M\bra{P(|\pi_2\psi_j|^2)}M(\pi_2\psi_k)^2.
}
By \cref{max-contraction}\ref{contr-b},
\ba{
\max_{j,k\in[p]}M\bra{(\pi_2\psi_j)^2\star_1^1(\pi_2\psi_k)^2}
\leq\max_{j\in[p]}M\bra{P(|\pi_2\psi_j|^4)}.
}
Also, by the AM-GM inequality,
\ba{
n\max_{j,k\in[p]}M\bra{P(|\pi_2\psi_j|^2)}M(\pi_2\psi_k)^2\log^9p
\leq\frac{n^2}{2}\max_{j\in[p]}M\bra{P(|\pi_2\psi_j|^2)}^2\log^8p
+\frac{1}{2}\max_{j\in[p]}M(\pi_2\psi_k)^4\log^{10}p.
}
Consequently,
\ban{
(I_2\log^2p)^2
&\leq n\E\sbra{\max_{j\in[p]}M\bra{P(|\pi_2\psi_j|^4)}}\log^9p
+n^2\E\sbra{\max_{j\in[p]}M\bra{P(|\pi_2\psi_j|^2)}^2}\log^8p\notag\\
&\quad+\E\sbra{\max_{j\in[p]}M(\pi_2\psi_k)^4\log^{10}p}\notag\\
&\leq\bra{\Delta_{2,*}^{(3)}(2)+\Delta_{2,*}^{(5)}(2)+\Delta_{2,*}^{(4)}(2)}\log^5p.\label{I2-bound}
}

\paragraph{Step 3.} 
Let us bound
\ba{
II_1=n^{\frac{3}{2}}(\log p)^{\frac{3}{2}}\sqrt{\E\sbra{\max_{j,k\in[p]}M\bra{P(|\wt{\pi_2\psi_j\star^{1}_1\pi_2\psi_{k}}|^2)}}}.
}
For any $j,k\in[p]$ and $i\in[n]$, Jensen's inequality gives
\ba{
P(|\wt{\pi_2\psi_j\star^{1}_1\pi_2\psi_{k}}|^2)(X_i)
\leq\frac{1}{2}\int\pi_2\psi_j\star^{1}_1\pi_2\psi_{k}(u,X_i)^2P(du)
+\frac{1}{2}\int\pi_2\psi_k\star^{1}_1\pi_2\psi_{j}(u,X_i)^2P(du).
}
Hence, by \cref{max-contraction}\ref{contr-d},
\ba{
P(|\wt{\pi_2\psi_j\star^{1}_1\pi_2\psi_{k}}|^2)(X_i)
\leq\max_{j,k\in[p]}\|\pi_2\psi_j\star_1^1\pi_2\psi_j\|_{L^2(P^2)}M\bra{P(|\pi_2\psi_k|^2)}.
}
Therefore, the AM-GM inequality gives
\ban{
II_1\log^2p
&\leq\frac{n^2}{2}\max_{j\in[p]}\|\pi_2\psi_j\star_1^1\pi_2\psi_j\|_{L^2(P^2)}\log^3 p
+\frac{n}{2}\sqrt{\E\sbra{\max_{j\in[p]}M\bra{P(|\pi_2\psi_j|^2)}^2}}\log^4 p\notag\\
&\leq \Delta_1^{(0)}\log^3 p
+\sqrt{\Delta_{2,*}^{(5)}(2)\log^5p}.\label{II1-bound}
}

\paragraph{Step 4.} 
Let us bound
\ba{
III_1=n^{\frac{1}{2}}(\log p)^{\frac{3}{2}}\sqrt{\E\sbra{\max_{j,k\in[p]}M\bra{P(|\wt{\pi_2\psi_j\star^{0}_2\pi_2\psi_{k}}|^2)}}}.
}
Observe that $\pi_2\psi_j\star^{0}_2\pi_2\psi_{k}=\pi_2\psi_j\pi_2\psi_k$ for any $j,k\in[p]$. Hence the Schwarz inequality gives
$
P(|\wt{\pi_2\psi_j\star^{0}_2\pi_2\psi_{k}}|^2)
=P(|\pi_2\psi_j\pi_2\psi_k|^2)
\leq \sqrt{P(|\pi_2\psi_j|^4)P(|\pi_2\psi_k|^4)}.
$ Consequently, 
\ben{\label{III1-bound}
III_1\log^2p\leq n^{\frac{1}{2}}\sqrt{\E\sbra{\max_{j,k\in[p]}M\bra{P(|\pi_2\psi_j|^4)}}}\log^{\frac{7}{2}}p\leq\sqrt{\Delta_{2,*}^{(3)}(2)\log^5p}.
}

Combining \eqref{I3-III2-bound}--\eqref{III1-bound} gives \eqref{lem:delta-eq22}.
\end{proof}

\begin{proof}[Proof of \eqref{lem:delta-eq12}]
Observe that
\ba{
\Delta_1(1,2)
%&=\sum_{l=0}^{1}\max_{0\leq u\leq 2-l}n^{4+\frac{l-4-u}{2}}(\log p)^{\frac{2-l+u}{2}}\sqrt{\E\sbra{\max_{j,k\in[p]}M\bra{P^{2-l-u}(|\wt{\pi_1\psi_j\star^{l}_1\pi_2\psi_{k}}|^2)}}}\\
%&=\max_{0\leq u\leq 2}n^{4+\frac{-4-u}{2}}(\log p)^{\frac{2+u}{2}}\sqrt{\E\sbra{\max_{j,k\in[p]}M\bra{P^{2-u}(|\wt{\pi_1\psi_j\star^{0}_1\pi_2\psi_{k}}|^2)}}}\\
%&\quad+\max_{0\leq u\leq 1}n^{4+\frac{-3-u}{2}}(\log p)^{\frac{1+u}{2}}\sqrt{\E\sbra{\max_{j,k\in[p]}M\bra{P^{1-u}(|\wt{\pi_1\psi_j\star^{1}_1\pi_2\psi_{k}}|^2)}}}\\
&=\max_{0\leq u\leq 2}\Delta_1(1,2;1,0,u)
+\max_{0\leq u\leq1}\Delta_1(1,2;1,1,u)\\
&=:\max_{0\leq u\leq2}\mathbf I_u+\max_{0\leq u\leq1}\mathbf{II}_u.
}
By \cref{aux-delta1},
\ban{
\mathbf{I}_2\log^2p
&\leq n^2(\log p)^3\sqrt{\E\sbra{\max_{j\in[p]}M(\pi_1\psi_j)^4}}
+(\log p)^5\sqrt{\E\sbra{\max_{j\in[p]}M(\pi_2\psi_j)^4}}\notag\\
&\leq \sqrt{\bra{\Delta_{2,*}^{(2)}(1)+\Delta_{2,*}^{(4)}(2)}\log^5p}.\label{I2-bound12}
}
Meanwhile, \cref{max-contraction}\ref{contr-b} gives $M\bra{\pi_1\psi_j\star^{1}_1\pi_2\psi_{k}}^2\leq\|\pi_1\psi_j\|_{L^2(P)}^2M(P(|\pi_2\psi_k|^2))$. 
Combining this with \eqref{sym-M} yields
\ban{
\mathbf{II}_1\log^2p
&\leq n^2\sqrt{\E\sbra{\max_{j,k\in[p]}\|\pi_1\psi_j\|_{L^2(P)}^2M(P(|\pi_2\psi_k|^2))}}\log^3p\notag\\
&\leq n^{3/2}\max_{j\in[p]}\|\pi_1\psi_j\|_{L^2(P)}\bra{\Delta_{2,*}^{(5)}(2)\log^{9}p}^{1/4},\label{II1-bound12}
}
where we used Lyapunov's inequality in the last line. 
Next, by \eqref{sym-lp},
\ban{
\mathbf I_0&\leq n^{2}(\log p)\max_{j,k\in[p]}\|\pi_1\psi_j\star^{0}_1\pi_2\psi_{k}\|_{L^2(P^2)},\notag\\
\mathbf{II}_0&\leq n^{\frac{5}{2}}(\log p)^{\frac{1}{2}}\max_{j,k\in[p]}\|\pi_1\psi_j\star^{1}_1\pi_2\psi_{k}\|_{L^2(P)}\leq\Delta_1^{(1)}\log^{1/2} p.\label{II0-bound12}
}
For any $j,k\in[p]$, Lemma 2.4(iii) in \cite{DoPe19} and \cref{max-contraction}\ref{contr-c} give
\ba{
\|\pi_1\psi_j\star^{0}_1\pi_2\psi_{k}\|_{L^2(P^2)}^2
&\leq\|\pi_1\psi_j\star^{0}_1\pi_1\psi_{j}\|_{L^2(P)}\|\pi_2\psi_k\star^{1}_2\pi_2\psi_{k}\|_{L^2(P)}\\
&=\|\pi_1\psi_j\|_{L^4(P)}^2\|P(|\pi_2\psi_k|^2)\|_{L^2(P)}.
}
Hence, using the AM-GM inequality, we obtain
\ban{
(\mathbf I_0\log^2p)^2
&\leq \frac{n^{5}}{2}\max_{j\in[p]}\|\pi_1\psi_j\|_{L^4(P)}^4\log^{5}p
+\frac{n^{3}}{2}\max_{j\in[p]}\|P(|\pi_2\psi_j)|^2\|_{L^2(P^2)}^2\log^{7}p\notag\\
&\leq\bra{\Delta_{2,*}^{(1)}(1)+\Delta_{2,*}^{(2)}(2)}\log^5p.\label{I0-bound12}
}
\if0
Also, Lemma 2.4(vi) in \cite{DoPe19} gives
\ba{
%\|\pi_1\psi_j\star^{1}_1\pi_2\psi_{k}\|_{L^2(P)}^2
%\leq\|\pi_1\psi_j\|_{L^2(P)}^2\|\pi_2\psi_k\star^{1}_1\pi_2\psi_{k}\|_{L^2(P)}.
(\mathbf{II}_0\log^2p)^2
&\leq n^5\max_{j,k\in[p]}\|\pi_1\psi_j\|_{L^2(P)}^2\|\pi_2\psi_k\star^{1}_1\pi_2\psi_{k}\|_{L^2(P)}\log^5p\\
&\leq n^3\max_{j,k\in[p]}\|\pi_1\psi_j\|_{L^2(P)}^2\Delta_1^{(0)}\log^5p.
}
\fi
Third, by definition,
\ba{
\mathbf{I}_1=n^{\frac{3}{2}}(\log p)^{\frac{3}{2}}\sqrt{\E\sbra{\max_{j,k\in[p]}M\bra{P(|\wt{\pi_1\psi_j\star^{0}_1\pi_2\psi_{k}}|^2)}}}.
}
For any $j,k\in[p]$ and $i\in[n]$, the Jensen and Schwarz inequalities give
\ba{
&P(|\wt{\pi_1\psi_j\star^{0}_1\pi_2\psi_{k}}|^2)(X_{i})\\
&\leq\frac{1}{2}\int\pi_1\psi_j(X_i)^2\pi_2\psi_k(X_i,v)^2P(dv)
+\frac{1}{2}\int\pi_1\psi_j(y)^2\pi_2\psi_k(y,X_i)^2P(dy)\\
&\leq \frac{1}{2}M(\pi_1\psi_j)^2M\bra{P(|\pi_2\psi_k|^2)}
+\frac{1}{2}\|\pi_1\psi_j\|_{L^4(P)}^2\sqrt{M\bra{P(|\pi_2\psi_k|^4)}}.
}
Hence, by the AM-GM inequality,
\bm{
n^3M\bra{P(|\wt{\pi_1\psi_j\star^{0}_1\pi_2\psi_{k}}|^2)}\log^7p
\leq\frac{n^4}{4}M(\pi_1\psi_j)^4\log^6p+
\frac{n^2}{4}M\bra{P(|\pi_2\psi_k|^2)}^2\log^{8}p\\
+\frac{n^5}{4}\|\pi_1\psi_j\|_{L^4(P)}^4\log^5p
+\frac{n}{4}M\bra{P(|\pi_2\psi_k|^4)}\log^9p.
}
Consequently,
\ben{\label{I1-bound12}
(\mathbf{I}_1\log^2p)^2\leq\bra{\Delta_{2,*}^{(2)}(1)+\Delta_{2,*}^{(5)}(2)+\Delta_{2,*}^{(1)}(1)+\Delta_{2,*}^{(3)}(2)}\log^5p.
}
Combining \eqref{II0-bound12}--\eqref{I1-bound12} gives \eqref{lem:delta-eq12}.
\end{proof}

\addcontentsline{toc}{section}{References}
\bibliography{ref}
\bibliographystyle{chicago}
\end{document}